\def\softl{{l\kern-0.3ex\raise0.1ex\hbox{'}\kern-0.10ex}}
\def\comment#1{}
\let\comment=\footnote 
\def\figscale{0.7}
\newcommand{\vc}[1]{\ensuremath{\vcenter{\hbox{#1}}}}
\newcommand{\figV}{\vc{\includegraphics[page=1,scale=\figscale]{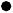}} }
\newcommand{\figN}{\vc{\includegraphics[page=2,scale=\figscale]{EiC-fig}} }
\newcommand{\figB}{\vc{\includegraphics[page=3,scale=\figscale]{EiC-fig}} }
\newcommand{\figR}{\vc{\includegraphics[page=4,scale=\figscale]{EiC-fig}} }
\newcommand{\figTRG}{\vc{\includegraphics[page=5,scale=\figscale]{EiC-fig}} }
\newcommand{\figEMP}{\vc{\includegraphics[page=20,scale=\figscale]{EiC-fig}} }
\newcommand{\figCOCh}{\vc{\includegraphics[page=21,scale=\figscale]{EiC-fig}} }
\newcommand{\cupdot}{\mathbin{\mathaccent\cdot\cup}}
\newtheorem{theorem}{Theorem} 
\newtheorem{theorem*}{Theorem} 
\newtheorem{thm}[theorem]{Theorem} 
\newtheorem{prop}[theorem]{Proposition}
\newtheorem{cor}[theorem]{Corollary}
\newtheorem{lem}[theorem]{Lemma}
\newtheorem{conj}[theorem]{Conjecture}
\newtheorem{obs}[theorem]{Observation}
\newtheorem{claim}[theorem]{Claim}
\theoremstyle{definition}
\newtheorem{construction}{Construction}
\theoremstyle{remark}
\numberwithin{theorem}{section}
\let\oldlfloor\lfloor
\let\oldrfloor\rfloor
\let\oldlceil\lceil
\let\oldrceil\rceil
\renewcommand{\lfloor}{\left\oldlfloor}
\renewcommand{\rfloor}{\right\oldrfloor}
\renewcommand{\lceil}{\left\oldlceil}
\renewcommand{\rceil}{\right\oldrceil}
\newcommand{\cA}{\mathcal{A}}
\newcommand{\cC}{\mathcal{C}}
\newcommand{\cE}{\mathcal{E}}
\newcommand{\cF}{\mathcal{F}}
\newcommand{\cG}{\mathcal{G}}
\newcommand{\cH}{\mathcal{H}}
\newcommand{\cK}{\mathcal{K}}
\newcommand{\cP}{\mathcal{P}}
\newcommand{\eps}{\varepsilon}
\renewcommand{\epsilon}{\varepsilon}
\newcommand{\Nat}{\mathbb{N}}
\newcommand{\Rat}{\mathbb{Q}}
\newcommand{\Real}{\mathbb{R}}
\def\Cfive{{\rm C5}}
\def\RL{{\rm RL}}
\def\Csev{{\rm C7}}
\DeclareMathOperator{\im}{im}
\DeclareMathOperator{\Hom}{Hom}
\newcommand{\T}{{\mathrm T}}
\newcommand{\unlab}[2]{\left\llbracket #1\right\rrbracket_{#2}}
\newcommand{\cond}{{ \; \big\vert \; }}
\newcommand{\linktocalculations}{\url{http://honza.ucw.cz/proj/EdgesInCycles/}}
\begin{document}

\title{Minimum number of edges that occur in odd cycles
\thanks{
This work has received funding from the European Research Council (ERC)
under the European Union's Horizon 2020 research and innovation programme
(grant agreement No 648509). This publication reflects only its authors'
view; the European Research Council Executive Agency is not responsible for
any use that may be made of the information it contains.
}
}

\author{
Andrzej Grzesik\thanks{Faculty of Mathematics and Computer Science, Jagiellonian University, ul. St. \L ojasiewicza 6, 30-348 Krak\'ow, Poland. Email: {\tt Andrzej.Grzesik@uj.edu.pl}. The author was partially supported by the National Science Centre grant 2013/08/T/ST1/00108.}
\and
Ping Hu\thanks{Current address: School of Mathematics, Sun Yat-sen University, Guangzhou, 510275, China. This work has been carried out while at: Department of Computer Science, University of Warwick, Coventry, CV4 7AL, United Kingdom.
Email: {\tt huping9@mail.sysu.edu.cn}.}
\and
Jan Volec\thanks{Department of Mathematics and Statistics, McGill University, Burnside Hall, 805 Sherbrooke West, Montreal H3A 2K6, Canada.
Former affiliation: Department of Mathematics, ETH, 8092 Z\"urich, Switzerland. Email: {\tt jan@ucw.cz}.
The author was partially supported by the SNSF grant 200021-149111 and CRM-ISM fellowship.}
}

\date{}

\maketitle

\begin{abstract}
If a graph $G$ has $n\ge 4k$ vertices and more than $n^2/4$ edges, then it
contains a copy of~$C_{2k+1}$. In 1992, Erd\H{o}s, Faudree and Rousseau showed
even more, that the number of edges that occur in a triangle of such a $G$ is
at least $2\lfloor n/2 \rfloor + 1$, and this bound is tight. They also showed
that the minimum number of edges in $G$ that occur in a copy of $C_{2k+1}$ for
$k \ge 2$ suddenly starts being quadratic in $n$, and conjectured that for any
$k\ge2$, the correct lower bound should be $2n^2/9 - O(n)$. Very recently,
F\"uredi and Maleki constructed a counterexample for $k=2$ and proved an
asymptotically matching lower bound, namely that for any $\eps > 0$
graphs with $(1+\eps)n^2/4$ edges contain at least $(2+\sqrt{2})n^2/16 \sim 0.2134n^2 $
edges that occur in $C_5$.

In this paper, we use a different approach to tackle this problem and prove
the following stronger result: Every $n$-vertex graph with at least $\lfloor
n^2/4 \rfloor + 1$ edges has at least $(2+\sqrt{2})n^2/16-O(n^{15/8})$ edges that
occur in $C_5$. Next, for all $k\ge 3$ and $n$ sufficiently large, we
determine the exact minimum number of edges that occur in $C_{2k+1}$
for $n$-vertex graphs with more than $n^2/4$ edges, and show it is indeed equal to 
$\lfloor \frac{n^2}4 \rfloor + 1 - \lfloor \frac{n+4}6 \rfloor \lfloor \frac{n+1}6
\rfloor = 2n^2/9 - O(n)$.
For both of these results, we give a structural description of all the large
extremal configurations as well as obtain the corresponding stability results,
which answers a conjecture of F\"uredi and Maleki. 

The main ingredient of our results is a novel approach that combines the
semidefinite method from flag algebras together with ideas from finite
forcibility of graph limits, which may be of independent interest. This
approach allowed us to keep track of the additional extra edge needed to
guarantee even the existence of a single copy of $C_{2k+1}$, which a standard
flag algebra approach would not be able to handle. Also, we establish the first
application of the semidefinite method in a setting, where the set of tight
examples has exponential size and arises from two very different
constructions.
\end{abstract}

\section{Introduction}
A classical result in graph theory is Mantel's Theorem~\cite{mantel:1907}, which states
that every triangle-free graph on $n$ vertices has at most $\lfloor n^2/4
\rfloor$ edges, and this result is tight. In other words, a graph with $n$
vertices and $\lfloor n^2/4 \rfloor+1$ edges must contain a triangle.
But can we guarantee something stronger than just one triangle?
In 1941, Rademacher proved that such graphs contain at least $\lfloor n/2\rfloor$ triangles,
and in 1992, Erd\H os, Faudree and Rousseau~\cite{ERDOS199223} showed that such graphs have 
at least $2\lfloor n/2 \rfloor + 1$ edges that occur in a triangle.
Both results are tight simply by adding one edge to the complete balanced bipartite graph.

Erd\H os~\cite{Erdos199781} also considered analogous questions for longer odd
cycles in graphs with $n$ vertices and $\lfloor n^2/4 \rfloor+1$ edges, where
 adding an extra edge into the complete balanced bipartite graph is not optimal.
He showed that every such graph contains at least $2n^2/9$ edges that occur in
some odd cycle. This number is best possible, and it can be achieved by the following construction.

\begin{construction}\label{cstn:cliquebip}
Let $G_1$ be an $n$-vertex graph with the following two $2$-connected blocks that overlap on exactly one vertex:
\begin{enumerate}
\item a complete graph on $\lfloor \frac{2n+4}{3}\rfloor$ vertices, and
\item a complete balanced bipartite graph on $\lfloor \frac{n+1}{3} \rfloor$ vertices.
\end{enumerate}
\end{construction}

\begin{figure}[ht]
\begin{center}
\begin{tikzpicture}[scale=1.25, very thick]
	\coordinate (AB) at (0,-2);
	\coordinate (AT) at (0,0);
	\coordinate (BB) at (2,-2);
	\coordinate (BT) at (2,0);

	\draw[fill=gray,gray] (AT) -- (BT) -- (BB) -- (AB);

	\draw[fill=gray] (+3.9,-1) ellipse (1.5 and 1.9); 
	\draw[fill=white] (0,-1) ellipse (.4 and 1); 
	\draw[fill=white] (2,-1) ellipse (.4 and 1); 
        \draw (+2.4,-1) node[inner sep=3pt, outer sep=0pt, circle, fill] {};
	
\end{tikzpicture}
\end{center}

\caption{The graph $G_1$ from Construction~\ref{cstn:cliquebip}. Gray areas represent all the possible edges.}
\label{fig:cliquebip}
\end{figure}
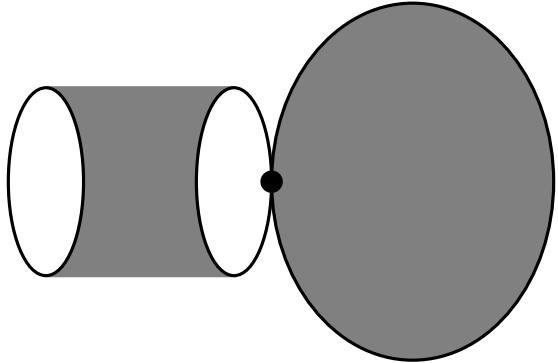

Erd\H os, Faudree and Rousseau~\cite{ERDOS199223} conjectured that 
Construction~\ref{cstn:cliquebip} provides an extremal example
also if we minimize the number of edges that occur only in copies of $C_{2k+1}$ for a fixed $k \ge 2$.
Again, we minimize over all $n$-vertex graphs with $\lfloor n^2/4\rfloor + 1$ edges.
The case of $C_5$ is Problem 11 in Erd\H os' paper \cite{Erdos199781} with interesting problems. 

\begin{conj}[Erd\H os-Faudree-Rousseau~\cite{ERDOS199223}]\label{conj:erdos}
Fix an integer $k\ge 2$. Every graph with $n$ vertices and $\lfloor \frac{n^2}4
\rfloor+1$ edges contains at least $\frac29 n^2 - O(n)$ edges that occur in
$C_{2k+1}$. 
\end{conj}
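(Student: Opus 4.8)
For a fixed integer $k\ge2$, the quantity ``number of edges of $G$ that lie in a copy of $C_{2k+1}$'', divided by $\binom{n}{2}$, is a density of $(2k+1)$-vertex configurations, hence expressible in the language of \fa{}. The natural first move is therefore to run the semidefinite method to bound this density from below by $2/9$ under the single constraint that the edge density is at least $1/2$. What makes the conjecture delicate is that this inequality is \emph{false} in the limit: the balanced complete bipartite graph has edge density exactly $1/2$ yet no edge on any odd cycle. So one genuinely must exploit that $G$ has strictly more than $n^2/4$ edges, i.e.\ that $G$ is not bipartite --- but non-bipartiteness is not a closed property of graph limits (limits of $T_2(n)$ plus one edge are bipartite), so it cannot be imposed as a flag-algebra constraint. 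Bridging this gap is, in my plan, where finite-forcibility ideas come in.

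I would thus aim to prove via \fa{} not the bare inequality but a \emph{stability} strengthening: the density of edges in $C_{2k+1}$ is at least $2/9$, with a quantitative gap that degenerates only near a short explicit list of extremal limits, and I would show that each of these limits is finitely forcible, i.e.\ pinned down by finitely many subgraph densities. The list consists of the blow-ups of Construction~\ref{cstn:cliquebip} --- a clique on $\tfrac{2n}{3}$ vertices glued at a cut vertex to a balanced bipartite graph on $\tfrac n3$ vertices, the ratio $2:1$ being forced by minimizing $\tfrac{\alpha^2}{2}$ subject to $\tfrac{\alpha^2}{2}+\tfrac{(1-\alpha)^2}{4}\ge\tfrac14$ --- together with, for $k=2$, a second and genuinely different pentagon-type construction. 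Feeding an $n$-vertex graph $G$ with $\lfloor n^2/4\rfloor+1$ edges into this inequality, either $G$ differs from one of the extremal configurations in only $o(n^2)$ edges, or $\phi(G)\ge\tfrac29 n^2(1+\Omega(1))$ and we finish with room to spare.

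It then remains to dispose of the near-extremal cases. If $G$ is close to $T_2(n)$, or more generally close to a bipartite-looking configuration that is not block-separated, I would argue directly: since $e(G)>n^2/4$, there is an edge inside one part, and in its nearly-bipartite neighbourhood one can complete a $C_{2k+1}$ through this edge for a positive fraction of the cross edges, so that either $\phi(G)$ already exceeds $2n^2/9$ comfortably or, to suppress this spreading, one must remove $\Theta(n^2)$ cross edges and $e(G)$ drops below $n^2/4$ --- contradiction. If $G$ is close to Construction~\ref{cstn:cliquebip}, I would run a cleaning / local-exchange argument, moving ``defect'' vertices one at a time into the clique or the bipartite block while checking that no move increases $\phi$, until $G$ becomes exactly the extremal graph; for $k\ge3$ this pins down the exact optimum $\lfloor\tfrac{n^2}{4}\rfloor+1-\lfloor\tfrac{n+4}{6}\rfloor\lfloor\tfrac{n+1}{6}\rfloor$ as well as the full list of extremal graphs.

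The two steps I expect to be hardest are the following. First, making the \fa{} bound sensitive to a single edge: a vanilla density SDP cannot tell $\lfloor n^2/4\rfloor$ edges from $\lfloor n^2/4\rfloor+1$, and my remedy is to use finite forcibility to replace the non-closed condition ``$G$ is not bipartite'' by a finite family of density inequalities that every configuration close to but distinct from an extremal one must violate, and then to optimize over the corresponding face of the flag-algebra cone. Second, the set of tight examples is not a point but an exponentially large family coming from two unrelated constructions, which forces the dual certificate to be exactly tight on many faces at once; finding and rounding such a certificate, and then carrying the multi-parameter stability analysis through all extremal branches, is where I expect the bulk of the effort. A consequence I would keep an eye on is that the $k=2$ branch in fact beats Construction~\ref{cstn:cliquebip}, so that the true asymptotic constant there is $(2+\sqrt{2})/16<2/9$; thus the conjecture as literally stated should hold for $k\ge3$ (with the exact optimum above) but fail for $k=2$.
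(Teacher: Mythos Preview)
Your high-level diagnosis is correct and matches the paper: the conjecture is \emph{false} for $k=2$ (the pentagon constant is $(2+\sqrt2)/16$, coming from Construction~\ref{cstn:c5}) and true for $k\ge3$; the obstacle to a bare flag-algebra proof is precisely that the balanced complete bipartite limit has edge density $1/2$ but no edge in any odd cycle; and the overall scheme ``flag algebras + stability near the degenerate limit + cleaning'' is what the paper does.

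Where your plan diverges from the paper is in the concrete mechanism for neutralising the bipartite limit. You propose to encode ``not bipartite'' by a finite list of density constraints and optimise over the resulting face; it is not clear what those constraints would be, since the bipartite limit is an honest point of the feasible region and is not separated from Construction~\ref{cstn:cliquebip} by any closed density condition. The paper's device is different and sharper: it passes to a \emph{two-colour} flag algebra (edges in some $C_{2k+1}$ are ``red'', the rest ``blue''), forbids the small red/blue configurations whose $k$-blowup would force a blue edge into a $C_{2k+1}$ (e.g.\ a triangle with a blue edge, a red triangle with a pendant blue edge, a $C_5$ with a blue edge), and then proves in this algebra the inequality
\[
\psi\!\left(\figTRG\,\times\,\bigl(9\cdot\figR-4\bigr)\right)\ \ge\ 0
\]
(for $k=2$ the second factor is $8\cdot\figR-(2+\sqrt2)$). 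Multiplying by the triangle density inside the SDP is the ``finite forcibility'' trick: it makes the inequality trivially tight on every triangle-free limit, so the dual certificate need not be tight on the bipartite family at all, and whenever the triangle density is positive one reads off the desired lower bound on red edges. The remaining case --- $o(n^3)$ triangles --- is handled not by further flag-algebra work but by the triangle removal lemma plus a direct argument that an almost-bipartite graph with $\lfloor n^2/4\rfloor+1$ edges has $\Omega(n^2)$ edges (indeed $\ge 0.236\,n^2$) lying in a $C_{2k+1}$.

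Two further points. First, you do not mention the two-colouring at all, but it is essential: the quantity ``edges that lie in some $C_{2k+1}$'' is not itself a subgraph density, and encoding it requires the red/blue setup together with the forbidden-subgraph family $\cF_{\Csev}$ (resp.\ $\cF_{\Cfive}$). Second, your stability-then-bound ordering is reversed relative to the paper: the paper first obtains the asymptotic bound (via the triangle-multiplication inequality plus the few-triangles lemma), and only afterwards extracts stability from slack coefficients in the same SDP certificate (zero density of induced $P_4$, resp.\ $P_5$ and $C_4^X$), which then feeds the exact/structural analysis. Your cleaning step for $G$ close to Construction~\ref{cstn:cliquebip} is in the right spirit and close to what is done in Section~\ref{sec:c7+exact}.
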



Very recently, F\"uredi and Maleki~\cite{bib:FurMal} constructed the following $n$-vertex graph
with $\lfloor n^2/4 \rfloor+1$ edges, out of which only $\frac{2+\sqrt{2}}{16}\cdot n^2 + O(n)
\approx 0.2134 n^2$  occur in $C_5$,
which disproves Conjecture~\ref{conj:erdos} for $k=2$.

\begin{construction}\label{cstn:c5}
Let $G_2$ be an $n$-vertex graph whose vertex-set is divided into
four parts $A,B,C$ and $D$ of sizes $\frac{2-\sqrt{2}}{4}\cdot n, \frac{n}{4}, \frac{n}{4}$ and $\frac{\sqrt{2}}{4}\cdot n$, respectively.
The edge-set of $G_2$ consists of all the edges between the parts $A$ and $B$, $B$ and $C$, $C$ and $D$, and all the edges inside the part $D$.
In other words, $G_2$ is a non-balanced blowup of a path on four vertices, where one of the endpoints of the path has a loop.
\end{construction}

\begin{figure}[ht]
\begin{center}
\begin{tikzpicture}[scale=1.5, very thick]
	\coordinate (AB) at (0,-1.5);
	\coordinate (AT) at (0,-0.5);
	\coordinate (BB) at (2,-2);
	\coordinate (BT) at (2,0);
	\coordinate (CB) at (4,-2);
	\coordinate (CT) at (4,0);
	\coordinate (DB) at (6,-2.3);
	\coordinate (DT) at (6,0.3);
	\draw[fill=gray,gray] (AT) -- (BT) -- (CT) -- (DT) -- (DB) -- (CB) -- (BB) -- (AB) -- (AT);
	\draw[fill=white] (0,-1) ellipse (.3 and 0.5); 
	\draw[fill=white] (2,-1) ellipse (.4 and 1); 
	\draw[fill=white] (4,-1) ellipse (.4 and 1); 
	\draw[fill=gray] (6,-1) ellipse (.6 and 1.3); 
        \node [below] at (0,-2.4) {$A$};
         \node [below] at (2,-2.4) {$B$};
          \node [below] at (4,-2.4) {$C$};
           \node [below] at (6,-2.4) {$D$};
	
\end{tikzpicture}
\end{center}

\caption{The graph $G_2$ from Construction~\ref{cstn:c5}.
Gray areas represent all the possible edges.
The respective sizes are $\frac{2-\sqrt{2}}{4}\cdot n$, $\frac{n}{4}$, $\frac{n}{4}$, and $\frac{\sqrt{2}}{4}\cdot n$.}
\label{fig:counterexample}
\end{figure}
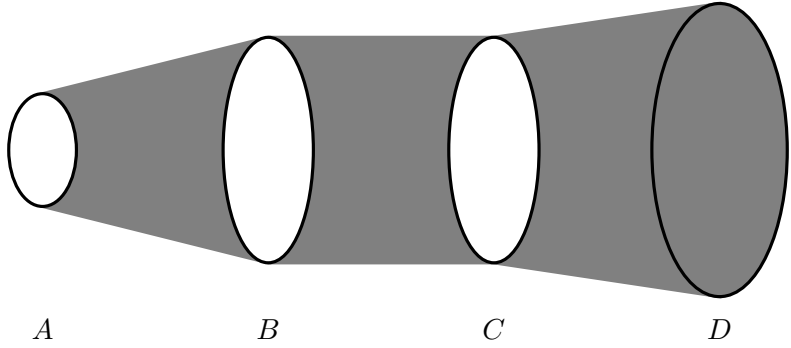

F\"uredi and Maleki~\cite{bib:FurMal} developed a new version
of Zykov's symmetrization method, and obtained the following asymptotic
solution to this problem for all odd cycles of length at least five.
\begin{theorem}[\cite{bib:FurMal}]\label{thm:FurMal}
For every $\eps > 0$, there exists $n_0\in \Nat$ such that
if a graph $G$ on $n>n_0$ vertices has $\left(\frac14+\eps\right)n^2$ edges,
then $G$ contains at least $\frac{2+\sqrt{2}}{16}\cdot n^2$ edges that occur in $C_5$.
Moreover, for any fixed $k\ge3$, $G$ contains at least $\frac29 \cdot n^2$ edges that occur in $C_{2k+1}$.
\end{theorem}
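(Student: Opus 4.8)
The plan is to pass to the language of graph limits and turn the problem into a constrained optimization over graphons. Given a sequence of $n$-vertex graphs $G_n$ with at least $(1/4+\eps)n^2$ edges, extract a convergent subsequence with limit graphon $W$; its edge density then satisfies $t(K_2,W)\ge 1/2+2\eps>1/2$. Call a pair $(x,y)$ with $W(x,y)>0$ \emph{live} if the rooted path density $\rho_W(x,y):=\iiint W(x,a)W(a,b)W(b,c)W(c,y)\,da\,db\,dc$ is positive, and set $c(W):=\tfrac12\,\mu\{(x,y):W(x,y)>0,\ \rho_W(x,y)>0\}$, where $\mu$ is Lebesgue measure on $[0,1]^2$. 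For each continuity level $\delta>0$, the number of edges of $G_n$ lying in at least $\delta n^3$ copies of $C_5$, divided by $n^2$, converges along the subsequence to $\tfrac12\mu\{(x,y):W(x,y)>0,\ \rho_W(x,y)\ge\delta\}$; letting $\delta\to 0^+$ gives $\liminf_n (\#\text{edges in a }C_5)/n^2\ge c(W)$. So Theorem~\ref{thm:FurMal} for $C_5$ reduces to: \emph{every graphon $W$ with $t(K_2,W)>1/2$ has $c(W)\ge(2+\sqrt2)/16$}, and replacing $\rho_W$ by the analogous rooted density of a path with $2k$ edges handles $C_{2k+1}$.

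Next I would pin down the structure of a minimizer. Since the edge density strictly exceeds the Turán density $1/2$ of $C_5$, the graphon $W$ has a positive-measure ``non-bipartite core'' (so $c(W)>0$ to begin with), and a Zykov-style symmetrization on graphons --- repeatedly replacing the neighbourhood of one point by that of a suitably chosen non-adjacent point, in a direction along which $c(\cdot)$ does not increase --- should reduce an extremal $W$ to a weighted blow-up of a template graph of bounded order, where a looped vertex stands for a part inducing a clique. This is where ideas from finite forcibility enter: the claim is that the only templates that can be optimal are the two behind Construction~\ref{cstn:cliquebip} (a clique glued to a complete bipartite graph along a vertex) and Construction~\ref{cstn:c5} (a looped blow-up of $P_4$), and one must rule out every other bounded template. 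Given a finite list of templates, computing $\min c(W)$ becomes, for each template, a small polynomial optimization in the part weights under the edge-density constraint; for $C_5$ the looped-$P_4$ template is optimal with value $(2+\sqrt2)/16\approx 0.2134<2/9$, whereas for $C_{2k+1}$ with $k\ge3$ the longer odd cycle no longer embeds efficiently into the bipartite-like parts, so the clique-plus-bipartite template wins with value $2/9$.

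To make the ``no other template is optimal'' step rigorous and to produce the exact algebraic constants, I would invoke Razborov's semidefinite (flag algebra) method: for each fixed $\delta$ the quantity ``density of edges lying in at least a $\delta$-fraction of the potential $C_5$'s'' is a limit of ordinary subgraph densities, so one can search the flag algebra of graphs for a sum-of-squares certificate proving the bound $(2+\sqrt2)/16$, and the complementary slackness of the optimal certificate then pins any extremal $W$ onto the template list, closing the structural gap. The main obstacle --- and the reason a black-box flag algebra run does not work --- is exactly the non-locality of ``occurs in a $C_5$'': one has to interleave the limit $\delta\to 0^+$ with the semidefinite argument and carefully account for the slack provided by the $\eps n^2$ surplus edges, because at edge density exactly $1/2$ the bound degenerates to $0$ (witnessed by the complete bipartite graph). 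Beyond that, I expect the real work to lie in (i) checking that the symmetrization is legitimate for this unusual objective, since equalizing neighbourhoods can both create and destroy copies of $C_5$ through third vertices, so the merging direction must be chosen so that $c$ is genuinely monotone, and (ii) the finite but delicate case analysis bounding $c(W)$ over all bounded templates.
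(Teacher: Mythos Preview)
First, a framing point: Theorem~\ref{thm:FurMal} is quoted from F\"uredi--Maleki and is not proved in this paper; the paper only records that their argument is ``a new version of Zykov's symmetrization method'' carried out on finite graphs. What the present paper does prove are the stronger Theorems~\ref{thm:c5} and~\ref{thm:c7+} (only $\lfloor n^2/4\rfloor+1$ edges required), via a two-colouring of the edges together with flag algebras. So the relevant comparison is with that argument.

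Your reduction to graphons has a genuine gap at the very first step, and it is precisely the obstacle both papers are designed to avoid. The functional $c(W)=\tfrac12\,\mu\{W>0,\ \rho_W>0\}$ is not lower semicontinuous in the cut metric, and your claimed convergence
\[
\frac{\#\{e\in E(G_n):e\text{ lies in }\ge\delta n^3\text{ copies of }C_5\}}{n^2}\ \longrightarrow\ \tfrac12\,\mu\{W>0,\ \rho_W\ge\delta\}
\]
fails already for quasirandom graphs. Take any sequence $(G_n)$ of density $p=\tfrac12+2\eps$ converging to the constant graphon $W\equiv p$ (e.g.\ typical instances of $G(n,p)$). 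Every edge lies in $\sim p^4 n^3$ copies of $C_5$, so the left side tends to $p/2=\tfrac14+\eps$; but $W>0$ and $\rho_W=p^4>0$ everywhere, so the right side is $\tfrac12$. Hence $\liminf(\cdots)\ge c(W)$ is false. The underlying issue is that ``$(x,y)$ is an edge'' is $\{0,1\}$ information which the cut metric discards: the level set $\{W>0\}$ can have full measure while the edge density is anywhere in $(0,1]$. No amount of symmetrization or SDP work downstream repairs this, because the optimization problem you arrive at --- minimize $c(W)$ over graphons with $t(K_2,W)>1/2$ --- simply does not lower-bound the quantity you care about.

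The paper's fix is to record ``occurs in a $C_5$'' as a \emph{colour} rather than as a graphon level set. Colour an edge red if it lies in some $C_5$ and blue otherwise; now the red-edge density is an honest coloured subgraph density, continuous in the appropriate limit, and ``no blue edge in a $C_5$'' becomes, via supersaturation, an almost-$\cF_{\Cfive}$-free condition on the coloured sequence. The flag-algebra certificate then proves $\phi(\text{triangle})\cdot\bigl(8\,\phi(\text{red edge})-(2+\sqrt2)\bigr)\ge 0$ for all such limits with edge density at least $1/2$, which yields the bound whenever the triangle density is positive; the near-triangle-free regime is handled by a separate stability argument. Your later ideas --- symmetrize down to a bounded template, then certify via SDP --- are in the right spirit and indeed echo both F\"uredi--Maleki and this paper, but they must be executed in this coloured framework (or directly on finite graphs, as F\"uredi--Maleki do), not through the uncoloured graphon functional $c(W)$.
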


Our first two results answer a conjecture of F\"uredi and Maleki
that the assumption on the number of edges of $G$ can be
lowered to $\lfloor n^2/4 \rfloor + 1$, which is indeed best possible.
\begin{theorem}\label{thm:c5}
If an $n$-vertex graph has $\lfloor \frac{n^2}4\rfloor + 1$ edges, then it contains
at least $\frac{2+\sqrt{2}}{16} \cdot n^2-O\left(n^{15/8}\right)$ edges that occur in $C_5$.
\end{theorem}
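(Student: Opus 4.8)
The plan is to prove Theorem~\ref{thm:c5} via a refinement of the flag-algebra/semidefinite method that is sensitive to the single extra edge above the Mantel threshold. Let $G$ be an $n$-vertex graph with $\lfloor n^2/4\rfloor+1$ edges, and let $H=H(G)$ be the spanning subgraph consisting of all edges of $G$ that lie in a copy of $C_5$. We want $|E(H)|\ge \tfrac{2+\sqrt2}{16}n^2-O(n^{15/8})$. The first step is a \emph{cleaning / dichotomy} step: either $G$ is within $o(n^2)$ edges of the complete balanced bipartite graph $K_{\lfloor n/2\rfloor,\lceil n/2\rceil}$, or it is $\delta$-far from it for some small constant $\delta$. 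In the far case, one runs the asymptotic flag-algebra argument (essentially Theorem~\ref{thm:FurMal}, or a fresh SDP computation) to get $|E(H)|\ge (\tfrac{2+\sqrt2}{16}+\delta')n^2$ with room to spare, so the $O(n^{15/8})$ error is irrelevant. Thus the real work is the \emph{near-bipartite regime}.

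In the near-bipartite regime, fix a partition $V(G)=X\cupdot Y$ minimizing the number of non-crossing (``bad'') edges; write $b$ for that number, $m_X,m_Y$ for the edges inside $X,Y$, and note $b=m_X+m_Y = o(n^2)$, while the number of missing crossing edges is also $o(n^2)$ and in fact, by the exact edge count, missing crossing edges $= m_X+m_Y-1$. The key structural observation is that a crossing edge $xy$ (with $x\in X$, $y\in Y$) lies in a $C_5$ essentially whenever there is a ``local witness'': e.g.\ a vertex inside $X$ or $Y$ adjacent appropriately, or a suitable alternating pattern using one bad edge. One then shows that all but $O(n^{15/8})$ crossing edges are covered, \emph{unless} the bad edges are concentrated in a very restricted way; here is where the finite-forcibility-type idea enters — we set up an auxiliary optimization (a small ``reduced'' problem on the limit object, where the bad edges appear as a loop/edge-weight on a bounded-size template) whose unique optimizers are exactly the limits of Construction~\ref{cstn:cliquebip} and Construction~\ref{cstn:c5}, and we transfer this back to $G$ with a stability argument. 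The count $\tfrac{2+\sqrt2}{16}n^2$ then falls out: in the $G_2$-type configuration the part $D$ together with its neighborhood forces exactly that many $C_5$-edges, and in the $G_1$-type configuration the clique block contributes more, so $G_2$ is the true minimizer.

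The bound $O(n^{15/8})$ will come from a counting/averaging inequality of the following shape: if a positive fraction of crossing edges fail to be in a $C_5$, then a careful double-count of ``cherries'' (paths of length two) and bad edges yields a contradiction once $n$ is large, with the exponent $15/8$ arising from balancing a term like $b\cdot n$ (bad-edge contribution) against a term like $n^{2}\cdot (b/n^2)^{1/?}$ — i.e.\ optimizing $n^{2-a}$ against $n^{1+a}$-type error terms where the bad-edge density is at most $n^{-a}$. Concretely, I would show: (i) if $\max(m_X,m_Y)\ge n^{15/8}$ then a Kővári–Sós–Turán / supersaturation argument already produces $\gg \tfrac{2+\sqrt2}{16}n^2$ edges in $C_5$'s (because many bad edges create many short odd closed walks among the crossing edges); and (ii) if $\max(m_X,m_Y)< n^{15/8}$ then at most $O(n^{15/8})$ crossing edges can avoid a $C_5$, since avoiding a $C_5$ forces local degree deficiencies that each ``use up'' a bad edge or a missing edge, of which there are only $O(n^{15/8})$.

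The step I expect to be the main obstacle is the \emph{tight} handling of the two-source extremal set in the near-bipartite regime: the standard semidefinite method gives a single sum-of-squares certificate vanishing on one optimum, but here the optimum is not unique up to the natural symmetries — it comes from two genuinely different constructions ($G_1$ and $G_2$), and moreover $G_2$ itself has a one-parameter family of near-extremal blowups. Making the SDP certificate vanish on this exponential-size, two-component tight set — and simultaneously encoding the ``$+1$ edge'' so that the bipartite graph itself is excluded — is exactly the novelty the abstract advertises, and is where most of the technical effort (choosing the right flags, the right template for the bad edges, and verifying the rounding of the SDP solution) will go; the $O(n^{15/8})$ error term is then the price paid for converting the asymptotic certificate into a statement valid already at $\lfloor n^2/4\rfloor+1$ edges.
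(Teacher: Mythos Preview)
Your dichotomy is inverted, and this is a genuine gap. The extremal configuration $G_2$ (Construction~\ref{cstn:c5}) is \emph{not} near-bipartite: it contains a clique on $\frac{\sqrt2}{4}n$ vertices and hence $\Omega(n^3)$ triangles. So the ``far from bipartite'' case is exactly where the tight value $\frac{2+\sqrt2}{16}n^2$ is attained; there is no room to spare there, and one cannot dispose of it by quoting Theorem~\ref{thm:FurMal}. Conversely, the near-bipartite regime is the \emph{easy} case: if $G$ has $o(n^3)$ triangles, the triangle removal lemma plus F\"uredi's stability gives a large bipartite subgraph, and then a single bad edge (guaranteed by the $+1$) forces roughly $0.236\,n^2 > \frac{2+\sqrt2}{16}n^2$ edges into copies of $C_5$ (this is Lemma~\ref{lem:stab}). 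Your Step~(i)/(ii) analysis of $m_X,m_Y$ versus $n^{15/8}$ is therefore aimed at the wrong regime entirely.

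The actual mechanism you are missing is the ``multiply by the triangle'' trick in the flag-algebra certificate. The paper proves, for every limit $\psi$ of almost $\cF_{\Cfive}$-free colorings with edge density $\ge\frac12$, the inequality
\[
\psi\!\left(\figTRG\,\times\,\bigl(8\cdot\figR-(2+\sqrt2)\bigr)\right)\ge 0,
\]
so \emph{either} the triangle density vanishes (handled by Lemma~\ref{lem:stab}) \emph{or} the red-edge density is at least $\frac{2+\sqrt2}{8}$. This is precisely how the $+1$ edge is encoded and how the two families of tight examples (arbitrary red/blue colorings of $K_{n/2,n/2}$, with zero first factor, and Construction~\ref{cstn:c5}, with zero second factor) coexist in one SDP certificate. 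Finally, the $O(n^{15/8})$ error does not come from any bad-edge balancing argument: it is the density $O(n^{-1/8})$ of the forbidden $4$-vertex configuration $B_3^+$ (via Corollary~\ref{cor:supersat} with $b=2$, $h=4$) propagating through the flag-algebra identity.
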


\begin{theorem}\label{thm:c7+}
For every integer $k \ge 3$, if an $n$-vertex graph has $\lfloor \frac{n^2}4\rfloor
+1$ edges, then it contains at least $\frac29 \cdot n^2 - O(n)$ edges that occur in
$C_{2k+1}$.
\end{theorem}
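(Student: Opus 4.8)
First note the bound is attained by Construction~\ref{cstn:cliquebip}: once $n$ is large relative to $k$, the clique block on $\lfloor\frac{2n+4}{3}\rfloor>2k+1$ vertices places every one of its $\binom{\lfloor(2n+4)/3\rfloor}{2}=\frac29 n^2-O(n)$ edges on a copy of $C_{2k+1}$, while the bipartite block contributes none, and the two sizes are pinned down by $e(G)=\lfloor n^2/4\rfloor+1$. So the content of the theorem is that no $n$-vertex graph with that many edges can do better, and the plan is an extremal argument locating inside $G$ a $C_{2k+1}$-rich substructure carrying $\frac29 n^2-O(n)$ edges. By Theorem~\ref{thm:FurMal} (whose conclusion survives adding edges, since a copy of $C_{2k+1}$ is never destroyed) we may assume $\lfloor n^2/4\rfloor+1\le e(G)<(\tfrac14+\varepsilon)n^2$ for a small fixed $\varepsilon$; and since passing to a densest connected component only makes the graph denser, it is convenient to prove the scale-free statement that every graph with at least $\lfloor n^2/4\rfloor+1$ edges has at least $\tfrac89 e(G)-O(n)$ edges on copies of $C_{2k+1}$ (which reduces to the claim when $e(G)=\lfloor n^2/4\rfloor+1$, is tight there by Construction~\ref{cstn:cliquebip}, and lets us assume $G$ is connected). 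Argue by contradiction, assuming $G$ has fewer than $\frac29 n^2-Cn$ edges on copies of $C_{2k+1}$ for a large constant $C=C(k)$.

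Decompose $G$ along its block--cut tree. An edge lies on a copy of $C_{2k+1}$ only if it lies in a non-bipartite block on $\ge 2k+1$ vertices, and the edges lying on \emph{some} odd cycle are exactly those in non-bipartite blocks, so by Erd\H os' lower bound~\cite{Erdos199781} the latter number at least $\frac29 n^2$. Classify each block as \emph{small} ($<2k+1$ vertices), \emph{bipartite}, \emph{rich} (non-bipartite, odd girth $\le 2k+1$), or \emph{thin} (non-bipartite, odd girth $\ge 2k+3$). Two structural facts drive the argument: (i) a rich block $B$ has all but $O_k(|B|)$ of its edges on copies of $C_{2k+1}$ — a rerouting argument splicing a short odd detour into long even paths, and the one place $k\ge 3$ is used essentially (for $C_5$ the analogue fails, which is the source of Construction~\ref{cstn:c5}); and (ii) a thin block $B$ satisfies $e(B)\le\lfloor|B|^2/4\rfloor-\Omega_k(|B|)$ — a generalised Andr\'asf\'ai--Erd\H os--S\'os phenomenon: such $B$ is triangle-free with minimum degree at most $\frac{2|B|}{2k+3}$, and peeling off low-degree vertices leaves a non-bipartite triangle-free graph whose non-bipartiteness is witnessed by two vertex classes at distance $\ge 2k+1$, which via a BFS-layering estimate forces the linear deficiency. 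Small blocks and bridges carry only $O_k(n)$ edges in total.

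Let $R$ be the total number of vertices, counted with multiplicity over the block--cut tree, in the rich blocks. Summing the per-block bounds, super-additivity of $t\mapsto\binom t2$ and $t\mapsto\lfloor t^2/4\rfloor$, and an elementary vertex count over the block--cut tree give $\binom R2+\lfloor(n-R)^2/4\rfloor\ge n^2/4-O_k(n)$; since the left side dips well below $n^2/4$ for $R$ in the middle of $[0,n]$, this forces either $R\ge\frac23 n-O(1)$ or $R$ linearly small. In the first case the edges outside the rich blocks number at most $\lfloor(n-R)^2/4\rfloor\le\frac{n^2}{36}+O(n)$, so the rich blocks carry at least $\lfloor n^2/4\rfloor+1-\frac{n^2}{36}-O(n)=\frac29 n^2-O(n)$ edges, and by fact (i) at least $\frac29 n^2-O(n)>\frac29 n^2-Cn$ of them lie on copies of $C_{2k+1}$, a contradiction.

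The hard case — and the main obstacle — is the second: when the $C_{2k+1}$-rich part is small, one must show that $G$ is then close to the extremal complete bipartite graph, and that a \emph{single} extra edge beyond $\lfloor n^2/4\rfloor$, landing inside a part of the near-bipartition, already threads $C_{2k+1}$'s through $(\tfrac14-o(1))n^2$ edges (using that the bipartite part is, up to $o(n^2)$ edges, complete and balanced, so a path of length $2k$ between the endpoints of that extra edge can be routed through almost every edge). Making this quantitatively airtight — tracking the $O(n)$ error terms and the handful of exceptional low-degree vertices, over a near-extremal family that (for the exact version and the stability statement) has exponential size — is the crux, and it is precisely here that the paper's hybrid of the flag-algebra semidefinite method with finite-forcibility ideas, which certifies what the limits of $C_{2k+1}$-free configurations look like and how one extra edge destabilises them, does the work that this ad hoc bookkeeping would have to do by hand.
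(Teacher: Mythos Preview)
Your claim (i) is false. Build a $2$-connected block $B$ from a triangle $xyz$, two internally disjoint paths $x a_1\cdots a_L$ and $y b_1\cdots b_L$ of length $L$, and a complete bipartite graph $K_{m,m}$ with parts $\{a_L,u_2,\dots,u_m\}$ and $\{b_L,v_2,\dots,v_m\}$. This block is $2$-connected with odd girth $3$, hence rich in your sense. Deleting $z$ leaves a bipartite graph, so every odd cycle in $B$ passes through $z$; but any cycle through $z$ and an edge of the $K_{m,m}$ has length at least $2L+3$. Thus for $L\ge k$ none of the $m^2$ edges of the $K_{m,m}$ lies on a $C_{2k+1}$, and with $m\approx L$ that is $\Theta(|B|^2)$ such edges, not $O_k(|B|)$. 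So ``rich'' does not imply ``almost all edges on $C_{2k+1}$'', and your rerouting heuristic --- which you flag as the one place $k\ge3$ enters --- simply does not hold; the distinction between $k=2$ and $k\ge3$ is not located here.

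Even setting (i) aside, you explicitly leave the $R$-small case to ``the paper's hybrid'', so the argument is incomplete on its own terms. For comparison, the paper's dichotomy is not on block structure but on triangle count. If $G$ has $\Omega(n^3)$ triangles, a flag-algebra identity in $\cA_{\Csev}$ (Lemma~\ref{lem:c7+flag}) gives the asymptotic bound directly. If $G$ has $o(n^3)$ triangles, triangle removal plus stability (Lemma~\ref{lem:stab}) puts $G$ close to $K_{\lceil n/2\rceil,\lfloor n/2\rfloor}$, after which one extra edge forces more than $0.23n^2>\tfrac29 n^2$ edges onto copies of $C_{2k+1}$ --- so this is the \emph{easy} case, contrary to your framing. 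These two pieces together yield only $\tfrac29 n^2-o(n^2)$; the $O(n)$ error claimed in Theorem~\ref{thm:c7+} is obtained via the exact structural Theorem~\ref{thm:c7+exact}, which in turn rests on the flag-algebra-derived stability Theorem~\ref{thm:c7+uniq}. Your block route, even if (i) were repaired, would still need its own mechanism to reach the $O(n)$ error.
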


In the case of odd cycles of length at least $7$ and $n$ sufficiently large,
we determine the exact value of the number of edges that occur in $C_{2k+1}$,
which indeed matches the value given by Construction~\ref{cstn:cliquebip},
which answers another conjecture of F\"uredi and Maleki~\cite[Conjecture 8]{bib:FurMalTria}.
\begin{theorem}\label{thm:c7++}
There exists $n_0 \in \Nat$ such that the following is true for any $n$-vertex
graph $G$ with $n \ge n_0$. If $G$ has $\lfloor \frac{n^2}4 \rfloor + 1$ edges, then
it contains at least $\lfloor \frac{n^2}4 \rfloor + 1 - \lfloor \frac{n+4}6 \rfloor \lfloor \frac{n+1}6
\rfloor$ edges that occur in
$C_{2k+1}$.  \end{theorem}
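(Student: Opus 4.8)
Let $B\subseteq E(G)$ be the set of \emph{bad} edges, i.e.\ those lying on no copy of $C_{2k+1}$; since the number of edges that occur in $C_{2k+1}$ equals $e(G)-|B|=\lfloor\tfrac{n^2}4\rfloor+1-|B|$, it suffices to show $|B|\le\lfloor\tfrac{n+4}6\rfloor\lfloor\tfrac{n+1}6\rfloor$. The basic observation is that the graph $(V(G),B)$ contains no $C_{2k+1}$ whatsoever, as such a cycle would itself witness that its edges are not bad. Hence, by the classical fact recalled at the start of the paper, any part of $B$ spanned by $m$ vertices has size at most $\lfloor m^2/4\rfloor$ once $m\ge 4k$ (and $O(1)$ otherwise). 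A short case analysis on $n$ modulo $6$ shows $\lfloor m^2/4\rfloor\le\lfloor\tfrac{n+4}6\rfloor\lfloor\tfrac{n+1}6\rfloor$ for every $m\le\lfloor\tfrac{n+1}3\rfloor$, with equality at the top, so the whole theorem reduces to the claim that \emph{there is a set $Z\subseteq V(G)$ with $|Z|\le\lfloor\tfrac{n+1}3\rfloor$ containing both endpoints of every bad edge}.

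To produce $Z$ I would invoke Theorem~\ref{thm:c7+} through stability. Fix a small constant $\delta>0$: if $G$ carries at least $\tfrac29 n^2+\delta n^2$ edges on copies of $C_{2k+1}$, then for $n$ large it already beats the claimed bound and we are done; otherwise the stability companion of Theorem~\ref{thm:c7+} places $G$ within $o(n^2)$ edges of Construction~\ref{cstn:cliquebip}, yielding a partition $V(G)=X\cupdot Y$ with $|X|=(\tfrac23+o(1))n$, $|Y|=(\tfrac13+o(1))n$, $G[X]$ within $o(n^2)$ of complete, $G[Y]$ within $o(n^2)$ of a balanced complete bipartite graph, and $o(n^2)$ edges between $X$ and $Y$. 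Since $|X|\gg 2k+1$ and $G[X]$ is dense, a path-rotation argument shows that every edge of $G$ whose two endpoints each have large degree into the dense core lies on a copy of $C_{2k+1}$: splice a long path through the core and cap it off with short detours at its ends. Consequently every bad edge is incident to the exceptional set $D$ of \emph{core-poor} vertices, with $|D\cap X|=o(n)$; this identifies the set $Z$ we are after up to a small correction, which must be removed in the next step.

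Upgrading this $o(n)$ slack to the exact statement about $Z$ is the heart of the matter and the step I expect to be hardest. The plan is a compression/extremal scheme: among all $n$-vertex graphs with $\lfloor n^2/4\rfloor+1$ edges take one maximising $|B|$, and repeatedly apply moves that preserve the number of edges and never decrease $|B|$ — reassigning a core-poor vertex of $X$ to the $Y$-side, deleting a bad $X$--$Y$ edge while re-inserting an edge of $Y$ that keeps $G[Y]$ bipartite, and symmetrising $G[Y]$ towards a balanced bipartite graph — each time driving down a suitable potential (first the number of misplaced vertices, then the number of $X$--$Y$ edges, then the number of non-bipartite edges inside $Y$). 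One reaches a normal form in which $G[X^\ast]$ is still dense enough that all its edges and all $X^\ast$--$Y^\ast$ edges are good, $G[Y^\ast]$ is bipartite, and the $O(n)$ edges between $X^\ast$ and $Y^\ast$ are all incident to a single vertex. The delicate points are checking that each move strictly decreases the relevant potential (so the process terminates) and, crucially, that no move ever creates a bad edge outside $Y^\ast$; this is exactly where one must control how a $C_{2k+1}$ survives a local edge modification, and it is the reason the statement requires $n$ large.

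For a graph in normal form the conclusion is immediate: the bad edges are precisely $E(G[Y^\ast])$, since a $Y^\ast$-edge can lie only on an odd cycle that leaves $Y^\ast$, which is impossible across a single cut vertex. It remains to bound $z:=|Y^\ast|$. From $\binom{n-z}{2}+e(X^\ast,Y^\ast)+e(G[Y^\ast])=\lfloor\tfrac{n^2}4\rfloor+1$ together with $e(X^\ast,Y^\ast)=O(n)$ and $e(G[Y^\ast])\le\lfloor z^2/4\rfloor$ (the latter because $G[Y^\ast]$ is bipartite) we get $\binom{n-z}{2}+\lfloor z^2/4\rfloor\ge\lfloor\tfrac{n^2}4\rfloor-O(n)$; writing $z=tn$ this reads $2(1-t)^2+t^2\ge 1-o(1)$, and since $2(1-t)^2+t^2-1=(3t-1)(t-1)$ and $t<1$ we obtain $t\le\tfrac13+o(1)$. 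Tracking the lower-order integer terms — the ``$+1$'', the $O(n)$ cross edges, and the floors — sharpens this to $z\le\lfloor\tfrac{n+1}3\rfloor$, which is exactly the value making $\lfloor z^2/4\rfloor=\lfloor\tfrac{n+4}6\rfloor\lfloor\tfrac{n+1}6\rfloor$. Combined with the reduction of the first paragraph this gives $|B|=e(G[Y^\ast])\le\lfloor\tfrac{n+4}6\rfloor\lfloor\tfrac{n+1}6\rfloor$, matching Construction~\ref{cstn:cliquebip}. The hypothesis $k\ge 3$ is used in two places: the dense core must be large enough to contain a $C_{2k+1}$, and, more importantly, for $k=2$ the true extremal configuration is Construction~\ref{cstn:c5} rather than Construction~\ref{cstn:cliquebip}, so both the stability input and the normal form above would fail.
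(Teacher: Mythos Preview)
Your high-level architecture --- use stability to land near Construction~\ref{cstn:cliquebip}, then do a fine structural analysis of an extremal graph to pin down the exact bound --- is exactly the route the paper takes. The observation that the graph of bad edges is itself $C_{2k+1}$-free, so that confining all bad edges to a set $Z$ of size at most $\lfloor (n+1)/3\rfloor$ would finish the proof via $|B|\le\lfloor|Z|^2/4\rfloor$, is a clean reduction the paper does not isolate; it instead proves the stronger Theorem~\ref{thm:c7+exact} describing all extremal graphs and reads off the bound.

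Where your proposal parts ways with the paper is in the ``heart of the matter'', and this is also where it has a genuine gap. You sketch a compression scheme (reassign core-poor vertices, delete bad cross edges while re-inserting bipartite $Y$-edges, symmetrise $G[Y]$) and flag the verification that $|B|$ never drops as the delicate point. It is more than delicate: adding an edge inside $Y$ before you know the interface with $X$ is a single cut vertex can create a $C_{2k+1}$ that runs through $X$, turning the new edge --- and potentially many old $Y$-edges --- good, so $|B|$ can genuinely decrease. Without a workable invariant guaranteeing this never happens, the scheme does not terminate in the claimed normal form. The paper avoids this entirely: working directly with a fixed extremal $G$, it takes a minimum vertex cut $C$ separating a dense bipartite block from the rest, and through a sequence of clone/swap contradictions (Claims~\ref{cl:c7+duality}--7.13 in Section~\ref{sec:c7+exact}) shows $|C|=1$. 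No iterative modification of $G$ is performed; each claim is of the form ``if the structure were otherwise, a single local replacement would beat the extremal value''.

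A second, smaller gap is the endgame. From $\binom{n-z}{2}+e(X^\ast,Y^\ast)+e(G[Y^\ast])=\lfloor n^2/4\rfloor+1$ with $e(X^\ast,Y^\ast)=O(n)$ you get only $z\le n/3+O(1)$; to reach $z\le\lfloor(n+1)/3\rfloor$ exactly you need precise control of $e(X^\ast,Y^\ast)$ (and implicitly that $G[X^\ast]$ is complete, which you have not argued). The paper sidesteps this by showing $|C|=1$ first and then reading off $|A|,|B|,|D|$ from a one-variable integer optimisation. If you want to salvage your route, the natural fix is to replace the compression by the paper's cut argument to obtain the single cut vertex, after which your $Z$-reduction finishes things cleanly.
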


The main tool in our proofs is the semidefinite method from flag algebras,
which we apply in a specific $2$-edge-colored setting.  This approach has the
unfortunate by-product, that we lose track of the additional edge that is
needed to guarantee even the existence of a single copy of $C_{2k+1}$.  In order
to overcome this, we apply a trick inspired by techniques used in the area of
so-called finitely forcible graph limits. This allows us to obtain a tight
bound from flag algebras conditioned by having a positive triangle density, and
then handle the (almost) triangle-free case using a standard stability
argument.  A closely related difficulty of our approach arises from the fact that 
instead of most applications of the semidefinite method, where there is only one tight example, 
the flag algebra formulation of this problem has a significantly larger set
of tight examples. Nevertheless, we were still able to obtain a tight result in
this setting. 

We guided our method to establish a slightly stronger flag algebra claims which yield also the corresponding stability results:
\begin{thm}\label{thm:c5uniq}
For every $\eps > 0$ there exist $\delta > 0$ and $n_0\in\Nat$
such that the following is true for any $n > n_0$.
If $G$ is an $n$-vertex graph with $\left(\frac14 \pm \delta\right)n^2$ edges out of which
$\left(\frac{2+\sqrt{2}}{16} \pm \delta\right)n^2$ occur in $C_5$,
then the edge-set of $G$ can be modified on at most $\eps n^2$
pairs so that the resulting graph is isomorphic to Construction~\ref{cstn:c5}.
\end{thm}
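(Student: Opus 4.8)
The plan is to establish Theorem~\ref{thm:c5uniq} as a stability companion to Theorem~\ref{thm:c5}, by upgrading the flag-algebra optimization that proves Theorem~\ref{thm:c5} into a full characterization of near-extremal configurations. Concretely, I would work in the space of graph limits (graphons) and prove the following limit version: any graphon $W$ with edge density $1/4$ in which the density of edges lying in a $C_5$ equals $(2+\sqrt2)/16$ must be (weakly isomorphic to) the graphon associated with Construction~\ref{cstn:c5}; standard compactness then transfers this to the finite statement with the $\eps n^2$ editing distance. To set this up, I would use the two-colored setting already announced in the paper: color an edge ``active'' if it occurs in a $C_5$ and ``inactive'' otherwise, so that a graph becomes a structure in a theory with two edge colors, and ``number of edges in a $C_5$'' becomes the density of active edges. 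The flag-algebra certificate that yields the bound $(2+\sqrt2)/16$ in Theorem~\ref{thm:c5} comes with a sum-of-squares decomposition; the set of graphons on which every squared term in that decomposition vanishes and every slack inequality is tight is exactly the set of extremal limits, and the heart of the argument is to show this zero-locus is a single point.

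The key steps, in order, are: (1) Record the exact flag-algebra inequality proving Theorem~\ref{thm:c5}, together with the list of ``tight'' small configurations forced by vanishing of the SOS terms — this is where the richness of the extremal set shows up, since both a blow-up of $P_4$-with-a-loop and degenerate bipartite-like pieces can satisfy many local constraints. (2) Show that any extremal graphon $W$ is, up to measure zero, a ``step-like'' object: the complementary slackness conditions force the neighborhoods of almost all vertices to take only finitely many values, so $W$ is a weighted blow-up of a small template graph with a loop; this is the finite-forcibility-flavored step the authors allude to. (3) Classify the possible templates: using the tight subgraph constraints (e.g. which $4$- and $5$-vertex configurations must have density zero), rule out everything except the blow-up of the path $A$–$B$–$C$–$D$ with a loop on $D$, where the active/inactive coloring must match Construction~\ref{cstn:c5} (all $AB,BC,CD$ edges and all edges inside $D$ are active, everything else inactive). (4) Solve for the part sizes: the constraints ``edge density $=1/4$'' and ``active-edge density $=(2+\sqrt2)/16$'' pin the four part sizes to $\frac{2-\sqrt2}{4},\frac14,\frac14,\frac{\sqrt2}{4}$, exactly as in the construction. (5) Run the usual compactness/removal argument: if the finite statement failed, a sequence of counterexamples would have a convergent subsequence whose limit is extremal but not close to the construction, contradicting the limit classification; quantitatively this gives the $\delta$ and $n_0$ as functions of $\eps$.

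The main obstacle I expect is step~(2)–(3): forcing the extremal graphon to be a finite blow-up and then eliminating all the spurious templates. Because the active/inactive $2$-coloring is not an arbitrary decoration but is \emph{determined} by the underlying graph (an edge is active iff it is genuinely in a $C_5$), one must check that the ``coloring constraints'' — that no configuration the flag algebra treats as forbidden can actually occur — are compatible with this determined-ness, and in particular that the candidate templates one does not want (such as those arising near the triangle-free extremal family, which is why the paper needs the separate positive-triangle-density trick) are killed either by the SOS certificate or by the combinatorial meaning of ``active''. Handling the interface between the two very different constructions mentioned in the abstract — making sure the SOS certificate's zero-locus genuinely collapses to Construction~\ref{cstn:c5} in the $C_5$ case rather than leaving a positive-dimensional family — is the delicate part; everything downstream (solving for part sizes, the compactness transfer) is routine.
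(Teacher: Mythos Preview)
Your overall strategy---extract structural constraints from the flag-algebra certificate, classify the extremal limit objects, then transfer by compactness---is a reasonable route to stability and is close in spirit to what the paper does, but the paper's execution differs in a way that sidesteps the obstacle you yourself flag in steps~(2)--(3). Rather than arguing finite forcibility in the graphon world, the paper reads off from the certificate (the ``moreover'' clause of Claim~\ref{cl:c5flag}) that any extremal $\phi$ assigns density zero to every red/blue-coloring of the induced path $P_5$ and to one further graph $C_4^X$. It then applies the colored induced removal lemma (Theorem~\ref{thm:RL}) to pass to a nearby \emph{finite} graph $G'$ containing no induced copy of any of these, and analyses $G'$ directly: vertices are partitioned by distance to blue edges, a chain of short combinatorial claims (Claims~\ref{cl:stab5bipH}--\ref{cl:stab5indepY1}) forces the four-part template of Construction~\ref{cstn:c5}, and a final optimization pins the part sizes. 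So the paper never needs to show an extremal graphon is a step function.

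There is a genuine gap in your step~(4). Knowing the template together with ``edge density $=\tfrac12$'' and ``active-edge density $=\tfrac{2+\sqrt2}{8}$'' gives only three relations among four part sizes; Construction~\ref{cstn:c5} is not the unique solution of that system. The paper breaks this degeneracy via an optimization (Claim~\ref{cl:stab5opt}): it first invokes Lemma~\ref{lem:stab} to force many triangles, hence a positive lower bound on the $D$-part, and then shows that the maximum of $2ab+2bc+2cd+d^2$ under the constraints $ab\ge(2-\sqrt2)/16$ and $d\ge\delta_2/2$ is attained uniquely at the construction's proportions. Your plan must incorporate the triangle-density input somewhere to recover uniqueness; the two density constraints alone are not enough. (A minor slip: in your step~(3) the coloring is reversed---in Construction~\ref{cstn:c5} the $A$--$B$ edges are precisely the ones \emph{not} lying in any $C_5$, i.e.\ they are inactive/blue.)
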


\begin{thm}\label{thm:c7+uniq}
Fix an integer $k\ge3$. For every $\eps > 0$ there exist $\delta > 0$ and $n_0 \in \Nat$
such that the following is true for any $n > n_0$.
If $G$ is an $n$-vertex graph with $\left(\frac14 \pm \delta\right)n^2$ edges out of which $\left(\frac29 \pm \delta\right)n^2$ occur in
$C_{2k+1}$, then the edge-set of $G$ can be modified on at most $\eps n^2$
pairs so that the resulting graph is isomorphic to Construction~\ref{cstn:cliquebip}.
\end{thm}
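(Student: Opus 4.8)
\medskip\noindent
The plan is to deduce Theorem~\ref{thm:c7+uniq} from a stability-aware strengthening of the flag-algebra inequality underlying Theorem~\ref{thm:c7+}, combined with an elementary analysis of the near-bipartite regime. We argue by contradiction: suppose that for some fixed $k\ge 3$ and $\eps>0$ the statement fails, so that there are $\delta_m\to 0$ and graphs $G_m$ on $n_m\to\infty$ vertices with $\left(\tfrac14\pm\delta_m\right)n_m^2$ edges, out of which $\left(\tfrac29\pm\delta_m\right)n_m^2$ lie in a copy of $C_{2k+1}$, yet no graph isomorphic to Construction~\ref{cstn:cliquebip} can be obtained from $G_m$ by editing at most $\eps n_m^2$ pairs. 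Colour an edge of $G_m$ \emph{red} if it lies in some copy of $C_{2k+1}$ in $G_m$ and \emph{blue} otherwise, making each $G_m$ a graph with red and blue edges; after passing to a subsequence, assume the resulting sequence converges to a limit $W$ of two-coloured graphs. Then $W$ has underlying edge density $\tfrac12$, red-edge density $\tfrac49$ (the density reformulation of ``$\tfrac29 n^2$ red edges''), and it inherits all the local constraints imposed by the colouring — most importantly, a blue edge can never be completed to a copy of $C_{2k+1}$ by a $2k$-edge path joining its endpoints. As $k$ is fixed, these constraints forbid only finitely many two-coloured subgraphs, so $W$ lives in a finitely described flag-algebra theory.

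We next run the semidefinite method there. Since the plain assertion ``red density $\ge\tfrac49$ whenever edge density $\ge\tfrac12$'' is false — any bipartite limit has no red edges at all — we can only produce a sharp sum-of-squares certificate conditioned on the triangle density being at least some absolute constant $\tau_0>0$ coming out of the optimization. This conditioning is exactly the footprint of the single extra edge of $\lfloor n^2/4\rfloor+1$, which the flag calculus cannot otherwise see; following a trick in the spirit of finite forcibility, we dispose of it by splitting on the triangle density of $G_m$. If it stays below $\tau_0$ along a subsequence, then the triangle removal lemma and the stability version of Mantel's theorem put $G_m$ within edit distance $o(n_m^2)$ of a balanced complete bipartite graph; but $G_m$ still has $\left(\tfrac29\pm\delta_m\right)n_m^2>0$ red edges, and hence an edge $pq$ inside one side of the corresponding near-bipartition, and since the crossing part misses only $o(n_m^2)$ edges, all but $o(n_m^2)$ of its edges lie on a $pq$-path of length $2k$ in the crossing part, hence on a copy of $C_{2k+1}$ together with $pq$; thus $G_m$ has at least $\left(\tfrac14-o(1)\right)n_m^2$ red edges, more than $\left(\tfrac29+\delta_m\right)n_m^2$ for large $m$, a contradiction. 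Hence the triangle density of $W$ is at least $\tau_0$, so the conditioned certificate applies to $W$, forcing its red density to be exactly $\tfrac49$ and every square in the certificate to vanish $W$-almost everywhere.

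The last step, which I expect to be the real obstacle, is to convert this tightness into rigidity: the vanishing of the squared flags must force $W$ to be the limit $W_1$ of Construction~\ref{cstn:cliquebip}, namely a part $P$ of measure $\tfrac23$ all of whose pairs are red edges, a part $Q$ of measure $\tfrac13$ carrying a complete bipartite pattern whose edges are all blue, and no edges between $P$ and $Q$. Unwinding the equality conditions should pin the conditional neighbourhood type of almost every vertex to be either ``clique-like'' or ``one side of a blue bipartite pair'', show these two types cannot mix, and recover the measures $\tfrac23$ and $\tfrac13$ from the identities $c^2+\tfrac{(1-c)^2}{2}=\tfrac12$ and $c^2=\tfrac49$. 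The genuine difficulty is that the flag relaxation carries a far richer stock of near-tight configurations than the finite problem — in particular the non-balanced path-blow-up-with-a-loop family behind Construction~\ref{cstn:c5} and its variants — so the certificate must be engineered so that, for $k\ge 3$, all of these have red density strictly above $\tfrac49$ and are therefore excluded; this is the ``exponentially many, geometrically different tight examples'' phenomenon flagged in the introduction. Once $W=W_1$ is established, the fact that $W_1$ is the blow-up of a fixed finite coloured pattern turns the convergence $G_m\to W_1$ into the statement that $G_m$ can be edited on $o(n_m^2)<\eps n_m^2$ pairs to become isomorphic to Construction~\ref{cstn:cliquebip}, contradicting the choice of the sequence and finishing the proof.
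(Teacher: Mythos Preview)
Your overall architecture matches the paper: colour edges red/blue, split on triangle density, handle the near-bipartite regime by a stability argument for triangle-free graphs, and in the dense-triangle regime use tightness in the flag-algebra certificate. The low-triangle branch is essentially the content of Lemma~\ref{lem:stab} (note that the paper allows any positive threshold, since the certificate actually proves $\psi(\text{triangle}\cdot(9\cdot\text{red}-4))\ge 0$, not a bound depending on a fixed $\tau_0$).

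The genuine gap is exactly where you flag it: you assert that ``the vanishing of the squared flags must force $W$ to be the limit $W_1$'' and then offer only a heuristic about neighbourhood types. You do not identify which flag expressions vanish, nor argue that they suffice to pin down $W_1$ among all limits with edge density $\tfrac12$, red density $\tfrac49$, and positive triangle density. Your remark that the certificate ``must be engineered'' so that the path-blow-up family has red density strictly above $\tfrac49$ is not how the paper resolves this, and as stated it is not a workable plan: uniqueness does not follow merely from excluding one explicit rival family.

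The paper's route to rigidity is different and more concrete. Rather than trying to read off the graphon directly from the sum-of-squares equalities, it arranges the certificate (Claim~\ref{cl:c7+flag}) so that the slack coefficient $c_H$ is strictly positive for every $6$-vertex $\cF_\Csev$-free graph $H$ containing an induced $P_4$. Tightness then forces $\phi(P)=0$ for every red/blue-colouring $P\in\cP_4$ of the $4$-vertex path (Lemma~\ref{lem:c7+uniq}). This single zero-density fact, together with the already small densities of $B_3$, $B_3^+$ and $B_3^*$, is fed into the coloured induced removal lemma (Theorem~\ref{thm:RL}) to pass from $G$ to a nearby $G'$ with \emph{no} induced $P_4$ and no copy of $B_3$, $B_3^+$, $B_3^*$. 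From here the argument is purely finite and short: the vertices at distance at most one from a blue edge induce a bipartite graph (Claim~\ref{cl:stab7bipH}), the remaining set $A$ has size $\ge\delta_k n$ (Claim~\ref{cl:stab7sizeA}), there are no edges between the two parts (Claim~\ref{cl:stab7noABedge}), and then the elementary optimisation $a^2+(1-a)^2/2\le\tfrac12$ with equality only at $a=\tfrac23$ (Observation~\ref{obs:c7+stabopt}) forces the part sizes. So the key idea you are missing is not a graphon uniqueness argument but the extraction of a single forbidden-induced-subgraph condition (induced $P_4$-freeness) from the positivity of the slacks, followed by removal lemma and a direct combinatorial cleanup.
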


Using the above stability results, we fully describe all the sufficiently large
graphs that contain the minimum value of edges that occur in odd cycles of
length at least $5$. The description of the tight graphs in the case of
pentagons is given by Theorem~\ref{thm:c5exact}. For all the longer odd cycles,
the description is provided by Theorem~\ref{thm:c7+exact}, which in turn proves~both Theorems~\ref{thm:c7+} and~\ref{thm:c7++}.


This paper is organized as follows. In Section~\ref{sec:prelim}, we describe
the notation and introduce parts from the flag algebra framework we are going to use.
In Section~\ref{sec:c5}, we present
our proof of Theorem~\ref{thm:c5}, and in Section~\ref{sec:c7+}, we adapt the
approach to cope with odd cycles of length at least $7$. 
Section~\ref{sec:stability} is devoted to the corresponding stability results
of the Constructions~\ref{cstn:cliquebip} and~\ref{cstn:c5}. Finally, in Sections~\ref{sec:c5exact} and \ref{sec:c7+exact} we provide the exact description of the tight extremal graphs.
Section~\ref{sec:remarks} concludes the paper with remarks and related open
problems.


\section{Notation and preliminaries}
\label{sec:prelim}

We start with the definition of the \emph{induced density} of a $k$-vertex
(small) graph $F$ in an $n$-vertex (large) graph $G$, which we denote by $p(F,G)$.
If $n \ge k$, then  $p(F,G)$ is the probability that a randomly chosen
$k$-vertex induced subgraph of $G$ is isomorphic to $F$.
In the case when $k > n$, the value of $p(F,G)$ is simply equal to zero.

In order to distinguish the edges that occur in some copy of $C_5$ (or
more generally $C_{2k+1}$ for some fixed $k\ge2$) in graphs $G$ in question, we will
work with edge-colorings of $G$ where the edges are colored using two
colors -- \emph{red} and \emph{blue}.
With a slight abuse of notation, we will use $G$ both to refer to the underlying
graph and to the edge-colored graph, whenever it will be clear from the context
which variant we intend to use. A graph $G$ with edges colored by red and blue will be 
called a \emph{red/blue-colored graph}.
Through the whole paper, we will use the convention that
none of the blue edges of $G$ can occur in a copy of $C_{2k+1}$ for a given $k \ge 2$. 
Let us emphasize that we do not put any restriction on the red edges of $G$,
so in particular any graph $G$ can be completely colored with red.

The definition of the induced density $p(F,G)$ naturally generalizes to the
edge-colored setting.  For convenience, we extend the definition of $p(F,G)$
also to graphs $F$ where we allow the edges to be colored with three colors --
red, blue or black (the edges of $G$ will always be colored only with red and
blue). The interpretation of an edge of $F$ being black will be that we
do not care whether $G$ contains a copy of $F$ where the edge is colored red
or blue. Therefore, for a $k$-vertex red/blue/black-colored graph $F$
and an $n$-vertex red/blue-colored graph $G$, the value of $p(F,G)$
is the probability that a random $k$-vertex subgraph of $G$ is isomorphic to
one of the graphs that can be obtained from $F$ by recoloring each of its
black edges to either red or blue.

We depict red/blue/black-colored graphs in the following way. We draw black edges
using solid lines, for blue edges we used dashed lines, and finally red edges will
be depicted using dotted lines; see Figure~\ref{fig:edges}.

\begin{figure}
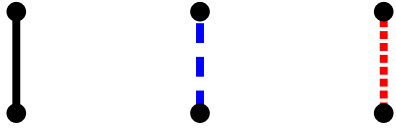

\begin{center}
\includegraphics[scale=1.5,page=19]{EiC-fig}
\hskip 2cm 
\includegraphics[scale=1.5,page=3]{EiC-fig}
\hskip 2cm 
\includegraphics[scale=1.5,page=4]{EiC-fig}
\end{center}
\caption{Our convention used for depicting black, blue and red edges -- black edges are drawn with
solid lines, blue edges with dashed lines and red edges with dotted lines.}
\label{fig:edges}
\end{figure}

\subsection{\emph{F}-free graphs and sequences of almost \emph{F}-free graphs}

We will also use the following notion of \emph{$F$-free graphs} and \emph{sequences of almost $F$-free graphs}.
For a $k$-vertex graph $F$ and a graph $G$, we say that a graph $G$ is \emph{$F$-free},
if $G$ does not contain $F$ as a subgraph.
For a sequence of graphs $(G_i)_{i\in\Nat}$, where the $i$-th graph $G_i$ has $n_i$ vertices,
we say that $(G_i)_{i\in\Nat}$ is almost $F$-free, if $G_i$ contains only $o\left(n_i^k\right)$ copies of $F$.
This notion naturally generalizes to the red/blue-colored setting.

If $\cF$ is a finite collection of graphs, we say that $G$ is $\cF$-free
and $(G_i)_{i\in\Nat}$ is almost $\cF$-free, if $G$ is $F$-free for every $F \in \cF$
and $(G_i)_{i\in\Nat}$ is almost $F$-free for every $F \in \cF$, respectively.
We also extend the notion of being $F$-free to red/blue/black-colored
graphs $F$, where being $F$-free corresponds to being $\cF(F)$-free, where $\cF(F)$
denotes the family of red/blue-colored graphs consisting of all the possible recolorings of the black
edges in $F$ by red or blue. Analogously, we extend the notion of being almost $F$-free,
and the notions of $\cF$-free and almost $\cF$-free for finite families $\cF$ consisting of red/blue/black-colored graphs.

Now let us recall a classical generalization of the theorem of K\H{o}vari, S\'os and Tur\'an
to $r$-uniform hypergraphs (or just \emph{$r$-graphs} for short) which is due to Erd\H{o}s~\cite{Erdos:1964}.
\begin{thm}
\label{thm:erdosKST}
If $H$ is an $r$-graph on $n$ vertices with no copy of the complete $r$-partite $r$-graph that has all the parts of size $\ell$,
then the number of $r$-edges in $H$ is at most $O\left(n^{r-1/\ell^{(r-1)}}\right)$.
\end{thm}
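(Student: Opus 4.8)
The plan is to prove this by induction on the uniformity $r$, via a double-counting argument modelled on the proof of the theorem of K\H{o}vari, S\'os and Tur\'an. Write $m := |E(H)|$, and for an $(r-1)$-subset $T$ of $V(H)$ let $N_H(T) := \{v : T\cup\{v\}\in E(H)\}$ and $d(T):=|N_H(T)|$; counting each edge through each of its $r$ vertices gives $\sum_T d(T) = rm$. The base case $r=1$ is trivial, as forbidding the complete $1$-partite $1$-graph with a part of size $\ell$ merely says $H$ has fewer than $\ell$ edges, which is $O(1)=O(n^{1-1/\ell^{0}})$; equivalently one could take $r=2$, where the statement is precisely the theorem of K\H{o}vari, S\'os and Tur\'an, as the base case. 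So fix $r\ge 2$, assume the bound for $r-1$, and assume $\ell\ge2$ (the case $\ell=1$ being trivial).

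Call a pair $(S,T)$ a \emph{book} if $S$ is an $\ell$-subset and $T$ an $(r-1)$-subset of $V(H)$ with $S\subseteq N_H(T)$; note that every book automatically has $S\cap T=\emptyset$, since $v\in N_H(T)$ forces $|T\cup\{v\}|=r$. Let $B$ denote the number of books. Counting books by their second coordinate, $B=\sum_T\binom{d(T)}{\ell}$, and convexity of $x\mapsto\binom{x}{\ell}$ (Jensen's inequality, with the $O(n^{r-1})$ edges at low-degree $(r-1)$-sets set aside) yields $B\ge\binom{n}{r-1}\binom{\bar d}{\ell}$ where $\bar d:=rm/\binom{n}{r-1}$; if $\bar d<2\ell$ then $m=O(n^{r-1})$ and we are already done since $r-1<r-1/\ell^{r-1}$, and otherwise $\binom{\bar d}{\ell}\ge(\bar d/2\ell)^{\ell}$, so $B\ge c_1\,m^{\ell}\,n^{-(r-1)(\ell-1)}$ for some constant $c_1=c_1(r,\ell)>0$.

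Counting books by their first coordinate instead, for each $\ell$-subset $S$ of $V(H)$ let $H_S$ be the $(r-1)$-graph on $V(H)\setminus S$ whose edges are the $(r-1)$-sets $T$ with $S\subseteq N_H(T)$, so $B=\sum_S|E(H_S)|$. The crucial observation is that $H_S$ contains no complete $(r-1)$-partite $(r-1)$-graph with all parts of size $\ell$: a copy of it with parts $P_1,\dots,P_{r-1}\subseteq V(H)\setminus S$ would make $P_1,\dots,P_{r-1},S$ into $r$ pairwise disjoint $\ell$-sets whose every transversal $r$-set $\{v_1,\dots,v_{r-1},v\}$ (with $v_i\in P_i$, $v\in S$) is an edge of $H$, contradicting the hypothesis on $H$. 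Hence the inductive hypothesis applied to each $H_S$ (on at most $n$ vertices) gives $|E(H_S)|=O(n^{(r-1)-1/\ell^{r-2}})$, so $B=O(n^{\ell}\cdot n^{(r-1)-1/\ell^{r-2}})$. Comparing the two bounds on $B$,
\[
 m^{\ell}=O\!\left(n^{(r-1)(\ell-1)}\cdot n^{\ell+(r-1)-1/\ell^{r-2}}\right)=O\!\left(n^{r\ell-1/\ell^{r-2}}\right),
\]
which gives $m=O(n^{r-1/\ell^{r-1}})$ and completes the induction.

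I do not expect a genuine obstacle here; the only slightly delicate points are routine. One is the bookkeeping in the convexity step (a small average degree $\bar d$, and the fact that $\binom{x}{\ell}$ is only convex for $x\ge\ell-1$), which at worst perturbs the implied constants. The other is the disjointness of the $r$ parts of the forbidden configuration in the key observation, which however comes for free because every book has $S\cap T=\emptyset$ and we took $H_S$ to live on $V(H)\setminus S$, so the parts $P_i$ obtained from the induction are automatically disjoint from $S$.
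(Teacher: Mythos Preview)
The paper does not give a proof of this statement; it is quoted as a classical result of Erd\H{o}s and used only as a black box (to derive Corollary~\ref{cor:supersat}). There is therefore nothing in the paper to compare your argument against.

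That said, your proof is correct and is essentially the standard inductive proof of Erd\H{o}s's theorem. The two points you flag are indeed harmless: for the convexity step one can, for instance, replace $\binom{x}{\ell}$ throughout by the globally convex minorant $\max(0,x-\ell+1)^{\ell}/\ell!$, which coincides with $\binom{x}{\ell}$ up to constants for large $x$ and removes the issue entirely; and the disjointness of $S$ from the parts $P_1,\dots,P_{r-1}$ holds for exactly the reason you give, since $H_S$ was taken to live on $V(H)\setminus S$.
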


A standard averaging argument together with Theorem~\ref{thm:erdosKST} yields the following result on supersaturation in dense graphs,
which will be one of the ingredients we will use in the proofs of Theorems~\ref{thm:c5} and~\ref{thm:c7+}.
\begin{cor}
\label{cor:supersat}
Fix $F$ an $h$-vertex red/blue-colored graph and a positive integer $b$. 
If $G$ is an $n$-vertex red/blue-colored graph that does not contain the $b$-blowup of $F$ as a subgraph,
then the number of copies of $F$ in $G$ is $O\left(n^{h-1/b^{(h-1)}}\right)$.
\end{cor}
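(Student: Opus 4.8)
The plan is to reduce the statement to the hypergraph K\H{o}v\'ari--S\'os--Tur\'an bound of Theorem~\ref{thm:erdosKST} through an auxiliary hypergraph that encodes copies of~$F$. Write $V(F)=\{1,\dots,h\}$ and let $\cH$ be the $h$-uniform $h$-partite hypergraph whose $h$ vertex classes $X_1,\dots,X_h$ are disjoint copies of $V(G)$, and where a transversal $\{x_1,\dots,x_h\}$ with $x_i\in X_i$ is a hyperedge of $\cH$ precisely when the assignment $i\mapsto x_i$ is a color-preserving copy of $F$ in $G$; in particular this forces the $x_i$ to be pairwise distinct in $G$ and every edge $ij\in E(F)$ of some color $c$ to be mapped onto an edge of $G$ of color $c$. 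Since each copy of $F$ in $G$ arises from exactly $|\Aut(F)|$ such assignments, each of which is a distinct hyperedge of $\cH$, the number of copies of $F$ in $G$ is at most $e(\cH)$, so it suffices to bound $e(\cH)$.

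Next I would argue that if $\cH$ contains a copy of the complete $h$-partite $h$-graph with all parts of size $b$, then $G$ contains the $b$-blowup of $F$ as a subgraph. Suppose such a sub-hypergraph has parts $A_1,\dots,A_h$ of size $b$. Every hyperedge of $\cH$ meets each class $X_i$ exactly once, so, fixing one transversal and then varying a single coordinate over $A_{j_0}$ while keeping the others fixed, one sees that all of $A_{j_0}$ lies in one class; thus there is a bijection $f$ with $A_j\subseteq X_{f(j)}$ for every $j$. Identifying $A_j$ with a $b$-subset $B_j\subseteq V(G)$, the fact that \emph{every} transversal is a hyperedge says that $i\mapsto v_{f^{-1}(i)}$ is a copy of $F$ in $G$ for all choices $v_j\in B_j$; this forces the $B_j$ to be pairwise disjoint and, for each edge $ij\in E(F)$ of color $c$, every pair between $B_{f^{-1}(i)}$ and $B_{f^{-1}(j)}$ to be an edge of $G$ of color $c$. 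Reindexing $U_i:=B_{f^{-1}(i)}$ then exhibits a copy of the $b$-blowup of $F$ inside $G$. Contrapositively, if $G$ does not contain the $b$-blowup of $F$, then $\cH$ contains no complete $h$-partite $h$-graph with all parts of size $b$.

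It then remains only to invoke Theorem~\ref{thm:erdosKST} with $r=h$ and $\ell=b$ applied to $\cH$, which has $hn$ vertices: we obtain $e(\cH)=O\bigl((hn)^{h-1/b^{(h-1)}}\bigr)=O\bigl(n^{h-1/b^{(h-1)}}\bigr)$, where the implied constant depends only on $F$ and $b$ (as $h$ is fixed). Combined with the first paragraph, $G$ contains $O\bigl(n^{h-1/b^{(h-1)}}\bigr)$ copies of $F$, which is the claim.

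The one point that needs care is the middle step. If one instead worked with the $h$-uniform hypergraph placed directly on $V(G)$ — with an $h$-set declared a hyperedge when it supports some copy of $F$ — then a complete $h$-partite sub-$h$-graph would only tell us that every transversal supports a copy of $F$ \emph{via some labeling}, and these labelings could vary across transversals; removing this ambiguity costs a pigeonhole/Ramsey step that worsens the exponent. Making the auxiliary hypergraph $h$-partite, with the $i$-th class recording the image of vertex $i\in V(F)$, is exactly what sidesteps this: the partition itself carries the labeling information, so a complete $h$-partite sub-$h$-graph of $\cH$ is automatically ``aligned'' with $V(F)$, which is what the argument in the second paragraph exploits. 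Everything else is routine bookkeeping together with the black-box bound of Theorem~\ref{thm:erdosKST}.
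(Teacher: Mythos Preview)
Your proof is correct and is precisely the standard argument the paper alludes to when it writes ``A standard averaging argument together with Theorem~\ref{thm:erdosKST}''; the paper gives no further details, and your $h$-partite auxiliary hypergraph is the clean way to carry it out (your final paragraph correctly explains why placing the hypergraph directly on $V(G)$ would cost an extra pigeonhole step).
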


\subsection{Flag Algebras}

In this subsection, we describe parts of the flag algebra framework of
Razborov~\cite{Razborov:2007} that will be relevant for our exposition.  Flag
algebras play a crucial role in our proofs of Theorems~\ref{thm:c5}
and~\ref{thm:c7+}.  We follow the notation from~\cite{Razborov:2007} with a few
minor alternations specific to sequences of almost $\cF$-free graphs.
Flag algebras have been very successful in tackling various problems.
To mention some of them:
Caccetta-H\"aggkvist conjecture~\cite{Grzesik:2017,HladkyKN:2009,RazborovCH:2011},
various Tur\'an-type problems in graphs~\cite{DasHMNS:2012, Grzesik:2011,Hatami:2011,Hirst:2014,Nikiforov:2011, PikhurkoR:2012,PikhurkoV:2013,Razborov:2008,Reiher:2012,Sperfeld:2011},
hypergraphs~\cite{BaberT:2011,Falgas:2012,Falgas:2011,GlebovKV:2013,Pikhurko:2011}
and hypercubes~\cite{Baber:2012,BaloghHLL:2014},
extremal problems in a colored environment~\cite{BaberT:2013,CummingsKPSTY:2012,HatamiJKNR:2012,KralLSWY:2012}
and also to problems in geometry~\cite{Kral:2011} or extremal theory of permutations~\cite{BaloghHLPUV:2014}.
For more details on these applications, see a survey of Razborov~\cite{Razborov13}.

The central object of interest in flag algebras are so-called \emph{convergent
sequences} of finite discrete objects, for example finite graphs. In this
paper, we apply the framework to sequences of red/blue-colored almost
$\cF$-free graphs, for two certain choices of $\cF$ (the two families will be
explicitly specified in Sections~\ref{sec:c5} and~\ref{sec:c7+}, respectively).


In the following, we describe a~\emph{flag algebra} $\cA^{\sigma}$, where
$\sigma$ is a fixed vertex-labelled red/blue-colored graph,
on all the red/blue-colored graphs  with a fixed copy of a labelled graph
$\sigma$.  The graph $\sigma$ is usually called a~{\em type}.
Note that we will use simply $\cA$ to refer to the algebra $\cA^\emptyset$,
where $\emptyset$ is the empty type.

Fix a type $\sigma$.
Let $\cH^\sigma$ be the set of all finite red/blue-colored graphs with a fixed
{\em embedding} of $\sigma$, i.e., an injective mapping $\theta$ from
$V(\sigma)$ to $V(H)$ such that $\theta$ is an isomorphism between $\sigma$ and
$H[\im(\theta)]$. The elements of $\cH^{\sigma}$ are called {\em
$\sigma$-flags}, and the subgraph induced by $\im(\theta)$ is called the {\em
root} of a $\sigma$-flag.
For every $\ell\in\Nat$, we let $\cH^\sigma_\ell$ to be the subset of $\cH^\sigma$ containing all of its $\ell$-vertex graphs.
Let $\Real\cH^\sigma$ be the set of all formal linear combinations of the
$\sigma$-flags with real coefficients, and $\cK^\sigma$ the linear subspace of
$\Real\cH^\sigma$ generated by all the combinations of the form
\[H-\sum_{H'\in\cH^\sigma_{v(H)+1}}p(H,H')\cdot H'.\]

The algebra $\cA^\sigma$ is defined as $\Real\cH^\sigma$ factored by $\cK^\sigma$, and 
the element corresponding to $\cK^\sigma$ in $\cA^\sigma$ is the zero element of $\cA^\sigma$.
$\cA^\sigma$ comes with a natural definition of the addition; the notion of multiplication is slightly more involved.
Firstly, we describe a product of two $\sigma$-flags $H_1 \in \cH^\sigma_k$ and $H_2 \in \cH^\sigma_\ell$.
For a $\sigma$-flag $H \in \cH^\sigma_{k+\ell-v(\sigma)}$ with $\theta$ being the
fixed embedding of $\sigma$, we define $p(H_1, H_2; H)$ to be the probability
that a randomly chosen subset of $V(H)\setminus \theta(V(\sigma))$ of size
$k-v(\sigma)$ and its complement in $V(H)\setminus \theta(V(\sigma))$ of
size $\ell-v(\sigma)$ extend $\theta(V(\sigma))$ in $H$ to $\sigma$-flags
isomorphic to $H_1$ and $H_2$, respectively.
We set
\[H_1 \times H_2 := \sum_{H\in\cH^\sigma_{k+\ell-v(\sigma)}}p(H_1,H_2;H) \cdot H,\]
and extend the notion linearly to the elements of $\cA^\sigma$.
Note that the unique $\sigma$-flag of order $v(\sigma)$ is, modulo $\cK^\sigma$, the neutral element of the product in $\cA^\sigma$.

Fix a finite family $\cF$ of red/blue-colored graphs and an $\cF$-free type $\sigma$.
The presented exposition of the flag algebra $\cA^\sigma$ on red/blue-colored
graphs naturally adapts to the setting of $\cF$-free red/blue-colored graphs,
simply by replacing the set $\cH^\sigma$ with the set of all red/blue-colored
$\cF$-free $\sigma$-flags.



Now consider an infinite sequence $(G_i)_{i\in\Nat}$ of red/blue-colored almost $\cF$-free
graphs with increasing orders.  We call the sequence {\em convergent} if the
probabilities $p(H,G_i)$ converge for every $H\in\cH$.  By compactness, every
sequence $(G_i)_{i\in\Nat}$ has a convergent subsequence.
For the rest of this section, we will assume that $(G_i)_{i\in\Nat}$ is convergent.
For $H\in\cH$, we set $\phi(H) = \lim_{i\to\infty} p(H,G_i)$, and linearly extend $\phi$ to the elements of $\cA$.
We refer to the mapping $\phi$ as to the {\em limit} of the sequence.
For every red/blue-colored graph $H$, it holds that $\phi(H)\geq 0$.
Moreover, the sequence $(G_i)_{i\in\Nat}$ is almost $\cF$-free, hence $\phi(F) = 0$ for all $F \in \cF$.
It follows that the restriction of $\phi$ to $\cF$-free red/blue-colored graphs is in fact
an algebra homomorphism from $\cA_\cF$ to $\Real$.
We let $\Hom^+(\cA_\cF, \Real)$ to be the set of all homomorphisms $\psi$ from
$\cA_\cF$ to $\Real$ such that $\psi(H)\ge0$ for every $\cF$-free $H\in\cH$.

For an $\cF$-free type $\sigma$ and its embedding $\theta$ in $G_i$,
we define $G_i^\theta$ to be the red/blue-colored graph rooted on $\theta$.
For every $i\in\Nat$ and $H^\sigma \in \cH^\sigma$, let
$p^\theta_i(H^\sigma)=p(H^\sigma,G_i^\sigma)$.
Picking $\theta$ at random gives rise to a probability distribution ${\bf P}_{\bf i}^\sigma$ on mappings
from $\cA^{\sigma}$ to $\Real$. 
It holds that the sequence of $\left({\bf P}_{\bf i}^\sigma\right)_{i\in\Nat}$ weakly converges
to a Borel probability measure on $\Hom^+(\cA^\sigma,\Real)$, see~\cite[Theorems 3.12 and 3.13]{Razborov:2007}.
In fact, for any $\sigma$ such that $\phi(\sigma) > 0$, the homomorphism $\phi$
fully determines the limit probability distribution~\cite[Theorem 3.5]{Razborov:2007}.
We denote the limit of $\left({\bf P}_{\bf i}^\sigma\right)_{i\in\Nat}$ by ${\bf P}_\phi^\sigma$.
Furthermore, since $(G_i)_{i\in\Nat}$ is almost $\cF$-free, any mapping
$\phi^\sigma$ drawn from the support of the distribution ${\bf P}_\phi^\sigma$
is in fact an algebra homomorphism from $\cA^{\sigma}_\cF$ to $\Real$ such that
$\phi^\sigma(H^\sigma) \ge 0$ for any $\sigma$-flag $H^\sigma$. 


The last notion we introduce is the \emph{averaging operator} (also called the \emph{downward operator})
$\unlab\cdot{\sigma}: \cA^{\sigma}_\cF \to \cA_\cF$.
It is a linear operator defined on the~$\sigma$-flags $H^\sigma$ by
\[\unlab{H^\sigma}{\sigma} := p_H^\sigma \cdot H,\] where
$H$ is the unlabelled red/blue-colored graph corresponding to $H^\sigma$, 
and $p_H^\sigma$ is the probability that a random injection from $V(\sigma)$ to $V(H)$ yields
a $\sigma$-flag isomorphic to $H^\sigma$. A key relation
is
\begin{equation}
\label{eq:flag:averaging}
\forall A^\sigma\in\cA^\sigma_\cF,\quad \phi\left(\unlab{A^\sigma}{\sigma}\right)=
\phi\left(\unlab\sigma\sigma\right) \cdot \int_{\phi^\sigma} \phi^\sigma(A^\sigma) \; d{\bf P}_\phi^\sigma\,
.
\end{equation}
%
If $\phi^\sigma(A^\sigma)\ge 0$ with probability one for some $A^\sigma \in \cA_{\cF}^\sigma$, then (\ref{eq:flag:averaging}) yields
that $\phi\left(\unlab{A^\sigma}{\sigma}\right)\ge 0$.
In~particular,
$
\phi\left(\unlab{A^\sigma \times A^\sigma }{\sigma}\right)\ge 0
$
for every $\phi \in \Hom^+(\cA_\cF,\Real)$ and every $A^\sigma\in\cA^\sigma_\cF$.


\section{Edges that occur in pentagons --- proof of Theorem~\ref{thm:c5}}
\label{sec:c5}

We start the proof by formulating the statement of Theorem~\ref{thm:c5}
into the language of red/blue-colored graphs. This statement is
convenient for the flag algebra framework, which we intend to apply.

\begin{thm}\label{thm:c5c}
If $G$ is a red/blue-colored graph on $n$ vertices with $\lfloor \frac{n^2}4 \rfloor + 1$ edges
and no blue edge occur in $C_5$, then $G$ contains at least $\frac{2+\sqrt{2}}{16}\cdot n^2 - O(n^{15/8})$ red edges.
\end{thm}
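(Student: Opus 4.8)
The plan is to prove Theorem~\ref{thm:c5c}, which is the red/blue reformulation; Theorem~\ref{thm:c5} then follows by coloring an edge red exactly when it occurs in a copy of $C_5$ (so the convention "no blue edge occurs in $C_5$" holds) and noting the number of red edges equals the number of edges in pentagons. So from now on $G$ is a red/blue-colored $n$-vertex graph with $\lfloor n^2/4\rfloor+1$ edges whose blue edges avoid $C_5$, and the goal is the lower bound $\frac{2+\sqrt2}{16}n^2 - O(n^{15/8})$ on the red-edge count.

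First I would set up the flag-algebra side. Choose the family $\cF$ of forbidden red/blue-colored configurations to encode "a blue edge lies in no $C_5$": this amounts to forbidding, for each way of 2-coloring a $C_5$ with at least one blue edge, that colored $C_5$ as a (colored, not necessarily induced) subgraph — equivalently forbidding a short list of red/blue/black-colored flags. Then I pass to a convergent sequence $(G_i)$ of almost-$\cF$-free colored graphs realizing the infimum of red-edge density subject to edge density $\to 1/4$, with limit homomorphism $\phi\in\Hom^+(\cA_\cF,\Real)$. The core claim to be proved by the semidefinite method is: for every such $\phi$ with edge density exactly $1/4$, the red-edge density is at least $(2+\sqrt2)/8$ — matching Construction~\ref{cstn:c5}. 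Concretely I would run a flag-algebra SDP on colored graphs of order $5$ (or $6$), using a suitable set of types $\sigma$ and the Cauchy–Schwarz inequalities \eqref{eq:flag:cauchyschwarz}, to certify
\[
\phi(\text{red edge}) \;\ge\; \frac{2+\sqrt2}{8}\cdot\Bigl(2\,\phi(\text{edge})\Bigr)^{?}\!,
\]
or more precisely a linear inequality in colored-subgraph densities that is tight on the limit of Construction~\ref{cstn:c5}.

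The genuinely new obstacle — and the part I expect to be hardest — is the gap between "$\lfloor n^2/4\rfloor+1$ edges" and "edge density $1/4$": a straight flag-algebra argument only sees the asymptotic density and so cannot distinguish a graph at exactly $n^2/4$ edges (which could be triangle-free, hence bipartite, with no odd cycle at all and thus all edges blue) from one with the single extra edge that forces pentagons. To handle this I would split into two regimes. In the \emph{dense-in-triangles} regime, where $G$ has $\Omega(n^3)$ triangles — or more relevantly where the colored structure is bounded away from the bipartite-with-no-$C_5$ picture — the flag-algebra inequality above applies with room to spare and gives the bound with the $O(n^{15/8})$ slack absorbed via the supersaturation Corollary~\ref{cor:supersat} (this is where the odd exponent $15/8$ enters, from a Kővári–Sós–Turán-type bound with $\ell$ small: $15/8 = 2 - 1/8$). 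In the \emph{almost-triangle-free} regime I would instead run a stability argument: $G$ has $\lfloor n^2/4\rfloor+1 > \lfloor n^2/4\rfloor$ edges, so by a Mantel-type stability result $G$ is $\eps n^2$-close to the balanced complete bipartite graph; the extra edge inside one side, together with the near-completeness across, then forces many pentagons through that edge and its neighbors, and a direct counting argument shows the number of red edges is $\Omega(n^2)$ with the right constant. The "finitely-forcible-limits trick" the introduction alludes to is precisely the mechanism for importing a \emph{conditional} tight flag-algebra bound ("tight provided the triangle density is positive") and then closing the remaining triangle-free case by hand.

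Assembling the pieces: I would (i) state and prove the conditional flag-algebra inequality via the SDP certificate, presenting the certificate (the list of types, flags, and the PSD matrices whose entries I would make available at \linktocalculations) and verifying it is tight on Construction~\ref{cstn:c5}; (ii) prove the supersaturation/clean-up lemma converting the limit inequality into a finite-$n$ inequality with additive error $O(n^{15/8})$, using Corollary~\ref{cor:supersat} to discard graphs containing a bounded blowup of each bad configuration and a standard averaging to pass from order-$n$ to order-$5$ (or $6$) densities; (iii) handle the almost-triangle-free case by Mantel stability plus direct pentagon-counting; and (iv) combine (ii) and (iii), noting the two bounds cross so that the minimum is attained in the dense regime and equals $\frac{2+\sqrt2}{16}n^2 - O(n^{15/8})$. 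The main risk, as usual with such arguments, is that the SDP must be tight simultaneously on \emph{two} very different extremal families — the limit of Construction~\ref{cstn:c5} and (in the boundary regime) structures near Construction~\ref{cstn:cliquebip} — so the certificate has to be engineered to have the correct, exponentially large, set of zero-eigenvector directions; getting a numerically clean, roundable certificate with that degenerate structure is the delicate step.
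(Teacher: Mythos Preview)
Your proposal is correct and follows essentially the same architecture as the paper: a two-regime split on triangle density, a flag-algebra SDP certificate in the algebra $\cA_{\cF_\Cfive}$ for the dense-in-triangles case (with the $O(n^{15/8})$ error coming via Corollary~\ref{cor:supersat} from the $2$-blowup of a triangle-plus-pendant-blue-edge), and a Mantel-stability argument giving a strictly larger constant in the almost-triangle-free case. The one point you leave implicit that the paper makes the crux of the argument is the precise mechanism for the ``conditional'' bound: the SDP certifies not a linear inequality like your displayed one (which would fail on bipartite limits) but rather $\psi\bigl(\figTRG\times(8\cdot\figR-(2+\sqrt2))\bigr)\ge0$, i.e.\ the target expression is \emph{multiplied by the triangle density} so that the inequality is trivially tight on all triangle-free limits and only bites when $\psi(\figTRG)>0$ --- this product trick is exactly the ``finite forcibility'' idea you allude to, and also why the exponentially large tight set (all colorings of the balanced bipartite graph) is not an obstruction.
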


It is straightforward to check that the statements of Theorem~\ref{thm:c5} and Theorem~\ref{thm:c5c} are equivalent.
In the rest of the section, we give a proof of Theorem~\ref{thm:c5c}. We split the proof into the following two cases: either
$G$ contains many triangles and then we apply flag algebras, or,
$G$ contains only a small number of triangles in which case we use stability to
show that $G$ is close to the complete bipartite graph. 
Since $G$ has more than $n^2/4$ edges, it follows
that in the second case $G$ must have many red edges (in fact, more than Theorem~\ref{thm:c5c} asks for).

\subsection{Flag algebra setting}
We start with describing the precise setting of flag algebras we are going to
use. Clearly, every $G$ from Theorem~\ref{thm:c5c} is $B_5$-free, where $B_5$
is the $5$-cycle with one blue and four black edges (recall that an edge is
black if it is either red or blue).  But we can say more. Suppose $F$ is a
red/blue-colored graph such that the $b$-blowup of $F$, for some positive
integer $b$, contains $C_5$ with at least one blue edge. Then, by
Corollary~\ref{cor:supersat}, $G$ can contain only $O\left(n^k\right)$ copies
of such a graph $F$, where $k$ is a rational strictly smaller than $v(F)$ and
depends only on $F$ and $b$.
For example, since the $2$-blowup of the graph $B_3$ depicted in
Figure~\ref{fig:FC5graphs} contains $C_5$ with at least one blue edge, $G$
contains only $O\left(n^{3-1/4}\right)$ copies of $B_3$.  In other words,
all but $O\left(n^{11/4}\right)$ triangles in $G$ have only red edges.
Analogously, the $2$-blowup of the graph $B_3^+$, which is also depicted in
Figure~\ref{fig:FC5graphs}, contains $C_5$ with two blue edges.
Therefore, $G$ may contain only $O\left(n^{31/8}\right)$ copies of $B_3^+$.

\begin{figure}
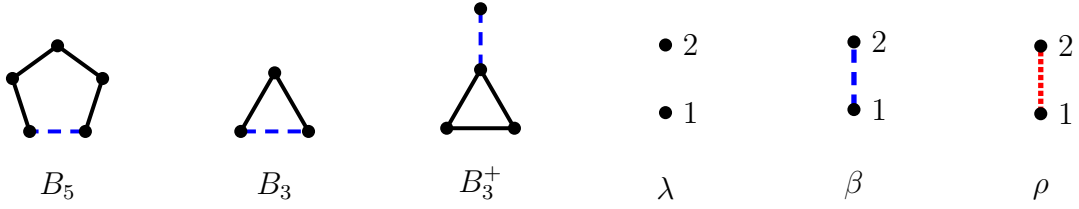

\begin{center}
\includegraphics[scale=1,page=14]{EiC-fig}
\hskip 1.5cm 
\includegraphics[scale=1,page=11]{EiC-fig}
\hskip 1.5cm 
\includegraphics[scale=1,page=12]{EiC-fig}
\hskip 1.5cm 
\includegraphics[scale=1,page=16]{EiC-fig}
\hskip 1.5cm 
\includegraphics[scale=1,page=17]{EiC-fig}
\hskip 1.5cm 
\includegraphics[scale=1,page=18]{EiC-fig}
\end{center}
\caption{The graphs $B_5$, $B_3$ and $B_3^+$ used in the construction of the
$\cF_{\Cfive}$-free flag algebra $\cA_{\Cfive}$, and the types $\lambda$,
$\beta$ and $\rho$.}
\label{fig:FC5graphs}
\end{figure}


Let $\cF_\Cfive := \left\{B_3, B_3^+, B_5 \right\}$.
For brevity, we will write $\cA_\Cfive$ and $\cA^\sigma_\Cfive$ instead of $\cA_{\cF_\Cfive}$ and $\cA^\sigma_{\cF_\Cfive}$.
Next, let $\lambda, \beta$ and $\rho$ be the three red/blue-colored flag algebra
types of size two with labels $1$ and~$2$, where $\lambda$ denotes the non-edge type,
$\beta$ the blue-edge type, and $\rho$ the red-edge type; see Figure~\ref{fig:FC5graphs}.
We define $\cH^\lambda_4$ to be the set of all the non-isomorphic $4$-vertex $\lambda$-flags in $\cF_\Cfive^\lambda$,
$\cH^\beta_4$ the set of the $4$-vertex $\beta$-flags in $\cF_\Cfive^\beta$,
and $\cH^\rho_4$ the set of the $4$-vertex $\rho$-flags in $\cF_\Cfive^\rho$.
It holds that $\left|\cH^\lambda_4\right|=76$, $\left|\cH^\beta_4\right|=33$, and $\left|\cH^\rho_4\right|=43$.
Let $v_\lambda$ be the $76$-dimensional vector in $\left(\cA^\lambda_\Cfive\right)^{76}$ such that
the $i$-th element of $v_\lambda$ is equal to the $i$-th element of $\cH^\lambda_4$. Analogously,
we let $v_\beta$ to be the $33$-dimensional vector, where the coordinates correspond
to the elements of $\cH^\beta_4$, and $v_\rho$ the $43$-dimensional vector, where the
coordinates correspond to the elements of $\cH^\rho_4$.
Finally, we define $\cH_6$ to be the set of all $6$-vertex red/blue-colored $\cF_\Cfive$-free graphs;
it can be checked that there are precisely $756$ such graphs.


Theorem~\ref{thm:c5c} is concerned with red/blue-colored graphs of density at least $1/2$,
which translates to the flag algebra framework as studying 
homomorphisms $\psi \in \Hom^+\left(\cA_{\Cfive},\Real\right)$ satisfying
$\psi\left(\figR \,+\, \figB\right) \ge 1/2$. As $\psi\left(\figN \,+\, \figR \,+\, \figB \right) = 1$, the last condition is equivalent 
to $\psi\left(\figR \,+\, \figB \,-\, \figN \right) \ge 0$.
%

\begin{prop}
\label{prop:c5flag}
There exist three positive-semidefinite matrices $L$, $B$ and $R$
with the entries from the field $\Rat\left[\sqrt 2\right]$,
and non-negative numbers $a\in\Rat\left[\sqrt 2\right]$, $b\in\Rat\left[\sqrt 2\right]$ and $c_H\in\Rat\left[\sqrt 2\right]$ for $H \in \cH_6$,
such that in the algebra $\cA_\Cfive$, the expression 
\[
  \unlab{v_\lambda^\T L v_\lambda}\lambda
+ \unlab{v_\beta^\T B v_\beta}\beta
+ \unlab{v_\rho^\T R v_\rho}\rho
+ \left(\figR \,+\, \figB \,-\, \figN  \right) \times \left(a\cdot\figEMP \,+\, b\cdot\figCOCh\right)
+ \sum\limits_{H \in \cH_6} c_H \cdot H
\]
is equal to
\[
\figTRG \,\times\,
\left(
8\cdot\figR - \left({2+\sqrt{2}}\right) \cdot \figV \,
\right)
\mbox{.}
\]
Moreover, if $H$ is a $6$-vertex red/blue-colored graph from $\cH_6$ that contains an induced copy of any $P_5 \in \cP_5$
or $C_4^X$, then $c_H > 0$.
\end{prop}

Finding the positive-semidefinite matrices $L$, $B$ and $R$, and the non-negative numbers $a$, $b$ and $c_H$ 
for $H \in \cH_6$ such that the claimed identity holds can be expressed as a semidefinite program.
We used an SDP solver called CSDP~\cite{Borchers:1999} together with a computer algebra software SAGE~\cite{sw:sage} to help us solving
the corresponding semidefinite program.
Since some of the numbers and the entries of the matrices are too large to be presented in a printed form,
we created a webpage  and uploaded all the corresponding data there.
The URL of the webpage is \mbox{\linktocalculations}.

We also prepared a short verification script in SAGE that checks the
correctness of the claimed identity; see also Appendix~\ref{apx:verify} for the
details about the formal verification.  The script, as well as a description
of all the data files, can be also found at the webpage.
Note that the matrices $L$, $B$ and $R$ are not stored directly. Instead, they are decomposed as
\[
L = M_\lambda^\T \cdot \widehat{L} \cdot M_\lambda,\quad
B = M_\beta^\T \cdot \widehat{B} \cdot M_\beta,\quad \hbox{\rm{and}} \quad
R = M_\rho^\T \cdot \widehat{R} \cdot M_\rho,
\]
where $\widehat{L}$, $\widehat{B}$ and $\widehat{R}$ are positive definite matrices of sizes $50 \times 50$, $19 \times 19$ and $26\times 26$,
respectively, and $M_\lambda$, $M_\beta$ and $M_\rho$ are specific matrices of sizes $76\times 50$, $33\times 19$ and $43\times 26$, respectively.
Another advantage of this is that verifying whether a matrix is positive definite is faster from the practical point of view;
see Appendix~\ref{apx:verify}.

\subsection{Case 1 --- Graphs with many triangles}
\label{sec:c5case1}

We first prove the theorem for graphs $G$ that satisfy the assumptions of Theorem~\ref{thm:c5c} and contain $\Omega\left(n^3\right)$
triangles. This will be the only case where we use flag algebra method, and the reason for that is the following.
In order to apply flag algebra method, we pass the asymptotic statement to the limit. As we already mentioned in the
introduction, an unfortunate consequence is that we completely lose control on having the additional edge that
is needed to contain even a single copy of $C_5$. However, in the situation that $G$ contains about $n^2/4$ edges
and only a small number of triangles, a stability argument yields that $G$ must be very close to the complete balanced bipartite
graph. Such a situation will be analyzed in Section~\ref{sec:c5case2}.
Therefore, the statement we prove with flag algebras states that for every
$G$ that satisfies the assumptions of Theorem~\ref{thm:c5c}, at least one of
the following is true:
\begin{enumerate}
\item $G$ has at least $\frac{2+\sqrt{2}}{16} \cdot n^2 - O(n^{15/8})$ red edges, or,
\item $G$ contains $o\left(n^3\right)$ triangles.
\end{enumerate}

Suppose, for a contradiction, that Theorem~\ref{thm:c5c} is false.
Then there exists a sequence of red/blue-colored graphs
$(G_i)_{i\in\Nat}$ of increasing orders $n_i$ such that for $i$ big enough every $G_i$ has
at~most~\hbox{$\frac{2+\sqrt{2}}{16}\cdot n_i^2 - \omega\left(n_i^{15/8}\right)$} red edges.
Without loss of generality, the sequence is convergent.
Furthermore, the sequence $(G_i)_{i\in\Nat}$ is almost $\cF_{\Cfive}$-free.
Therefore, the sequence converges to a limit $\phi_0$, which is an element of
the set $\Hom^+\left(\cA_\Cfive,\Real\right)$.  It is straightforward to check
that the edge-density of $\phi_0$ is equal to $1/2$.  The following lemma
states that such a limit $\phi_0$ must have triangle density equal to zero.

\begin{lem}\label{lem:c5flag}
Let $\delta > 0$ and $\phi \in \Hom^+\left(\cA_{\Cfive},\Real\right)$.
If $\phi\left(\figR \,+\, \figB\right) \ge \frac12$ and $\phi\left(\figTRG\right) \ge \delta$,
then $\phi\left(\figR\right) \ge \frac{2+\sqrt{2}}8$. 
Moreover,
if $G$ is an $n$-vertex red/blue-colored graph with $\lfloor \frac{n^2}4 \rfloor + 1$ edges,
at least $\delta \cdot n^3$ triangles and no blue edge occur in $C_5$, 
then $G$ contains at least $\frac{2+\sqrt{2}}{16}\cdot n^2 - O(n^{15/8})$ red edges.
\end{lem}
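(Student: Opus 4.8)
The plan is to establish the flag-algebraic inequality first and then transfer it to finite graphs by a standard supersaturation/averaging argument. For the inequality, I would work in the algebra $\cA_\Cfive$ of red/blue-colored $\cF_\Cfive$-free graphs, where $\cF_\Cfive = \{B_3, B_3^+, B_5\}$, so that every triangle is monochromatic red, and moreover no red edge can be "blocked" from extending to a red triangle by a blue configuration (the exclusion of $B_3^+$). The target is to show $\phi(\figR) \ge \frac{2+\sqrt2}{8}$ whenever $\phi(\figR + \figB) \ge \tfrac12$ and $\phi(\figTRG) \ge \delta$. The approach is the semidefinite method: pick a small finite set of types $\sigma$ (single vertex, red edge, blue edge, a path of length two in the red/blue colors, and perhaps a triangle type), enumerate all $\cF_\Cfive$-free $\sigma$-flags on $4$ vertices, and search for positive semidefinite matrices $Q_\sigma$ such that
\[
\phi(\figR) \;-\; \frac{2+\sqrt2}{8}\,\phi(\figTRG)\cdot\big(\text{something}\big) \;-\; \sum_\sigma \phi\!\left(\unlab{ \mathbf{f}_\sigma^{\mathsf T} Q_\sigma \mathbf{f}_\sigma}{\sigma}\right) \;\ge\; 0
\]
holds identically in $\cA_\Cfive$, after substituting the edge-density constraint $\phi(\figR + \figB) = \tfrac12$ (used as an equality via the extra edge being negligible, so really $\ge \tfrac12$, which is the direction we need). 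Here $\mathbf{f}_\sigma$ is the vector of $\sigma$-flags; by \eqref{eq:flag:cauchyschwarz} each Cauchy--Schwarz term is nonnegative, so the identity forces $\phi(\figR) \ge \frac{2+\sqrt2}{8}$ once $\phi(\figTRG) \ge \delta > 0$ contributes a strictly positive slack. The numerical certificate $\{Q_\sigma\}$ would be produced by an SDP solver and then rounded to exact rationals (with $\sqrt2$ appearing symbolically in the bound); I would cite the accompanying calculations at \linktocalculations.

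The subtle point — and the reason $\delta$ appears at all — is that the "honest" pentagon-edge minimum is conjectured to sit between two genuinely different extremal families (the bipartite-plus-edge graph and Construction \ref{cstn:c5}), and flag algebras cannot see the single extra edge. So the inequality I actually need is not "$\phi(\figR)\ge \frac{2+\sqrt2}{8}$ unconditionally" (false, since a blue complete bipartite $\phi$ has $\phi(\figR)=0$) but rather "$\phi(\figR)\ge \frac{2+\sqrt2}{8}$ whenever $\phi(\figTRG)\ge\delta$." Concretely I would write the desired inequality as
\[
\phi(\figR) \;\ge\; \frac{2+\sqrt2}{8} \;-\; C\cdot\big(\tfrac12 - \phi(\figR+\figB)\big) \;-\; \text{(CS terms)} \;+\; c\cdot\phi(\figTRG)
\]
for suitable constants $C, c > 0$ absorbed into the SDP, so that a positive triangle density is exactly what pushes the bound up to $\frac{2+\sqrt2}{8}$. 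The main obstacle I anticipate is precisely engineering the SDP so that the triangle-density term enters with the right sign and magnitude and so that the whole combination is tight simultaneously on both extremal limits (the limit of Construction \ref{cstn:c5} and the clique-plus-bipartite limit) — a certificate that is tight on an exponentially large, two-source family of extremal points is delicate, and getting the solver to find a low-rank, cleanly-roundable solution is where most of the work lies. One also has to be careful that the types and flag sizes chosen are rich enough to certify tightness on Construction \ref{cstn:c5}, whose part $D$ with its red clique (loop) forces nontrivial local structure.

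For the "Moreover" part, the transfer to finite graphs is routine. Suppose $G$ has $n$ vertices, $\lfloor n^2/4\rfloor + 1$ edges, at least $\delta n^3$ triangles, and no blue edge in $C_5$; color $G$ so that exactly the edges occurring in some $C_5$ are red (all others blue), which is consistent with $\cF_\Cfive$-freeness after deleting the $O(n^{31/8})$ copies of $B_3, B_3^+, B_5$. If the conclusion failed, there would be an infinite sequence $(G_i)$ with $\ge \delta n_i^3$ triangles but only $\frac{2+\sqrt2}{16}n_i^2 - \omega(n_i^{15/8})$ red edges; pass to a convergent subsequence with limit $\phi \in \Hom^+(\cA_\Cfive,\Real)$. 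Then $\phi(\figR+\figB) = \tfrac12$ (the extra edge vanishes in the limit), $\phi(\figTRG) \ge 6\delta > 0$, yet $\phi(\figR) \le \frac{2+\sqrt2}{8}$ with the deficit forced to be strictly positive by the $\omega(n_i^{15/8})$ gap — contradicting the first part. The $O(n^{15/8})$ error term is exactly the bookkeeping cost of the supersaturation bounds from Corollary \ref{cor:supersat} applied to $B_3^+$ (giving the $n^{31/8} = n^{4 - 1/8}$ and hence $n^{15/8}$-scale corrections after the standard averaging), so I would just track those exponents carefully rather than reprove them.
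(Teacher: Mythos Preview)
Your plan misses the one genuinely new idea in the paper's proof: the \emph{multiplicative} form of the flag-algebra target. The inequality the paper certifies by SDP is
\[
\psi\Bigl(\figTRG \,\times\, \bigl(8\cdot\figR - (2+\sqrt{2})\bigr)\Bigr) \;\ge\; 0
\]
for all $\psi \in \Hom^+(\cA_\Cfive,\Real)$ with edge density $\ge \tfrac12$; because $\psi$ is a ring homomorphism this factors as $\psi(\figTRG)\cdot\bigl(8\psi(\figR)-(2+\sqrt2)\bigr)\ge 0$, and dividing by $\psi(\figTRG)\ge\delta>0$ gives the bound. Your two proposed certificates are instead affine in $\phi(\figTRG)$, and no such certificate can be simultaneously valid at the all-blue balanced bipartite limit (where $\phi(\figR)=\phi(\figTRG)=0$, edge density $=\tfrac12$) and tight at Construction~\ref{cstn:c5} (where $\phi(\figR)=\tfrac{2+\sqrt2}{8}$ and $\phi(\figTRG)>0$). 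Concretely, your second display would force $0 \ge \tfrac{2+\sqrt2}{8} + 0$ at the bipartite limit (the Cauchy--Schwarz and slack terms must vanish at any tight point, and the bipartite limit \emph{is} tight for the product inequality, so you cannot hide the defect there). The whole point of multiplying by $\figTRG$ is that the product vanishes identically on the triangle-free boundary, which is exactly what lets the SDP be feasible; this is the ``finite forcibility'' trick the paper advertises, and your write-up never arrives at it.

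A smaller issue: your proof of the ``Moreover'' part by passing to a convergent subsequence only yields $\phi(\figR)\le \tfrac{2+\sqrt2}{8}$ in the limit, since the $\omega(n_i^{15/8})$ deficit is $o(n_i^2)$ and disappears --- so there is no contradiction with the first part, only equality. To get the quantitative $O(n^{15/8})$ error you must apply the SDP identity (from Claim~\ref{cl:c5flag}) directly to the finite graph $G$, incurring the usual $O(1/n)$ averaging error plus the $O(n^{-1/8})$ densities of $B_3$ and $B_3^+$ coming from Corollary~\ref{cor:supersat}. You correctly name the source of the $15/8$ exponent at the end, but the compactness argument you sketch does not deliver it.
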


\begin{proof}
Proposition~\ref{prop:c5flag} yields that if
$\psi \in \Hom^+\left(\cA_{\Cfive},\Real\right)$ satisfies $\psi\left(\figR \,+\, \figB\right) \ge 1/2$,
then
\begin{equation}
\psi \left( \figTRG \,\times\, \left( 8\cdot\figR - {2-\sqrt{2}} \right) \right) \ge 0
\mbox{.}
\label{eq:c5flag}
\end{equation}
It immediately follows that if the first factor of the product on the left-hand
side, i.e., the triangle density in $\psi$, is at~least $\delta > 0$, then 
inequality (\ref{eq:c5flag}) yields that the second factor must be non-negative. In other words,
the density of red edges is at~least $\left(2+\sqrt{2}\right)/8$. 


The moreover part of the lemma follows from a standard $O\left(n^{-1}\right)$ error estimate in the semidefinite method
(for details, see, for example, \cite{Oleg}), and the $O\left(n^{-1/8}\right)$ estimate on the densities of $B_3$ and $B_3^+$
in $G$.
\end{proof}

Recall that $(G_i)_{i \in \Nat}$ is a sequence of $n_i$-vertex graphs with at
most~\hbox{$\frac{2+\sqrt{2}}{16} \cdot n_i^2 - \omega\left(n_i^{15/8}\right)$}
red edges.  Applying Lemma~\ref{lem:c5flag} to $(G_i)$ readily implies that 
$G_i$ must contain $o\left(n_i^3\right)$ triangles.



\subsection{Case 2 --- Graphs with small number of triangles}
\label{sec:c5case2}

It remains to verify Theorem~\ref{thm:c5c} for graphs $G$ that
contain less than $\delta n^3$ triangles for an arbitrary $\delta > 0$.
As we have already mentioned, in this case our plan is to use stability of triangle-free
graphs to show that $G$ must be close, in the so-called \emph{edit-distance}, to a complete
bipartite graph. Since the number of edges in $G$ is strictly more than
$n^2/4$, the graphs we are dealing with are essentially almost complete bipartite graphs plus an additional
edge in one of the parts. Therefore, we will be able to show that nearly all
the edges of $G$ occur in $C_5$. This is summarized in the following lemma,
which actually holds for any odd cycle of length at least five.

\begin{lem}
\label{lem:stab}
For every integer $k\ge 2$, there exists $\delta_k > 0$ such that the following is true.
If $G$ is an $n$-vertex graph with $\lfloor \frac{n^2}4\rfloor+1$ edges and
at most $\delta_k \cdot n^3$ triangles,
then all but $o\left(n^2\right)$ edges of $G$ occur in $C_{2k+1}$.
\end{lem}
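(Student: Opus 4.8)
The plan is to prove Lemma~\ref{lem:stab} by combining the stability version of Mantel's theorem with an explicit construction of the required odd cycles, where the one idea that makes everything fit together is to work with a \emph{maximum-cut} partition of $G$ rather than the partition directly supplied by stability. First I would apply the triangle removal lemma: if $\delta_k$ is chosen small enough, deleting at most $\epsilon n^2$ edges makes $G$ triangle-free, where $\epsilon=\epsilon(\delta_k)\to 0$ as $\delta_k\to 0$. The remaining graph still has at least $\lfloor n^2/4\rfloor+1-\epsilon n^2$ edges, so by the Erd\H{o}s--Simonovits stability theorem for Mantel's theorem it lies within edit distance $o(1)\cdot n^2$ of the complete balanced bipartite graph, and hence so does $G$, say within $\gamma n^2$ with $\gamma=\gamma(\delta_k)\to 0$. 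I would then let $(X,Y)$ maximise the number of crossing edges; since this cut is at least the cut from stability, it has at least $n^2/4-\gamma n^2$ crossing edges, and a short count gives $\bigl||X|-|Y|\bigr|^2\le 4\gamma n^2$ (the partition is almost balanced), the number $m$ of missing cross-pairs is $\le\gamma n^2$, and the number $d$ of edges inside the parts is $d=e(G)-|X|\,|Y|+m\ge 1$. Crucially, maximality of the cut forces $\deg_Y(v)\ge\deg_X(v)$ for every $v\in X$ and symmetrically for $Y$, so both endpoints of every ``defect'' edge (an edge inside a part) have at least one neighbour on the opposite side.

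Next I would call a vertex \emph{bad} if it has fewer than $n/4$ neighbours on the opposite side; a bad vertex misses at least $n/5$ cross-pairs, so there are at most $10m/n=o(n)$ bad vertices, and all but $O(\gamma n^2)$ of the $|X|\,|Y|-m$ present cross-edges have both endpoints good. I would then show there is a defect edge $uv$, say inside $X$, such that both $u$ and $v$ have a \emph{good} neighbour on the opposite side. If no such edge existed, every defect edge would meet the set $B'$ of vertices all of whose opposite neighbours are bad; as $B'$ is contained in the bad set $B$ and every $b\in B'$ satisfies $\deg_X(b)\le\deg_Y(b)\le|B|$ by the max-cut property, one gets $d\le 2|B|^2$, hence $m\le d\le 2|B|^2$, hence $|B|\le 10m/n\le 20|B|^2/n$; since $|B|\le 10\gamma n<n/20$ this forces $B=\varnothing$, and then any of the $d\ge 1$ defect edges trivially qualifies.

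With such a $uv$ fixed, for every good cross-edge $xy$ (with $x\in X$, $y\in Y$) I would build a copy of $C_{2k+1}$ of the shape $u,v,y_1,x_1,y_2,x_2,\dots,y_k$ — the edge $uv$, then an alternating path across the cut, then back to $u$ — choosing $y_1$ to be a good opposite neighbour of $v$ and $y_k$ a good opposite neighbour of $u$, placing the prescribed edge $xy$ at an appropriate position of the alternating part, and filling in the remaining $O(k)$ vertices greedily; this is possible because $x,y$ and every auxiliary vertex chosen have at least $n/4$ neighbours across the cut while only $2k+1=O(1)$ vertices are used and $n$ is large, with the handful of degenerate placements (e.g.\ $x\in\{u,v\}$, or $y$ forced to equal $y_1$ or $y_k$) treated directly. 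Hence every good cross-edge occurs in a $C_{2k+1}$, so the number of edges of $G$ occurring in a $C_{2k+1}$ is at least $|X|\,|Y|-m-O(\gamma n^2)\ge n^2/4-O(\gamma n^2)$; choosing $\delta_k$ so small that this error term is below $\bigl(\tfrac14-\stabconst\bigr)n^2$ yields the bound $\stabconst\, n^2$.

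The main obstacle, and the reason the lemma is not entirely routine, is that stability only tells us $G$ is \emph{close} to $K_{n/2,n/2}$ plus one extra edge, so a priori that extra edge could be incident to low-degree vertices and a naive attempt to thread cross-edges through it could get stuck; the device that removes this difficulty is passing to a maximum-cut partition, which at once keeps the partition almost balanced, keeps the defect count at least $1$, and forces both endpoints of every defect edge to reach across the cut. The remaining work — the bootstrap producing a defect edge with good endpoints and the explicit greedy construction of the odd cycle — is elementary but must be carried out with a little care about the $o(n)$ bad vertices.
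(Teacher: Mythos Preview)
Your overall strategy---removal lemma, then stability, then an almost-balanced bipartition, then a defect edge through which you thread the odd cycles---is the same as the paper's. The difference is that you work with a maximum-cut partition, whereas the paper first extracts a bipartite subgraph $G_0$ of minimum degree $\ge 0.498n$, puts the $o(n)$ leftover vertices into a set $L$, and then argues either that some defect edge lies inside one part of $G_0$ (so both its endpoints automatically have nearly full cross-degree), or that some vertex of $L$ has neighbours in both parts. That distinction is what causes the problems below.

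A minor point first: your threshold $n/4$ for ``good'' is too weak. Greedily \emph{extending} a path only needs degree $\gg 2k+1$, but \emph{closing} the cycle at the prescribed vertex $y_k$ requires the last $X$-vertex to be a \emph{common} neighbour of $y_{k-1}$ and $y_k$; this needs $\deg_X(y_{k-1})+\deg_X(y_k)>|X|\approx n/2$, and $n/4+n/4$ need not exceed $|X|$. This is harmless---raise the threshold to, say, $0.45n$, and the same bad-vertex count still gives $|B|=o(n)$---but it has to be said.

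The substantive gap is at $k=2$. In your cycle $u,v,y_1,x_1,y_2$ the only $Y$-slots are $y_1\in N_Y(v)$ and $y_2\in N_Y(u)$, so when $x\notin\{u,v\}$ you are forced to have $y\in N_Y(u)\cup N_Y(v)$. Your bootstrap, however, only shows that $u$ and $v$ each have \emph{one} good neighbour on the other side; max-cut gives $\deg_Y(u)\ge\deg_X(u)\ge 1$ and nothing more, so $N_Y(u)\cup N_Y(v)$ can have size as small as $2$, and then almost every good cross-edge $xy$ has $y\notin N_Y(u)\cup N_Y(v)$ and cannot be placed. The phrase ``degenerate placements\dots treated directly'' does not address this; for $k=2$ \emph{every} placement of $y$ is one of $y_1,y_2$. (For $k\ge 3$ you can put $y$ at an interior slot $y_j$ with $2\le j\le k-1$ and instead absorb the constraint into $x\in N_X(p)\cup N_X(q)$, where $p,q$ are the \emph{good} neighbours of $v,u$; since $p,q$ are good this misses only $o(n)$ choices of $x$, and your argument can be completed.) The paper avoids the $k=2$ difficulty precisely because its defect edge, when it exists inside a part of $G_0$, has both endpoints of cross-degree $\ge 0.498n$; and when it does not, the separate ``vertex in $L$ adjacent to both parts'' case takes over. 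Your max-cut device, while neatly forcing $\deg_{\mathrm{opp}}\ge\deg_{\mathrm{same}}$, does not provide an analogue of this dichotomy, and some additional argument is needed for $k=2$.
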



Before proving Lemma~\ref{lem:stab}, let us recall two classical results in extremal graph theory.
The first one is the triangle removal lemma due to Ruzsa and Szemer\'edi~\cite{ruzszem:1978}.
\begin{thm}
\label{thm:trgremoval}
If an $n$-vertex graph has $o(n^3)$ triangles, then it
can be made triangle-free by removing at most $o(n^2)$ edges.
\end{thm}
Next, we recall a classical stability-type result for dense triangle-free
graphs; for its proof, see, e.g., \cite[Theorem VI.4.2]{bib:bollobook}.
\begin{thm}
\label{thm:trgstability}
If $G$ is an $n$-vertex triangle-free graph with $n^2/4 - o(n^2)$ edges,
then $G$ contains an induced bipartite subgraph with minimum degree $n/2 - o(n)$.
\end{thm}

We are now ready to prove the main lemma of this subsection.

\begin{proof}[Proof of Lemma~\ref{lem:stab}]

Suppose that $\delta_k$ is sufficiently small.
An application of Theorems~\ref{thm:trgremoval} and~\ref{thm:trgstability} to $G$
readily finds an induced bipartite subgraph on $n-o(n)$ vertices with minimum degree $\frac{n}2 - o(n)$.

Let $G_0$ be an induced bipartite subgraph of $G$ with maximum number of vertices that has the minimum degree at least $\frac{n}2-o(n)$.
Let $A$ and $B$ be the parts of $G_0$ and let $L := V(G) \setminus V(G_0)$.
Clearly, both $A$ and $B$ have sizes between $\frac{n}2 \pm o(n)$ and $|L| = o(n)$.

The following claim states that $G_0$ has many edges between any two large subsets of $A$ and $B$.
\begin{claim}
If $A' \subseteq A$ and $B' \subseteq B$ are two sets of vertices of size $n/2-o(n)$ each,
then the number of edges in $G_0$ between the sets $A'$ and $B'$ is $n^2/4-o(n^2)$.
\end{claim}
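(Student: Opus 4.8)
The plan is to prove the claim by a direct degree count, relying only on the structural facts about $G_0$ established in the previous paragraph: $G_0$ is bipartite with parts $A$ and $B$, both of size between $0.498n$ and $0.502n$, and $G_0$ has minimum degree at least $0.498n$ (and, as a consequence of the earlier claim, $|L| < 0.001n$, which was what gave these size bounds).

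First I would fix an arbitrary vertex $v \in A'$. Since $G_0$ is bipartite with parts $A$ and $B$, every $G_0$-neighbour of $v$ lies in $B$, and the minimum-degree hypothesis guarantees that $v$ has at least $0.498n$ such neighbours. Now $|B \setminus B'| = |B| - |B'| \le 0.502n - 0.49n = 0.012n$, so at most $0.012n$ of these neighbours can lie outside $B'$; hence $v$ has at least $0.498n - 0.012n = 0.486n$ neighbours inside $B'$. Summing this bound over all $v \in A'$ and using $|A'| \ge 0.49n$, the number of edges of $G_0$ between $A'$ and $B'$ is at least $|A'| \cdot 0.486\,n \ge 0.49 \cdot 0.486\,n^2 = 0.23814\,n^2 > \stabconst\, n^2$, which is exactly the assertion of the claim.

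I do not anticipate any real obstacle here: the statement is an immediate consequence of the large minimum degree of $G_0$ together with the fact that $A'$ and $B'$ omit only a tiny fraction of $A$ and $B$. The only points that require any care are to invoke the correct numerical bounds $|A|, |B| \in [0.498n, 0.502n]$ from the preceding paragraph, and to verify the arithmetic inequality $0.49 \cdot 0.486 > 0.236$, which holds with a comfortable margin and so leaves slack even if one is slightly less careful with the constants.
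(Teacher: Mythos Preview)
Your proof is correct and takes essentially the same approach as the paper: a direct edge count using the minimum-degree condition on $G_0$ together with the fact that $A\setminus A'$ and $B\setminus B'$ each have size at most $0.012n$. The only cosmetic difference is that the paper bounds the total number of edges of $G_0$ from below and subtracts an upper bound on the edges missing $A'\times B'$, whereas you sum the degrees from $A'$ into $B'$; both arguments are one-line computations from the same ingredients.
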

\begin{proof}
Let $e$ be the number of edges between $A'$ and $B'$.
On one hand, $G_0$ has at least $n^2/4 - o(n^2)$ edges.
On the other hand, $e(G_0) \le e + |A\setminus A'|\cdot|B| + |B\setminus B'|\cdot|A| \le e + o(n^2)$.
\end{proof}

Our second claim states that any vertex with neighbors both in $A$ and $B$ allows
us to find many edges that occur in $C_{2k+1}$.
\begin{claim}
If some vertex $v_L\in L$ is adjacent both to a vertex $v_A \in A$ and a vertex $v_B \in B$,
then all but $o(n^2)$ edges of $G$ occur in $C_{2k+1}$.
\end{claim}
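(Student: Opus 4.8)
\emph{Proof proposal.}
The plan is to extend the length-two path $v_A\,v_L\,v_B$ to a copy of $C_{2k+1}$ in which every edge except the two incident to $v_L$ lies in the bipartite subgraph $G_0$, and in which one designated edge is free to range over a large family of $A$--$B$ edges of $G_0$; the claim above asserting that any $A'\subseteq A$ and $B'\subseteq B$ of size at least $0.49n$ span at least $\stabconst n^2$ edges of $G_0$ will then directly furnish $\stabconst n^2$ copies of $C_{2k+1}$ in $G$.

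First I would record the parity observation that drives everything: since $v_A\in A$ and $v_B\in B$ lie in opposite parts of $G_0$, every $v_A$--$v_B$ path inside the bipartite graph $G_0$ has odd length, so prepending and appending the edges $v_Lv_A$ and $v_Lv_B$ closes an \emph{odd} cycle. To make the cycle have length exactly $2k+1$ I need such an internal path of length $2k-1$ avoiding $v_L$, and I would split it as a path $v_A=u_0,u_1,\dots,u_s$ in $G_0$, a path $v_B=w_0,w_1,\dots,w_t$ in $G_0$, and a middle edge $xy$ of $G_0$ with $x\in N_{G_0}(u_s)$ and $y\in N_{G_0}(w_t)$, where $s+t=2k-4$. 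Choosing $s=t=k-2\ge0$ gives $s\equiv t\pmod2$, and one checks that precisely this congruence forces $N_{G_0}(u_s)$ and $N_{G_0}(w_t)$ to lie on \emph{opposite} sides of the bipartition of $G_0$, so that the middle edge $xy$ is genuinely an $A$--$B$ edge of $G_0$ — exactly what is needed to invoke the claim above.

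Next I would build the two short paths greedily inside $G_0$: since $G_0$ has minimum degree at least $0.498n$ and their combined length $s+t=2k-4$ is far below $n$ (recall $n\ge250k$), the vertices $u_1,\dots,u_s,w_1,\dots,w_t$ can be chosen pairwise distinct and distinct from $v_A,v_B,v_L$. Deleting from $N_{G_0}(u_s)$ and from $N_{G_0}(w_t)$ the at most $2k-1$ vertices already occurring in the prospective cycle leaves sets $A'\subseteq A$ and $B'\subseteq B$ (one on each side, by the parity discussion), each of size at least $0.498n-(2k-1)\ge0.49n$ because $n\ge250k$. The claim above then yields at least $\stabconst n^2$ edges $xy$ of $G_0$ between $A'$ and $B'$, and for each such $xy$ the closed walk
\[
v_L,\;v_A=u_0,u_1,\dots,u_s,\;x,\;y,\;w_t,\dots,w_1,w_0=v_B,\;v_L
\]
is a genuine cycle on $s+t+5=2k+1$ vertices, all of whose edges lie in $G$ and which contains $xy$. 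Hence at least $\stabconst n^2$ edges of $G$ occur in $C_{2k+1}$.

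The only genuinely delicate point is the bookkeeping in the second step: arranging simultaneously that the cycle has the right (odd) length $2k+1$ and that the free middle edge lands between the two parts of $G_0$, so that the ``large bipartite subsets'' claim applies. The greedy path construction, the disjointness of all the chosen vertices, and the leftover-degree estimate are all routine consequences of the minimum-degree bound on $G_0$ together with $n\ge250k$.
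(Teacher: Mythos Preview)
Your proof is correct and follows essentially the same approach as the paper: extend the length-two path $v_A v_L v_B$ to a $(2k+1)$-cycle whose remaining edges lie in $G_0$, with one ``free'' $A$--$B$ edge ranging over a pair of large neighbourhoods so that the earlier claim yields $\stabconst n^2$ such edges. The only difference is cosmetic: the paper grows a single path of length $2k-4$ from $v_B$ and places the free edge adjacent to $N_{G_0}(v_A)$ (i.e.\ your $s=0$, $t=2k-4$), whereas you split symmetrically with $s=t=k-2$; both choices satisfy the parity constraint and the argument is otherwise identical.
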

\begin{proof}
Let $V_{2k-1}\subseteq B$ be the neighborhood of $v_A$ in $G_0$,
and $v_B,v_2,\dots,v_{2k-3}$ any path in $G_0$ that does not contain
the vertex $v_A$. We set $V_{2k-2}\subseteq A$ to be the neighborhood of $v_{2k-3}$ in $G_0$,
$A' := V_{2k-2} \setminus \{v_2,\dots,v_{2k-2},v_A\}$
and $B' := V_{2k-1} \setminus \{v_B,v_3,\dots,v_{2k-3}\}$.

It follows that  both $A'$ and $B'$ have size at least $\frac{n}2 - o(n)$.
Therefore, the number of edges of the form $\{v_{2k-2},v_{2k-1}\}$ between $A'$ and $B'$ is
at least ${n^2}/4 - o(n^2)$ Each such an edge encloses a $(2k+1)$-cycle in
$G$, which is of the form $v_L,v_B,v_2,\dots,v_{2k-1},v_A$.
\end{proof}

In order to finish the proof of the lemma, we simply need to find such a vertex $v_L$.
Firstly, observe that $|L|\ge 1$, as otherwise $G$ cannot have $\lfloor n^2/4\rfloor + 1$ edges.
Moreover, at least one $v_L \in L$ must have $\deg_G(v_L) \ge n/2$.
If $v_L$ would be adjacent only to $A$ or only to $B$, then the subgraph $G_0+v_L$ contradicts the choice of $G_0$.
\end{proof}

Now recall that if Theorem~\ref{thm:c5c} would be false, then by
Lemma~\ref{lem:c5flag} there exists a limit $\phi_0 \in
\Hom^+\left(\cA_\Cfive,\Real\right)$ such that $\phi_0\left(\figTRG\right) = 0$
and $\phi_0\left(\figR\right) \le \frac{2+\sqrt{2}}8$.  However,
Lemma~\ref{lem:stab} yields that $\phi_0\left(\figR\right) = \frac12$.  Therefore,
there is no such $\phi_0$, and the proof Theorem~\ref{thm:c5c} is now finished.


\section{Edges that occur in longer odd cycles --- proof of Theorem~\ref{thm:c7+}}
\label{sec:c7+}

We adapt the approach presented in the previous section and give a proof of an
asymptotic version of Theorem~\ref{thm:c7+}. The exact version will be obtained
in Section~\ref{sec:c7+exact}, where we find a description of all the sufficiently
large extremal constructions.
We start with stating the main result of this section using the language of red/blue-colored graphs.
\begin{thm}\label{thm:c7+c}
For every $\eps > 0$ and integer $k \ge 3$, there exists $n_0\in \Nat$ such that 
if $G$ is a red/blue-colored graph on $n>n_0$ vertices with $\lfloor \frac{n^2}4 \rfloor + 1$ edges
and no blue edge occur in $C_{2k+1}$, then $G$ contains at least $\left(\frac29 -\eps\right)n^2$ red edges.
\end{thm}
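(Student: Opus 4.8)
The plan is to mirror the two-case structure used for $C_5$ in Section~\ref{sec:c5}, splitting on whether $G$ contains $\Omega(n^3)$ triangles or only $o(n^3)$ of them. The triangle-free-ish case requires nothing new: Lemma~\ref{lem:stab} is already stated for all $k\ge 2$, so if $G$ has at most $\delta_k n^3$ triangles it has at least $\stabconst n^2 > \left(\tfrac29-\eps\right)n^2$ edges in $C_{2k+1}$, which is more than enough. Hence the whole content is the analogue of Case~1: establishing an asymptotic flag-algebra bound of $2/9$ on the red-edge density conditioned on positive triangle density, in a suitable $\cF$-free red/blue setting.

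First I would set up the correct family $\cF$ of forbidden red/blue/black graphs. As in the $C_5$ case, for any red/blue-colored $F$ whose $b$-blowup contains a copy of $C_{2k+1}$ using at least one blue edge, Corollary~\ref{cor:supersat} forces $G$ to contain only $o\!\left(n^{v(F)}\right)$ copies of $F$. The key point distinguishing $k\ge 3$ from $k=2$ is that a blue edge sitting inside a \emph{triangle} is now harmless on its own: a single triangle does not create a $C_{2k+1}$ for $k\ge 3$, so $B_3$ need not be forbidden. Instead the relevant small obstructions are longer structures — a blue edge together with enough red structure to close up a $(2k+1)$-cycle in a bounded blowup — and I would identify a finite family $\cF_{\Csev}$ (depending on $k$) of such graphs; crucially its members can be taken on a bounded number of vertices independent of $n$, since a blowup of a bounded graph on $\ge 2k+1$ "coordinates'' already contains $C_{2k+1}$. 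Passing to a convergent sequence of counterexamples then yields a limit $\phi_0\in\Hom^+(\cA_{\Csev},\Real)$ with edge density $1/2$, triangle density $\ge\delta>0$, and red-edge density $\le 2/9-\eps$.

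The core step, and the main obstacle, is proving the flag-algebra inequality
\[
\psi\!\left(\figTRG \times \left(9\cdot\figR - 2\right)\right)\ge 0
\]
for every $\psi\in\Hom^+(\cA_{\Csev},\Real)$ with $\psi(\figR+\figB)\ge 1/2$, which would force the second factor to be nonnegative whenever the triangle density is positive. This is where the technical difficulty of the paper concentrates: unlike the pentagon case, here the extremal configuration is Construction~\ref{cstn:cliquebip}, a clique glued to a balanced bipartite graph, so the set of tight homomorphisms is large and comes from two genuinely different regimes (the "all red clique'' part and the "bipartite blue'' part). The semidefinite method must therefore be run carefully — on densities of, say, $6$- or $7$-vertex $\cF_{\Csev}$-free subgraphs — with a certificate that is exactly zero on \emph{all} the limit objects arising from Construction~\ref{cstn:cliquebip}, for which one typically needs to choose the types and the positive-semidefinite multipliers so that the slack vanishes on each tight example simultaneously. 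I expect the bulk of the work (and the appendix) to be exhibiting this certificate and verifying it is $\cF_{\Csev}$-valid; the rest — the $O(n^{-1})$ error terms from the semidefinite method and the $o(1)$ error from supersaturation giving the "$-\eps$'' slack and translating the limit statement back to finite $G$ — is routine and parallels Lemma~\ref{lem:c5flag}. Finally, combining the flag bound (positive triangle density $\Rightarrow$ red density $\ge 2/9-\eps$) with Lemma~\ref{lem:stab} (small triangle count $\Rightarrow$ red density $\ge \stabconst$) rules out the counterexample sequence and proves Theorem~\ref{thm:c7+c}, hence Theorem~\ref{thm:c7+}.
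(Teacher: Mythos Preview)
Your overall strategy matches the paper's exactly: split on triangle count, invoke Lemma~\ref{lem:stab} when there are few triangles, and in the dense-triangle case pass to a limit in a suitable $\cF_\Csev$-free flag algebra to prove an inequality of the shape $\psi(\figTRG \times (9\cdot\figR - c))\ge 0$. However, several of your specific claims are wrong and would derail the flag-algebra step.

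First, your assertion that ``$B_3$ need not be forbidden'' is incorrect. The relevant criterion is not whether $B_3$ itself contains a $C_{2k+1}$, but whether some \emph{bounded blowup} of $B_3$ does (this is exactly Corollary~\ref{cor:supersat}). The $k$-blowup of a triangle is $K_{k,k,k}$, which contains $C_{2k+1}$ for every $k\ge2$, and one can route such a cycle through a blue edge. Hence $B_3$ \emph{is} in the family, and in fact the paper takes $\cF_\Csev=\{B_3,B_3^+,B_3^*,B_5,B_5^+\}\supseteq\cF_\Cfive$. This matters: without $B_3$-freeness the SDP bound of $4/9$ on the red-edge density is simply false (blue triangles would let you color a clique entirely blue). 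Relatedly, the paper's family is \emph{independent of $k$}; a single SDP certificate (Claim~\ref{cl:c7+flag}) works for every $k\ge3$, so there is no need for ``a finite family depending on $k$'' or infinitely many certificates.

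Second, the constant in your displayed inequality is off by a factor of two: since $\psi(\figR)$ is a density of unordered pairs, the target $\tfrac{2}{9}n^2$ red edges corresponds to $\psi(\figR)\ge \tfrac{4}{9}$, and the correct inequality (Lemma~\ref{lem:c7+flag}) is $\psi\bigl(\figTRG\times(9\cdot\figR-4)\bigr)\ge0$, not $9\cdot\figR-2$.
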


The rest of this section is devoted to the proof of Theorem~\ref{thm:c7+c}.
First, we define a class of graphs $\cF_\Csev$ such that, for any fixed integer $k\ge3$,
the following will be true. If a sequence $(G_i)_{i\in\Nat}$ of red/blue-colored graphs is such that no blue edge occurs in $C_{2k+1}$,
then $(G_i)$ is almost $\cF_\Csev$-free.
Analogously to the $C_5$ case, a $k$-blow of any graph $F \in \cF_\Cfive$ contains a copy of $C_{2k+1}$ with at least one blue edge.
Similarly, a $k$-blowup of either the graph $B_3^*$ or the graph $B_5^+$, which are both depicted in Figure~\ref{fig:FC7graphs},
contains a copy of $C_{2k+1}$ with at least one blue edge. 
\begin{figure}
\begin{center}
\includegraphics[scale=1,page=11]{EiC-fig}
\hskip 2cm 
\includegraphics[scale=1,page=12]{EiC-fig}
\hskip 2cm 
\includegraphics[scale=1,page=13]{EiC-fig}
\hskip 2cm 
\includegraphics[scale=1,page=14]{EiC-fig}
\hskip 2cm 
\includegraphics[scale=1,page=15]{EiC-fig}
\end{center}

\caption{The family of graphs $\cF_\Csev$ used in the construction of the $\cF_\Csev$-free flag algebra $\cA_\Csev$.}
\label{fig:FC7graphs}
\end{figure}

Let $\cF_\Csev := \left\{B_3,B_3^+,B_3^*,B_5,B_5^+\right\}$. As we have just observed, any sequence of graphs 
satisfying the assumptions of Theorem~\ref{thm:c7+c} is almost $\cF_\Csev$-free. We use the class $\cF_\Csev$
to construct the corresponding flag algebras. Again, we refer to them $\cA_\Csev$ and $\cA^\sigma_\Csev$ instead
of $\cA_{\cF_\Csev}$ and $\cA^\sigma_{\cF_\Csev}$.


Recall the three types $\lambda, \beta$ and $\rho$ depicted in Figure~\ref{fig:FC5graphs}.
We define $\cH^\lambda_4$, $\cH^\beta_4$ and $\cH^\rho_4$ to be the sets of all the $4$-vertex $\lambda$-flags in $\cA^\lambda_\Csev$,
$\beta$-flags in $\cA^\beta_\Csev$ and $\rho$-flags in $\cA^\rho_\Csev$, respectively.
Since the set of all the $4$-vertex flags in $\cA_\Csev$ is the same as the corresponding set
in $\cA_\Cfive$, we have again $\left|\cH^\lambda_4\right|=76$, $\left|\cH^\beta_4\right|=33$, and $\left|\cH^\rho_4\right|=43$.
Let $v_\lambda$, $v_\beta$ and $v_\rho$ be the appropriate vectors, where the $i$-th element of $v_\lambda$/$v_\beta$/$v_\rho$
is equal to the $i$-th element of $\cH^\lambda_4$/$\cH^\beta_4$/$\cH^\rho_4$.

This time, there are $741$ non-isomorphic red/blue-colored $\cF_\Csev$-free graphs on $6$ vertices.
With a slight abuse of notation, we again denote the set of all such graphs by $\cH_6$.
An application of the flag algebra method for $\cA_\Csev$ yields the following:
\begin{prop}\label{prop:c7+flag}
There exist three positive-semidefinite matrices $L$, $B$ and $R$
with rational entries and non-negative rational numbers $a$, $b$ and $c_H$, where $H \in \cH_6$,
such that in the algebra $\cA_\Csev$, the expression 
\[
  \unlab{v_\lambda^\T L v_\lambda}\lambda
+ \unlab{v_\beta^\T B v_\beta}\beta
+ \unlab{v_\rho^\T R v_\rho}\rho
+ \left(\figR \,+\, \figB \,-\, \figN  \right) \times \left(a\cdot\figEMP \,+\, b\cdot\figCOCh\right)
+ \sum\limits_{H \in \cH_6} c_H \cdot H
\]
is equal to
\[
\figTRG \,\times\,
\left(
9\cdot\figR - 4 \cdot \figV \,
\right)
\mbox{.}
\]
Moreover, if $H$ is a $6$-vertex red/blue-colored graph from $\cH_6$ that contains an induced copy of any $P_4 \in \cP_4$,
then $c_H > 0$.
\end{prop}

Again, we used CSDP and SAGE to find $L$, $B$, $R$, $a$, $b$ and $c_H$.
The webpage mentioned in Proposition~\ref{prop:c5flag} contains all the corresponding data,
as well as a short SAGE script that verifies the claimed identity.
As in Section~\ref{sec:c5}, the matrices $L$, $B$ and $R$ are decomposed as
\[
L = M_\lambda^\T \cdot \widehat{L} \cdot M_\lambda,\quad
B = M_\beta^\T \cdot \widehat{B} \cdot M_\beta,\quad \hbox{\rm{and}} \quad
R = M_\rho^\T \cdot \widehat{R} \cdot M_\rho,
\]
where $\widehat{L}$, $\widehat{B}$ and $\widehat{R}$ are positive definite matrices of sizes $58 \times 58$, $22 \times 22$ and $32\times 32$,
respectively, and $M_\lambda$, $M_\beta$ and $M_\rho$ are specific matrices of sizes $76\times 58$, $33\times 22$ and $43\times 32$, respectively.


\begin{cor}\label{cor:c7+flag}
If $\delta > 0$ and $\phi \in \Hom^+\left(\cA_{\Csev},\Real\right)$ such that
$\phi\left(\figR \,+\, \figB\right) \ge \frac12$ and $\phi\left(\figTRG\right) \ge \delta$,
then~$\phi\left(\figR\right) \ge \frac49$.
\end{cor}
\begin{proof} 
By Proposition~\ref{prop:c7+flag}, any $\psi \in \Hom^+\left(\cA_{\Csev},\Real\right)$ with $\psi\left(\figR \,+\, \figB\right) \ge 1/2$
must satisfy
\begin{equation}
\psi \left( \figTRG \,\times\, \left( 9\cdot\figR - 4 \right) \right) \ge 0
\mbox{.}
\label{eq:c7+flag}
\end{equation}
Since the triangle density in $\phi$ is at~least $\delta > 0$,
the density of red edges is at~least $4/9$.
\end{proof}

Now if Theorem~\ref{thm:c7+c} would be false, then there exists some absolute
constant $\eps_0 > 0$ and a convergent sequence of red/blue-colored almost
$\cF_\Csev$-free graphs $(G_i)_{i\in\Nat}$ of increasing orders $(n_i)$ such
that every $G_i$ has at most $(2/9 -\eps_0) \cdot \left(n_i\right)^2$ red
edges.  By Lemma~\ref{lem:stab}, the limit of the triangle densities in the
sequence must be positive.  However, Corollary~\ref{cor:c7+flag} yields that, for a
sufficiently large $i$, the graph $G_i$ has strictly more than $(2/9 -\eps_0)
\cdot \left(n_i\right)^2$ red edges; a contradiction.


\section{Stability of Constructions~\ref{cstn:cliquebip} and~\ref{cstn:c5}}
\label{sec:stability}

In this section, we show the corresponding stability for the extremal results
presented in Sections~\ref{sec:c5} and~\ref{sec:c7+}, and prove Theorems~\ref{thm:c5uniq} and~\ref{thm:c7+uniq}.
Let us start by recalling the following edge-colored variant of the induced graph removal lemma,
which is a direct consequence of~\cite[Theorem 1.5]{AusTao:2010}.
\begin{thm}
\label{thm:RL}
For any $\eps_{\RL}>0$ and a finite family of red/blue-colored graphs $\cF$, there exists $\delta_{\RL}>0$ such that
the following is true:
If $G$ is an $n$-vertex red/blue-colored graph with at most $\delta_{\RL} \cdot n^{v(F)}$ induced copies of $F$
for all $F \in \cF$, then the edge-set of $G$ can be modified on at most $\eps_{\RL}\cdot n^2$ pairs so that
no induced subgraph of the resulting graph is isomorphic to an element of $\cF$.
\end{thm}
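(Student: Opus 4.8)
The plan is to derive Theorem~\ref{thm:RL} directly from the repair/removal lemma for hereditary properties of colored graphs due to Austin and Tao~\cite{AusTao:2010}. First I would recast a red/blue-colored graph on vertex set $[n]$ as a $3$-edge-colored complete graph $K_n$, where the three colors of a pair record whether it is a red edge of $G$, a blue edge of $G$, or a non-edge of $G$; equivalently, this is a colored $2$-uniform hypergraph in the sense of \cite{AusTao:2010}. Under this encoding, an "induced red/blue-colored subgraph of $G$" corresponds exactly to the sub-coloring induced on a vertex subset, modifying one pair of $G$ corresponds to recoloring one pair of the $3$-colored $K_n$, and the property
\[
\cP \;:=\; \{\,G \text{ red/blue-colored} \;:\; G \text{ contains no induced copy of any } F\in\cF\,\}
\]
is \emph{hereditary}: deleting vertices cannot create a forbidden induced sub-configuration. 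Since $\cF$ is finite, $\cP$ is a hereditary property defined by finitely many forbidden induced sub-configurations, which is precisely the setting of \cite[Theorem~1.5]{AusTao:2010}.

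Next I would invoke that theorem in its repair form: for every $\eps_{\RL}>0$ there is $\delta_{\RL}>0$ such that every $n$-vertex $3$-colored complete graph that is $\eps_{\RL}$-far from $\cP$ (in normalized edit distance, the $n^2$ versus $\binom{n}{2}$ normalization being immaterial up to a factor $2$ absorbed into $\eps_{\RL}$) must contain at least $\delta_{\RL}\,n^{v(F)}$ induced copies of some $F\in\cF$. A single $\delta_{\RL}$ works uniformly over $\cF$ because $\{v(F):F\in\cF\}$ is bounded; one simply takes $\delta_{\RL}$ small enough. Taking the contrapositive gives exactly Theorem~\ref{thm:RL}: if $G$ has at most $\delta_{\RL}\,n^{v(F)}$ induced copies of $F$ for every $F\in\cF$, then $G$ is $\eps_{\RL}$-close to $\cP$, so its edge set can be modified on at most $\eps_{\RL}n^2$ pairs so that the resulting red/blue-colored graph has no induced subgraph isomorphic to an element of $\cF$.

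The only genuine work is bookkeeping: verifying that the encoding is faithful (a copy of $F$ as an induced red/blue-colored subgraph is the same object as the induced copy of its $3$-colored encoding, and a single pair-edit on $G$ is a single pair-edit on the encoding), and that the finitely many distinct exponents $v(F)$ can be absorbed into one constant — the small-$n$ cases being vacuous, since if $\delta_{\RL}n^{v(F)}<1$ then $G$ has \emph{no} induced copy of $F$ and needs no repair on its account. I expect the main (and essentially only) obstacle to be this translation into the colored-hypergraph language of \cite{AusTao:2010}; once it is set up, the statement is immediate. A self-contained alternative would be to iterate the single-forbidden-graph edge-colored induced removal lemma $|\cF|$ times, but then one has to thread the $\delta$'s carefully through the iteration, which is exactly the packaging that the hereditary-property formulation of \cite{AusTao:2010} provides, so I would prefer the direct citation.
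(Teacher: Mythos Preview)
Your proposal is correct and is exactly the paper's approach: the paper does not give a proof but simply states that Theorem~\ref{thm:RL} ``is a direct consequence of~\cite[Theorem 1.5]{AusTao:2010}.'' You have merely spelled out the (routine) translation into the colored-hypergraph setting and the contrapositive that the paper leaves implicit.
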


Since the structure of Construction~\ref{cstn:cliquebip} is simpler than the structure of Construction~\ref{cstn:c5},
we begin with proving Theorem~\ref{thm:c7+uniq}.

\subsection{Odd cycles of length at least seven --- stability of Construction~\ref{cstn:cliquebip}} 

This whole subsection is devoted to the proof of Theorem~\ref{thm:c7+uniq}.
Recall that our task is, given an integer $k\ge3$ and $\eps > 0$, to find an integer $n_0$ and $\delta > 0$
so that for any graph $G$ with $n\ge n_0$ vertices and $\left(1/4 \pm \delta\right)n^2$ edges out of which $\left(2/9 \pm \delta\right)n^2$ occur in
$C_{2k+1}$, it holds that $G$ is $\eps n^2$-close in the edit-distance to Construction~\ref{cstn:cliquebip}.
Since Construction~\ref{cstn:cliquebip} is $O(n)$-close to a disjoint union of $2n/3$-vertex clique
and complete balanced bipartite graph on the remaining $n/3$ vertices, we show that $G$ is $\eps n^2$-close 
to this construction.
 
Fix such a graph $G$.
Following the notation from the previous sections, we color the edges of $G$ that occur in some
copy of $C_{2k+1}$ red, and the other edges of $G$ blue.
Since $G$ has only $\left(2/9 \pm \delta\right)n^2$ red edges, Lemma~\ref{lem:stab} yields 
that $G$ contains at least $\delta_k \cdot n^3$ triangles.
Without loss of generality, we may assume $\eps \ll \delta_k$.
Throughout the whole proof, we will use two auxiliary positive constants $\eps_{\RL}$ and $\delta_{\RL}$,
which will be determined during the proof, obeying the hierarchy
$\delta \ll \delta_{\RL} \ll \eps_{\RL} \ll \eps$.

By Corollary~\ref{cor:supersat}, we can choose $n_0$ to be a large enough integer 
such that the graph $G$ contains only $\delta n^{v(F)}$ copies of $F$ for all  $F \in \cF_\Csev$.
We continue our exposition by showing that $G$ cannot contain too many
induced paths on four vertices. 
Let $\cP_4$ be the set of all the six possible red/blue-colorings of the $4$-vertex path; see also Figure~\ref{fig:P4}.
The following lemma directly follows from Proposition~\ref{prop:c7+flag}.

\begin{figure}
\begin{center}
\hfill
\foreach \n in {22,...,27}{ \includegraphics[scale=0.95,page=\n]{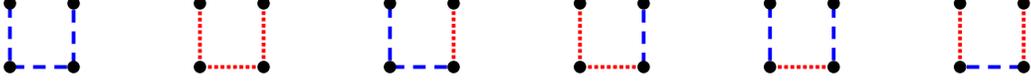}\hfill}
\hfill
\end{center}
\caption{The family $\cP_4$ containing all the $6$ non-isomorphic red/blue-colorings of $P_4$.}
\label{fig:P4}
\end{figure}

\begin{lem}
\label{lem:c7+uniq}
If $\phi \in \Hom^+\left(\cA_{\Csev},\Real\right)$ satisfies $\phi\left(\figR \right) = \frac49$ and $\phi\left(\figB\right) = \frac1{18}$,
then~$\phi\left(P\right) = 0$ for every $P \in \cP_4$.
\end{lem}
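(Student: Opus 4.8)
The plan is to read Lemma~\ref{lem:c7+uniq} off the semidefinite certificate that underlies inequality~(\ref{eq:c7+flag}); accordingly I would state and prove Claim~\ref{cl:c7+flag} in Appendix~\ref{apx:c7+flag} in a form that keeps the slack of that certificate explicit. First I would translate the hypotheses. The edge density of $\phi$ equals $\phi(\figR + \figB) = \tfrac49 + \tfrac1{18} = \tfrac12$, so the constraint $\phi(\figR + \figB) \ge \tfrac12$ used in the proof of~(\ref{eq:c7+flag}) is tight for $\phi$. Moreover, $\phi \in \Hom^+(\cA_\Csev, \Real)$ is an \emph{algebra} homomorphism, hence multiplicative, so that
\[
\phi\Bigl(\figTRG \,\times\, \bigl(9\cdot\figR - 4\bigr)\Bigr) \;=\; \phi\bigl(\figTRG\bigr)\cdot\bigl(9\,\phi(\figR) - 4\bigr) \;=\; \phi\bigl(\figTRG\bigr)\cdot 0 \;=\; 0,
\]
that is, inequality~(\ref{eq:c7+flag}) holds with equality for $\phi$.

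Next I would exploit the shape of the certificate. The proof of~(\ref{eq:c7+flag}) expresses $\figTRG \times (9\cdot\figR - 4)$, expanded to six-vertex $\cF_\Csev$-free flags, as a sum of nonnegative pieces: Cauchy--Schwarz terms $\unlab{A^\sigma \times A^\sigma}{\sigma}$ coming from positive semidefinite matrices indexed by types $\sigma$; a nonnegative multiple of $\bigl(\figR + \figB - \tfrac12\bigr)$ times a flag with nonnegative coefficients; and a genuine slack, namely a nonnegative combination of $\cF_\Csev$-free six-vertex flags. I would arrange the computer search so that this slack contains, for each of the six colourings $P \in \cP_4$, the six-vertex expansion of $c_P\cdot P$ for some $c_P > 0$. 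Applying $\phi$ to the resulting identity, each summand maps to a nonnegative number: the Cauchy--Schwarz terms by~(\ref{eq:flag:cauchyschwarz}), the constraint term to $0$ because $\phi(\figR+\figB)=\tfrac12$ and $\phi$ is multiplicative, and the slack because $\phi$ is nonnegative on all $\cF_\Csev$-free graphs. Since these nonnegative numbers sum to $\phi\bigl(\figTRG \times (9\cdot\figR - 4)\bigr)=0$, each of them vanishes; in particular $\sum_{P \in \cP_4} c_P\,\phi(P) = 0$ with all $c_P > 0$, and as $\phi(P) \ge 0$ this forces $\phi(P) = 0$ for every $P \in \cP_4$.

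The main obstacle is the last design choice in the certificate: one has to verify, inside the semidefinite program on $6$-vertex $\cF_\Csev$-free flags, that~(\ref{eq:c7+flag}) can still be certified after a fixed positive combination of all six $P_4$-colourings has been subtracted into the slack --- equivalently, that the optimum leaves room precisely in these directions. This is exactly the flag-algebraic reflection of the fact that (the limit of) Construction~\ref{cstn:cliquebip} contains no induced $P_4$ and is the only tight configuration here, and it is checked by a computer-assisted computation analogous to the one behind Lemma~\ref{lem:c7+flag}; the details, including the explicit certificate, are deferred to Appendix~\ref{apx:c7+flag}.
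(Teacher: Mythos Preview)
Your proposal is correct and follows essentially the same route as the paper. The paper states that the lemma ``directly follows from Claim~\ref{cl:c7+flag}'', and your write-up is precisely the argument that makes this work: the hypotheses force both the constraint $\phi(\figR+\figB-\figN)=0$ and the objective $\phi\bigl(\figTRG\times(9\cdot\figR-4)\bigr)=0$, so every nonnegative summand in the certificate vanishes under $\phi$, and the designed positivity in the slack then kills all induced $P_4$'s. The only cosmetic difference is how the slack condition is phrased: the paper requires $c_H>0$ for every $6$-vertex $H\in\cH_6$ containing an induced member of $\cP_4$, whereas you ask that the slack dominate a positive multiple of each $P\in\cP_4$ expanded to six vertices; either formulation yields $\phi(P)=0$.
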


Let $n_0$ be large enough so that the limit identity proven by flag algebras
in Proposition~\ref{prop:c7+flag} holds with an error of order $O(\delta)$ for 
any graph in question with at least $n_0$ vertices.
Therefore, for any $F \in \cP_4$, it holds that $p(F,G) = O(\delta) \ll \delta_{\RL}$.
Set $\cF$ to be the family containing 
\begin{itemize}
\item all the red/blue-colored triangles with at least one blue edge, i.e, the graphs from $B_3$,
\item all the $4$-vertex red/blue-colored graphs that contain a copy of $B_3^+$,
\item all the $5$-vertex red/blue-colored graphs that contain a copy of $B_3^*$, and
\item the six elements of $\cP_4$.
\end{itemize}
Let $\delta_{\RL}$ be the constant from Theorem~\ref{thm:RL} applied with the constant $\eps_{\RL}$ and the family $\cF$.
Since $\delta \ll \delta_{\RL}$, the induced removal lemma yields a graph $G'$ with no induced copy of $F$ for all $F \in \cF$,
and differs from the original graph $G$ on at most $\eps_{\RL}\cdot n^2$ pairs.
In other words, $G'$ contains no induced path on $4$ vertices and no (not necessarily induced) copy
of $B_3$, $B_3^+$ or $B_3^*$. It follows that the number of edges in $G'$ is $\left(1/4 \pm 2\eps_{\RL}\right) n^2$.
By choosing $\eps_{\RL}$ to be much smaller than $\eps$, it is enough to
show that $G'$ is $\left(\eps/2 \cdot n^2\right)$-close to
Construction~\ref{cstn:cliquebip}.

Let $B$ be the set of vertices of $G'$ that are incident to at least one blue edge
or have a neighbor that is incident to a blue edge,
$H$ the subgraph induced by $B$, and $A$ the vertices of $G'$ that are not in $B$.
Now we prove the following three claims describing the structure of $G'$ in terms of $A$ and $B$.

\begin{claim}
\label{cl:stab7bipH}
The graph $H$ is bipartite.
\end{claim}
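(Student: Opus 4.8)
The plan is to show that $H$ contains no odd cycle. Recall that $H$ is the subgraph of $G'$ induced by the set $B$ of vertices that are either incident to a blue edge or adjacent to a vertex incident to a blue edge, and that $G'$ contains no (not necessarily induced) copy of $B_3$, $B_3^+$, or $B_3^*$ and no induced $P_4$. First I would record the consequences of these forbidden configurations. Having no copy of $B_3$ means every triangle of $G'$ is entirely red. Having no copy of $B_3^+$ means no blue edge is incident to a triangle, i.e., if $uv$ is blue then $u$ and $v$ have no common neighbour; equivalently, a blue edge lies in no triangle at all. Having no copy of $B_3^*$ (a triangle with a pendant blue edge) means no blue edge has an endpoint lying on a triangle; combined with the previous point, every blue edge is "far" from every triangle, and in particular every blue edge is a bridge-like edge whose two endpoints have disjoint neighbourhoods.

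Next I would argue that $H$ has no triangle. Suppose $xyz$ is a triangle in $H$. By the absence of $B_3$ it is all-red. Each of $x,y,z$ lies in $B$, so each is either incident to a blue edge or adjacent to a vertex incident to a blue edge; but a vertex on a triangle cannot itself be incident to a blue edge (that would be a $B_3^+$-type configuration, a blue edge meeting a triangle at a vertex — precisely what $B_3^*$, or already $B_3^+$, forbids), so each of $x,y,z$ has a neighbour carrying a blue edge. That is, there is a vertex on the triangle adjacent to an endpoint of a blue edge — again a copy of $B_3^*$. This contradiction shows $H$ is triangle-free.

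To upgrade "triangle-free" to "bipartite" I would use the absence of induced $P_4$ in $G'$. A graph with no induced $P_4$ is a cograph, and connected cographs on at least two vertices are joins; a cograph is perfect, so a triangle-free cograph is bipartite. Concretely: take any connected component $H_0$ of $H$ with at least two vertices; since $H_0$ is $P_4$-free and connected, its complement is disconnected, so $H_0$ is a join $H_1 * H_2$ of two smaller cographs; as $H_0$ is triangle-free, at least one of $H_1,H_2$ must be edgeless, and in fact — recursing — $H_0$ is a complete bipartite graph, hence bipartite. Taking the union of bipartitions over all components gives a $2$-colouring of $H$. The main obstacle here is a bookkeeping one rather than a conceptual one: I must be careful that the forbidden structures are being used for $G'$ as subgraphs (not merely induced), so that a triangle together with a blue edge at distance one really does produce one of $B_3$, $B_3^+$, $B_3^*$ inside $G'$; once that is set up, the cograph argument closes the claim cleanly. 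Alternatively, and perhaps more robustly, one can avoid the structure theory altogether: if $H$ had a shortest odd cycle $C$ of length $\ge 5$, then in $G'$, being $P_4$-free, any four consecutive vertices of $C$ would have a chord, forcing a shorter odd closed walk and hence a shorter odd cycle or a triangle, contradicting either minimality or triangle-freeness; so the shortest odd cycle in $H$, if any, is a triangle, which we have already excluded.
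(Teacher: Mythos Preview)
Your argument is correct and follows essentially the same route as the paper: rule out triangles in $H$ using $B_3$, $B_3^+$, $B_3^*$ in succession, then use $P_4$-freeness to reduce any odd cycle to a triangle. Two small remarks: your parenthetical descriptions of $B_3^+$ and $B_3^*$ in the first paragraph are swapped (the statement ``if $uv$ is blue then $u$ and $v$ have no common neighbour'' is the $B_3$ condition, and ``a triangle with a pendant blue edge'' is $B_3^+$, while $B_3^*$ is the $5$-vertex graph consisting of a triangle, a pendant edge, and a further pendant blue edge), but your main argument in the second paragraph invokes each forbidden configuration correctly, so this does not affect validity; and your two alternatives for the $P_4$-free step (cograph perfection, or the shortest-odd-cycle chord argument) both work and simply make explicit what the paper asserts in one line.
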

\begin{proof}
Suppose for contradiction there is an odd cycle in $H$. Since $H$ does not
contain any induced path on four vertices, $H$ must contain a triangle $xyz$.
Since $H$ is $B_3$-free, all three edges of the triangle must be red.
Also, neither $x$ nor $y$ nor $z$ is incident to a blue edge, because $H$ is $B_3^+$-free.
However, any of the three vertices, say $x$, is incident to a vertex $w$ such that $w$ is then
incident to a blue edge so $H$ fails to be $B_3^*$-free; a contradiction.
\end{proof}

\begin{claim}
\label{cl:stab7sizeA}
$A$ has size at least $\delta_k \cdot n$.
\end{claim}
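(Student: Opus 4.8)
The plan is to show that if $A$ were small, then $G'$ would not contain enough triangles, contradicting the fact that $G$ (and hence $G'$, since they differ on only $\eps_{\RL} n^2 \ll \delta_k n^3$-many pairs) contains $\Omega(\delta_k n^3)$ triangles. First I would record that $G'$ contains at least $\delta_k n^3 - \eps_{\RL} n^3 \ge \frac12 \delta_k n^3$ triangles, using $\eps_{\RL} \ll \delta_k$ and the crude bound that changing one pair destroys at most $n$ triangles. Next I would observe where triangles in $G'$ can live: by Claim~\ref{cl:stab7bipH} the subgraph $H$ induced by $B$ is bipartite and hence triangle-free, so every triangle of $G'$ uses at least one vertex of $A$.

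The heart of the argument is to control the edges incident to $A$. By definition of $B$, every edge of $G'$ incident to a vertex of $A$ is red, and moreover its other endpoint is also in $A$ — because if $a \in A$ had a neighbour $b \in B$, then $b$ is incident to a blue edge or adjacent to a vertex incident to a blue edge; in the former case $a$ would itself be adjacent to a vertex incident to a blue edge, forcing $a \in B$, and in the latter case $a$ would be at distance two from a blue edge, which still need not put $a$ in $B$ — so I must be careful here and instead argue via the forbidden subgraphs. The clean way: suppose $a \in A$ and $a$ lies in a triangle $axy$ of $G'$. All vertices of $A$ are incident only to red edges and have no neighbour incident to a blue edge. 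If $x$ or $y$ were in $B$, say $x$, then $x$ is incident to a blue edge or adjacent to one; the edge $ax$ is red (as $a \in A$), so this would make $a$ adjacent to a vertex ($x$) which is either incident to or adjacent to a blue edge — the first possibility contradicts $a \notin B$ directly, and the second is exactly the configuration $B_3^*$ (a red triangle-free path to a blue edge) which is forbidden in $G'$. Hence every triangle through a vertex of $A$ lies entirely inside $A$.

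Therefore all $\ge \frac12\delta_k n^3$ triangles of $G'$ lie in $G'[A]$, which forces $\binom{|A|}{3} \ge \frac12 \delta_k n^3$, and in particular $|A| \ge \delta_k n$ (with plenty of room to spare, since in fact $|A| \gtrsim \delta_k^{1/3} n$; the weaker bound $\delta_k n$ is all the claim asks for and is cleanest to state). I would close by noting that the only slightly delicate point — the one I expect to be the main obstacle — is the case analysis showing a triangle through $A$ cannot escape $A$, which hinges on using all three forbidden configurations $B_3$, $B_3^+$, $B_3^*$ together exactly as in the proof of Claim~\ref{cl:stab7bipH}; everything else is a one-line counting estimate. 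In fact the argument of Claim~\ref{cl:stab7bipH} already shows $G'[B]$ is triangle-free, so it suffices to additionally observe that a triangle with at least one vertex in $A$ and at least one in $B$ would produce a red edge from $A$ to a vertex of $B$, and trace through the definition of $B$ to reach one of $B_3^+$ or $B_3^*$; this gives the containment of all triangles in $G'[A]$ and hence the bound.
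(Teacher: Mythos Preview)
Your approach is correct and matches the paper's: lower-bound the triangle count in $G'$, argue that every triangle must lie entirely in $A$ (via the $B_3$-, $B_3^+$-, and $B_3^*$-freeness of $G'$), and extract the bound on $|A|$; the paper compresses the middle step into the single clause ``triangles in $G'$ can lie only inside the set $A$'' without further justification, whereas you spell it out. One small correction: your parenthetical description of $B_3^*$ as ``a red triangle-free path to a blue edge'' is off --- $B_3^*$ is a red triangle together with a pendant edge whose far endpoint carries a blue edge --- but since the triangle $axy$ is already present in your configuration, a copy of $B_3^*$ genuinely appears and the argument goes through.
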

\begin{proof}
The number of triangles in $G'$ is at least $\left(\delta_k - \eps_\RL \right) n^3$.
Since triangles in $G'$ can lie only inside the set $A$, $|A| \ge \left(\delta_k - \eps_\RL \right)^{1/3} \cdot n > \delta_k \cdot n$.
\end{proof}

\begin{claim}
\label{cl:stab7noABedge}
There are no edges between $A$ and $B$.
\end{claim}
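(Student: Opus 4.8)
The plan is to show that any edge between $A$ and $B$ would force an induced $P_4$, a $B_3^+$, or a $B_3^*$ in $G'$, contradicting how $G'$ was obtained from the removal lemma. So suppose for contradiction that there is an edge $ab$ with $a \in A$ and $b \in B$. First I would recall what membership in $A$ and $B$ means: every vertex of $A$ is incident only to red edges and moreover has no neighbor incident to a blue edge, whereas $b \in B$ is either incident to a blue edge itself or has a neighbor incident to a blue edge. Since $a$'s neighbor $b$ would then have to not be incident to a blue edge (as $a \in A$), the vertex $b$ must instead have a neighbor $c$ which is incident to a blue edge; say $cd$ is blue. The vertex $a$ is not incident to a blue edge and has no neighbor incident to one, so $a \ne c$, $a \ne d$, and $a$ is not adjacent to $c$ or to $d$.

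Next I would trace through the short path $a - b - c - d$ (or, if $b = c$, the path $a - b - d$ together with the blue edge $bd$, which already gives a $B_3^+$-like or $B_3^*$-like configuration depending on the colour of $ab$). In the generic case $b \ne c$: the edge $cd$ is blue, so $c$ and $d$ both lie in $B$; and we have established $a \notin \{c,d\}$ with $a$ adjacent to neither. If $b$ is also not adjacent to $d$, then $\{a,b,c,d\}$ induces a path on four vertices (regardless of how $ab$, $bc$ are coloured, each recolouring being one of the six members of $\cP_4$), contradicting that $G'$ has no induced $P_4$. If instead $b$ is adjacent to $d$, then $\{b,c,d\}$ is a triangle one of whose edges ($cd$) is blue, so $\{b,c,d\}$ is a copy of $B_3$; and since $a$ is a further vertex adjacent to $b$ (via the edge $ab$), the set $\{a,b,c,d\}$ contains a copy of $B_3^+$, again contradicting the construction of $G'$. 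In every case we reach a contradiction.

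I would organize this as a small case analysis on whether $b = c$ and, when $b \ne c$, on whether $b$ and $d$ are adjacent, checking in each branch that the resulting $4$- or $5$-vertex subgraph is, after any admissible recolouring, isomorphic to a member of $\cF$ (an element of $\cP_4$, or a supergraph of $B_3^+$ or $B_3^*$). The only mildly delicate point is to be careful that a vertex in $A$ really is at red-distance $2$ from every blue edge — this is exactly the definition of $B$ as "vertices incident to a blue edge \emph{or} adjacent to such a vertex", so $A = V(G') \setminus B$ is insulated from blue by a buffer of one edge, which is what makes the induced $P_4$ argument go through. The main obstacle is purely bookkeeping: matching each configuration to the correct forbidden subgraph and making sure no coloured edge accidentally allows the subgraph to avoid all members of $\cF$; there is no substantive combinatorial difficulty beyond that.
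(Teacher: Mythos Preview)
Your argument is correct and follows essentially the same approach as the paper. The paper's proof is slightly more streamlined: instead of splitting into cases on whether $b$ is adjacent to $d$, it invokes the already-proven Claim~\ref{cl:stab7bipH} (that $H=G'[B]$ is bipartite) to rule out the edge $bd$ immediately, and then concludes that $\{a,b,c,d\}$ induces a $P_4$. Your direct appeal to $B_3$ (or $B_3^+$)-freeness in the case $bd\in E(G')$ is an equally valid alternative. One small clean-up: the case ``$b=c$'' cannot occur, since $c$ is by definition a neighbor of $b$ incident to a blue edge while $b$ itself is not incident to any blue edge; you can drop that branch entirely.
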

\begin{proof}
Suppose there is an edge connecting two vertices $u \in A$ and $v \in B$.
Since $u \notin B$, the vertex $v$ is not incident to any blue edge, however,
it has a neighbor $w$, which then has a neighbor $x$ such that the edge $\{w,x\}$ has
blue color. Since $H$ is bipartite, $x$ is not connected to $v$. Hence
$\{u,v,w,x\}$ induces a $4$-vertex path in $G'$, which is a contradiction.
\end{proof}

Let $a := |A|/n$ and $b := 1-a = |B|/n$. In Claims~\ref{cl:stab7bipH}-\ref{cl:stab7noABedge}, we have shown
that the set $B$ induces a bipartite graph and there are no edges in $G'$ between $A$ and $B$.
Therefore, the edge-density of $G'$ is bounded by a function $f(a) := a^2 + (1-a)^2/2$.
The following observation directly follows from continuity of $f$, $f$ having
no local maximum on $(0,1)$, and compactness of $[0,1]$.
\begin{obs}
\label{obs:c7+stabopt}
The function $f(a)$ for $a \in [\delta_k, 2/3]$ 
has a unique maximum $1/2$ for $a=2/3$. Moreover, if the value of the function
for $a \in \left[\delta_k , 2/3+O(\eps_{\RL})\right]$ is close to $1/2$, then the value of $a$ is close to $2/3$.
\end{obs}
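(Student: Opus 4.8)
The plan is to treat $f(a)=a^{2}+(1-a)^{2}/2$ as an explicit quadratic and argue by elementary calculus, making the estimates quantitative enough for the ``moreover'' part. First I would compute $f'(a)=2a-(1-a)=3a-1$, so the only critical point of $f$ on $\Real$ is $a=1/3$; since $f''(a)=3>0$ everywhere, $f$ is strictly convex, $a=1/3$ is its strict global minimum, and $f$ is strictly decreasing on $[\delta_k,1/3]$ and strictly increasing on $[1/3,2/3]$. Hence the maximum of $f$ over the compact interval $[\delta_k,2/3]$ is attained at an endpoint, and a direct evaluation gives $f(2/3)=4/9+1/18=1/2$ while $f(\delta_k)=1/2-\delta_k+\tfrac32\delta_k^{2}<1/2$ (using $0<\delta_k<1/3$). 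This establishes that the maximum equals $1/2$ and is attained only at $a=2/3$.

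For the ``moreover'' part I would repeat the computation on the slightly larger interval $[\delta_k,2/3+O(\eps_{\RL})]$, on which $f$ is still strictly convex, decreasing on $[\delta_k,1/3]$ and increasing on $[1/3,2/3+O(\eps_{\RL})]$. On $[\delta_k,1/3]$ we have $f(a)\le f(\delta_k)=1/2-\delta_k+\tfrac32\delta_k^{2}\le 1/2-\delta_k/2$, so there $f$ is bounded away from $1/2$ by a constant depending only on $k$. On $[1/3,2/3]$, writing $a=2/3-\beta$ with $\beta\in[0,1/3]$, one computes $f(2/3-\beta)=1/2-\beta+\tfrac32\beta^{2}\le 1/2-\beta/2$, so $f(a)\ge 1/2-\gamma$ forces $\beta\le 2\gamma$, i.e.\ $a\ge 2/3-2\gamma$. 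Finally, any $a\in[2/3,2/3+O(\eps_{\RL})]$ is automatically within $O(\eps_{\RL})$ of $2/3$. Combining the three ranges: provided $\gamma$ and $\eps_{\RL}$ are small enough in terms of $\delta_k$, the inequality $f(a)\ge 1/2-\gamma$ forces $a$ into an arbitrarily small neighbourhood of $2/3$, which is exactly the assertion.

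There is no real obstacle here, as both parts are elementary; the only point requiring a little care is that the threshold below which ``$f(a)$ is close to $1/2$'' must be chosen small relative to $\delta_k$, so that the left endpoint $a=\delta_k$ and the entire interval $[\delta_k,1/3]$ get excluded. This is consistent with the hierarchy $\delta\ll\delta_{\RL}\ll\eps_{\RL}\ll\eps\ll\delta_k$ already fixed in this section, so the constants match up without further work.
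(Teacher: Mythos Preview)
Your argument is correct and follows essentially the same route as the paper, which simply notes that the observation ``directly follows from continuity of $f$, $f$ having no local maximum on $(0,1)$, and compactness of $[0,1]$''; you have just made this explicit by computing $f'$ and $f''$ and supplying quantitative bounds. Your remark that the closeness threshold must be small relative to $\delta_k$ is exactly the point, and it is consistent with the hierarchy $\eps_{\RL}\ll\eps\ll\delta_k$ fixed earlier in the section.
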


Since the number of edges of $G'$ is $\left(1/4 \pm 2\eps_{\RL}\right) n^2$, Observation~\ref{obs:c7+stabopt} yields that $a$ must be close to $2/3$.
It follows that $|A| = \left(2/3 \pm O(\eps_{\RL})\right) n$ and $|B| = \left(1/3 \pm O(\eps_\RL)\right) n$.
Moreover, the bipartite graph $H$ must have parts of sizes $\left(1/6 \pm O(\eps_\RL)\right) n$, and all but $O(\eps_\RL) n^2$ pairs between
the parts are joined by an edge. Finally, the number of non-adjacent pairs with both endpoints in $A$ is at most $O(\eps_\RL) n^2$.
Since $\eps_\RL \ll \eps$, we can easily modify $\eps/2 \cdot n^2$ pairs of $G'$ in order to obtain Construction~\ref{cstn:cliquebip}.
This finishes the proof of Theorem~\ref{thm:c7+uniq}.


\subsection{The pentagon case --- stability of Construction~\ref{cstn:c5}} 

We proceed very similarly as in the proof of Theorem~\ref{thm:c7+uniq}, but this
time, the arguments are tailored to Construction~\ref{cstn:c5}.
The graph $G$ has less than $0.22n^2$ red edges so Lemma~\ref{lem:stab} yields 
existence of at least $\delta_2 \cdot n^3$ triangles in $G$.
Without loss of generality, we may assume $\eps \ll \delta_2$.
As in the previous subsection, we will use two constants $\eps_{\RL} > 0$ and $\delta_{\RL} > 0$,
and we assume they obey the hierarchy
$\delta \ll \delta_{\RL} \ll \eps_{\RL} \ll \eps$.

Let $C_4^X$ be the red/blue-colored $4$-cycle with exactly one blue edge $\{u,v\}$ and a pendant red edge adjacent neither to $u$ nor to $v$,
and $\cP_5$ the set of all the ten possible red/blue-colorings of the $5$-vertex path; see also Figure~\ref{fig:C4X:P5}.
The moreover part of Proposition~\ref{prop:c5flag} yields the following lemma.

\begin{figure}
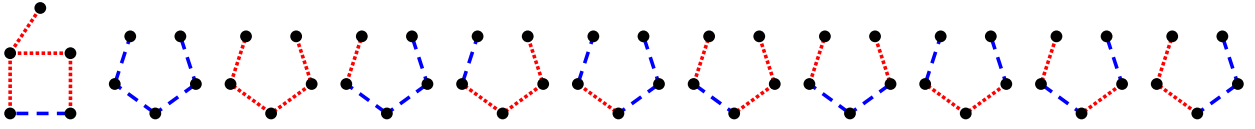

\begin{center}
\includegraphics[scale=0.9,page=28]{EiC-fig}
\foreach \n in {29,...,38}{ \hfill \includegraphics[scale=0.9,page=\n]{EiC-fig}}
\end{center}
\caption{The red/blue-colored graph $C_4^X$ and the family $\cP_5$ containing all the $10$ non-isomorphic red/blue-colorings of $P_5$.}
\label{fig:C4X:P5}
\end{figure}

\begin{lem}\label{lem:c5uniq}
If $\phi \in \Hom^+\left(\cA_{\Cfive},\Real\right)$ that satisfy
$\phi\left(\figR \right) = \frac{\left(2+\sqrt{2}\right)}8$ and $\phi\left(\figB\right) = \frac{2-\sqrt{2}}8$,
then $\phi\left(P\right) = 0$ for every $P \in \left\{C_4^X\right\} \cup \cP_5$.
\end{lem}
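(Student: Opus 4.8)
The plan is to read off the conclusion from the equality case of the semidefinite certificate that proves inequality~(\ref{eq:c5flag}) in Lemma~\ref{lem:c5flag}. The first observation is that a homomorphism $\phi$ as in the statement is \emph{extremal}: it has edge density
\[
\phi\!\left(\figR\right)+\phi\!\left(\figB\right)=\frac{2+\sqrt{2}}{8}+\frac{2-\sqrt{2}}{8}=\frac12 ,
\]
so it meets the hypothesis $\psi\!\left(\figR+\figB\right)\ge\frac12$ of~(\ref{eq:c5flag}) with equality, and its red density $\phi\!\left(\figR\right)=\frac{2+\sqrt{2}}{8}$ is exactly the bound of Lemma~\ref{lem:c5flag}. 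The second factor of the product in~(\ref{eq:c5flag}) is an affine function of $\phi\!\left(\figR\right)$ vanishing precisely at $\frac{2+\sqrt 2}{8}$, so $\phi$ annihilates the entire left-hand side of~(\ref{eq:c5flag}); it also annihilates the edge-density slack $\figR+\figB-\tfrac12$.

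Next I would use that the computation in Appendix~\ref{apx:c5flag} (Claim~\ref{cl:c5flag}) establishes not just~(\ref{eq:c5flag}) but a strengthening of it: working with densities of $6$-vertex $\cF_{\Cfive}$-free graphs, it produces an identity among the defining relations of $\cA_{\Cfive}$ of the form
\[
\bigl(\text{left-hand side of~(\ref{eq:c5flag})}\bigr)\;-\;\sum_{P\in\{C_4^X\}\cup\cP_5}\mu_P\cdot P\;=\;\sum_{\sigma}\unlab{A^\sigma\times A^\sigma}{\sigma}\;+\;\bigl(\text{nonnegative slack that is a multiple of }\figR+\figB-\tfrac12\bigr),
\]
with every $\mu_P>0$, every $A^\sigma\in\cA^\sigma_{\Cfive}$, and the sum over $\sigma$ ranging over a fixed finite list of types. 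Both summands on the right are $\ge 0$ under every $\psi\in\Hom^+(\cA_{\Cfive},\Real)$ with $\psi\!\left(\figR+\figB\right)\ge\frac12$, by~(\ref{eq:flag:cauchyschwarz}) and the sign of the edge slack. Evaluating this identity at $\phi$, the left-hand side equals $-\sum_P\mu_P\,\phi(P)$ by the first paragraph, while the right-hand side is nonnegative; therefore $\sum_P\mu_P\,\phi(P)\le 0$, and since each $\mu_P>0$ and each $\phi(P)\ge0$ we conclude $\phi(P)=0$ for all $P\in\{C_4^X\}\cup\cP_5$, which is the lemma. (If it is more convenient to organize the appendix certificate in the plain form of~(\ref{eq:c5flag}), an equivalent route is to expand each $5$-vertex $P$ into $6$-vertex graphs via the relations of $\cA_{\Cfive}$ and to arrange that every $6$-vertex graph containing a copy of some $P$ occurs with a strictly positive coefficient in the graph-slack; equality at $\phi$ then forces $\phi$ to vanish on all those graphs, hence on each $P$.)

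The part that requires real work, and the main obstacle, is the appendix computation itself: one must exhibit and verify, in exact arithmetic, a single semidefinite solution that simultaneously yields the sharp constant $\frac{2+\sqrt2}{8}$ \emph{and} carries positive weight on all ten colored $5$-vertex paths and on $C_4^X$. This is considerably more delicate than a bare flag-algebra bound, because the extremal problem has an unusually rich set of tight homomorphisms --- it contains the blow-up limit of Construction~\ref{cstn:c5} together with the complete balanced bipartite limit --- so the positive semidefinite blocks of the certificate have large kernels, and these have to be pinned down precisely in order to leave room for the extra terms $\mu_P\cdot P$ without spoiling the tightness of the bound. The analogous statement for longer odd cycles, Lemma~\ref{lem:c7+uniq}, is proved in exactly the same way from Claim~\ref{cl:c7+flag}.
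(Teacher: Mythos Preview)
Your argument is correct and is essentially the paper's approach: the paper simply says the lemma ``follows from Claim~\ref{cl:c5flag}'', and what you have written is precisely the deduction one makes from that claim. Your parenthetical formulation---forcing every $6$-vertex $\cF_{\Cfive}$-free graph containing an induced copy of some $P\in\{C_4^X\}\cup\cP_5$ to carry strictly positive slack coefficient $c_H$, then using $P=\sum_H p(P,H)\,H$---is exactly how Claim~\ref{cl:c5flag} is stated; your displayed version with explicit terms $\mu_P\cdot P$ is an equivalent reorganization obtained by absorbing a small positive multiple of each $P$ into the $c_H$'s.
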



Let $n_0$ be a large enough integer so that the graph $G$ contains only $\delta n^{v(F)}$ copies of $F$ for all  $F \in \cF_\Cfive$,
and the limit identity proven by flag algebras in Proposition~\ref{prop:c5flag} holds with an error of order $O(\delta)$ for 
any graph with at least $n_0$ vertices.
Set $\cF$ to be the family containing 
\begin{itemize}
\item all the red/blue-colored triangles with at least one blue edge, 
\item all the $4$-vertex red/blue-colored graphs that contain a copy of $B_3^+$,
\item all the $5$-vertex red/blue-colored graphs that contain a copy of $B_5$, 
\item the red/blue-colored graph $C_4^X$ and the ten elements of $\cP_5$.
\end{itemize}
Let $\delta_{\RL}$ be the constant from Theorem~\ref{thm:RL} applied with the constant $\eps_{\RL}$ and the family $\cF$.
Since $\delta \ll \delta_{\RL}$, by induced removal lemma there is a graph $G'$ differing from 
$G$ on at most $\eps_{\RL}\cdot n^2$ pairs that has no induced copy of $F$ for all $F \in \cF$.
Clearly, $G'$ has $\left(1/4 \pm 2\eps_{\RL}\right) n^2$ edges.
It remains to show that $G'$ is $\left(\eps/2 \cdot n^2\right)$-close to
Construction~\ref{cstn:c5}.

We begin with partitioning the vertices of $G'$ into three parts $X,Y,Z$ based
on their distance to vertices incident to blue edges.
Let $X$ be the set of vertices of $G'$ that are incident to at least one blue edge,
$Y$ the vertices that are incident only to red edges and have at least one neighbor
in $X$, and $Z$ the vertices of $G'$ that are neither in $X$ nor in $Y$.
We define $H$ to be the subgraph of $G'$ induced by $X \cup Y$.
Furthermore, let $X_0 \subseteq X$ be the set of all the vertices $x$
such that the connected component of $G'$ containing $x$ contains no
vertex from $Z$. Analogously, $Y_0 \subseteq Y$ are all the vertices
such that their connected component does not contain any vertex from $Z$.
Set $X_1 := X\setminus X_0$ and $Y_1 := Y\setminus Y_0$.
Having in mind the aim is to prove that $G'$ is close to Construction~\ref{cstn:c5},
we proceed with the following series of claims that describe the structure of $G'$.

\begin{claim}\label{cl:stab5bipH}
The graph $H$ is bipartite.
\end{claim}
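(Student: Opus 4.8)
The plan is to mimic the argument used for Claim~\ref{cl:stab7bipH}, but now in the richer graph $H = G'[X \cup Y]$, where instead of forbidding induced $P_4$'s we forbid induced $P_5$'s, $C_4^X$, and all the small graphs in $\cF$. First I would argue that $H$ cannot contain any short odd cycle with a "close" blue edge, and then bootstrap this to rule out all odd cycles. Concretely, suppose for contradiction $H$ contains an odd cycle, and take a shortest one, say $O$. Since $H$ is induced-$P_5$-free (it has no induced path on five vertices), the length of $O$ is at most $5$: a shortest odd cycle of length $\ge 7$ would contain an induced path on five of its vertices, and such a path is not destroyed by chords because the cycle is chordless by minimality. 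So $O$ is either a triangle or a (chordless) pentagon.

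\medskip

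The triangle case is handled exactly as before: a triangle $xyz$ in $H$ must be all-red since $H$ contains no copy of $B_3$; none of $x,y,z$ is incident to a blue edge since $H$ has no copy of $B_3^+$; but every vertex of $X\cup Y$ is within distance $1$ of a blue edge by construction, so some vertex of the triangle has a neighbour incident to a blue edge, producing a copy of $B_3^*$ (for the $C_5$ setting we instead use whichever graph from $\cF_\Cfive$ plays that role) — contradiction. It remains to rule out a chordless pentagon $v_1v_2v_3v_4v_5$ in $H$. All five edges are red (a blue edge on the pentagon together with the two opposite vertices would give $C_5$ with a blue edge, forbidden, or would already be a forbidden colored $C_5\in\cF_\Cfive$). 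Now some vertex of the pentagon, say $v_i$, has a neighbour $w$ incident to a blue edge $\{w,w'\}$. Since the pentagon is chordless, I analyze how $w$ and $w'$ attach to it: using that $H$ has no induced $P_5$ and no copy of $C_4^X$, together with bipartiteness-type parity obstructions, one shows $w$ (or $w'$) and a suitable sub-path of the pentagon form either an induced $P_5$, a $C_4^X$, or a shorter odd cycle, in every case a contradiction. This last bit is the only place that genuinely uses $C_4^X$ and the length-$5$ forbidden configurations, so it is the technical heart of the claim.

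\medskip

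The main obstacle I anticipate is exactly this pentagon-plus-pendant-blue-edge analysis: one has to carefully enumerate the possible adjacencies of the two endpoints of a nearby blue edge to the chordless pentagon, and in each sub-case exhibit a forbidden induced subgraph. The parity of the pentagon is what makes it work — a single extra vertex adjacent to the pentagon either creates a $4$-cycle (hence, once we track edge colors, a $C_4^X$ or a colored $C_5$) or leaves an induced $P_5$ — but spelling this out cleanly is fiddly. Once the pentagon case is eliminated, $H$ has no odd cycle and is therefore bipartite, completing the proof of the claim.
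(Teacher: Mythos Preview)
Your overall strategy---reduce to a triangle or an induced pentagon via the no-induced-$P_5$ condition, then derive a forbidden configuration---matches the paper. However, your triangle case has a genuine gap. You write that the triangle lies in $Y$, so some vertex has a neighbour incident to a blue edge, ``producing a copy of $B_3^*$''. But $B_3^*$ is \emph{not} in the forbidden family here: $\cF_{\Cfive} = \{B_3, B_3^+, B_5\}$, and the family $\cF$ used after the removal lemma consists only of (super)graphs of $B_3$, $B_3^+$, $B_5$, together with $C_4^X$ and $\cP_5$. Your parenthetical ``we instead use whichever graph from $\cF_{\Cfive}$ plays that role'' papers over the difficulty, because no single graph does. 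The paper's argument is genuinely more involved: for the triangle $u,v,w \subseteq Y$ one takes neighbours $x_u,x_v \in X$ of $u,v$ respectively, shows via $B_3^+$-freeness that $x_u$ is non-adjacent to $v,w$ (and symmetrically for $x_v$), then takes the blue neighbour $x$ of $x_u$ and uses $B_5$-freeness to rule out edges from $x$ to $v,w,x_v$. The no-induced-$P_5$ condition then forces $x_u x_v$ to be an edge, after which $w,v,x_v,x_u,x$ is an induced $P_5$---the contradiction comes from $\cP_5$, not from any $B_3^*$-type configuration.

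Your pentagon case is in the right spirit but misidentifies the key tool: the paper's proof of this claim does not use $C_4^X$ at all. The pentagon case splits according to whether some pentagon vertex lies in $X$ (then the blue neighbour along that edge, together with $B_3$- and $B_5$-freeness, already yields an induced $P_5$ on the pentagon plus one extra vertex) or all five lie in $Y$ (then one takes an $X$-neighbour $x_1$ of a pentagon vertex; triangle-freeness of $H$, established above, rules out adjacencies to the two nearest pentagon vertices, and adjacency to either far vertex would create a pentagon with a vertex in $X$, reducing to the previous subcase). Either way the contradiction is an induced $P_5$, not $C_4^X$.
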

\begin{proof}
Suppose for contradiction $H$ contains an odd cycle. Since $H$ does not
contain any induced $P_5$, $H$ contains either a triangle, or an induced pentagon.
In both cases, all the edges of the cycle must be red.

First, suppose that $H$ contains a triangle $u,v,w$. If at least one of the
three vertices is incident to a blue edge, we would have found a copy of $B_3^+$,
which is not possible. Therefore, $\{u,v,w\} \subseteq Y$.
Let $x_u \in X$ be a neighbor of $u$. If $v$ would be a neighbor of $x_u$ as well,
then $u,v,x_u$ and a blue edge going out from $x_u$ would create a copy of $B_3^+$.
Therefore, $\{x_u,v\}$ is not an edge. By the same reasoning, 
$\{x_u,w\}$ is not an edge and the vertex $v$ has neighbor
$x_v \in X$ such that neither $\{x_v,u\}$ nor $\{x_v,w\}$ are edges.
Now let $x \in X$ be a vertex connected to $x_u$ by a blue edge.
Since $H$ is $B_3$-free, $x$ is not a neighbor of $u$,
and since $H$ is $B_5$-free, $x$ is neither a neighbor of $v$ nor $x_v$ nor $w$.
The path $x,x_u,u,v,x_v$ cannot be induced and therefore there is an edge
between $x_u$ and $x_v$. But then the vertices $w,v,x_v,x_u,x$ span an induced
$P_5$, which is a contradiction.
For the rest of the proof, we will assume that $H$ is triangle-free.

Now suppose $H$ has an induced pentagon $u_1,u_2,u_3,u_4,u_5$
so that one of its vertices, say $u_1$, is incident to a blue edge.
Let $x_1 \in X$ be one of the neighbors of $u_1$ that is joined to $u_1$ by a
blue edge. If $x_1$ would be joined by an edge either to $u_2$ or $u_5$, then
we have found a copy of $B_3$.  Since $H$ is also $B_5$-free, the vertex $x_1$
cannot be joined by an edge to $u_3$ or $u_4$. Therefore, $x_1,u_1,u_2,u_3,u_4$
is an induced path of length four, a contradiction. 

Finally, suppose there is an induced pentagon $u_1,u_2,u_3,u_4,u_5$ such that all
the edges incident to the five vertices are red. The vertex $u_1$ must have a neighbor,
say $x_1$, that is incident to a blue edge. We already know that $H$ is triangle-free,
so $x_1$ is adjacent neither to $u_2$, nor to $u_5$. Also, if $x_1$ would be a neighbor
of $u_3$, then $u_1,x_1,u_3,u_4,u_5$ is a $5$-cycle with one endpoint incident to a blue
edge, which we already excluded in the previous paragraph. Analogously, $x_1$ is not adjacent
to $u_4$, and hence $x_1,u_1,u_2,u_3,u_4$ is an induced path of length four; a contradiction.
\end{proof}

\begin{claim}
$Z$ has size at least $\delta_2/2 \cdot n$.
\end{claim}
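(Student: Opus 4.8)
The plan is to derive the bound from two ingredients that are already in place: first, $G'$ inherits a positive density of triangles from $G$; and second, by Claim~\ref{cl:stab5bipH} the graph $H = G'[X\cup Y]$ is bipartite, hence triangle-free, so every triangle of $G'$ must hit $Z = V(G')\setminus(X\cup Y)$. Combining these two facts forces $Z$ to be linear in $n$.

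Concretely, I would proceed in two short steps. Step one is a counting estimate for the number of triangles of $G'$: Lemma~\ref{lem:stab} gives at least $\delta_2\, n^3$ triangles in $G$, and since $G'$ is obtained from $G$ by modifying at most $\eps_{\RL}\, n^2$ pairs and flipping a single pair changes the number of triangles by at most $n$, the graph $G'$ still contains at least $(\delta_2-\eps_{\RL})\, n^3$ triangles. Step two localizes them: by Claim~\ref{cl:stab5bipH} the subgraph $H$ has no triangle, so each of these triangles uses at least one vertex of $Z$. The number of unordered triples of vertices of $G'$ meeting $Z$ is at most $|Z|\cdot\binom{n-1}{2} < |Z|\, n^2/2$, and therefore
\[
\frac{|Z|\, n^2}{2} \;>\; (\delta_2-\eps_{\RL})\, n^3 ,
\]
which yields $|Z| > 2(\delta_2-\eps_{\RL})\, n > \tfrac{\delta_2}{2}\, n$, where the last inequality holds once $\eps_{\RL}$ is small compared to $\delta_2$, which we may assume since $\delta\ll\delta_{\RL}\ll\eps_{\RL}\ll\eps\ll\delta_2$.

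The one point that requires care — and the reason this argument looks different from the analogous Claim~\ref{cl:stab7sizeA} in the longer-cycle case — is that here we cannot claim that \emph{all three} vertices of a triangle lie in $Z$: in Construction~\ref{cstn:c5} the triangles formed by one vertex of the part $C$ and two of the part $D$ straddle $Y$ and $Z$, so a cube-root estimate of the form $\binom{|Z|}{3}\ge(\delta_2-\eps_{\RL})\, n^3$ is false in this setting. What bipartiteness of $H$ does buy us — that every triangle meets $Z$ in at least one vertex — is already enough for a linear lower bound on $|Z|$, which is all the claim asserts, so no further structural input is needed.
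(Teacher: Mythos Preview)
Your proof is correct and follows essentially the same approach as the paper: the paper also argues that $G'$ has at least $(\delta_2-\eps_{\RL})n^3$ triangles, that by Claim~\ref{cl:stab5bipH} each triangle meets $Z$, and hence $|Z|\ge(\delta_2-\eps_{\RL})n>\delta_2 n/2$. Your additional remark explaining why, unlike Claim~\ref{cl:stab7sizeA}, one cannot use a cube-root bound here is a correct and useful observation that the paper leaves implicit.
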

\begin{proof}
As in Claim~\ref{cl:stab7sizeA}, the number of triangles in $G'$ is at least $\left(\delta_2 - \eps_\RL \right) n^3$.
Since every triangle has at least one vertex in $Z$, $|Z| \ge \left(\delta_2 - \eps_\RL \right) \cdot n > \delta_2/2 \cdot n$.
\end{proof}

Let $H_1$ be the subgraph of $H$ induced by $X_1$. We continue in our exposition and find a good bipartition of $H_1$.

\begin{claim}
\label{cl:noblueedge}
If $u$ and $v$ are two vertices from $X_1$ that are joined by a blue edge, then at most one of the two vertices has a neighbor in $Y_1$.
\end{claim}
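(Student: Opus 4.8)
\noindent\emph{Proof plan.} Assume for contradiction that $u,v\in X_1$, that $\{u,v\}$ is blue, and that both $u$ and $v$ have a neighbour in $Y_1$; denote such neighbours by $y_u$ and $y_v$ (any neighbour in $Y$ of a vertex of $X_1$ lies in $Y_1$, being in a component meeting $Z$), and note that $\{u,y_u\}$ and $\{v,y_v\}$ are red because $y_u,y_v\in Y$. First I would clear away the trivial cases using that $G'$ is $B_3$-free: if $y_u=y_v$, or $y_u\sim v$, or $y_v\sim u$, then three of these vertices span a triangle through the blue edge $\{u,v\}$, i.e.\ a copy of $B_3$ --- impossible. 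Hence $y_u\neq y_v$, $y_u\not\sim v$, $y_v\not\sim u$, so $\{u,v,y_u,y_v\}$ induces either the path $y_u-u-v-y_v$ (if $y_u\not\sim y_v$) or a $4$-cycle $u-v-y_v-y_u$ whose only blue edge is $\{u,v\}$ (if $y_u\sim y_v$).

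The key idea is then to reach down to $Z$ along a shortest path and collide with this local gadget. Among all blue edges contained in $X_1$ both of whose endpoints have a neighbour in $Y_1$, fix one --- still written $\{u,v\}$ --- minimising the distance $\ell$ from $\{u,v\}$ to $Z$ in $G'$; this $\ell$ is finite because the component of $u$ meets $Z$, and $\ell\ge 2$ because $Z$ has no neighbour in $X$. Let $P=(u=p_0,p_1,\dots,p_\ell)$ be a corresponding shortest path, relabelling $\{u,v\}$ so that $P$ starts at $u$; then $p_\ell\in Z$ and $p_1,\dots,p_{\ell-1}\notin\{u,v\}\cup Z$. Routine shortcut arguments give that no $p_i$ with $i\ge2$ is adjacent to $u$ or to $v$, while $B_3$-freeness gives $p_1\not\sim v$; in particular $y_v\notin V(P)$, so $y_v,v,u,p_1,\dots,p_\ell$ is an honest path, of which I only need the five vertices $\{y_v,v,u,p_1,p_2\}$.

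Now inspect these five vertices. If $y_v$ is adjacent to neither $p_1$ nor $p_2$, they induce the path $y_v-v-u-p_1-p_2$, a copy of a member of $\cP_5$ --- contradiction. If $y_v\sim p_2$, then $y_v-v-u-p_1-p_2-y_v$ is a $5$-cycle through the blue edge $\{u,v\}$, so the five vertices span a $5$-vertex graph containing $B_5$ --- contradiction. If $y_v\sim p_1$ but $y_v\not\sim p_2$: were $\{u,p_1\}$ blue, then $\{u,p_1\}$ is itself a blue edge of $X_1$ with both endpoints having a neighbour in $Y_1$ (namely $y_u$ and $y_v$) and at distance at most $\ell-1$ from $Z$, contradicting minimality of $\ell$; hence $\{u,p_1\}$ is red, and then $\{y_v,v,u,p_1\}$ induces a $4$-cycle with unique blue edge $\{u,v\}$, so if also $\{p_1,p_2\}$ is red the five vertices induce exactly $C_4^X$ --- contradiction (this already covers $\ell=2$, where $p_2\in Z$ forces $\{p_1,p_2\}$ red). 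The single remaining configuration, $y_v\sim p_1$ with $\{u,p_1\}$ red and $\{p_1,p_2\}$ blue, I would dispatch by continuing along $P$: since $Z$ has no neighbour in $X$, the path must at some point pass from $X$ into $Y$, and analysing that first $X$-to-$Y$ step together with the already-traversed blue edges $\{u,v\},\{p_1,p_2\}$ --- using $B_3$- and $B_3^+$-freeness, the bipartiteness of $H$ (Claim~\ref{cl:stab5bipH}) to kill odd closed walks inside $X\cup Y$, and once more the minimality of $\ell$ to discard any new ``bad'' blue edge that is strictly closer to $Z$ --- yields one of $B_3$, $B_3^+$, $C_4^X$, an induced $P_5$, or a $5$-vertex graph containing $B_5$. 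In every branch $G'$ contains a forbidden subgraph, which is the desired contradiction.

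The main obstacle is exactly that last configuration: a $4$-cycle through a blue edge carrying a blue pendant edge is not on its own one of the forbidden patterns, so one cannot conclude from a single extra vertex, and the analysis must track carefully at which vertex of $\{u,v,y_u,y_v\}$ the path $P$ re-attaches, the colours of its first one or two edges, and which of $p_1,p_2$ are adjacent to $y_u$ or $y_v$. The device that keeps this case analysis finite --- and the reason for choosing $\{u,v\}$ to minimise its distance to $Z$ --- is that any blue edge reappearing along $P$ with both endpoints still having $Y_1$-neighbours is strictly closer to $Z$ and hence excluded by the minimality.
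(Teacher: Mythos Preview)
Your route is genuinely different from the paper's, and more elaborate. The paper does not minimise over bad blue edges or split on colours along a path; it begins by asserting that one of $u,v$ lies at distance exactly~$2$ from $Z$ (this is what ``$H$ bipartite with no induced $P_5$'' buys), takes that vertex to be $u$ with shortest path $u,y_u,z$, and then checks that the five vertices $y_v,v,u,y_u,z$ induce either a $P_5$ (if $y_u\not\sim y_v$) or $C_4^X$ (if $y_u\sim y_v$). That is the whole argument.

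Your proposal has a genuine gap: the ``single remaining configuration'' ($y_v\sim p_1$, $\{u,p_1\}$ red, $\{p_1,p_2\}$ blue) is not proved, only planned, and you yourself flag it as the main obstacle. The good news is that it is far easier than your outline suggests, and closing it uncovers precisely the paper's shortcut. In that configuration $p_2\in X$ (it meets a blue edge), so $p_3\notin Z$ and $\ell\ge 4$; but then the five vertices $v,u,p_1,p_2,p_3$ already induce a $P_5$, since you have established $v\not\sim p_1$ (via $B_3$), $v\not\sim p_2,p_3$ (distance), $u\not\sim p_2,p_3$ and $p_1\not\sim p_3$ (shortest path). More to the point, this observation needs none of your case hypotheses: for \emph{any} counterexample edge with $d(u,Z)\le d(v,Z)=:\ell$, the walk $v,u,p_1,\dots,p_{\ell-1}$ is an induced path in $H$, so $\ell\le 3$; and if $\ell=3$ then $v,u,p_1,p_2,p_3$ (now $p_3\in Z$, hence non-adjacent to $u,v\in X$) is an induced $P_5$ in $G'$. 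Thus $\ell=2$ always, which is exactly the paper's opening assertion. Once $\ell=2$, your $p_1,p_2$ coincide with the paper's $y_u,z$, Cases~1 and~3b become the paper's two cases, Case~2 and your minimisation device are never needed, and Case~3c is vacuous.
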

\begin{proof}
Suppose for contradiction there are two such vertices $u$ and $v$.
Since $H$ is bipartite and has no induced $P_5$, at least one of the two vertices
is within distance exactly two to a vertex $z \in Z$. Without loss of generality,
let $u$ be the vertex, and let $y_u$ be the middle vertex on a shortest path
between $u$ and $z$.

Let $y_v \in Y_1$ be a neighbor of $v$.
Since $H$ is bipartite, neither $y_u$ is a neighbor of $v$, nor $y_v$ is a neighbor of $u$.
Also, $G'$ is $B_5$-free, hence the vertex $z$ is not a neighbor of $y_v$,
and by definition, there are no edges between $Z$ and $X_1$.
So either $y_u$ and $y_v$ are not joined by an edge and $y_v,v,u,y_u,z$
induces a path, which contradicts that $G'$ does not contain an induced $P_5$. Or, $\{y_u,y_v\}$ is an edge, but then the vertices
induces $C_4^X$; a contradiction.
\end{proof}

\begin{claim}
Let $u\in X_1$ and $v \in X_1$ be two vertices from the same connected component of $H_1$.
If both $u$ and $v$ have a neighbor in $Y_1$,
then there exists a vertex $w \in V(H_1)$ such that both $\{u,w\}$ and $\{w,v\}$ are edges in $H_1$.
\end{claim}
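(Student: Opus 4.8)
Recall that $H$, and hence its induced subgraph $H_1$, is bipartite (Claim~\ref{cl:stab5bipH}) and triangle-free, and that $G'$ has no induced copy of $P_5$ in any red/blue colouring (all ten colourings lie in $\cP_5$), no induced copy of $C_4^X$, and no copy of $B_3$, $B_3^+$ or $B_5$ at all. Fix a bipartition $(\mathcal P,\mathcal Q)$ of $H$; it restricts to a bipartition of the component $K$ of $H_1$ containing $u$ and $v$. Since a shortest path of length at least four is an induced $P_5$, the diameter of $K$ is at most three, so $u$ and $v$ are at distance $1$, $2$ or $3$. If the distance is $2$, the middle vertex of a shortest $u$--$v$ path lies in $V(H_1)$ and is adjacent to both, and this is the desired $w$. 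The plan is therefore to rule out distances $1$ and $3$; this reduces to the single statement $(\star)$: no edge of $H_1$ has both endpoints adjacent to $Y_1$. Indeed, distance $1$ is a special case of the negation of $(\star)$. If the distance is $3$, fix a shortest path $u,p,q,v$; then $u,q$ lie in one class and $p,v$ in the other, and a neighbour $y_u\in Y_1$ of $u$ lies in the class of $p$ and $v$. If $y_u$ is not adjacent to $q$, then $y_u,u,p,q,v$ is an induced $P_5$ (every remaining non-adjacency is forced, either by two vertices lying in the same class or by $u,p,q,v$ being a shortest path), a contradiction; hence $y_u$ is adjacent to $q$, and then the edge $\{q,v\}$ of $H_1$ has both endpoints adjacent to $Y_1$, violating $(\star)$.

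To prove $(\star)$ I would assume $\{a,b\}$ is such an edge, with $a\in\mathcal P$ and $b\in\mathcal Q$. It is red, since a blue such edge would contradict Claim~\ref{cl:noblueedge}. Pick neighbours $y_a\in Y_1$ of $a$ and $y_b\in Y_1$ of $b$; then $y_a\in\mathcal Q$, $y_b\in\mathcal P$, and $y_a\ne y_b$ (otherwise $a,b,y_a$ would be a triangle in the bipartite graph $H$). Since $a,b\in X$, they have blue neighbours $a'$ and $b'$, which lie in $K$ (their $G'$-components contain a vertex of $Z$ because $a,b\in X_1$), with $a'\in\mathcal Q$, $b'\in\mathcal P$, and, by Claim~\ref{cl:noblueedge}, neither $a'$ nor $b'$ adjacent to $Y_1$. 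A short check shows the six vertices $a,b,a',b',y_a,y_b$ are distinct and that, among them, the only adjacencies not already forced to be absent (by bipartiteness, or because $a'$ and $b'$ avoid $Y_1$) are $\{a',b'\}$ and $\{y_a,y_b\}$; in particular $b',b,a,a'$ and $y_b,b,a,y_a$ are always induced paths.

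It then remains to run a case analysis on whether $a'\sim b'$ and whether $y_a\sim y_b$, producing in each case a forbidden induced subgraph, usually after adjoining a seventh vertex. For instance, let $r$ be a blue neighbour of $a'$ other than $a$, or, if there is none, a red neighbour of $a'$ (one exists since $a'\in X$). If $r$ is not adjacent to $b$, then $r,a',a,b,b'$ is an induced $P_5$; if $r$ is adjacent to $b$, then one obtains either an induced $C_4^X$ on $\{r,a',a,b\}$ with the pendant red edge $\{b,y_b\}$, or again an induced $P_5$ through $b'$, or a new blue edge at $b$ along which the argument can be iterated on a longer induced path. The cases $a'\sim b'$ and $y_a\sim y_b$ are handled analogously, using the two induced $P_4$'s above together with a blue neighbour of $b'$ or a further neighbour of $y_a$ on a path from $y_a$ to $Z$ (such a path exists by the definition of $Y_1$). \textbf{The main obstacle is precisely this case analysis.} The reduction to $(\star)$ is clean and is essentially forced by $P_5$-freeness and the bound on the diameter of $K$; but the degenerate subcases of $(\star)$ --- where a blue partner has no second blue neighbour, or where the two $Y_1$-neighbours behave symmetrically --- require importing an auxiliary vertex guaranteed by the connectivity of $K$ or by the definition of $Y_1$, and a careful bookkeeping of which class each vertex lies in and of which forbidden configuration among $\cP_5$, $C_4^X$, $B_3$, $B_3^+$, $B_5$ is being contradicted.
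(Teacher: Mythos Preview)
Your reduction to $(\star)$ is correct and is in fact a mild reorganisation of what the paper does; the paper also splits into the cases $\mathrm{dist}(u,v)\in\{1,2,3\}$ and disposes of distance~$2$ trivially. The genuine gap is your proof of $(\star)$ itself. The case analysis you sketch does not close: for instance, when $r$ is a blue neighbour of $a'$ and $r\sim b$, the $4$-cycle $r,a',a,b$ carries two blue edges and is not a $C_4^X$, and following the remaining subcases (colour of $\{b,r\}$, existence of a second blue neighbour, etc.) proliferates without terminating.

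The fix is exactly the move you relegate to a parenthetical remark at the end: bring in a vertex $z\in Z$ \emph{immediately}. Since $a,b\in X_1$, a shortest $G'$-path from $\{a,b\}$ to $Z$ has length at most $3$ (length $\ge 4$ gives an induced $P_5$), and bipartiteness of $H$ forces that one of $a,b$---say $a$---is at distance exactly two from some $z\in Z$, via a middle vertex $y\in Y_1$. Now take $b'\in X_1$ with $\{b,b'\}$ blue; by Claim~\ref{cl:noblueedge} the vertex $b'$ has no neighbour in $Y_1$. Then $b',b,a,y,z$ is an induced $P_5$: the non-edges are forced by bipartiteness of $H$ ($b'\!\not\sim\! a$, $b\!\not\sim\! y$), by $b'$ avoiding $Y_1$ ($b'\!\not\sim\! y$), and by $Z$ having no neighbours in $X$ ($b',b,a\!\not\sim\! z$). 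This single $P_5$ proves $(\star)$ outright, with no branching.

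This is precisely how the paper rules out the case $u\sim v$. For distance $3$ the paper argues slightly differently from you: instead of your reduction to $(\star)$ via the extra adjacency $y_u\sim q$, it takes a shortest path $u,x_u,x_v,v$, observes that the blue partner $t$ of $v$ must be adjacent to $x_u$ (otherwise $u,x_u,x_v,v,t$ is an induced $P_5$), and then $v,t,x_u,u,y$ is an induced $P_5$. Your distance-$3$ reduction is a legitimate alternative and arguably tidier, but only once $(\star)$ is in hand.
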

\begin{proof}
Analogously to the previous claim, we may assume that one of the two vertices, say $u$,
has a neighbor $y \in Y_1$ such that $y$ is adjacent to a vertex $z \in Z$.
On the other hand, since $v \in X_1$, it must have a neighbor $t \in X_1$ such that $\{t,v\}$ is blue.
By Claim~\ref{cl:noblueedge}, $t \neq u$.
If $\{t,u\}$ or $\{v,y\}$ is an edge, we are done by letting $w:=t$ or $w:=y$, respectively.
For the rest of the proof, we assume that neither $\{t,u\}$ nor $\{v,y\}$ is an edge.
Also, Claim~\ref{cl:noblueedge} yields that $t$ has no neighbor in $Y_1$, so in particular,
$\{t,y\}$ is not an edge.

Now we show that $u$ is not adjacent to $v$.
Suppose there is an edge between $u$ and $v$. By Claim~\ref{cl:noblueedge}, the edge must be red.
Recall that the vertex $y$ has a neighbor $z \in Z$.
There are no edges between $Z$ and $X_1$ so the vertices $t,v,u,y,z$ induces $P_5$, which is a contradiction.

Suppose there is no $w \in V(H_1)$ such that $u,w,v$ is a path of length two.
Since $H_1$ does not contain any induced path of length four, there exist vertices $x_u \in V(H_1)$ and $x_v \in V(H_1)$
such that $u,x_u,x_v,v$ is a path of length three. 
The vertex $t$ must be connected to $x_u$,
as otherwise $u,x_u,x_v,v,t$ is an induced $P_5$.
However, $H$ is bipartite so the path $v,t,x_u,u,y$ must be induced; a contradiction.
\end{proof}

The last claim immediately yields the following corollary.
\begin{cor}
There exists a partition of the set $X_1$ into two parts $A_1$ and $B_1$ such
that both $A_1$ and $B_1$ are independent sets in $G'$, and there are no edges
between $A_1$ and $Y_1$.
\end{cor}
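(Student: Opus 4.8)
The plan is to read off the corollary directly from the bipartite structure of $H_1$ together with the last claim. First I would record two basic facts. By Claim~\ref{cl:stab5bipH} the graph $H$ is bipartite, hence so is its induced subgraph $H_1 = H[X_1]$; and since $X_1 \subseteq X \subseteq V(H)$ with $H = G'[X\cup Y]$, we in fact have $H_1 = G'[X_1]$. Consequently any set of vertices that is independent in $H_1$ is independent in $G'$, and two vertices lying in distinct connected components of $H_1$ are non-adjacent in $G'$.

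Next, for each connected component $C$ of $H_1$ I would fix its unique partition $(P_C,Q_C)$ into the two bipartition classes of the bipartite graph $H_1[C]$ (uniqueness of this partition for a connected bipartite graph is what makes the next step work). The key observation is that \emph{all} vertices of $C$ that have a neighbour in $Y_1$ lie in a single one of the two classes $P_C,Q_C$: if $u,v\in C$ each have a neighbour in $Y_1$, then by the last claim there is a $w\in V(H_1)$ with $\{u,w\}$ and $\{w,v\}$ edges of $H_1$, so $u$ and $v$ are joined by a walk of even length in $H_1$ and therefore lie in the same bipartition class of $C$.

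Then the partition is defined component by component: for each component $C$, let $B_1\cap C$ be the class of $(P_C,Q_C)$ that contains every vertex of $C$ having a neighbour in $Y_1$ (and pick either class arbitrarily if $C$ has no such vertex), and let $A_1\cap C$ be the other class. Setting $A_1 := \bigcup_C (A_1\cap C)$ and $B_1 := \bigcup_C (B_1\cap C)$ gives a partition of $X_1$. By construction no vertex of $A_1$ has a neighbour in $Y_1$, i.e.\ there are no edges between $A_1$ and $Y_1$. Each of $A_1$, $B_1$ is a union of sets that are independent inside each component of $H_1$ and pairwise non-adjacent across components, so $A_1$ and $B_1$ are independent in $G'[X_1]$ and hence in $G'$.

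There is essentially no obstacle beyond this bookkeeping, since the substantive work was already done in the last claim. The only point that deserves a line of care is exactly the uniqueness of the bipartition of a connected bipartite graph: it is what turns "$u$ and $v$ are joined by an even walk" into a well-defined two-colouring of each component, and hence lets the last claim funnel all the $Y_1$-neighbours of a component onto one side.
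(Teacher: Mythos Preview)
Your proposal is correct and is precisely the intended argument: the paper does not give a proof but merely records that ``the last claim immediately yields the following corollary,'' and your write-up is exactly the natural unpacking of that sentence --- bipartition each connected component of $H_1$ and use the previous claim to see that, within any component, all vertices with a neighbour in $Y_1$ lie on one side. The only side remark worth adding is that every vertex of $X_1$ is incident to a blue edge whose other endpoint also lies in $X_1$ (same component of $G'$), so $H_1$ has no isolated vertices and the per-component bipartition is indeed unique as you use it.
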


This also implies that the set $Y_1$ must be independent.
\begin{claim}\label{cl:stab5indepY1}
The set $Y_1$ is an independent set in $G'$.
\end{claim}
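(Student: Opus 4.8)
The plan is to argue by contradiction: assume some edge $\{y_1,y_2\}$ has both endpoints in $Y_1$. This edge is red, since every vertex of $Y$ (and hence of $Y_1$) is incident only to red edges. The first step is to pull in, for each $i\in\{1,2\}$, a neighbour $x_i\in X$ of $y_i$, which exists by the definition of $Y$. Because $y_i\in Y_1$, the connected component of $G'$ containing $y_i$ meets $Z$, and this component also contains $x_i$, so $x_i\notin X_0$, i.e.\ $x_i\in X_1$. Since, by the corollary preceding this claim, there are no edges between $A_1$ and $Y_1$, we in fact get $x_i\in B_1$ for $i\in\{1,2\}$.

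The second step is to show that $x_1,y_1,y_2,x_2$ induce a path on four vertices. If $x_1=x_2$, then $x_1,y_1,y_2$ span a triangle one of whose vertices, $x_1\in X$, is incident to a blue edge, which together give a copy of $B_3^+$; so $x_1\neq x_2$, and since $x_1,x_2\in B_1$ and $B_1$ is an independent set, $x_1\not\sim x_2$. The same triangle-plus-blue-edge argument rules out $x_1\sim y_2$ and, symmetrically, $x_2\sim y_1$. As the four vertices are easily seen to be distinct, $x_1-y_1-y_2-x_2$ is an induced $P_4$.

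The third step extends this $P_4$ either to an induced $P_5$ or to a short odd cycle with a blue edge. Pick a blue neighbour $t$ of $x_1$; then $t\in X_1$, since $t$ lies in the same component as $x_1\in X_1$. Applying Claim~\ref{cl:noblueedge} to the blue edge $x_1t$, where $x_1$ has the $Y_1$-neighbour $y_1$, shows that $t$ has no neighbour in $Y_1$; in particular $t\not\sim y_1$, $t\not\sim y_2$, and $t\neq x_2$ (because $x_2$ has the neighbour $y_2\in Y_1$), and moreover $t\notin\{y_1,y_2\}$ since $t\in X$. Hence $t,x_1,y_1,y_2,x_2$ are five distinct vertices, and the only adjacency not yet determined is whether $t\sim x_2$. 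If $t\sim x_2$, these five vertices span a $5$-cycle $t-x_1-y_1-y_2-x_2-t$ carrying the blue edge $tx_1$, so the subgraph they induce contains a copy of $B_5$, which is impossible. If $t\not\sim x_2$, then $t,x_1,y_1,y_2,x_2$ induce a copy of an element of $\cP_5$, again impossible. Either way we obtain a contradiction, so $Y_1$ is independent.

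The only slightly delicate point is the bookkeeping in the third step: one must be careful that all five vertices are genuinely distinct and that every adjacency among them except $t\sim x_2$ is already pinned down, so that both ways of resolving that single remaining adjacency land on a forbidden configuration. All the structural inputs required for this — $H$ bipartite, the partition of $X_1$ into the independent sets $A_1$ and $B_1$ with no $A_1$–$Y_1$ edges, and Claim~\ref{cl:noblueedge} — are already in hand, so no new ideas beyond this case analysis should be needed.
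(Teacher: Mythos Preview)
Your proof is correct and follows essentially the same route as the paper: pick $B_1$-neighbours of the two endpoints, verify they give an induced $P_4$, then attach a blue neighbour of one of them to obtain five vertices that must induce either a $P_5$ or a $C_5$ with a blue edge. The only cosmetic differences are that the paper notes the blue neighbour lies in $A_1$ (hence has no $Y_1$-neighbours by the preceding corollary), whereas you derive the same non-adjacencies directly from Claim~\ref{cl:noblueedge}; and the paper dispatches the $t\sim x_2$ case implicitly via bipartiteness of $H$, whereas you invoke $B_5$ explicitly.
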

\begin{proof}
Suppose there is an edge between two vertices $u \in Y_1$ and $v \in Y_1$.
By definition, there exist two vertices $b_u \in B_1$ and $b_v \in B_1$ that are adjacent
to $u$ and $v$, respectively. The two vertices are distinct and none of them
can be adjacent to both $u$ and $v$.
Let $a \in A_1$ be a neighbor of $b_u$ such that $\{a,b_u\}$ is a blue edge.
The vertex $a$ cannot be adjacent to $b_v$, which yields that $a,b_u,u,v,b_v$
is an induced path of length four; a contradiction.
\end{proof}

Now let $(A_0,B_0)$ be the color classes of an arbitrary $2$-coloring of the
bipartite graph induced by $X_0 \cup Y_0$.  We define the following four sets
that partition the set $V(G')$:
$A:= A_0 \cup A_1$, $B:= B_0 \cup B_1$, $C:= Y_1$, and $D:=Z$.
Claims~\ref{cl:stab5bipH}-\ref{cl:stab5indepY1} yield that $G'$ must have the following structure.

\begin{cor}
$\{A,B,C,D\}$ is a partition of the vertex-set of $G'$,
the sets $A$, $B$ and $C$ are independent sets in $G'$, and every edge $e$ of
$G'$ goes either between $A$ and $B$, or $B$ and $C$, or $C$ and $D$,
or inside $D$. Moreover, if $e$ is blue, then $e$ must go between $A$ and $B$.
\end{cor}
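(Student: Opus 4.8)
The plan is to obtain the Corollary simply by collecting Claims~\ref{cl:stab5bipH}--\ref{cl:stab5indepY1}, the (unlabelled) corollary that partitions $X_1 = A_1 \cupdot B_1$, and two elementary facts extracted from the definitions. The first preliminary observation I would establish is that a vertex $z \in Z$ is, by definition, incident only to red edges and has no neighbour in $X$; hence $Z$ sends no edge to $X = X_0 \cup X_1$, and in particular none to $A_1 \cup B_1 \subseteq X_1$. The second is that, by the definitions of $X_0$ and $Y_0$, the set $X_0 \cup Y_0$ is a union of connected components of $G'$, each of which avoids $Z$; consequently it sends no edge to the rest of $G'$, and $G'[X_0 \cup Y_0] = H[X_0 \cup Y_0]$ is bipartite by Claim~\ref{cl:stab5bipH}, so its colour classes $A_0$ and $B_0$ are independent sets in $G'$.

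First I would verify that $\{A,B,C,D\}$ is a partition of $V(G')$. Since $V(G') = X \cupdot Y \cupdot Z$ with $X = X_0 \cupdot X_1$ and $Y = Y_0 \cupdot Y_1$, and since $(A_0,B_0)$ partitions $X_0 \cup Y_0$ while $(A_1,B_1)$ partitions $X_1$, the four sets $A = A_0 \cup A_1$, $B = B_0 \cup B_1$, $C = Y_1$, and $D = Z$ are pairwise disjoint and cover $V(G')$. Next I would check independence. The set $C = Y_1$ is independent by Claim~\ref{cl:stab5indepY1}. For $A$: the part $A_0$ is independent as a colour class of $G'[X_0\cup Y_0]$, the part $A_1$ is independent by the corollary partitioning $X_1$, and there is no edge between them because $A_0$ lies in $Z$-free components of $G'$ whereas $A_1 \subseteq X_1$ lies in components containing a vertex of $Z$; the identical argument yields independence of $B$.

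For the edge structure it then suffices to exclude edges of type $A$--$C$, $A$--$D$, and $B$--$D$. No $A$--$D$ or $B$--$D$ edge exists, since $D = Z$ has no neighbour in $X$ (hence none in $A_1 \cup B_1$) and $A_0 \cup B_0$ lies in $Z$-free components; no $A$--$C$ edge exists, since $A_1$--$Y_1$ edges are forbidden by the corollary partitioning $X_1$ and an $A_0$--$Y_1$ edge would connect a $Z$-free component to a component meeting $Z$. Therefore every edge of $G'$ is of type $A$--$B$, $B$--$C$, $C$--$D$, or lies inside $D$. Finally, for the ``moreover'' part, a blue edge $e = \{u,v\}$ has $u,v \in X$ by the definition of $X$, and $u,v$ cannot lie one in $X_0$ and one in $X_1$, since their adjacency would place them in a common component, contradicting the definitions of $X_0$ and $X_1$. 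If $\{u,v\}\subseteq X_0 \subseteq X_0 \cup Y_0$, then $e$ joins $A_0$ to $B_0$ by bipartiteness of $H$; if $\{u,v\}\subseteq X_1 = A_1 \cupdot B_1$, then $e$ joins $A_1$ to $B_1$ since both parts are independent; either way $e$ is an $A$--$B$ edge.

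I expect no real obstacle here: the argument is a matter of carefully assembling the earlier claims. The only point requiring genuine care is the component bookkeeping — confirming that $X_0 \cup Y_0$ is in fact a union of whole connected components of $G'$, which is immediate from the definitions of $X_0$ and $Y_0$ and which guarantees both that the $2$-colouring $(A_0,B_0)$ is proper in all of $G'$ and that no edge ``leaks'' across the divide between $A_0 \cup B_0$ and $A_1 \cup B_1 \cup Y_1 \cup Z$.
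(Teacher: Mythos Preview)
Your proposal is correct and matches the paper's approach: the paper gives no explicit proof of this Corollary, simply stating that ``Claims~\ref{cl:stab5bipH}--\ref{cl:stab5indepY1} yield that $G'$ must have the following structure,'' and your argument is precisely the careful unpacking of those claims and definitions that the paper leaves to the reader. The component bookkeeping you flag as the only delicate point---that $X_0\cup Y_0$ is a union of connected components of $G'$---is indeed immediate from the definitions and is exactly what makes the rest routine.
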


Let $a:=|A|/n$, $b:=|B|/n$, $c:=|C|/n$ and $d:=|D|/n$.
The edge-density of $G'$ can be upper-bounded by $f(a,b,c,d) := 2ab+2bc+2cd+d^2$.
Let us analyze the maximum value of $f$ under constraints on $a$, $b$, $c$ and $d$
that we have already established. All of that is summarized in the following optimization
problem:

\begin{alignat*}{2}
    \textbf{maximize: }    & 2ab+2bc+2cd+d^2  \\
    \textbf{subject to: } & a\ge0, \quad b\ge0, \quad c \ge 0, \\
    & d=1-a-b-c, \\
    & ab \ge \left(2 - \sqrt2\right)/16,\\
    & d \ge  \delta_2/2 .\\
\end{alignat*}

Clearly, if the values of $a$, $b$, $c$ and $d$ are equal to those coming from Construction~\ref{cstn:c5},
then $f(a,b,c,d) = 1/2$. The following proposition shows that there is no other
point $(a',b',c',d') \in \Real^4$ that would satisfy the constraints and also attain the value $1/2$.

\begin{claim}\label{cl:stab5opt}
The optimization problem has a unique solution at the point
\[\left(a_m,b_m,c_m,d_m\right) = \left(\frac12 - \frac{\sqrt2}4, \frac14, \frac14, \frac{\sqrt2}4 \right),\]
Moreover, if a point $(a',b',c',d')$ satisfies all the constraints and $f(a',b',c',d')$
is close to $1/2$, then $(a',b',c',d')$ is close to $(a_m,b_m,c_m,d_m)$.
\end{claim}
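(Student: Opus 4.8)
The plan is to eliminate the variables one at a time and reduce the optimization to a single--variable inequality that can be settled by an explicit factorization --- one that works precisely because $\alpha := \tfrac{2-\sqrt2}{16}$ is a root of $128\alpha^2-32\alpha+1=0$. First I would substitute $c = 1-a-b-d$, turning the objective into $f = 2b+2d-2b^2-d^2-4bd-2ad$, which no longer involves $c$. Since $d \ge \delta_2/2 > 0$, and since $ab\ge\alpha>0$ forces $b>0$ and hence $a\ge\alpha/b$, the bound $-2ad \le -2\alpha d/b$ gives the purely algebraic estimate $f \le f(\alpha/b,b,\cdot,d)$, whose right--hand side is a downward parabola in $d$ with vertex $d^\star := 1-2b-\tfrac{\alpha}{b}$. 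Thus for \emph{every} feasible point
\[
f \ \le\ h(b) - (d-d^\star)^2, \qquad h(b) := 2b(1-b) + \Bigl(1-2b-\tfrac{\alpha}{b}\Bigr)^2 .
\]

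The crux is then the identity (a routine but essential computation relying on $128\alpha^2-32\alpha+1=0$)
\[
b^2\bigl(h(b)-\tfrac12\bigr) \ =\ \bigl(b-\tfrac14\bigr)^2\bigl(2b^2-b+16\alpha^2\bigr).
\]
As $1-128\alpha^2 = 2\sqrt2-2 > 0$, the factor $2b^2-b+16\alpha^2$ is negative exactly on the interval with endpoints $\tfrac14(1\pm\sqrt{2\sqrt2-2})$, and, because $16\alpha<1$, that interval contains $\{\,b>0:2b^2-b+\alpha<0\,\}=\{\,b>0:d^\star>0\,\}$. Now I would split on the sign of $d^\star$. If $d^\star>0$: the containment yields $2b^2-b+16\alpha^2<0$, so $h(b)\le\tfrac12$ and $f\le h(b)-(d-d^\star)^2\le\tfrac12$; moreover equality forces $a=\alpha/b$, $d=d^\star$ and $h(b)=\tfrac12$, i.e.\ $b=\tfrac14$ (the remaining roots of $2b^2-b+16\alpha^2$ both have $d^\star\le0$), which gives $(a,b,c,d)=\bigl(\tfrac12-\tfrac{\sqrt2}4,\tfrac14,\tfrac14,\tfrac{\sqrt2}4\bigr)$. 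If $d^\star\le0$: every feasible point has $d\ge\delta_2/2>0\ge d^\star$, hence $(d-d^\star)^2>(d^\star)^2$ and so $f < h(b)-(d^\star)^2 = 2b(1-b) \le \tfrac12$ strictly; in particular no optimum lies here --- this is exactly where the hypothesis $d\ge\delta_2/2$ is needed, as it rules out the degenerate maximizers with $d=0$ and $b=\tfrac12$.

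Finally, the robustness ("moreover") part follows from compactness: the feasible region is a closed bounded subset of $\Real^4$ and $f$ is continuous, so a maximizer exists, must coincide with the unique point found above, and any feasible point with $f$ near $\tfrac12$ must be near it --- otherwise a subsequential limit would provide a second maximizer. I expect the real obstacle to be not any one computation but the feasibility bookkeeping: checking that the successive reductions are legitimate over the \emph{entire} feasible region, that $\{d^\star>0\}$ truly sits inside the interval where $h\le\tfrac12$, and that nothing near--optimal hides on the boundary $d=\delta_2/2$; the sign--of--$d^\star$ dichotomy is arranged precisely to dispatch all of this.
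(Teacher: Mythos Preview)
Your proof is correct and takes a genuinely different route from the paper's. The paper argues at a putative optimum $(a_0,b_0,c_0,d_0)$ by a sequence of local perturbations: first it shows the constraint $ab=\alpha$ must be tight (by shifting mass from $a$ to $c$), then that $b_0=c_0$ (by shifting mass between $c$ and $d$), then that $b_0\ge a_0$ (by swapping $a_0$ and $b_0$), and finally reduces to a single-variable maximization in $c_0$ on an explicit interval. Your argument instead produces a global pointwise upper bound $f\le h(b)-(d-d^\star)^2$ valid throughout the feasible region, and settles the resulting one-variable inequality by the explicit factorization
\[
b^2\bigl(h(b)-\tfrac12\bigr)=(b-\tfrac14)^2\bigl(2b^2-b+16\alpha^2\bigr),
\]
whose validity hinges exactly on $128\alpha^2-32\alpha+1=0$. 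The dichotomy on the sign of $d^\star$ cleanly isolates where the lower bound $d\ge\delta_2/2$ is genuinely needed, and your containment $\{d^\star>0\}\subset\{2b^2-b+16\alpha^2<0\}$ (from $16\alpha<1$) is the key structural observation replacing the paper's interval bookkeeping. Both approaches finish the ``moreover'' clause by the same compactness argument. Your method is more algebraic and self-contained; the paper's perturbation steps are perhaps more transparently motivated but longer.
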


This claim immediately yields that $G'$ is close to Construction~\ref{cstn:c5}, hence proving
the claim finishes the proof of Theorem~\ref{thm:c5uniq}

\begin{proof}[Proof of Claim~\ref{cl:stab5opt}]
Let $\left(a_0,b_0,c_0,d_0\right)\in \Real^4$ be a point that satisfies the constraints and maximizes the objective function.
In particular, $f\left(a_0,b_0,c_0,d_0\right)\ge 1/2$.

First, we show that $a_0b_0 = \left(2 - \sqrt2\right)/16$.
If $a_0b_0 > \left(2 - \sqrt2\right)/16$, then let $\alpha:=a_0 - \frac{2-\sqrt2}{16\cdot b_0}$.
It follows that \[f(a_0-\alpha,b_0,c_0+\alpha,d_0) = f\left(a_0,b_0,c_0,d_0\right) + \alpha \cdot d_0,\]
and the point $(a_0,b_0,c_0,d_0)$ was not an optimal solution.
Next, we bound $b_0$ away from $1/2$. Observe that
\[
f\left(a,b,c,d\right) = 2ab + 2(b+d)(1-a-b-d) + {d}^2
= 2b(1-b)  - {d}^2 + 2d\left(1-a-2b\right) .
\]
Therefore, \[
\left(1-a_0-2b_0\right) = \left(1-\frac{2-\sqrt2}{16\cdot b_0}-2b_0\right) > 0,
\]
as otherwise $f(a_0,b_0,c_0,d_0) \le 1/2 - {d_0}^2 \le 1/2 - (\delta_2/2)^2 < 1/2$ contradicting $f(a_0,b_0,c_0,d_0) \ge 1/2$.
Thus, \[b_0 < \frac{2+2^{3/4}}8 < 0.4603.\]
Moreover, the maximum value of $2b_0\cdot(1-b_0)$ is at most $\left(6-\sqrt{2}+2^{7/4}\right)/16 < 0.497$. On the other hand, the maximum
value of $\left(1-\frac{2-\sqrt2}{16\cdot b_0}-2b_0\right)$ is at most $1-\sqrt{1-1/\sqrt{2}}<0.46$.
Since $f(a_0,b_0,c_0,d_0) \ge 1/2$, we conclude that $d_0 > 0.003$.
%
%

Suppose now that $b_0 \neq c_0$. If $b_0 < c_0$,
then \[
 f\left(a_0,b_0,c_0-\alpha,d_0+\alpha\right) - f\left(a_0,b_0,c_0,d_0\right) = 2(c_0-b_0)\alpha + \alpha^2
,\] where $\alpha = c_0-b_0$; a contradiction. 
On the other hand, if $c_0 < b_0$, then  \[
 f\left(a_0,b_0,c_0+\alpha,d_0-\alpha\right) - f\left(a_0,b_0,c_0,d_0\right) = 2(b_0-c_0)\alpha - \alpha^2 \ge \alpha^2
,\] where this time $\alpha = \min(b_0 - c_0,d_0 - 0.003)$.
We conclude that
\begin{equation}\label{eq:stab5finalf}
f\left(a_0,b_0,c_0,d_0\right) = \frac{2-\sqrt2}8 + 2{c_0}^2
+ 2c_0 \cdot \left(1-\frac{2-\sqrt2}{16\cdot c_0}-2c_0\right)
+ \left(1-\frac{2-\sqrt2}{16\cdot c_0}-2c_0\right)^2
.
\end{equation}



Since swapping the values of $a_0$ and $b_0$ changes the objective function by $c_0(a_0 - b_0)$, it holds that $b_0 \ge a_0$.
In particular, $c_0 = b_0 \ge \left(\sqrt{2-\sqrt2}\right)/4 > 0.19$. The right-hand side of (\ref{eq:stab5finalf})
depends only on $c_0$ and $c_0 \in [0.19,0.4603]$. It is straightforward to
check that in this range, the value of (\ref{eq:stab5finalf}) is at most $1/2$,
and the unique point where the value is attained is $c_0 = 1/4$. Therefore,
$b_0 = 1/4$, $a_0 = \frac{2 - \sqrt2}4$ and $d_0 = \frac{\sqrt2}4$.
By continuity of $f(a,b,c,d)$ and compactness of $[0,1]^4$, it also follows that if $f(a',b',c',d')$ is close to $1/2$,
then $(a',b',c',d')$ is close to $\left(a_0,b_0,c_0,d_0\right)$.
\end{proof}
 

\section{Exact result for pentagons}
\label{sec:c5exact}

For a graph $G$, we define $\cC_5(G)$ to be the set of all edges of $G$
that occur in a copy of $C_5$ in $G$. In other words,
\[\cC_5(G) = \bigcup\limits_{H \subseteq G, H \cong C_5} E(H) .\]
Let $\cE_n$ be the set of all $n$-vertex graphs with exactly $\lfloor
n^2/4\rfloor +1$ edges, and define 
\[F(n) := \min\limits_{G \in \cE_n}|\cC_5(G)|.\]
For convenience, we set $\widetilde{F}(n) := \lfloor n^2/4\rfloor +1 - F(n)$.

Next, let $\cE'_n$ be the set of all $n$-vertex graphs with
at least $\lfloor n^2/4\rfloor +1$ edges.  It immediately follows that for any $G \in
\cE'_n$, it holds $|\cC_5(G)| \ge F(n)$, and if $|\cC_5(G)| = F(n)$, then $G \in
\cE_n$.
Finally we define $\cG_n \subseteq \cE'_n$ to be the set of all $G \in \cE'_n$ with $|\cC_5(G)| = F(n)$.

We call a quadruple of non-negative integers $(a,b,c,d)$ $n$-extremal,
if the following is satisfied:
\begin{itemize}
\item $a+b+c+d=n$,
\item $a \cdot b = \widetilde{F}(n)$, and
\item $a \cdot b + b\cdot c + c \cdot d + \binom{d}{2} > \frac{n^2}4$.
\end{itemize}

The main theorem of this section is the following:
\begin{theorem}
\label{thm:c5exact}
There exists an integer $n_0$ such that the following holds for any $n \ge n_0$.
If $G \in \cG_n$, then $V(G)$ can be partitioned 
into four sets $A$, $B$, $C$ and $D$ such that
\begin{itemize}
\item the quadruple $(|A|,|B|,|C|,|D|)$ is $n$-extremal,
\item $A$, $B$ and $C$ are independent sets of $G$,
\item $\{u,v\} \in E(G)$ for any $u \in A$ and $v \in B$,
\item $\{u,v\} \notin E(G)$ for any $u \in A$  and $v \in C \cup D$, and
\item $\{u,v\} \notin E(G)$ for any $u \in B$  and $v \in D$.
\end{itemize}
\end{theorem}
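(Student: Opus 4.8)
The strategy is to upgrade the approximate structure supplied by the stability theorem to the exact structure, using the minimality of $|\cC_5(G)|$ for $G\in\cG_n$ to forbid every local deviation. First I would record what the earlier results give for $G\in\cG_n$: by definition $G$ has $\lfloor n^2/4\rfloor+1$ edges and $|\cC_5(G)|=F(n)$, and combining Theorem~\ref{thm:c5} with the upper bound from Construction~\ref{cstn:c5} we get $F(n)=\tfrac{2+\sqrt2}{16}n^2+O(n^{15/8})$. Colour the edges of $\cC_5(G)$ red and the rest blue, as in Section~\ref{sec:c5}; then the red-edge density and the total edge density of $G$ satisfy the hypotheses of Theorem~\ref{thm:c5uniq} with error $o(1)$, so for any fixed $\eps>0$ and $n$ large enough $G$ is $\eps n^2$-close in edit distance to a graph isomorphic to Construction~\ref{cstn:c5}. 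Pulling the four-part structure of that construction back along the editing yields a partition $V(G)=A_0\cupdot B_0\cupdot C_0\cupdot D_0$ with $|A_0|=\tfrac{2-\sqrt2}{4}n+O(\sqrt\eps\,n)$, $|B_0|=|C_0|=\tfrac14 n+O(\sqrt\eps\,n)$, $|D_0|=\tfrac{\sqrt2}{4}n+O(\sqrt\eps\,n)$, for which all but $\eps n^2$ pairs are \emph{correct}, i.e.\ agree with the template (an edge between consecutive parts or inside $D_0$, a non-edge otherwise).

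Next I would clean the partition. For a vertex $v$ and a role $R\in\{A,B,C,D\}$, let the defect of $v$ in role $R$ be the number of vertices $w$ for which $\{v,w\}$ is incorrect once $v$ plays role $R$. Since the total number of incorrect pairs is at most $\eps n^2$, at most $O(\sqrt\eps\,n)$ vertices have defect exceeding $\sqrt\eps\,n$ in every role; discard these into an exceptional set $W$ with $|W|=O(\sqrt\eps\,n)$, and reassign every remaining vertex to a role in which its defect is at most $\sqrt\eps\,n$. This produces a partition $A\cupdot B\cupdot C\cupdot D$ of $V(G)\setminus W$ in which every part has linear size, every vertex has defect $O(\sqrt\eps\,n)$ with respect to its role, and the number of incorrect pairs among $A\cup B\cup C\cup D$ is $O(\sqrt\eps\,n^2)$. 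The point of keeping $W$ and the per-vertex defect both of size $o(n)$ is that each incorrect pair becomes ``localised'' and can be paid for by a bounded local modification.

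The heart of the proof, and the step I expect to be the main obstacle, is to promote ``few, localised deviations'' to ``none'', and this is where extremality enters. The admissible moves are edge swaps: if $G$ has an edge $e$ in a forbidden slot (inside $A$, $B$ or $C$, or between $A$ and $C$, $A$ and $D$, or $B$ and $D$), delete $e$ and insert a missing $A$--$B$ edge, or, if $A$--$B$ is already complete, a suitable edge in the $B$--$C$--$D$ part; symmetrically a missing $A$--$B$ edge can be filled at the expense of a forbidden edge, and a vertex of $W$ can be inserted into the part matching its neighbourhood. Using that the surviving deviations are few and localised, one can choose the swapped pair so that the deletion creates no new $C_5$-edge and the inserted edge lies in no $C_5$ (in the exact template no $A$--$B$ edge lies in a copy of $C_5$, while every edge meeting the $B$--$C$--$D$ part does, and the localisation lets us keep this near the modified pair). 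Hence each move never increases $|\cC_5(\cdot)|$, strictly decreases it whenever the deleted edge lay in a $C_5$, and always preserves the edge count $\lfloor n^2/4\rfloor+1$. Since $|\cC_5(G)|=F(n)$ is minimal over $\cE_n$, no move can strictly decrease, and a monovariant argument---say on the number of incorrect pairs, or on $\sum_v\deg_G(v)^2$---then forces that no admissible move exists at all. Consequently $W=\emptyset$ after reabsorption, the parts $A,B,C,D$ satisfy the claimed adjacency pattern exactly, and moreover no vertex can be reassigned to a strictly cheaper role (so, e.g., every vertex of $C$ has a neighbour in $D$, every vertex of $D$ has a neighbour in $C$ and in $D$, etc.). The delicate bookkeeping---which edges turn red under a given local change---is the part that needs the most care; this is essentially a Zykov-type symmetrisation run on top of the stability output.

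Finally I would check that $(a,b,c,d):=(|A|,|B|,|C|,|D|)$ is $n$-extremal. That $a+b+c+d=n$ is immediate. Because $A$--$B$ is complete bipartite and, as one verifies directly from the exact structure, no $A$--$B$ edge lies in a $C_5$, while the ``no cheaper role'' property from the previous step forces every $B$--$C$, $C$--$D$ and $D$--$D$ edge of $G$ to lie in a $C_5$, the blue edges of $G$ are exactly the $ab$ edges between $A$ and $B$; hence $F(n)=|\cC_5(G)|=\lfloor n^2/4\rfloor+1-ab$, and since $F(n)$ is the minimum this forces $ab$ to be as large as possible, namely $ab=\lfloor n^2/4\rfloor+1-F(n)=\widetilde F(n)$. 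Lastly, every edge of $G$ occupies one of the slots $A$--$B$, $B$--$C$, $C$--$D$ or inside $D$, so $ab+bc+cd+\binom d2\ge e(G)=\lfloor n^2/4\rfloor+1>n^2/4$, which is the third $n$-extremality condition. This completes the proof.
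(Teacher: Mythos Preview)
Your high-level strategy---stability plus extremality to promote approximate structure to exact structure---is the same as the paper's, but the mechanism you propose has a real gap. The crux of your swap argument is the assertion that one can always insert a missing $A$--$B$ edge that lies in no $C_5$; this is essentially the conclusion you are trying to prove. An $A$--$B$ edge is free of $C_5$'s in the \emph{exact} template, but while defects remain (an $A$--$D$ edge, a vertex of $W$ with a neighbour in both $B$ and $D$, etc.) a single bad vertex can put a positive fraction of $A$--$B$ edges into $\cC_5$. So your swaps need not be non-increasing, and even when they are only neutral you have no workable monovariant: ``number of incorrect pairs'' does not control $|\cC_5(\cdot)|$, since adding or deleting one edge can change the $\cC_5$-status of linearly many other edges. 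The step you flag as ``delicate bookkeeping'' is in fact the entire difficulty.

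The paper avoids this circularity by working in the opposite direction. It first proves by counting that at most $O(\eps n^2)$ of the $A_0$--$B_0$ edges lie in $\cC_5(G)$ (Corollary~\ref{cor:c5a0b0edge}), and then uses this as a lever: any single forbidden configuration (a vertex adjacent to both $A_0$ and $B_0$, a vertex adjacent to both $B_0$ and $C_0$, a short path from $A_0$ to $D_0$) would force $\Omega(n^2)$ of the $A_0$--$B_0$ edges into $\cC_5(G)$, a contradiction. This kills the bad adjacencies directly, without swaps. For the exceptional vertices the paper uses vertex \emph{cloning} rather than edge swaps: it first exhibits a vertex $u\in A_0'$ incident to roughly $n/4$ blue edges, deduces a minimum-degree bound $\deg(v)\ge n/4-O(\eps n)$ for all $v$, and then classifies each exceptional vertex by which of $B_0,C_0,D_0$ it reaches, showing via the cloning comparison that it must behave exactly like a vertex of one of $A,B,C,D$. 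Your proposal would need to be rebuilt along these lines; the swap/monovariant shortcut does not go through.
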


An immediate consequence of this theorem is
that $(a,b,c,d)$ is $n$-extremal if and only if it solves
the following integer quadratic program:

\begin{alignat*}{2}
    \textbf{maximize: }    & a \cdot b \\
    \textbf{subject to: } & a\in \Nat, \quad b\in\Nat, \quad c \in \Nat, \quad d \in \Nat, \\
    & a \cdot b + b\cdot c + c \cdot d + \binom{d}{2} > \frac{n^2}4,\\
    & a+b+c+d = n. \\
\end{alignat*}

Since the exact solution of this maximization problem for a given integer $n$
depends on errors in rounding expressions like $\sqrt{2}n/4$, we leave it in
this form. Approximate values of $a$, $b$, $c$ and $d$ are indeed given by
Construction~\ref{cstn:c5}.

\begin{proof}[Proof of Theorem~\ref{thm:c5exact}]
Theorems~\ref{thm:c5} and~\ref{thm:c5uniq} immediately yield that
for any $\eps > 0$, there exists an integer $n_0$ so that if $n \ge n_0$,
then by adding or removing $\eps n^2$ edges in a graph $G \in \cG_n$
we obtain the graph from Construction~\ref{cstn:c5}.
Moreover, the value of $n_0$ will be large enough so that Construction~\ref{cstn:c5}
yields that $F(n) = ((2+\sqrt{2})/16\pm\eps)n^2$ 
and $\widetilde{F}(n) = ((2-\sqrt{2})/16\pm\eps)n^2$ for every $n \ge n_0$.

Fix an integer $n \ge n_0$ and any graph $G \in \cG_n$, and let $V$ be the vertex-set of $G$.
Clearly, for any $\eps' > 0$, we can find $\eps > 0$ small enough
so that $V$ can be partitioned into five sets $A_0, B_0, C_0, D_0$ and $X$ such that
\begin{itemize}
\item $|A_0| = (1/2-\sqrt{2}/4 \pm \eps')\cdot n$,
$|B_0| = (1/4 \pm \eps') \cdot n$,
$|C_0| = (1/4 \pm \eps') \cdot n$,
$|D_0| = (\sqrt{2}/4 \pm \eps') \cdot n$,
\item $0 \le |X| \le \eps' \cdot n$,
\item $\deg_{A_0}(u) \ge (1-\eps')|A_0|$ for every $u \in B_0$,
\item $\deg_{B_0}(u) \ge (1-\eps')|B_0|$ for every $u \in A_0 \cup C_0$,
\item $\deg_{C_0}(u) \ge (1-\eps')|C_0|$ for every $u \in B_0 \cup D_0$,
\item $\deg_{D_0}(u) \ge (1-\eps')|D_0|$ for every $u \in C_0$, and
\item the induced subgraph $G[D_0]$ has edge-density at least $1-\eps'$.
\end{itemize}
In other words, the stability result from Section~\ref{sec:stability} yields
an approximate structure of $G$. In the following series of claims, we will
show that the extremality of $G$ allows us to ``clean up'' this description to
the one claimed in the statement of the theorem.
For the rest of the proof, we will assume $\eps' > 0$ is sufficiently small ($\eps' < 10^{-4}$ would be sufficient).

For a set $S \subseteq V$, we denote by $E(S)$ the set of edges of the subgraph
induced by $S$, i.e., $E(S) = E(G[S])$.  For two disjoint $X, Y \subseteq V$,
we denote by $E(X,Y)$ the set of edges in $G$ with exactly one endpoint in $X$
and the other endpoint in $Y$.

First, let us observe that every graph with more than $n^2/4$ edges has at most $\widetilde{F}(n)$ edges that
do not occur in $C_5$.
\begin{claim}
\label{cl:c5duality}
There is no $n$-vertex graph $G' \in \cE'_n$ with $|E(G') \setminus \cC_5(G')| > \widetilde{F}(n)$.
\end{claim}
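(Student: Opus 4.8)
The plan is to argue by contradiction: suppose $G' \in \cE'_n$ is an $n$-vertex graph with more than $\widetilde{F}(n)$ edges that do not occur in any $C_5$. Color the edges of $G'$ red if they occur in a copy of $C_5$ and blue otherwise, so that $G'$ is a red/blue-colored graph in which no blue edge occurs in $C_5$, and $G'$ has more than $\widetilde{F}(n) = \lfloor n^2/4 \rfloor + 1 - F(n)$ blue edges. The idea is to delete blue edges until we hit the threshold of $\lfloor n^2/4 \rfloor + 1$ edges, and then invoke the definition of $F(n)$ to derive a contradiction. Concretely, since $G'$ has at least $\lfloor n^2/4\rfloor + 1$ edges and more than $\widetilde F(n)$ of them are blue, while $F(n) = \lfloor n^2/4\rfloor+1-\widetilde F(n)$, the number of red edges of $G'$ is less than $F(n)$; but $F(n) \ge 1$, so in particular $G'$ has at least one blue edge and we can perform the deletion.

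The key step is the monotonicity observation: removing a blue edge $e$ from $G'$ never creates a new copy of $C_5$ through any previously-blue edge — more carefully, if $G'' := G' - e$ for a blue edge $e$, then every edge of $G''$ that occurs in a $C_5$ in $G''$ also occurred in a $C_5$ in $G'$ (since any $C_5$ in $G''$ is a $C_5$ in $G'$ not using $e$). Hence $\cC_5(G'') \subseteq \cC_5(G') \setminus \{e\}$, and in fact $e \notin \cC_5(G')$ already since $e$ was blue, so $|\cC_5(G'')| \le |\cC_5(G')|$. Iterating: delete blue edges one at a time until the resulting graph $G^*$ has exactly $\lfloor n^2/4\rfloor + 1$ edges (this is possible since we start with at least that many edges, more than $\widetilde F(n) \ge $ enough blue ones are available, and at each step we only remove blue edges — note $\widetilde F(n) = \lfloor n^2/4\rfloor + 1 - F(n)$ and $F(n) = |\cC_5(G)|$ for the extremal $G$, so $\widetilde F(n) < \lfloor n^2/4\rfloor + 1$, meaning we never run out of edges before reaching the target count). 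Throughout, $|\cC_5(\cdot)|$ is non-increasing, so $|\cC_5(G^*)| \le |\cC_5(G')| = |E(G')| - (\text{number of blue edges of } G') < |E(G')| - \widetilde F(n) \le $ — here one must be slightly careful, since $|E(G')|$ may exceed $\lfloor n^2/4\rfloor + 1$; I would instead argue directly that after deletion $|\cC_5(G^*)|$ equals the number of red edges remaining, which is at most the number of red edges of $G'$, which is $|E(G')| - (\text{blue edges of } G') < |E(G')| - \widetilde F(n)$, and then use that we removed exactly $|E(G')| - (\lfloor n^2/4\rfloor+1)$ edges, all blue, to conclude the red count is unchanged and equals $|\cC_5(G^*)| < \lfloor n^2/4\rfloor + 1 - \widetilde F(n) = F(n)$.

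But $G^* \in \cE_n$ (it has exactly $\lfloor n^2/4\rfloor + 1$ edges), so by the definition $F(n) = \min_{G\in\cE_n}|\cC_5(G)|$ we must have $|\cC_5(G^*)| \ge F(n)$, a contradiction. I expect the main obstacle to be purely bookkeeping: keeping the inequalities between $|E(G')|$, the number of blue edges, $\widetilde F(n)$, and $\lfloor n^2/4 \rfloor + 1$ straight, and confirming that one can always carry the deletion process down to exactly $\lfloor n^2/4 \rfloor + 1$ edges while only ever deleting blue edges (which follows because the number of blue edges strictly exceeds $\widetilde F(n) = \lfloor n^2/4\rfloor + 1 - F(n) \ge |E(G')| - (\lfloor n^2/4 \rfloor + 1)$ whenever $|E(G')| \le \lfloor n^2/4\rfloor + 1 + F(n)$; and if $|E(G')|$ is even larger we may first delete arbitrary edges down to that range without increasing $|\cC_5|$). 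No deep input is needed — this is a self-contained "duality" argument relating the minimum number of $C_5$-edges to the maximum number of non-$C_5$-edges.
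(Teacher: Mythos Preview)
Your overall strategy --- delete edges to bring $G'$ down to exactly $\lfloor n^2/4\rfloor+1$ edges and then invoke the definition of $F(n)$ --- is the same as the paper's, but you delete the wrong colour. Deleting a \emph{blue} edge $e$ leaves $\cC_5$ completely unchanged (as you yourself note), so after removing $k:=|E(G')|-\lfloor n^2/4\rfloor-1$ blue edges you get $|\cC_5(G^*)|=|\cC_5(G')|=|E(G')|-(\text{blue count})$. Your final line silently replaces $|E(G')|$ by $\lfloor n^2/4\rfloor+1$ here, which is only valid when $G'\in\cE_n$ to begin with; for $|E(G')|>\lfloor n^2/4\rfloor+1$ the bound you obtain is $|\cC_5(G^*)|<|E(G')|-\widetilde F(n)$, which can be much larger than $F(n)$. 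Your suggested patch (``first delete arbitrary edges down to that range'') does not help either, since deleting a blue edge strictly decreases the blue count, so the hypothesis ``blue count $>\widetilde F(n)$'' need not survive.

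The paper instead removes $k$ edges from $\cC_5(G')$ (i.e.\ \emph{red} edges). Then every original blue edge is still present in $G^*$ and, because $\cC_5(G^*)\subseteq\cC_5(G')$, is still outside $\cC_5(G^*)$. Hence $|E(G^*)\setminus\cC_5(G^*)|\ge |E(G')\setminus\cC_5(G')|>\widetilde F(n)$, which combined with $|E(G^*)|=\lfloor n^2/4\rfloor+1$ gives $|\cC_5(G^*)|<F(n)$ directly. So the one-word fix to your argument is: delete red edges, not blue ones. (If you worry there might not be enough red edges to delete, note that the blue edges induce a $C_5$-free subgraph, hence have at most $\lfloor n^2/4\rfloor$ edges for $n\ge n_0$; alternatively, greedily delete a currently-red edge at each step --- the original blue edges are never touched and never become red.)
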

\begin{proof}
Indeed, otherwise remove from $G'$ arbitrarily chosen $|E(G')| - \lfloor n^2/4 \rfloor - 1$ edges in $\cC_5(G')$.
The obtained graph has less than $\lfloor n^2/4 \rfloor + 1 - \widetilde{F}(n) = F(n)$ edges that occur in $C_5$,
a contradiction.
\end{proof}

We continue with three simple claims that all the edges between the parts $B_0$ and $C_0$, $C_0$ and $D_0$,
and inside $D_0$ occur in a copy of $C_5$.
\begin{claim}
$E(B_0,C_0) \subseteq \cC_5(G)$.
\end{claim}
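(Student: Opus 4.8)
The plan is to show that every edge $\{b,c\}$ with $b\in B_0$ and $c\in C_0$ lies on an explicit copy of $C_5$ in $G$, of the form $b-c-d-d'-c'-b$ with $c'\in C_0$ and $d,d'\in D_0$. This is the shape forced on a pentagon through a $B_0$--$C_0$ edge by the approximate structure established above: if we ignore the (few) edges inside $D_0$, the graph on $A_0\cup B_0\cup C_0\cup D_0$ is bipartite with parts $A_0\cup C_0$ and $B_0\cup D_0$, so a $C_5$ must use an edge inside $D_0$, reach it from $c$ through one $C_0$--$D_0$ edge, and return to $b$ through a $D_0$--$C_0$ edge followed by a $C_0$--$B_0$ edge.

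First I would pick $c'$. Since $b\in B_0$, the stability description gives $\deg_{C_0}(b)\ge(1-\eps')|C_0|$, and $|C_0|=(1/4\pm\eps')n$ is large for $n\ge n_0$; hence $b$ has a neighbour $c'\in C_0$ with $c'\ne c$.

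Next I would find $d$ and $d'$. Both $c$ and $c'$ lie in $C_0$, so each has at least $(1-\eps')|D_0|$ neighbours in $D_0$; write $N:=N_G(c)\cap D_0$ and $N':=N_G(c')\cap D_0$, so that $|N\cap N'|\ge(1-2\eps')|D_0|$. Because $G[D_0]$ has edge-density at least $1-\eps'$, the number of non-edges inside $D_0$ is at most $\eps'\binom{|D_0|}{2}$, which for $\eps'$ small and $n_0$ large is strictly less than $\binom{|N\cap N'|}{2}$; therefore $G[N\cap N']$ contains an edge $\{d,d'\}$. Now $b,c,d,d',c'$ lie in the pairwise disjoint sets $B_0,C_0,D_0,D_0,C_0$ with $c\ne c'$ and $d\ne d'$, so they are five distinct vertices, and the edges $bc$ (the hypothesis), $cd$, $dd'$, $d'c'$, and $c'b$ are all present; this is a $C_5$ through $\{b,c\}$. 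Since $\{b,c\}\in E(B_0,C_0)$ was arbitrary, $E(B_0,C_0)\subseteq\cC_5(G)$.

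I do not expect a genuine obstacle here: the only two points needing a little care are identifying the correct shape of the pentagon (dictated by the bipartiteness observation) and the one-line counting guaranteeing the ``bridge'' edge $\{d,d'\}$ joining $N_G(c)\cap D_0$ to $N_G(c')\cap D_0$; both go through immediately once $\eps'$ is small enough and $n_0$ large enough, which has already been assumed.
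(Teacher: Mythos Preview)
Your proof is correct and follows essentially the same approach as the paper: pick a second neighbour $c'\in C_0$ of $b$, then find an edge $\{d,d'\}$ in $D_0$ with $d$ adjacent to $c$ and $d'$ adjacent to $c'$, giving the pentagon $b c d d' c'$. The only cosmetic difference is that you locate $\{d,d'\}$ inside the common neighbourhood $N\cap N'$, whereas the paper only requires $d\in N$ and $d'\in N'$; both counting arguments go through for small $\eps'$.
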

\begin{proof}
Fix any $\{u,v\} \in E(B_0,C_0)$ with $u \in B_0$.
Let $v' \in C_0$ be a neighbor of $u$ in $C_0$ different from $v$.
Since both $v$ and $v'$ have more than $|D_0|/2$ neighbors in $D_0$ and the edge-density of $G[D_0]$ is $(1-\eps')$,
there exist a vertex $w \in D_0$ connected to $v$, and vertex $w' \in D_0$ connected to $v'$ such that $\{w,w'\}$ is an edge. 
Therefore, $uvww'v'$ forms a $C_5$ in $G$.
\end{proof}

\begin{claim}
$E(C_0,D_0) \subseteq \cC_5(G)$.
\end{claim}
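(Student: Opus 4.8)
The plan is to close a $C_5$ through the edge $\{u,v\}$ itself, placing the two "extra" vertices inside $D_0$, in direct analogy with the previous claim for $E(B_0,C_0)$. Fix an arbitrary edge $\{u,v\}\in E(C_0,D_0)$ with $u\in C_0$ and $v\in D_0$. First I would use the degree condition $\deg_{C_0}(v)\ge(1-\eps')|C_0|$, which applies since $v\in D_0$, together with $|C_0|=(1/4\pm\eps')n$, to choose a neighbor $u'\in C_0$ of $v$ with $u'\ne u$.

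Next, both $u$ and $u'$ lie in $C_0$, so each of them has at least $(1-\eps')|D_0|$ neighbors in $D_0$, and hence more than $|D_0|/2$ neighbors in $D_0\setminus\{v\}$ once $n$ is large and $\eps'$ small (recall $|D_0|=(\sqrt2/4\pm\eps')n$). Since $G[D_0]$ has edge-density at least $1-\eps'$, the number of non-adjacent pairs inside $D_0$ is at most $\eps'\binom{|D_0|}{2}$, so a routine counting argument — exactly the one used in the proof of the previous claim — produces vertices $w,w'\in D_0\setminus\{v\}$ with $w$ adjacent to $u$, $w'$ adjacent to $u'$, $w\ne w'$, and $\{w,w'\}\in E(G)$. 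Then $u,v,u',w',w$, taken in this cyclic order, span a copy of $C_5$ in $G$: its five edges are $\{u,v\}$ (the given edge), $\{v,u'\}$, $\{u',w'\}$, $\{w',w\}$, and $\{w,u\}$. The five vertices are pairwise distinct since $u,u'\in C_0$ are distinct, $v,w,w'\in D_0$ are distinct ($w,w'\ne v$ by construction and $w\ne w'$), and $C_0\cap D_0=\emptyset$. Therefore $\{u,v\}\in\cC_5(G)$, and as the edge was arbitrary, $E(C_0,D_0)\subseteq\cC_5(G)$.

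Given the approximate structure of $G'$ already established, the argument is essentially bookkeeping; the only point that needs a little care is the distinctness of the five vertices, which is the reason for deleting $v$ from the two neighborhoods before invoking the density estimate. The only \emph{obstacle}, such as it is, is checking that the slack in the bounds $(1-\eps')|D_0|>|D_0|/2$ and $\eps'\binom{|D_0|}{2}\ll|D_0|^2/4$ suffices to guarantee the desired adjacent pair $\{w,w'\}$ with $w\in N(u)\cap(D_0\setminus\{v\})$ and $w'\in N(u')\cap(D_0\setminus\{v\})$ — and this holds comfortably for $\eps'<10^{-4}$ and $n$ large.
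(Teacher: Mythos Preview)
Your proof is correct. The paper's own argument differs in the specific $C_5$ it builds: rather than picking a second vertex $u'\in C_0$ and mirroring the $E(B_0,C_0)$ claim, the paper stays anchored at the single vertex $u\in C_0$, chooses a second neighbor $v'\in N_{D_0}(u)\setminus\{v\}$, and then uses the high edge-density of $G[D_0]$ to find a path of length three inside $D_0$ between $v$ and $v'$; together with $\{u,v\}$ and $\{u,v'\}$ this closes a $C_5$. So the paper's cycle uses one $C_0$-vertex and four $D_0$-vertices, whereas yours uses two $C_0$-vertices and three $D_0$-vertices. Your route has the virtue of reusing the template of the previous claim verbatim; the paper's route is marginally slicker in that it avoids the second $C_0$-vertex and reduces the task to finding a $3$-path in a near-complete graph, but both arguments are short and equally valid.
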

\begin{proof}
Fix any $\{u,v\} \in E(C_0,D_0)$ with $u \in C_0$.
Let $v' \in D_0$ be a neighbor of $u$ in $D_0$ different from $v$.
Since the edge-density of $G[D_0]$ is $(1-\eps')$, there
exists a path of length three between $v$ and $v'$ in $G[D_0]$.
This path together with the edges $\{u,v\}$ and $\{u,v'\}$ forms a $C_5$.
\end{proof}

\begin{claim}
$E(G[D_0]) \subseteq \cC_5(G)$.
\end{claim}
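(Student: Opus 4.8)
The plan is to show that every edge with both endpoints in $D_0$ lies in a copy of $C_5$, by routing a $5$-cycle through the parts $C_0$ and $B_0$. One might first try to close the cycle inside $D_0$ — using that $G[D_0]$ has edge-density at least $1-\eps'$ to find a length-$3$ path between the two endpoints — but this is delicate for the (few) vertices of $D_0$ whose degree \emph{within} $D_0$ is small, which the density bound alone does not preclude. Instead I would use only the ``incoming'' degree conditions of the stability structure, which do control \emph{every} vertex of $D_0$.

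Concretely, fix $\{u,v\}\in E(G[D_0])$. Since $u,v\in D_0$, each of them has at least $(1-\eps')|C_0|$ neighbours in $C_0$; as $\eps'$ is small and $|C_0|$ is large, we may choose distinct vertices $c\in N(v)\cap C_0$ and $c'\in N(u)\cap C_0$. Since $c,c'\in C_0$, each of them has at least $(1-\eps')|B_0|$ neighbours in $B_0$, so $N(c)\cap N(c')\cap B_0$ has size at least $(1-2\eps')|B_0|>0$; pick $x$ in this set. Then $(u,v,c,x,c')$ is a copy of $C_5$ in $G$: the five consecutive edges $\{u,v\},\{v,c\},\{c,x\},\{x,c'\},\{c',u\}$ are all present by construction, and the five vertices are pairwise distinct — $u$ and $v$ are adjacent hence distinct, $c\ne c'$ by choice, and otherwise $x,u,v,c,c'$ lie in the pairwise different parts $B_0$, $D_0$, $D_0$, $C_0$, $C_0$. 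Hence $\{u,v\}\in\cC_5(G)$, and since $\{u,v\}$ was an arbitrary edge inside $D_0$ we conclude $E(G[D_0])\subseteq\cC_5(G)$.

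So the only real point of care is to invoke the degree bounds in the correct direction — that \emph{every} vertex of $D_0$ sees almost all of $C_0$, and \emph{every} vertex of $C_0$ sees almost all of $B_0$ — both of which are among the properties extracted from Section~\ref{sec:stability}; no separate case analysis or new tool is needed, and the argument is a minor variant of the two preceding claims. Together with the analogous statements for the $B_0$--$C_0$ and $A_0$--$B_0$ layers, this will force all non-$C_5$ edges of $G$ to lie between $A_0$ and $B_0$; matched against the exact count $\lfloor n^2/4\rfloor+1-F(n)=\widetilde F(n)$ of non-$C_5$ edges (and $|A_0||B_0|=\widetilde F(n)+O(\eps' n^2)$), this is what ultimately pins down the clean partition claimed in Theorem~\ref{thm:c5exact}.
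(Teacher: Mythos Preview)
Your proof is correct and takes a genuinely different route from the paper. The paper closes the $C_5$ entirely inside $D_0$, arguing that the edge-density $\ge 1-\eps'$ of $G[D_0]$ yields a path of length four from $u$ to $v$; you instead route through $C_0$ and $B_0$, using only the per-vertex degree guarantees $\deg_{C_0}(w)\ge(1-\eps')|C_0|$ for $w\in D_0$ and $\deg_{B_0}(w)\ge(1-\eps')|B_0|$ for $w\in C_0$. As you rightly point out, the density bound on $G[D_0]$ alone does not preclude an individual vertex of $D_0$ from having small degree \emph{within} $D_0$, so the paper's one-line argument tacitly needs an extra word of justification for such vertices; your version sidesteps this entirely and is cleaner in that respect.

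One small inaccuracy in your closing paragraph: the two preceding claims in the paper treat the $B_0$--$C_0$ and $C_0$--$D_0$ layers (not $A_0$--$B_0$), and the conclusion actually drawn is Corollary~\ref{cor:c5a0b0edge}, namely that at most $5\eps' n^2$ edges of $E(A_0,B_0)$ lie in $\cC_5(G)$ --- not that all non-$C_5$ edges already lie between $A_0$ and $B_0$. Edges incident to $X$ are dealt with only in the subsequent claims.
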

\begin{proof}
Let $u$ and $v$ be two adjacent vertices from $D_0$.
Since the edge-density of $G[D_0]$ is $(1-\eps')$, there
is a path of length four between $u$ and $w$,
which together with $\{u,v\}$ forms a $C_5$.
\end{proof}

Since $\left| E(B_0,C_0) \cup E(C_0,D_0) \cup E(G[D_0]) \right| \ge (2+\sqrt{2} - 3\eps')n^2$,
we immediately conclude that
\begin{cor}
\label{cor:c5a0b0edge}
$\left| E(A_0,B_0) \cap \cC_5(G) \right| < 5\eps' n^2$.
\end{cor}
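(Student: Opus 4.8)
The plan is to read the bound off directly from the three claims just established, combined with the already-fixed value of $F(n)$, so no new idea is needed. The key point is a disjointness observation followed by an arithmetic comparison.

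First I would note that $E(A_0,B_0)$, $E(B_0,C_0)$, $E(C_0,D_0)$ and $E(G[D_0])$ are pairwise disjoint subsets of $\binom{V}{2}$, since $A_0,B_0,C_0,D_0$ are pairwise disjoint. By the three preceding claims the last three of these sets are contained in $\cC_5(G)$, so
\[
|\cC_5(G)| \;\ge\; \bigl|E(A_0,B_0)\cap\cC_5(G)\bigr| \;+\; |E(B_0,C_0)| \;+\; |E(C_0,D_0)| \;+\; |E(G[D_0])| .
\]
Next I would bound the three interior terms from below using the stability structure of $G$: the minimum-degree conditions give $|E(B_0,C_0)|\ge(1-\eps')|B_0||C_0|$ and $|E(C_0,D_0)|\ge(1-\eps')|C_0||D_0|$, while the edge-density condition on $G[D_0]$ gives $|E(G[D_0])|\ge(1-\eps')\binom{|D_0|}{2}$. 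Substituting $|B_0|,|C_0|=(1/4\pm\eps')n$ and $|D_0|=(\sqrt2/4\pm\eps')n$ and collecting the lower-order contributions shows, as noted just before the statement, that $\bigl|E(B_0,C_0)\cup E(C_0,D_0)\cup E(G[D_0])\bigr|\ge\left(\tfrac{2+\sqrt2}{16}-3\eps'\right)n^2$. Plugging this into the displayed inequality together with $|\cC_5(G)|=F(n)\le\left(\tfrac{2+\sqrt2}{16}+\eps\right)n^2$ gives $\bigl|E(A_0,B_0)\cap\cC_5(G)\bigr|\le(\eps+3\eps')n^2<5\eps'n^2$, where the last inequality uses that $\eps$ was chosen much smaller than $\eps'$ and $n\ge n_0$.

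I do not expect a genuine obstacle here: the statement is a purely arithmetic consequence of the three preceding claims and of the value of $F(n)$, and the only thing requiring mild care is the bookkeeping of the error terms and the hierarchy $\eps\ll\eps'$ — which is precisely why the conclusion is phrased with the generous slack constant $5$ rather than an optimized one.
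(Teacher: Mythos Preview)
Your argument is correct and is exactly the paper's approach: the paper gives no separate proof but simply notes the lower bound $\left|E(B_0,C_0)\cup E(C_0,D_0)\cup E(G[D_0])\right|\ge\left(\tfrac{2+\sqrt2}{16}-3\eps'\right)n^2$ and states the corollary as an immediate consequence. Your write-up fills in the disjointness observation and the comparison with $|\cC_5(G)|=F(n)\le\left(\tfrac{2+\sqrt2}{16}+\eps\right)n^2$ just as intended, and you have the hierarchy $\eps\ll\eps'$ the right way round.
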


Let $E' := E(A_0,B_0) \cup E(B_0,C_0) \cup E(C_0,D_0) \cup E(G[D_0])$.
Since most of the edges between $A_0$ and $B_0$ do not occur in any $C_5$,
we now get much better control on the edges in $E \setminus E'$.
We start with the following two claims.
\begin{claim}
\label{cl:c5noAvB}
There is no vertex $z \in V$ adjacent both to $u \in A_0$ and $v \in B_0$.
\end{claim}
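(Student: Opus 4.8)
The plan is to argue by contradiction, playing the hypothetical vertex $z$ off against Corollary~\ref{cor:c5a0b0edge}, which says that fewer than $5\eps' n^2$ edges between $A_0$ and $B_0$ occur in a copy of $C_5$. Suppose there is a vertex $z \in V$ adjacent to some $u \in A_0$ and some $v \in B_0$. The observation driving the proof is that the length-two path $u$--$z$--$v$, combined with the almost complete bipartite graph between $A_0$ and $B_0$ guaranteed by the stability description, forces nearly all of the $\Theta(n^2)$ edges between $A_0$ and $B_0$ to lie in pentagons.

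Concretely, I would fix such a triple $(u,z,v)$ and, for each edge $\{a,b\}$ with $a\in A_0$ and $b\in B_0$, attempt to realise the $5$-cycle on $a,b,u,z,v$ taken in this cyclic order, whose edges are $\{a,b\}$, $\{b,u\}$, $\{u,z\}$, $\{z,v\}$, $\{v,a\}$. The edges $\{u,z\}$ and $\{z,v\}$ are present by assumption, so it remains to check that $\{b,u\}$ and $\{v,a\}$ are edges and that $a,b,u,z,v$ are five distinct vertices. Since $\deg_{B_0}(u)\ge(1-\eps')|B_0|$, the edge $\{b,u\}$ is absent for at most $\eps'|B_0|$ choices of $b$, and since $\deg_{A_0}(v)\ge(1-\eps')|A_0|$, the edge $\{v,a\}$ is absent for at most $\eps'|A_0|$ choices of $a$. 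Distinctness can fail only when $a\in\{u,z\}$ or $b\in\{v,z\}$ (note $z\ne u$ and $z\ne v$, as $\{u,z\}$ and $\{z,v\}$ are edges), which discards at most $\deg(u)+\deg(v)+\deg(z)\le 3n$ edges of $E(A_0,B_0)$. Hence all but at most $2\eps'|A_0||B_0|+3n\le 2\eps' n^2+3n$ edges of $E(A_0,B_0)$ occur in a $C_5$.

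To finish, I would bound $|E(A_0,B_0)|$ from below: as $\deg_{A_0}(b)\ge(1-\eps')|A_0|$ for every $b\in B_0$, we get $|E(A_0,B_0)|\ge(1-\eps')|A_0||B_0|\ge\frac{2-\sqrt2}{16}n^2-O(\eps')n^2$, using the approximate sizes of $A_0$ and $B_0$. Therefore at least $\frac{2-\sqrt2}{16}n^2-O(\eps')n^2-3n$ edges between $A_0$ and $B_0$ belong to $\cC_5(G)$, which for $\eps'$ small enough (the standing assumption $\eps'<10^{-4}$ is more than sufficient) and $n$ large exceeds $5\eps' n^2$, contradicting Corollary~\ref{cor:c5a0b0edge}. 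I do not expect a real obstacle here; the only delicate points are the bookkeeping needed to keep the five cycle vertices distinct --- handled by throwing away the $O(n)$ edges incident to $u$, $v$ or $z$ --- and checking that the accumulated $\eps'$-error terms remain safely below the $\frac{2-\sqrt2}{16}n^2$ main term.
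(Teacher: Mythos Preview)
Your proof is correct and is essentially the same as the paper's: both build the pentagon $u$--$z$--$v$--$a$--$b$--$u$ (the paper writes it as $u\,z\,v\,u'\,v'$ with $u'=a$, $v'=b$) and count that this puts at least $(1-O(\eps'))|A_0||B_0|$ edges of $E(A_0,B_0)$ into $\cC_5(G)$, contradicting Corollary~\ref{cor:c5a0b0edge}. The only cosmetic difference is that the paper first fixes $v'\in N_{B_0}(u)$ and then picks a common neighbor $u'$ of $v,v'$ in $A_0$, whereas you start from an arbitrary edge $\{a,b\}\in E(A_0,B_0)$ and discard the few for which $b\notin N(u)$ or $a\notin N(v)$; the resulting bounds match.
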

\begin{proof}
Suppose for contradiction there is such a vertex $z$, and let $u \in A_0$
and $v \in B_0$ be its two neighbors. Let $v' \in B_0$ be a neigbor of $u$
different from $v$. Clearly, there are at least $(1-\eps')|A_0|$ ways of
choosing $v'$. The vertices $v$ and $v'$ have at least $\deg_{A_0}(v) +
\deg_{A_0}(v') - |A_0| \ge (1-2\eps')|A_0|$ common neigbors $u' \in A_0$. Each
such choice of $u'$ and $v'$ yields a copy of $C_5$ on the vertices $uzvu'v'$.
In particular the edge $\{u',v'\} \in E(A_0,B_0) \cap \cC_5(G)$. However,
there are at least $(1-3\eps')|A_0||B_0| > 0.03n^2$ choices of $\{u',v'\}$,
which contradicts Corollary~\ref{cor:c5a0b0edge}.
\end{proof}

\begin{claim}
\label{cl:c5noBvC}
There is no vertex $z \in V$ adjacent both to $u \in B_0$ and $v \in C_0$.
\end{claim}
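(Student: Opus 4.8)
The plan is to follow the template of the proof of Claim~\ref{cl:c5noAvB}: from a putative bad vertex $z$ we exhibit quadratically many copies of $C_5$, each through a distinct edge of $E(A_0,B_0)$, contradicting Corollary~\ref{cor:c5a0b0edge}. So suppose for contradiction that some $z\in V$ is adjacent both to $u\in B_0$ and to $v\in C_0$.

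First we would choose a vertex $a\in A_0$ adjacent to $u$. Since $u\in B_0$, there are at least $(1-\eps')|A_0|$ such vertices, hence at least $(1-\eps')|A_0|-1$ of them differ from $z$; fix any such $a$. Both $a\in A_0$ and $v\in C_0$ have at least $(1-\eps')|B_0|$ neighbours in $B_0$, so they have at least $(1-2\eps')|B_0|$ common neighbours there, and discarding the at most two of them lying in $\{u,z\}$ leaves at least $(1-2\eps')|B_0|-2$ choices of a common neighbour $b'\in B_0\setminus\{u,z\}$. For every such pair $(a,b')$ the vertices $a,u,z,v,b'$ are pairwise distinct — the first, second, fourth and fifth lie in the pairwise disjoint parts $A_0,B_0,C_0,B_0$, while $z$ was excluded from equalling $a$ or $b'$ and differs from $u$ and $v$ because it is adjacent to both — and $a\,u\,z\,v\,b'$ is a copy of $C_5$ in $G$, using the edges $\{a,u\}$, $\{u,z\}$, $\{z,v\}$, $\{v,b'\}$ and $\{b',a\}$. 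In particular the edge $\{a,b'\}$ belongs to $E(A_0,B_0)\cap\cC_5(G)$.

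Since distinct pairs $(a,b')$ with $a\in A_0$ and $b'\in B_0$ yield distinct edges $\{a,b'\}$, this produces at least $\bigl((1-\eps')|A_0|-1\bigr)\bigl((1-2\eps')|B_0|-2\bigr)$ edges of $E(A_0,B_0)$ lying in $\cC_5(G)$. Substituting $|A_0|=(1/2-\sqrt2/4\pm\eps')n$ and $|B_0|=(1/4\pm\eps')n$ and recalling $\eps'<10^{-4}$, this is more than $0.03n^2$ for $n$ large, which contradicts Corollary~\ref{cor:c5a0b0edge}. Hence no such vertex $z$ exists. This is a near-verbatim adaptation of Claim~\ref{cl:c5noAvB}, so there is no genuine obstacle; the only points needing a line of justification are the distinctness of the five vertices of each $C_5$ (settled by the part partition together with the explicit exclusions of $z$) and the easy check that the resulting count of $C_5$-edges dominates the bound $5\eps' n^2$, both of which are immediate from the size and degree estimates.
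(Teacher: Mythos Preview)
Your proof is correct and follows essentially the same approach as the paper. The paper builds the $C_5$ as $uzvu'w$ with $u'\in N_{B_0}(v)$ and $w\in N_{A_0}(u)\cap N_{A_0}(u')$, while you build it as $auzvb'$ with $a\in N_{A_0}(u)$ and $b'\in N_{B_0}(a)\cap N_{B_0}(v)$; this is the same five-cycle with the two varying vertices chosen in the opposite order, and your distinctness checks are slightly more explicit than the paper's.
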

\begin{proof}
Suppose not, and let $u \in B_0$ and $v \in C_0$ be two neighbors of $z$.
Let $u' \in B_0$ be any of the $(1-\eps')|B_0|$ neighbors of $v$ different from $u$.
Since $\deg_{A_0}(u) + \deg_{A_0}(u') - |A_0| \ge (1-2\eps')|A_0|$,
there are at least $(1-2\eps')|A_0| \cdot (1-\eps')|B_0| > (1-3\eps')|A_0||B_0|$
edges from $E(A_0,B_0)$ that occur in a $C_5$ (note that $uzvu'w$, where $w \in A_0$ is a common
neighbor of $u$ and $u'$, form a $C_5$); a contradiction.
\end{proof}

A direct consequence of the last two claims is the following.
\begin{cor}
The sets $A_0$, $B_0$ and $C_0$ are independent,
and $|E(A_0,C_0)| = |E(B_0,D_0)| = 0$.
\end{cor}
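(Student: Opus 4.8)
The plan is to obtain all five assertions as immediate contradictions with Claims~\ref{cl:c5noAvB} and~\ref{cl:c5noBvC}, using nothing beyond the fact that each of the four parts has size $\Omega(n)$ together with the minimum-degree conditions of the stability structure, which guarantee that every vertex of a given part has a neighbor in the ``adjacent'' part: a vertex of $A_0$ has a neighbor in $B_0$, a vertex of $B_0$ has neighbors in both $A_0$ and $C_0$, a vertex of $C_0$ has neighbors in both $B_0$ and $D_0$, and a vertex of $D_0$ has a neighbor in $C_0$.

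First I would rule out an edge $u_1u_2$ inside $A_0$: the vertex $u_1$ has a neighbor $v\in B_0$, so $u_1$ is adjacent both to $u_2\in A_0$ and to $v\in B_0$, contradicting Claim~\ref{cl:c5noAvB}. The same one-line argument kills an edge $v_1v_2$ inside $B_0$ (take a neighbor $u\in A_0$ of $v_1$ and apply Claim~\ref{cl:c5noAvB} to $v_1$), and an edge $w_1w_2$ inside $C_0$ (take a neighbor $v\in B_0$ of $w_1$ and apply Claim~\ref{cl:c5noBvC} to $w_1$). Hence $A_0$, $B_0$ and $C_0$ are independent.

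For the two cross conditions: an edge $uw$ with $u\in A_0$ and $w\in C_0$ would, via a neighbor $v\in B_0$ of $w$, make $w$ adjacent to both $A_0$ and $B_0$, contradicting Claim~\ref{cl:c5noAvB}; hence $|E(A_0,C_0)|=0$. Likewise an edge $vx$ with $v\in B_0$ and $x\in D_0$ would, via a neighbor $w\in C_0$ of $x$, make $x$ adjacent to both $B_0$ and $C_0$, contradicting Claim~\ref{cl:c5noBvC}; hence $|E(B_0,D_0)|=0$.

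There is really no obstacle in this corollary --- it is a bookkeeping step --- and the only point demanding a moment's attention is verifying that in each case the endpoint chosen to play the role of the forbidden vertex in the claims genuinely has the required neighbor in the adjacent part, which is precisely what the $(1-\eps')$-degree bounds and the linear part sizes supply; that the three vertices involved are pairwise distinct is automatic, since they either lie in different parts or form an edge.
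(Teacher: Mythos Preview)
Your argument is correct and is precisely the ``direct consequence'' the paper alludes to without spelling out: each of the five forbidden configurations is eliminated by applying Claim~\ref{cl:c5noAvB} or Claim~\ref{cl:c5noBvC} to a suitable vertex that, by the $(1-\eps')$-degree conditions, has the required neighbor in the adjacent part. There is nothing to add.
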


Now move our attention to paths of length at most two between $A_0$ and $D_0$.
Let $Y \subseteq A_0$ be the set of vertices $u \in A_0$ such that
there exist vertices $v \in V$ and $w \in D_0$ such that both 
$\{u,v\} \in E(G)$ and $\{v,w\} \in E(G)$.
\begin{claim}
$|Y| < 21 \eps' n$.
\end{claim}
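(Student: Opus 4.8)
The plan is to show that every vertex of $Y$ forces at least $(1-\eps')|B_0|$ of the edges between $A_0$ and $B_0$ to lie on a copy of $C_5$, which combined with Corollary~\ref{cor:c5a0b0edge} (bounding $|E(A_0,B_0)\cap\cC_5(G)|$ by $5\eps' n^2$) immediately yields $|Y|<21\eps' n$. The first ingredient is to locate the middle vertex $v$ from the definition of $Y$: if $u\in A_0$ and $\{u,v\},\{v,w\}\in E(G)$ with $w\in D_0$, then, since $A_0$ is independent and $E(A_0,C_0)=\emptyset$, we get $v\in B_0\cup D_0\cup X$, while since $E(B_0,D_0)=\emptyset$ we get $v\in A_0\cup C_0\cup D_0\cup X$; hence $v\in D_0\cup X$ (and $v\neq w$). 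So $Y=Y_D\cup Y_X$, where $Y_D$ is the set of $u\in A_0$ having a neighbour $d\in D_0$ that itself has a neighbour $d'\in D_0$, and $Y_X$ is the set of $u\in A_0$ having a neighbour $x\in X$ that has a neighbour $w\in D_0$.

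Next I would show that every $u\in Y$ creates many pentagons through its edges to $B_0$. Fix $u\in Y_D$ with the associated $d,d'\in D_0$. By the minimum-degree conditions coming from the stability structure, $u$ has at least $(1-\eps')|B_0|$ neighbours $b\in B_0$; for each such $b$, both $b$ and $d'$ have at least $(1-\eps')|C_0|$ neighbours in $C_0$, so for $n$ large there is some $c\in N(b)\cap N(d')\cap C_0$, and then $u,b,c,d',d$ is a $5$-cycle, its vertices lying in $A_0,B_0,C_0,D_0,D_0$ respectively with $d\neq d'$. Hence $\{u,b\}\in\cC_5(G)$. The case $u\in Y_X$ is identical using the $5$-cycle $u,b,c,w,x$ with $c\in N(b)\cap N(w)\cap C_0$. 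Since every edge of $E(A_0,B_0)$ has a unique endpoint in $A_0$, the edge families $\{\{u,b\}:b\in N(u)\cap B_0\}$ over distinct $u\in Y$ are pairwise disjoint, so $|E(A_0,B_0)\cap\cC_5(G)|\ge|Y|\,(1-\eps')|B_0|$; together with Corollary~\ref{cor:c5a0b0edge} and $|B_0|\ge(1/4-\eps')n$ this gives $|Y|<5\eps' n^2/\bigl((1-\eps')(1/4-\eps')n\bigr)<21\eps' n$ because $\eps'<10^{-4}$.

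The delicate step is the choice of pentagon. The naive attempt closes $\{u,b\}$ into a $5$-cycle via a path of length three from $b$ to a neighbour of $u$, and one would like to route this path through the $D_0$-neighbourhood of the $D_0$-vertex $d$; but this can break down when $d$ has tiny degree inside $D_0$, since $G[D_0]$ is only dense on average, not vertex by vertex. Routing instead through $b\to c\to d'\to d$, and choosing $c$ among the many common $C_0$-neighbours of $b$ and of $d'$ rather than among common $D_0$-neighbours, circumvents this, using only the guaranteed large $C_0$-degrees and the edge $\{d,d'\}$ that the definition of $Y$ supplies for free. The remaining points — the case analysis that pins $v$ down to $D_0\cup X$, and checking that the five vertices of each cycle are genuinely distinct — are routine.
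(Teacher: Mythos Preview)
Your proof is correct and follows essentially the same route as the paper's: for each $u\in Y$ and each of its $(1-\eps')|B_0|$ neighbours $b\in B_0$, close the edge $\{u,b\}$ into a $C_5$ via a common $C_0$-neighbour of $b$ and the $D_0$-endpoint of the witnessing path, then invoke Corollary~\ref{cor:c5a0b0edge}. The only difference is cosmetic: you first pin down the middle vertex $v$ to $D_0\cup X$ and split into cases $Y_D$, $Y_X$, whereas the paper works directly with the $D_0$-endpoint $x$ of the path $yzx$ and the common $C_0$-neighbour of $x$ and $b$, never needing to locate $z$ beyond noting $z\notin\{y,b\}$ (distinctness of $z$ from the $C_0$-vertex then follows silently from $E(A_0,C_0)=\emptyset$). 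Your case split makes the distinctness check more transparent but is not logically required.
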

\begin{proof}
For each edge $\{y,v\}$ with $y \in Y$ and $v \in B_0$, consider
vertices $z \in V\setminus\{y,v\}$ and $x \in N_{D_0}(z)$ such that
$yzx$ is a $3$-vertex path in $G$. Note that such a path exists by the
definition of $Y$. Since $|N_{C_0}(x) \cap N_{C_0}(v)| > |C_0|/2$, we conclude
that $\{y,v\} \in \cC_5(G)$. The edge $\{y,v\}$ can be chosen in at least $|Y|
\cdot (1-\eps') |B_0|$ ways, so by Corollary~\ref{cor:c5a0b0edge} we conclude that
\[  
|Y| \le \frac{5\eps' n^2}{(1-\eps')|B_0|} < \frac{20\eps' n}{1-2\eps'} < 21\eps'n
.\]
\end{proof}

We set $A_0' := A_0 \setminus Y$ and $Z := X \cup Y$.
We continue our exposition by establishing a lower bound on the minimum degree of $G$.
We start with the following claim.
\begin{claim}
\label{cl:c5clone}
There exists a vertex $u \in A_0'$ incident to at least $(1/4 - 191\eps')n$ edges
not in $\cC_5(G)$, and a vertex $u' \in B_0$ incident to at least
$((2-\sqrt{2})/4-93\eps')n$ edges that are not in $\cC_5(G)$.    
\end{claim}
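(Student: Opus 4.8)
The plan is to locate $u$ and $u'$ by a straightforward averaging (Markov) argument, fed by two facts already available: Corollary~\ref{cor:c5a0b0edge}, which gives $|E(A_0,B_0)\cap\cC_5(G)| < 5\eps' n^2$, and the minimum-degree part of the stability description of $G$, namely $\deg_{B_0}(w)\ge(1-\eps')|B_0|$ for every $w\in A_0$ and $\deg_{A_0}(w)\ge(1-\eps')|A_0|$ for every $w\in B_0$. The point is that, since almost no edge between $A_0$ and $B_0$ lies in $\cC_5(G)$, a typical vertex of $A_0'$ (resp.\ of $B_0$) sends almost all of its $\approx|B_0|$ (resp.\ $\approx|A_0|$) edges toward $B_0$ (resp.\ toward $A_0$) outside $\cC_5(G)$.

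First I would handle the vertex in $A_0'$. As $A_0'\cap B_0=\emptyset$, we have $\sum_{w\in A_0'}\bigl|\{v\in B_0:\{w,v\}\in\cC_5(G)\}\bigr| = |E(A_0',B_0)\cap\cC_5(G)| \le |E(A_0,B_0)\cap\cC_5(G)| < 5\eps' n^2$, so at most $5\eps' n^2/(40\eps' n)=n/8$ vertices $w\in A_0'$ send more than $40\eps' n$ edges of $\cC_5(G)$ into $B_0$. Since $|A_0'|\ge|A_0|-|Y|\ge(1/2-\sqrt2/4-22\eps')n>n/8$ (using the preceding bound $|Y|<21\eps' n$ and $\eps'$ small), there is $u\in A_0'$ sending at most $40\eps' n$ edges of $\cC_5(G)$ to $B_0$. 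Combined with $\deg_{B_0}(u)\ge(1-\eps')|B_0|\ge(1-\eps')(1/4-\eps')n$, this makes $u$ incident to at least $(1-\eps')(1/4-\eps')n-40\eps' n\ge(1/4-191\eps')n$ edges of $G$ not lying in $\cC_5(G)$, as required.

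The vertex $u'\in B_0$ is produced in exactly the same way with the roles of $A_0$ and $B_0$ exchanged: the same sum is $<5\eps' n^2$, so fewer than $5\eps' n^2/(50\eps' n)=n/10$ vertices of $B_0$ send more than $50\eps' n$ edges of $\cC_5(G)$ to $A_0$, and since $|B_0|\ge(1/4-\eps')n>n/10$ we may choose $u'\in B_0$ sending at most $50\eps' n$ such edges; then $\deg_{A_0}(u')\ge(1-\eps')|A_0|\ge(1-\eps')\bigl((2-\sqrt2)/4-\eps'\bigr)n$ yields at least $\bigl((2-\sqrt2)/4-93\eps'\bigr)n$ edges at $u'$ outside $\cC_5(G)$.

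The only delicate point is the $\eps'$-bookkeeping: the Markov thresholds ($40\eps' n$ and $50\eps' n$, or any comparable choice) must be picked so that the exceptional sets stay below $|A_0'|$ and $|B_0|$ respectively while the degree loss remains within the claimed $191\eps'$ and $93\eps'$; both hold with room to spare once $\eps'<10^{-4}$, as already assumed. No structural input beyond Corollary~\ref{cor:c5a0b0edge}, the bound $|Y|<21\eps' n$, and the minimum-degree estimates in the stability description of $G$ is needed.
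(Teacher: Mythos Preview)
Your argument is correct. Both your proof and the paper's are averaging arguments over the edges between $A_0'$ and $B_0$, but you route the count slightly differently. The paper lower-bounds $|E(A_0',B_0)\setminus\cC_5(G)|$ directly by $\widetilde{F}(n)-|Z|n$ (using that every non-$\cC_5$ edge of $G$ lies in $E(A_0',B_0)$ or is incident to $Z$) and then averages over $|A_0'|\le((2-\sqrt2)/4+\eps')n$ and over $|B_0|\le(1/4+\eps')n$; the claimed constants $191$ and $93$ drop out of this division. You instead upper-bound the \emph{bad} edges via Corollary~\ref{cor:c5a0b0edge}, apply Markov to isolate a vertex with few of them, and subtract from the stability minimum-degree bound. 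Your version actually yields sharper constants (roughly $42\eps'$ and $52\eps'$ in place of $191\eps'$ and $93\eps'$), at the cost of an extra Markov step and a dependence on the per-vertex degree estimates rather than just the global count $\widetilde{F}(n)$. Either way the claim follows with room to spare once $\eps'<10^{-4}$.
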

\begin{proof}
There are at least $\widetilde{F}(n) - |Z|n \ge ((2-\sqrt{2})/16-23\eps')n^2$ edges between $A_0'$ and $B_0$
that do not occur in $C_5$. Since $|A_0'| \le ((2-\sqrt{2})/4+\eps')n$, 
there is a vertex $u \in A_0'$ incident to at least $(1/4 - 191\eps')n$ such edges.
Similarly, $|B_0| \le (1/4+\eps')n$, which implies existence of $u' \in B_0$ incident
to at least $((2-\sqrt{2})/4-93\eps')n$ edges in $E(G) \setminus \cC_5(G)$.
\end{proof}

\begin{claim}
\label{cl:c5mindeg}
For any $v \in V$, $\deg(v) \ge (1/4 - 191\eps')n$.
\end{claim}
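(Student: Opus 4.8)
The plan is to argue by a \emph{cloning} (local replacement) trick, feeding the output into the duality observation of Claim~\ref{cl:c5duality}. Suppose for contradiction that some $v\in V$ has $\deg_G(v)<(1/4-191\eps')n$, and let $u\in A_0'$ be the vertex supplied by Claim~\ref{cl:c5clone}, so that $u$ is incident to at least $(1/4-191\eps')n$ edges lying in $E(G)\setminus\cC_5(G)$. Since $A_0$ is independent, $E(A_0,C_0)=\emptyset$, and $u\notin Y$ forbids neighbours of $u$ in $D_0$, we have $N_G(u)\subseteq B_0\cup X$ and hence $\deg_G(u)\ge\deg_{B_0}(u)\ge(1-\eps')|B_0|\ge(1/4-2\eps')n$. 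Form the $n$-vertex graph $G^\ast:=(G-v)+w$, where $w$ is a new vertex; its neighbourhood is chosen to be a large set $S\subseteq N_{B_0}(u)\setminus\{v\}$ with $|S|\ge(1/4-O(\eps'))n$ (the point of working inside $B_0$ is explained below). A direct edge count gives $|E(G^\ast)| = |E(G)|-\deg_G(v)+|S| > \left\lfloor n^2/4\right\rfloor+1$ once $\deg_G(v)<(1/4-191\eps')n$, so $G^\ast\in\cE'_n$ and Claim~\ref{cl:c5duality} yields $|E(G^\ast)\setminus\cC_5(G^\ast)|\le\widetilde F(n)$.

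The contradiction comes from showing $|E(G^\ast)\setminus\cC_5(G^\ast)|>\widetilde F(n)$, and the heart of it is tracking how the set of $C_5$-edges changes when passing from $G$ to $G^\ast$. A copy of $C_5$ in $G^\ast$ avoiding $w$ already lives in $G-v\subseteq G$; a copy using $w$ but not $u$ maps, on replacing $w$ by $u$, to a genuine copy of $C_5$ in $G$ (its other four vertices avoid $v$, and every vertex of $S$ is a neighbour of $u$); and a copy using both $u$ and $w$ — they are non-adjacent, hence at distance two on the $5$-cycle, of the form $u\text{--}z\text{--}w\text{--}p\text{--}q\text{--}u$ — cannot occur, because $p\in S\subseteq B_0$ forces $q\in N_G(p)\subseteq A_0\cup C_0$ whereas $q\in N_G(u)\subseteq B_0\cup X$, and $(A_0\cup C_0)\cap(B_0\cup X)=\emptyset$. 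Consequently every edge of $G-v$ outside $\cC_5(G)$ stays outside $\cC_5(G^\ast)$, and every edge $\{w,z\}$ with $z\in S$ and $\{u,z\}\notin\cC_5(G)$ lies outside $\cC_5(G^\ast)$. The first family contributes at least $\widetilde F(n)-\deg_G(v)$ edges to $E(G^\ast)\setminus\cC_5(G^\ast)$; the second contributes at least $(1/4-191\eps')n-|N_{B_0}(u)\setminus S|-1$ further, disjoint, edges. Since $\deg_G(v)<(1/4-191\eps')n$ and the constants in Claim~\ref{cl:c5clone} and the stability degree bounds carry $\Theta(\eps' n)$ of slack (which absorbs the $O(1)$ rounding terms), this total exceeds $\widetilde F(n)$, contradicting Claim~\ref{cl:c5duality}. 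The vertex $u'\in B_0$ produced by Claim~\ref{cl:c5clone} is not used here; it plays the analogous role later when one needs lower bounds on degrees ``towards'' the part $B$.

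I expect the main obstacle to be the choice of the set $S$: keeping $|S|\ge(1/4-O(\eps'))n$ while guaranteeing that no $C_5$ of $G^\ast$ passes through both $u$ and $w$. The structural condition that makes the third case above impossible is that every $p\in S$ has $N_G(p)\subseteq A_0\cup C_0$, i.e.\ $p$ has no neighbour in the atypical set $X$; since $N_{B_0}(u)$ already has size $(1/4-O(\eps'))n$, it suffices to show that all but $o(n)$ vertices of $B_0$ are ``$X$-clean''. The danger is that a vertex of $X$ could a priori be adjacent to a constant fraction of $B_0$. To rule this out I would use the edge budget $|E(G)|=\lfloor n^2/4\rfloor+1$ together with the (already exploited) near-extremal structure: a vertex of $X$ joined to many vertices of $B_0$, each of which is in turn joined to almost all of $A_0$ and to $C_0$, produces too many edges and too many short cycles to be compatible with the approximate partition, so the total number of $X$--$B_0$ edges, hence of $X$-dirty $B_0$-vertices, is $o(n)$. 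Making this last estimate airtight — essentially, that the leftover set $X$ cannot interact heavily with both $A_0$ and $B_0$ — is the one place the argument needs genuine care; the rest is routine bookkeeping within the $O(\eps')$ and $O(1)$ margins.
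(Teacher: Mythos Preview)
Your approach is the paper's: delete $v$, add a (partial) clone of the vertex $u$ from Claim~\ref{cl:c5clone}, and feed the result into Claim~\ref{cl:c5duality}. The paper states this in one line without tracking how $\cC_5$ changes; you are right that the case of a $C_5$ through both $u$ and the new vertex $w$ deserves a word.

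However, the obstacle you flag at the end is a phantom. Take simply $S:=N_{B_0}(u)\setminus\{v\}$ with no further ``$X$-clean'' restriction. In your third case the $C_5$ is $u\text{--}z\text{--}w\text{--}p\text{--}q\text{--}u$ with $p\in S\subseteq B_0$ and $q\in N_G(p)\cap N_G(u)$. Since $B_0$ is independent and $E(B_0,D_0)=\emptyset$, we only get $N_G(p)\subseteq A_0\cup C_0\cup X$ (not $A_0\cup C_0$ as you wrote); together with $N_G(u)\subseteq B_0\cup X$ this forces $q\in X$. But now $q$ is adjacent to $u\in A_0$ \emph{and} to $p\in B_0$, which is precisely what Claim~\ref{cl:c5noAvB} forbids. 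So no $C_5$ in $G^\ast$ can use both $u$ and $w$, with no hypothesis on $S$ beyond $S\subseteq B_0$. Your worry that ``a vertex of $X$ could be adjacent to a constant fraction of $B_0$'' is therefore irrelevant: such a vertex of $X$ is automatically non-adjacent to $u$, so it never plays the role of $q$.

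With this observation your bookkeeping goes through; the only cost of restricting from $N_G(u)$ to $N_{B_0}(u)$ is at most $|X|<\eps' n$ edges, which sits inside the $O(\eps')$ slack you already invoke. So your proposal and the paper's proof are the same argument, and the ``one place that needs genuine care'' in fact needs only one more application of Claim~\ref{cl:c5noAvB}.
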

\begin{proof}
Otherwise consider the graph $G'$ obtained from $G$ by removing the vertex $v$ and cloning the
vertex $u$ from the previous claim. $G'$ has more edges that do not occur in $C_5$ than $G$
and also $|E(G')| > |E(G)|$, a contradiction with Claim~\ref{cl:c5duality}.
\end{proof}

\begin{cor}
There is no vertex $z \in Z$ such that $N(z) \subseteq A'_0 \cup Z$.
\end{cor}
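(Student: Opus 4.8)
The plan is to obtain this corollary as an immediate consequence of the minimum-degree bound from Claim~\ref{cl:c5mindeg}, combined with the crude size estimates already available for $A_0$ and $Z$. The point is simply that $A_0' \cup Z$ is far too small a set to host the whole neighbourhood of any vertex.

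First I would assume, for contradiction, that there is a vertex $z \in Z$ with $N(z) \subseteq A_0' \cup Z$. Since $z$ is not an element of its own neighbourhood, this gives $\deg(z) \le |A_0' \cup Z| - 1 \le |A_0| + |Z|$, using $A_0' \subseteq A_0$. Next I would substitute the known bounds: the stability description provides $|A_0| \le \left(\frac12 - \frac{\sqrt2}{4} + \eps'\right)n = \left(\frac{2-\sqrt2}{4} + \eps'\right)n$, while $|Z| = |X| + |Y|$ with $|X| \le \eps' n$ and $|Y| < 21\eps' n$ (the latter being the claim established just above), so $|Z| \le 22\eps' n$. Hence $\deg(z) < \left(\frac{2-\sqrt2}{4} + 23\eps'\right)n$.

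Finally I would compare this against the degree threshold. Since $\frac{2-\sqrt2}{4} < \frac14$ with a fixed positive gap of $\frac{\sqrt2-1}{4} > 0.1$, the standing assumption $\eps' < 10^{-4}$ guarantees $\left(\frac{2-\sqrt2}{4} + 23\eps'\right)n < \left(\frac14 - 191\eps'\right)n$, so $\deg(z) < \left(\frac14 - 191\eps'\right)n$, which contradicts Claim~\ref{cl:c5mindeg}. This completes the argument.

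There is essentially no obstacle; the only point needing (minimal) attention is to check that the accumulated $O(\eps')$ slacks from $|A_0|$, $|X|$ and $|Y|$ do not push the upper bound on $\deg(z)$ above $\frac14 n$, which they comfortably do not, precisely because $\frac{2-\sqrt2}{4} \approx 0.146$ is bounded away from $\frac14$.
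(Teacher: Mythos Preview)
Your proposal is correct and takes essentially the same approach as the paper: bound $\deg(z)$ by $|A_0'\cup Z|$ and contradict Claim~\ref{cl:c5mindeg}. The only (cosmetic) difference is that the paper exploits the identity $|A_0'|+|Z| = |A_0\setminus Y| + |X\cup Y| = |A_0|+|X| \le ((2-\sqrt2)/4 + 2\eps')n$, which gives a slightly tighter constant than your $23\eps'$, but this has no bearing on the argument.
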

\begin{proof}
Indeed, any such $z$ would have 
\[\deg(z) \le |A'_0|+|Z|=|A_0|+|X| \le ((2 - \sqrt{2})/4 + 2\eps')n < 0.15n < (1/4-191\eps')n,\]
which contradicts the previous claim.
\end{proof}

Now we are ready to split the vertices $z \in Z$ based on their adjacencies to the vertices outside of $Z$.
First, let $Z' := \{z \in Z \cond \exists u \in B_0: \{z,u\} \in E(G) \}$.
Claims~\ref{cl:c5noAvB} and~\ref{cl:c5noBvC} yield that no vertex $z \in Z'$ has a neighbor in $A_0 \cup C_0$.
We define
\[C_1 := \{z \in Z' \cond \exists v\in V \land \exists w \in D_0: \{z,v\} \in E(G) \land \{v,w\} \in E(G) \},\]
and $A_1:=Z' \setminus C_1$. Note that $Y \subseteq C_1$, and if $z \in Z'$ has a neighbor in $D_0$, then $z \in C_1$.
Let us first focus on the set $A_1$.
\begin{claim}
For all $z \in A_1$, $\deg_{B_0}(z) > (1-214\eps')|B_0|$.
\end{claim}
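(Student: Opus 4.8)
The plan is to combine the structural description already obtained with the minimum‑degree bound and a final vertex‑swap argument against extremality.

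First I would pin down the neighbourhood of $z$. Since $z\in A_1\subseteq Z'$, Claims~\ref{cl:c5noAvB} and~\ref{cl:c5noBvC} show that $z$ has no neighbour in $A_0\cup C_0$, and since $z\in A_1=Z'\setminus C_1$, the observation that every vertex of $Z'$ with a neighbour in $D_0$ lies in $C_1$ shows that $z$ has no neighbour in $D_0$ either. As $\{A_0,B_0,C_0,D_0,X\}$ partitions $V(G)$, this gives $N(z)\subseteq B_0\cup X$, and therefore by Claim~\ref{cl:c5mindeg},
\[\deg_{B_0}(z)\ \ge\ \deg(z)-|X|\ \ge\ \left(\tfrac14-192\eps'\right)n,\]
so $z$ is already adjacent to all but $O(\eps'n)$ vertices of $B_0$. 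It remains to sharpen the error term to $214\eps'|B_0|$.

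For the sharpening I would argue by contradiction: suppose $\deg_{B_0}(z)\le(1-214\eps')|B_0|$. Then, using $|B_0|\le(\tfrac14+\eps')n$ and $|X|\le\eps'n$, one gets $\deg(z)=\deg_{B_0}(z)+\deg_X(z)<(\tfrac14-51\eps')n$, i.e.\ $z$ now has unusually small degree. Mimicking the proof of Claim~\ref{cl:c5mindeg}, delete $z$ and add in its place a twin $\widetilde u$ of a vertex $u\in A_0'$ (with $N(\widetilde u)=N(u)$ and $\widetilde u\not\sim u$), where $u$ is chosen so that (i) the number $m$ of edges at $u$ lying outside $\cC_5(G)$ satisfies $m>\deg(z)$, and (ii) $u$ lies in no triangle of $G$. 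In the resulting graph $G'$ one has $|E(G')|=\lfloor n^2/4\rfloor+1-\deg(z)+\deg(u)>\lfloor n^2/4\rfloor+1$, since $\deg(u)\ge\deg_{B_0}(u)\ge(1-\eps')|B_0|>\deg(z)$, so $G'\in\cE'_n$. The role of condition (ii) is that, because $u\not\sim\widetilde u$, any $C_5$ of $G'$ containing both $u$ and $\widetilde u$ would force two neighbours of $u$ to be adjacent, hence $u$ into a triangle; thus every $C_5$ of $G'$ through $\widetilde u$ avoids $u$, and substituting $u$ for $\widetilde u$ in it yields a $C_5$ of $G$. A short bookkeeping then gives $\cC_5(G')\subseteq\bigl(\cC_5(G)\setminus\{\text{edges at }z\}\bigr)\cup\{\{\widetilde u,w\}:\{u,w\}\in\cC_5(G)\}$, so $|\cC_5(G')|\le F(n)+\deg(u)-m$, whence
\[|E(G')\setminus\cC_5(G')|\ \ge\ \bigl(\lfloor n^2/4\rfloor+1-\deg(z)+\deg(u)\bigr)-\bigl(F(n)+\deg(u)-m\bigr)\ =\ \widetilde F(n)+\bigl(m-\deg(z)\bigr)\ >\ \widetilde F(n),\]
contradicting Claim~\ref{cl:c5duality}.

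The main obstacle is establishing that a vertex $u\in A_0'$ meeting both (i) and (ii) exists. For (i): since every edge out of $A_0'$ reaches $B_0\cup X$ while the edges inside $B_0$, inside $C_0$, from $C_0$ to $D_0$ and inside $D_0$ already lie in $\cC_5(G)$, one has $\sum_{u\in A_0'}m(u)\ge|E(A_0',B_0)\setminus\cC_5(G)|\ge\widetilde F(n)-O(\eps'n^2)$; together with $|A_0'|\le\bigl(\tfrac{2-\sqrt2}4+\eps'\bigr)n$ and $\widetilde F(n)=\bigl(\tfrac{2-\sqrt2}{16}-O(\eps)\bigr)n^2$ this makes $m(u)>\deg(z)$ hold for all but a small fraction of $u\in A_0'$. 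For (ii): as $N(u)\subseteq B_0\cup X$ and $B_0$ is independent, $u$ lies in a triangle only if it has a neighbour in $X$ sitting on an edge inside $N(u)$, so bounding the number of such $u$ amounts to controlling $|E(A_0',X)|$ — this is where I expect the real work to lie, using that $G$ has only $o(\cdot)$ copies of the forbidden graphs $B_3,B_3^+,\dots$ and that $\widetilde F(n)$ forces almost every $A_0'$–$B_0$ edge to be outside $\cC_5(G)$, so that a vertex of $X$ with large degree into $A_0'$ would produce too many $C_5$'s through $A_0'$–$B_0$ edges. Once $u$ is in hand the swap is routine, exactly as in Claim~\ref{cl:c5mindeg}.
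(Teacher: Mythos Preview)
Your first paragraph \emph{is} the paper's entire proof: observe $N(z)\subseteq B_0\cup Z$ (in fact $B_0\cup X$, since $Y\subseteq C_1$ forces $A_1\subseteq X$) and apply the minimum-degree bound of Claim~\ref{cl:c5mindeg}. The paper writes only
\[
\deg_{B_0}(z)+\deg_Z(z)<(1-214\eps')\bigl(\tfrac14+\eps'\bigr)n+|Z|\le \bigl(\tfrac14-191\eps'\bigr)n
\]
and stops. You are right to be suspicious: that last inequality is arithmetically false (with $|Z|<22\eps'n$ the middle expression is about $(\tfrac14-30\eps')n$). This is a slip in the paper's constant, not evidence that a stronger method is required. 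The one-line argument legitimately yields $\deg_{B_0}(z)>(1-c\eps')|B_0|$ for some $c$ in the neighbourhood of $772$ (using your sharper $N(z)\subseteq B_0\cup X$), and since the claim is invoked only to deduce that $A_1$ is independent---for which $\deg_{B_0}(z)>|B_0|/2$ already suffices---the particular constant is immaterial.

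Your swap argument is therefore unnecessary, and it also fails as written. For condition~(i) you appeal to averaging, but the very computation behind Claim~\ref{cl:c5clone} shows the mean of $m(u)$ over $A_0'$ is only about $(\tfrac14-160\eps')n$; your threshold is $m>\deg(z)$ with $\deg(z)$ allowed as large as $(\tfrac14-52\eps')n$, which sits \emph{above} that mean, so averaging cannot guarantee even a single such $u$. Condition~(ii) you already flag as unresolved. Keep the first paragraph (with an honest constant) and discard the rest.
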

\begin{proof}
If there exist a vertex $z\in A_1$ 
with $\deg_{B_0}(z) \le (1-214\eps')|B_0|$,
then its total degree in $G$ is at most
\[\deg_{B_0}(z) + \deg_Z(z) < (1-214\eps')(1/4+\eps')n + |Z| \le (1/4 - 191\eps') n,\]
a contradiction.
\end{proof}

\begin{cor}
$A_1$ is an independent set in $G$.
\end{cor}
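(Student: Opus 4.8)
The plan is to argue by contradiction, exactly along the lines of Claims~\ref{cl:c5noAvB} and~\ref{cl:c5noBvC}. If $A_1$ contained an edge, I would show that this forces $\Omega(n^2)$ distinct edges between $A_0$ and $B_0$ to occur in a copy of $C_5$, which contradicts Corollary~\ref{cor:c5a0b0edge}.

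Concretely, suppose $z$ and $z'$ are distinct vertices of $A_1$ with $\{z,z'\}\in E(G)$. By the preceding claim, $\deg_{B_0}(z)>(1-214\eps')|B_0|$ and $\deg_{B_0}(z')>(1-214\eps')|B_0|$. I would fix one neighbour $v'\in B_0$ of $z'$ and then let $v$ range over the neighbours of $z$ in $B_0$ distinct from $v'$. For each such $v$, since every vertex of $B_0$ is adjacent to all but at most $\eps'|A_0|$ vertices of $A_0$, the vertices $v$ and $v'$ have at least $(1-2\eps')|A_0|$ common neighbours in $A_0$; after discarding the at most two of them that might coincide with $z$ or $z'$ (recall $z,z'$ could lie in $A_0$ through $Y\subseteq A_0$, but they certainly avoid $B_0$ because $Z\cap B_0=\emptyset$), pick such a common neighbour $u\in A_0$. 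Then $z,v,u,v',z'$ are five pairwise distinct vertices and $\{z,v\},\{v,u\},\{u,v'\},\{v',z'\},\{z',z\}$ is a pentagon; in particular the edge $\{u,v\}\in E(A_0,B_0)$ occurs in a copy of $C_5$.

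Finally I would count. The map $(v,u)\mapsto\{u,v\}$ is injective on the pairs produced above, so this yields at least $(1-214\eps')|B_0|\cdot\bigl((1-2\eps')|A_0|-2\bigr)$ distinct edges of $E(A_0,B_0)\cap\cC_5(G)$. Using $|A_0|\ge(1/2-\sqrt2/4-\eps')n$, $|B_0|\ge(1/4-\eps')n$, and $\eps'<10^{-4}$, this quantity is (comfortably) larger than $5\eps'n^2$, contradicting Corollary~\ref{cor:c5a0b0edge}. Hence no edge of $A_1$ exists and $A_1$ is independent. There is no real obstacle here: the structure theorem and Corollary~\ref{cor:c5a0b0edge} do all the work, and the only points needing (routine) care are the pairwise distinctness of the five chosen vertices and verifying that the final tally indeed beats $5\eps'n^2$.
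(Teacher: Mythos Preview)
Your argument is correct and is precisely the kind of routine verification the paper has in mind: the corollary is stated without proof, and your construction of pentagons $z\,v\,u\,v'\,z'$ to force $\Omega(n^2)$ edges of $E(A_0,B_0)$ into $\cC_5(G)$, contradicting Corollary~\ref{cor:c5a0b0edge}, follows exactly the template of Claims~\ref{cl:c5noAvB} and~\ref{cl:c5noBvC}. The only cosmetic slip is that the count of choices for $v$ should carry a harmless $-1$ (to exclude $v=v'$), which does not affect the conclusion.
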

\begin{proof}
If there is an edge in $A_1$, then this edge together with any edge in $E(A_0,B_0)$ are in a $C_5$, contradicts Corollary~\ref{cor:c5a0b0edge}.
\end{proof}

We set $A := A_0' \cup A_1$, and continue our exposition by analyzing the
vertices $B_1 := \{z \in Z \cond \exists u \in A: \{z,u\} \in E(G)\}$.  Note
that $B_1 \cap Z' = \emptyset$.
By Claim~\ref{cl:c5noAvB} and definitions of the sets $A_0'$ and $A_1$, 
we conclude that both $|E(B_1,B_0)| = 0$ and $|E(B_1,D_0)| = 0$.
In the following two claims, we study the edges between $B_1$ and $C_0 \cup A$.

\begin{claim}
\label{cl:c5degB1C0}
For every $v \in B_1$, $\deg_{C_0}(v) > n/10$.
\end{claim}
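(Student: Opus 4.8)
The plan is to show that each vertex $v \in B_1$ has many neighbors inside $C_0$, arguing by contradiction via a degree count combined with the edge-counting inequality from Claim~\ref{cl:c5duality}. First I would recall the constraints already established for a vertex $v \in B_1$: by definition $v$ has a neighbor in $A = A_0' \cup A_1$, so by Claim~\ref{cl:c5noAvB} (applied through the vertex $v$ itself, which witnesses an $A_0$-neighbor) combined with the observation that $B_1 \cap Z' = \emptyset$, the vertex $v$ has no neighbor in $B_0$; and as noted just before the claim, $|E(B_1,D_0)| = 0$, so $v$ has no neighbor in $D_0$ either. Hence the neighbors of $v$ all lie in $A \cup C_0 \cup C_1 \cup Z$ (more precisely, in $A \cup C_0$ together with the small ``leftover'' sets of size $O(\eps' n)$). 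The key point is that $A$ is an independent set in $G$ (by the corollary preceding, $A_0'$ and $A_1$ are independent, and there are no edges between them by the same reasoning used for $B_1$), so no edge incident to $v$ going into $A$ can be "charged" to a $C_5$ through a second $A$-vertex.

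Next I would suppose for contradiction that $\deg_{C_0}(v) \le n/10$. Since the remaining potential neighbors of $v$ outside $C_0$ lie in $A \cup Z \cup C_1$, whose total size is at most $|A_0| + |Z| + |C_1| \le ((2-\sqrt2)/4 + O(\eps'))n < 0.15n$ (using $|Z| = O(\eps' n)$ and $|C_1| \le |Y| + O(\eps' n) = O(\eps' n)$ from the earlier bound $|Y| < 21\eps' n$), we would get $\deg(v) \le n/10 + 0.15n < (1/4 - 191\eps')n$, directly contradicting the minimum-degree bound of Claim~\ref{cl:c5mindeg}. This is the clean way to close the argument, and I expect it is exactly how the authors proceed: the constant $n/10$ is comfortably chosen so that $n/10 + |A \cup Z \cup C_1|$ stays below the minimum degree $(1/4 - 191\eps')n$ for small $\eps'$.

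The main obstacle — really the only subtlety — is pinning down precisely which sets a vertex of $B_1$ can send edges to, i.e., making sure the bound $|A \cup Z \cup C_1| < 0.15n$ genuinely accounts for every possible neighbor of $v$ outside $C_0$. This requires carefully invoking Claims~\ref{cl:c5noAvB} and~\ref{cl:c5noBvC} to rule out $B_0$- and $D_0$-neighbors, and tracking the containments $Y \subseteq C_1 \subseteq Z \cup Y$ and $|Z| = |X| + |Y| = O(\eps' n)$ established above. Once that bookkeeping is in place, the claim follows immediately from Claim~\ref{cl:c5mindeg}, with no further computation needed beyond checking the numerical inequality $1/10 + (2-\sqrt2)/4 + O(\eps') < 1/4 - 191\eps'$, which fails — so in fact one must be slightly more careful and use that $v$'s neighbors in $C_0$ are capped at $n/10$ only under the contradiction hypothesis while everything else is genuinely $O(\eps' n) + |A_0|$, and $1/10 + (2-\sqrt2)/4 \approx 0.1 + 0.146 = 0.246 < 0.25$, which does hold with room to spare for small $\eps'$. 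I would double-check this final inequality and then conclude.
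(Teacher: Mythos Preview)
Your approach is essentially the same as the paper's: use the minimum-degree bound (Claim~\ref{cl:c5mindeg}) together with the fact that $v \in B_1$ has no neighbors in $B_0 \cup D_0$, so all its non-$C_0$ neighbors lie in a set of size roughly $(2-\sqrt{2})n/4$. The paper does this directly rather than by contradiction, and more cleanly observes that the non-$C_0$ neighbors of $v$ lie in $A_0 \cup X$ (note $A \cup Z \cup C_1 = A_0' \cup Z = A_0 \cup X$, since $Y \subseteq A_0$, $Z = X \cup Y$, and $A_1, C_1 \subseteq Z$), giving
\[
\deg_{C_0}(v) \ge (1/4-191\eps')n - |A_0| - |X| \ge \bigl((\sqrt{2}-1)/4 - 193\eps'\bigr)n > n/10.
\]

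One point of confusion in your write-up: you assert that the inequality $1/10 + (2-\sqrt2)/4 + O(\eps') < 1/4 - 191\eps'$ ``fails'', but it does not --- this is exactly the inequality that makes the argument work, since $1/10 + (2-\sqrt2)/4 \approx 0.2464 < 0.25$. What fails is only your preliminary looser version with the rounding to $0.15$. Drop that rounding from the outset and your contradiction argument goes through cleanly; your final paragraph already has the correct computation.
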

\begin{proof}
We know that $v \in B_1$ can be adjacent only to the vertices from $A_0 \cup C_0 \cup X$.
Therefore, Claim~\ref{cl:c5mindeg} yields that $v$ has at least
\[(1/4-191\eps')n - |A_0| - |X|  \ge (\sqrt{2} - 1 -193\eps')n > n/10 \]
neighbors in $C_0$.
\end{proof}

\begin{claim}
\label{cl:c5degB1A0}
For every $v \in B_1$, $\deg_A(v) \ge (1 - 800 \eps')|A|$.
\end{claim}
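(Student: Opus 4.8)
The plan is to argue by contradiction, using the extremality of $G$ encoded in Claim~\ref{cl:c5duality}: since $G\in\cG_n$, it already carries the maximum possible number, $\widetilde F(n)$, of edges avoiding every copy of $C_5$, so it suffices to start from a hypothetical $v\in B_1$ with $\deg_A(v)<(1-800\eps')|A|$ and produce a graph in $\cE'_n$ with strictly more than $\widetilde F(n)$ such edges.

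First I would assemble what is known about $v$. As $B_1\cap Z'=\emptyset$ and $Y\subseteq Z'$, we have $v\in X$, hence $v\notin A$; moreover $N_G(v)\subseteq A_0\cup C_0\cup X$, so $\deg_{A_0}(v)\le\deg_A(v)+|Y|$, while $\deg_{C_0}(v)>n/10$ (Claim~\ref{cl:c5degB1C0}) and $\deg_G(v)\ge(1/4-191\eps')n$ (Claim~\ref{cl:c5mindeg}). Two facts about copies of $C_5$ are crucial. First, every edge of $G$ from $v$ to $C_0$ lies in a $C_5$: given two $C_0$-neighbours $c_1,c_2$ of $v$, each $c_i$ has at least $(1-\eps')|D_0|$ neighbours in $D_0$ and $G[D_0]$ has density at least $1-\eps'$, so there are $d_1\in N_{D_0}(c_1)$ and $d_2\in N_{D_0}(c_2)$ with $d_1\sim d_2$, making $v c_1 d_1 d_2 c_2$ a $C_5$; hence at most $\deg_{A_0}(v)+|X|\le\deg_A(v)+|Y|+|X|$ of the edges at $v$ avoid $C_5$. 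Second, in the ``cleaned'' adjacency pattern of $G$ — the sub-blow-up of the path $A_0\!-\!B_0\!-\!C_0\!-\!D_0$ with a clique on $D_0$, from which $G$ differs on only $O(\eps'n^2)$ pairs — no copy of $C_5$ meets $A_0$, no vertex of $A$ has a neighbour in $D_0$, and there is no $5$-vertex path whose two endpoints both lie in $A$.

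Now I would carry out a surgery in the spirit of Claim~\ref{cl:c5mindeg}: modify $G$ by deleting every edge incident to $v$, joining $v$ to each vertex of $A$, and, if this dropped the edge-count below $\lfloor n^2/4\rfloor+1$, restoring it by adding edges inside $D_0$ that lie in copies of $C_5$ (possible since $G[D_0]$ is only $(1-\eps')$-dense, and such additions create only $C_5$'s contained in $C_0\cup D_0$, whose edges already lie in $\cC_5(G)$). Call the result $\widehat G\in\cE'_n$. Comparing the sets of edges avoiding $C_5$ in $G$ and in $\widehat G$: at $v$ we destroy at most $\deg_A(v)+|Y|+|X|$ such edges and create $|A|$ new candidate edges $\{v,a\}$, $a\in A$; deleting a vertex can only turn surviving edges from $C_5$-edges into non-$C_5$-edges; and the only harmful effect is the set of edges converted the other way, from avoiding $C_5$ to lying in a $C_5$. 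By the second fact above, every $C_5$ responsible for such a conversion has the form $v\,a\,x\,y\,z$ with $a,z\in A$ and $axyz$ a path of $G$; since the cleaned pattern contains no such path, $x,y\notin D_0$, at least one of $x,y$ lies in $X$, and one of its at most three relevant edges is one of the $O(\eps'n^2)$ exceptional pairs. One then has to show, by counting over $X$ and these exceptional pairs, that the number $\Delta$ of converted edges is small. Granting that, since $|A|=\tfrac{2-\sqrt2}{4}n-O(\eps'n)$ and $\deg_A(v)<(1-800\eps')|A|$, the net change in the number of edges avoiding $C_5$ is at least
\[ |A|-\deg_A(v)-|Y|-|X|-\Delta\ \ge\ 800\eps'|A|-22\eps'n-\Delta, \]
which is positive once $\Delta$ is controlled; then $\widehat G$ has more than $\widetilde F(n)$ edges avoiding $C_5$, contradicting Claim~\ref{cl:c5duality} and proving the claim.

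The main obstacle is precisely the estimate on $\Delta$: one must exploit the rigidity of the cleaned structure to argue that every conversion is forced both through the small ``bottleneck'' set $X$ and through an exceptional pair, and that these two scarce resources cannot jointly account for enough distinct affected edges to eat up the gap $800\eps'|A|-22\eps'n$. (If $\Delta$ turns out not to be controllable by this crude re-wiring, an alternative is to instead delete $v$ and clone a vertex of $B_0$ chosen — as in the proof of Claim~\ref{cl:c5mindeg} — to have many non-$C_5$ edges while being incident to no exceptional pair, so that the cloning creates no new copies of $C_5$ on pre-existing edges at all.) Every remaining step is routine bookkeeping with the degree inequalities already established.
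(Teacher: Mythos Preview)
The paper follows the route you mention only as a parenthetical alternative. It removes $v$ and clones the vertex $u'\in B_0$ supplied by Claim~\ref{cl:c5clone}, which carries at least $\bigl(\tfrac{2-\sqrt2}{4}-93\eps'\bigr)n$ edges not in $\cC_5(G)$ and satisfies $\deg(u')\ge|A|+|C_0|-2\eps'n$. Since every $v$--$C_0$ edge lies in a pentagon (the observation you also make), $v$ is incident to at most $\deg_A(v)+\deg_Z(v)<|A|-94\eps' n$ non-$C_5$ edges under the hypothesis $\deg_A(v)<(1-800\eps')|A|$; this falls strictly below $u'$'s count, so the swap produces a graph in $\cE'_n$ that violates Claim~\ref{cl:c5duality}. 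No side condition on $u'$ such as ``incident to no exceptional pair'' is imposed.

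Your main plan --- rewiring $v$ so that $N_{\widehat G}(v)=A$ --- has a genuine gap exactly where you flag it. At this point in the argument one already knows $N(a)\subseteq B_0\cup B_1$ for every $a\in A$, and that $|E(B_0,B_1)|=0$; so each new pentagon through $v$ is forced to be $v\,a\,x\,y\,z$ with $x,y\in B_1$ and $\{x,y\}\in E(G)$. But the independence of $B_1$ is established only as a \emph{corollary} of the present claim and hence is unavailable here, and a single edge inside $B_1$ already converts up to $\deg_A(x)+\deg_A(y)=\Theta(n)$ edges of $E(A,B_1)$, which swamps the $\Theta(\eps' n)$ margin $800\eps'|A|-22\eps'n$. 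The bottleneck heuristic through $X$ and ``exceptional pairs'' does not yield a usable bound on~$\Delta$, so this is not routine bookkeeping.
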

\begin{proof}
First observe that any edge $\{v,w\}$ with $w \in C_0$ is contained in $\cC_5(G)$.
Indeed, consider a neighbor $w' \in N_{C_0}(v) \setminus \{w\}$ and two
adjacent vertices $x \in N_{D_0}(w)$ and $x'\in N_{D_0}(w')$.

Suppose for contradiction $\deg_A(v) < (1 - 800 \eps')|A| < |A|-116\eps'n$. In particular, $v$ is incident
to at most
\[\deg_{A}(v)  + \deg_Z(v) < |A| - 94 \eps'n \le \frac{2-\sqrt2}4 -93 \eps'n \]
edges not in $\cC_5(G)$.
Let $u' \in B_0$ be the vertex from Claim~\ref{cl:c5clone} with at least $\left((2-\sqrt{2})/4-93\eps'\right)n$
incident edges that are not in $\cC_5(G)$. Moreover,
$\deg(u') \ge |A| + |C_0| - 2 \eps' n$.
Therefore, removing the vertex $v$ and adding a clone of the vertex $u'$ yield an
$n$-vertex graph with more than $n^2/4$ edges that contradicts Claim~\ref{cl:c5duality}.
\end{proof}

\begin{cor}
$B_1$ is an independent set of $G$.
\end{cor}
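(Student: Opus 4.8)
The plan is to argue by contradiction, exactly in the spirit of the earlier arguments that $Y_1$ and $A_1$ are independent: an edge inside $B_1$ will be shown to force far too many edges of $E(A_0,B_0)$ onto copies of $C_5$, contradicting Corollary~\ref{cor:c5a0b0edge}.

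So suppose $\{u,v\}\in E(G)$ with $u,v\in B_1$. First I would harvest room around $u$ and $v$ from the two degree bounds already in hand. By Claim~\ref{cl:c5degB1C0} the vertex $u$ has a neighbour $c\in C_0$; fix one. By Claim~\ref{cl:c5degB1A0} we have $\deg_A(v)\ge(1-800\eps')|A|$, and since $|A_1|\le|Z|\le 22\eps' n$ is negligible next to $|A_0'|=\Theta(n)$, it follows that $v$ has at least $(1-O(\eps'))|A_0'|$ neighbours inside $A_0'$. Next I would use the stability description of $G$: every vertex of $A_0$ and every vertex of $C_0$ has at least $(1-\eps')|B_0|$ neighbours in $B_0$, so for each neighbour $a\in A_0'$ of $v$ the vertices $a$ and $c$ have at least $(1-2\eps')|B_0|$ common neighbours in $B_0$. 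For any such common neighbour $b$, the vertices $u,v,a,b,c$ are pairwise distinct --- they lie in $Z$, $Z$, $A_0'$, $B_0$, $C_0$ respectively, which are pairwise disjoint, and $u\neq v$ --- and span the $5$-cycle $u$--$v$--$a$--$b$--$c$--$u$; in particular $\{a,b\}\in E(A_0,B_0)\cap\cC_5(G)$.

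The last step is a count. Letting $a$ range over the $\ge(1-O(\eps'))|A_0'|$ neighbours of $v$ in $A_0'$ and, for each such $a$, letting $b$ range over the $\ge(1-2\eps')|B_0|$ common neighbours of $a$ and $c$ in $B_0$, the pairs $(a,b)$ yield at least $(1-O(\eps'))|A_0'|\,|B_0|>0.02\,n^2$ distinct edges of $E(A_0,B_0)\cap\cC_5(G)$; for $\eps'<10^{-4}$ this contradicts the bound $|E(A_0,B_0)\cap\cC_5(G)|<5\eps' n^2$ from Corollary~\ref{cor:c5a0b0edge}. I do not anticipate any real obstacle: all of the genuine work has already been absorbed into Claims~\ref{cl:c5degB1C0} and~\ref{cl:c5degB1A0} (the latter resting on the cloning argument and Claim~\ref{cl:c5duality}), and what remains is only the routine verification that distinct pairs $(a,b)$ give distinct edges, that the five cycle vertices are distinct, and that $|A_0'|$ and $|B_0|$ are both of order $\Theta(n)$ so the comparison with $5\eps' n^2$ is comfortably in our favour.
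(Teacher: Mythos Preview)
Your proof is correct and is essentially the argument the paper has in mind. The paper states this as a bare corollary with no written proof, relying on the reader to run exactly the kind of argument you give: use Claim~\ref{cl:c5degB1C0} and Claim~\ref{cl:c5degB1A0} to build many $C_5$'s through a hypothetical $B_1$-edge, each picking up a distinct edge of $E(A_0,B_0)$, and contradict Corollary~\ref{cor:c5a0b0edge}. Your cycle $u\text{--}v\text{--}a\text{--}b\text{--}c\text{--}u$ is valid, the distinctness and disjointness checks are fine, and the count comfortably beats $5\eps' n^2$.
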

\begin{proof}
If there is an edge in $B_1$, then this edge together with any edge in $E(A_0,B_0)$ are in a $C_5$, contradicts Corollary~\ref{cor:c5a0b0edge}.
\end{proof}

Let $B:=B_0 \cup B_1$. Recall the definition of $C_1$, i.e.,
\[
C_1 = \{u \in Z \cond \deg_{B_0}(u) \ge 1 \land \exists v \in V, w \in D_0 : \{u,v\} \in E(G) \land \{v,w\} \in E(G) \}
.\]
By Claim~\ref{cl:c5noBvC}, there are no edges between $C_1$ and $C_0$, and by the definition of $B_1$,
there are no edges between $C_1$ and $A$. Let us now show that vertices $v \in C_1$ have 
many neighbors in $B_0$.

\begin{claim}
\label{cl:c5degC1B0}
For every $u \in C_1$, $\deg_{B_0}(u) \ge n/25$.
\end{claim}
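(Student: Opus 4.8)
The plan is to split on which part of $Z=X\cup Y$ the vertex $u$ lies in, dispose of the easy case, and then in the hard case contradict the extremality of $G$ recorded in Claim~\ref{cl:c5duality} by a single vertex swap. Since $C_1\subseteq Z'\subseteq Z=X\cup Y$ and $Y\subseteq A_0$, if $u\in C_1\cap Y$ then $u\in A_0$, so the partition immediately gives $\deg_{B_0}(u)\ge(1-\eps')|B_0|\ge(1-\eps')(1/4-\eps')n>n/25$ and we are done. Thus we may assume $u\in C_1\cap X$; recall $|X|\le\eps' n$. Because $u\in C_1\subseteq Z'$, the vertex $u$ has a neighbour in $B_0$, so Claims~\ref{cl:c5noAvB} and~\ref{cl:c5noBvC} force $N(u)\cap(A_0\cup C_0)=\emptyset$ and hence $N(u)\subseteq B_0\cup D_0\cup X$. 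Combined with $\deg(u)\ge(1/4-191\eps')n$ from Claim~\ref{cl:c5mindeg} and $|X|\le\eps'n$, it suffices to prove that $\deg_{D_0}(u)\le(21/100-192\eps')n$, i.e.\ that $u$ misses a fixed linear fraction of $D_0$.

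First I would assume for contradiction that $\deg_{B_0}(u)<n/25$, which by the previous paragraph forces $\deg_{D_0}(u)>(21/100-193\eps')n$, in particular more than $\sqrt{\eps'}\,|D_0|+2$; meanwhile, since $G[D_0]$ has edge density at least $1-\eps'$, all but at most $\sqrt{\eps'}|D_0|$ vertices of $D_0$ have $D_0$-degree at least $(1-\sqrt{\eps'})|D_0|$. With this much room inside $D_0$ one checks routinely that \emph{every} edge incident to $u$ except the at most $\eps'n$ edges going to $X$ lies in $\cC_5(G)$: an edge $\{u,w\}$ with $w\in N_{D_0}(u)$ sits on a $C_5$ contained in $\{u\}\cup D_0$, and an edge $\{u,v\}$ with $v\in N_{B_0}(u)$ sits on a $C_5$ of the shape $u\,v\,c\,w_1\,w_2\,u$ with $c\in N_{C_0}(v)$ (nonempty as $\deg_{C_0}(v)\ge(1-\eps')|C_0|$), $w_2\in N_{D_0}(u)$ of large $D_0$-degree and $w_1\in N_{D_0}(c)\cap N_{D_0}(w_2)$. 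Consequently at most $\eps'n$ of the edges removed when we delete $u$ lie outside $\cC_5(G)$; also $\deg(u)<n/25+|D_0|+\eps'n$.

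Next I would perform the swap. Let $u'^{\ast}\in B_0$ be the vertex from Claim~\ref{cl:c5clone}, which is incident to at least $\bigl((2-\sqrt2)/4-93\eps'\bigr)n$ edges of $G$ outside $\cC_5(G)$ — all of them to $A_0$, since $E(B_0,C_0)\subseteq\cC_5(G)$ — and satisfies $\deg(u'^{\ast})\ge|A_0|+|C_0|-2\eps'n$. Form $G'$ from $G$ by deleting $u$ and adding a twin $u'^{\ast\ast}$ with $N_{G'}(u'^{\ast\ast})=N_G(u'^{\ast})\setminus\{u\}$. Since $(3-2\sqrt2)/4-1/25>0$, for $\eps'<10^{-4}$ and $n$ large we get $\deg_{G'}(u'^{\ast\ast})>\deg_G(u)$, so $|E(G')|>|E(G)|=\lfloor n^2/4\rfloor+1$ and hence $G'\in\cE'_n$. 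The twin $u'^{\ast\ast}$ inherits at least $\bigl((2-\sqrt2)/4-94\eps'\bigr)n$ edges to $A_0'$, and — exactly as in the proof of Claim~\ref{cl:c5degB1A0}, using that $A_0,B_0,C_0$ are independent, that $E(A_0,C_0)=\emptyset$, and that no vertex of $A_0'$ has a neighbour in $D_0$ — none of these lies on a $C_5$ of $G'$ (replacing $u'^{\ast\ast}$ by $u'^{\ast}$ in any putative $C_5$ through such an edge produces either a genuine $C_5$ of $G$ through the corresponding $A_0$--$B_0$ edge, contradicting Corollary~\ref{cor:c5a0b0edge}, or an edge inside $A_0\cup C_0$). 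Together with the fact that deleting $u$ destroys at most $\eps'n$ of $G$'s non-$\cC_5$ edges while leaving the $A_0$--$B_0$-type non-$\cC_5$ edges of $G$ intact and still outside $\cC_5(G')$ (adding a $B_0$-twin cannot complete such an edge to a $C_5$, by the same structural restrictions), this yields $|E(G')\setminus\cC_5(G')|>\widetilde F(n)$, contradicting Claim~\ref{cl:c5duality}. Hence $\deg_{B_0}(u)\ge n/25$.

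The step I expect to be the main obstacle is the last one — the precise bookkeeping that the swap strictly increases the number of edges lying outside $\cC_5$. The subtlety is that removing $u$ can in principle also kill copies of $C_5$ through edges not incident to $u$, and that adding the twin $u'^{\ast\ast}$ can create new copies of $C_5$; one must argue, leaning on the fact that nearly all of $\cC_5(G)$ is carried by $E(B_0,C_0)\cup E(C_0,D_0)\cup E(G[D_0])$ — whose $C_5$'s avoid $Z$ altogether — and on the structural rigidity of the $A_0$--$B_0$ pairs outside $\cC_5(G)$, that both perturbations move only $O(\eps'n)$ edges in total, which is dominated by the $\Theta(n)$ genuinely new non-$\cC_5$ edges produced by the $B_0$-twin. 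This is the same mechanism used (somewhat tersely) in Claims~\ref{cl:c5mindeg} and~\ref{cl:c5degB1A0}, carried out one level more carefully.
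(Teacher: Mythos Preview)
Your proposal is correct and lands on the same endgame as the paper --- show that $u$ has at most $O(\eps' n)$ incident edges outside $\cC_5(G)$, check $\deg(u)<\deg(u')$ for the $B_0$-vertex $u'$ from Claim~\ref{cl:c5clone}, then delete $u$ and clone $u'$ to violate Claim~\ref{cl:c5duality}. The difference is only in how you establish that the edges from $u$ to $B_0\cup D_0$ lie in $\cC_5(G)$.

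You first invoke the contradiction hypothesis $\deg_{B_0}(u)<n/25$, which via the minimum-degree bound forces $\deg_{D_0}(u)$ to be linear, and then use this large $D_0$-neighbourhood to build the pentagons. The paper instead uses the \emph{definition} of $C_1$ directly: there exist $v,w$ with $\{u,v\},\{v,w\}\in E(G)$ and $w\in D_0$. For any $x\in N_{B_0}(u)$ one finds a common neighbour of $x$ and $w$ in $C_0$, giving $u\,x\,(\cdot)\,w\,v$ as a $C_5$; and for any $z\in N_{D_0}(u)$ one takes $x\in N_{B_0}(u)$, $y\in N_{C_0}(x)$, and a common $D_0$-neighbour of $y$ and $z$. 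This shows unconditionally that every edge from $u$ to $B_0\cup D_0$ is in $\cC_5(G)$, so $u$ has at most $|Z|<22\eps' n$ non-$\cC_5$ edges --- no case split on $Y$ versus $X$ is needed, and no appeal to $\deg_{D_0}(u)$ being large. Your route is a detour that arrives at the same place.

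As for the bookkeeping worry in your final paragraph: it is a fair concern, but the paper treats the clone step with exactly the same level of terseness here as in Claims~\ref{cl:c5mindeg} and~\ref{cl:c5degB1A0}, so you are not missing an argument that the paper supplies. Note, incidentally, that deleting $u$ can only \emph{increase} the count of non-$\cC_5$ edges, so the only direction that needs care is that the clone does not push too many previously non-$\cC_5$ edges into $\cC_5(G')$; your sketch of why this perturbation is $O(\eps' n)$ is adequate.
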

\begin{proof}
As noted above, $u$ can be adjacent only to the vertices in $B_0 \cup D_0 \cup Z$.
First observe that for every $z \in N_{D_0}(u)$, the edge $\{u,z\} \in \cC_5(G)$.
Indeed, let $x \in N_{B_0}(u)$ and $y \in N_{C_0}(x)$ be chosen arbitrarily. Since $y \in C_0$
and $z \in D_0$, there exist a common neighbor of $y$ and $z$ which encloses a $C_5$.
Analogously, we show $\{u,x\} \in \cC_5(G)$ for every $x \in N_{B_0}(u)$.
Consider the vertices $v \in V$ and $w \in D_0$ with $\{u,v\} \in E(G)$ and $\{v,w\} \in E(G)$
witnessing that $u \in C_1$. Since $x \in B_0$ and $w \in D_0$, the two vertices must have a common
neighbor which yields $\{u,x\} \in \cC_5(G)$. 

The last paragraph shows that $u$ is incident to at most $|Z| < 22 \eps' n$
edges that do not occur in $C_5$. On the other hand,
$|Z\cup D_0| < (\sqrt{2}/4 + 23\eps')n$. So if $\deg_{B_0}(v) < n/25$, then
\[ \deg(v) < 0.396n < \deg(u'),\]
where $u' \in B_0$ is the vertex from Claim~\ref{cl:c5clone}.
Therefore, the graph obtained by removing the vertex $u$ and cloning the vertex $u'$ 
contradicts Claim~\ref{cl:c5duality}.
\end{proof}

\begin{cor}
$C_1$ is an independent set in $G$.
\end{cor}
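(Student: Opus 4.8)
The plan is to argue by contradiction, in exactly the same spirit as the proofs that $A_1$ and $B_1$ are independent. Suppose $C_1$ is not independent, so it contains an edge $\{u_1,u_2\}$. The crucial input is the bound just established: by Claim~\ref{cl:c5degC1B0} both $u_1$ and $u_2$ have at least $n/25$ neighbours inside $B_0$. This is to be combined with the robust adjacency between $A_0$ and $B_0$ coming from the stability step --- every vertex of $B_0$ has at least $(1-\eps')|A_0|$ neighbours in $A_0$, and every vertex of $A_0$ has at least $(1-\eps')|B_0|$ neighbours in $B_0$.

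The first step is to produce a large family of $C_5$'s, each passing through a carefully chosen edge of $E(A_0,B_0)$. For each $x\in N_{B_0}(u_1)$ and each $w\in N_{A_0}(x)$, one picks a vertex $y\in\bigl(N_{B_0}(w)\cap N_{B_0}(u_2)\bigr)\setminus\{x\}$; such a $y$ exists because $|N_{B_0}(w)|\ge(1-\eps')|B_0|$ and $|N_{B_0}(u_2)|\ge n/25$, so the intersection still has size at least $n/25-\eps'|B_0|>1$ when $\eps'$ is small and $n$ is large. Then $u_1,x,w,y,u_2$ spans a copy of $C_5$ in $G$: the five vertices are pairwise distinct because $u_1,u_2\in Z$, $x,y\in B_0$, $w\in A_0$, and $x\ne y$ by choice. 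In particular, the edge $\{x,w\}\in E(A_0,B_0)$ lies in $\cC_5(G)$.

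The second step is a counting contradiction. Letting $x$ range over $N_{B_0}(u_1)$ and $w$ over $N_{A_0}(x)$, the construction exhibits at least $\tfrac{n}{25}\cdot(1-\eps')|A_0|$ distinct edges of $E(A_0,B_0)$ that occur in a $C_5$; since $|A_0|\ge(1/2-\sqrt2/4-\eps')n$ and $\eps'<10^{-4}$, this is more than $5\eps' n^2$, contradicting Corollary~\ref{cor:c5a0b0edge}. Hence no edge inside $C_1$ can exist. The only point that needs a moment's thought is that the degree bound available for $C_1$-vertices into $B_0$ is the comparatively weak $n/25$, rather than the near-complete bounds used for $A_1$ and $B_1$; but multiplied against the $\Omega(n)$ lower bound on $|A_0|$ it still forces $\Omega(n^2)$ edges into $\cC_5(G)$, which comfortably beats the $5\eps' n^2$ budget, so I do not expect any genuine obstacle beyond bookkeeping the constants.
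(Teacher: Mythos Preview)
Your proof is correct and follows essentially the same route as the paper: assume an edge $\{u,u'\}$ inside $C_1$, use $\deg_{B_0}(u)\ge n/25$ together with the near-complete bipartite structure between $A_0$ and $B_0$ to place $\Omega(n^2)$ edges of $E(A_0,B_0)$ into $\cC_5(G)$, and contradict Corollary~\ref{cor:c5a0b0edge}. One tiny bookkeeping remark: since $Y\subseteq C_1$ and $Y\subseteq A_0$, the sets $Z$ and $A_0$ are not disjoint, so to guarantee $w\notin\{u_1,u_2\}$ you should pick $w\in N_{A_0}(x)\setminus\{u_1,u_2\}$ --- this costs at most two choices and changes nothing (the paper's own proof is equally informal on this point).
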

\begin{proof}
Suppose for a contradiction there is an edge $\{u,u'\}$ with $u,u'\in C_1$.
There are at least \[\deg_{B_0}(u) \cdot \left(|A_0| - \eps' n\right) > \frac
n{25} \cdot \left(\frac n4 -23\eps'n\right) > \frac{n^2}{101}\] edges $\{v,w\}$
with $v \in N_{B_0}(u)$ and $w \in N_{A_0}(v)$.  However, the vertices $w$ and
$u'$ have a common neighbor in $B_0 \setminus \{v\}$ and hence
$\left|E(A_0,B_0)\cap \cC_5(G)\right| > n^2/101$; a contradiction with Corollary~\ref{cor:c5a0b0edge}.
\end{proof}

We define $C:=C_0 \cup C_1$,  and $D_1 : = Z \setminus (A_1 \cup B_1 \cup C_1)$.
By the definition of the sets $A$, $B_1$ and $C_1$, every vertex $v \in D_1$ has no neigbors in $A \cup B_0$. 
We now concentrate on the edges between $D_1$ and $C_0$.
\begin{claim}
\label{cl:c5degD1C0}
For every $v \in D_1$, $\deg_{C_0}(v) \ge n/25$.
\end{claim}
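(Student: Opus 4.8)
The claim asserts a lower bound $\deg_{C_0}(v) \ge n/25$ for every $v \in D_1$, and the natural strategy is exactly the one used in the preceding claims (Claims~\ref{cl:c5degB1C0},~\ref{cl:c5degB1A0},~\ref{cl:c5degC1B0}): combine the minimum-degree bound from Claim~\ref{cl:c5mindeg} with the restrictions on where a vertex of $D_1$ can send its edges, and then — if the $C_0$-degree were too small — produce a graph contradicting Claim~\ref{cl:c5duality} by deleting $v$ and cloning one of the special high-degree vertices from Claim~\ref{cl:c5clone}. First I would pin down the admissible neighbourhood of $v \in D_1$: by definition $D_1 := Z \setminus (A_1 \cup B_1 \cup C_1)$, and it has already been observed that a vertex of $D_1$ has no neighbours in $A \cup B_0$. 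Chasing the definitions of $Z'$, $A_1$, $B_1$, $C_1$ one sees that $v$ can only be adjacent to vertices in $C_0 \cup D_0 \cup Z$ (it cannot be in $Z'$, so it has no neighbour in $B_0$; it is not in $B_1$, so no neighbour in $A$; being outside $C_1$ means it does not witness a length-$\le 2$ path to $D_0$ through a $B_0$-neighbour, but it may still be adjacent to $D_0$ directly). Since $|C_0 \cup D_0 \cup Z|$ could be as large as roughly $(1/4 + \sqrt2/4 + O(\eps'))n \approx 0.61n$, the raw minimum-degree bound of $(1/4 - 191\eps')n$ does not by itself force many neighbours in $C_0$; this is the point where the cloning argument is needed.

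**The cloning step.** So suppose for contradiction that $\deg_{C_0}(v) < n/25$. The key sub-step is to show that almost all edges incident to $v$ lie in $\cC_5(G)$, so that removing $v$ removes essentially no non-$C_5$ edges. For this I would argue: (i) every edge $\{v,z\}$ with $z \in D_0$ lies in a $C_5$ — $v$ has some neighbour, and using the high edge-density inside $D_0$ together with the large $C_0$–$D_0$ and $B_0$–$C_0$ bipartite degrees one builds a $5$-cycle through $\{v,z\}$, exactly as in the proof of Claim~\ref{cl:c5degC1B0}; (ii) every edge from $v$ to $C_0$ lies in a $C_5$, again by the standard "two $C_0$-vertices both reaching into $D_0$" construction; (iii) the only edges from $v$ not accounted for go into $Z$, and $|Z| < 22\eps' n$. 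Hence $v$ is incident to at most $n/25 + 22\eps' n < 0.05n$ — wait, more carefully: its non-$C_5$ edges number at most $\deg_{C_0}(v) + |Z| < n/25 + 22\eps' n$, which is tiny, and in fact its \emph{total} degree is at most $\deg_{C_0}(v) + \deg_{D_0}(v) + \deg_Z(v) < n/25 + (\sqrt2/4 + \eps')n + 22\eps' n < 0.40n$. Now take the vertex $u' \in B_0$ from Claim~\ref{cl:c5clone}, which satisfies $\deg(u') \ge |A| + |C_0| - O(\eps')n > 0.49n$ and is incident to at least $((2-\sqrt2)/4 - 93\eps')n$ edges outside $\cC_5(G)$. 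Delete $v$ and add a clone of $u'$: the new graph $G'$ has $|E(G')| > |E(G)| \ge \lfloor n^2/4\rfloor + 1$ (since $\deg(u') > \deg(v)$), and the number of edges \emph{not} in $\cC_5(G')$ is at least $(\text{non-}C_5\text{ edges of }G) - (\text{non-}C_5\text{ edges at }v) + (\text{non-}C_5\text{ edges at the clone})$, which strictly exceeds $\widetilde F(n)$; this contradicts Claim~\ref{cl:c5duality}.

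**The main obstacle.** The bookkeeping in step (i)–(iii) is the delicate part. One must verify that the $C_5$-constructions through $v$ really go through despite $v$ being a "stray" vertex of the clean-up set $Z$ rather than one of the large parts; in particular one needs $v$ to actually \emph{have} a neighbour of the right kind to start the cycle, which uses the minimum-degree bound $\deg(v) \ge (1/4 - 191\eps')n$ together with the fact that $N(v) \subseteq C_0 \cup D_0 \cup Z$ and $|Z|$ is small, so $v$ has many neighbours in $C_0 \cup D_0$. Once that is in place, all the cycle-closing steps are routine applications of the "large bipartite degree $+$ dense $D_0$" facts already exploited repeatedly, and the clone-and-delete accounting is identical in spirit to Claims~\ref{cl:c5degB1A0} and~\ref{cl:c5degC1B0}. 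I would also double-check the numerical slack: with $\eps' < 10^{-4}$ the bound $n/25$ is comfortably below the true order of magnitude $(\sqrt2/4)n$, so there is plenty of room, and the contradiction with Claim~\ref{cl:c5duality} is strict. An immediate corollary, to be recorded right after, is that $C$-type vertices all have large $C_0$-degree, which will then be used to show $C := C_0 \cup C_1 \cup D_1$-type independence statements and ultimately to force $D_1 \subseteq D$ in the final partition — but that is the content of the following claims, not this one.
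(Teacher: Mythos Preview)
Your proposal is correct and follows essentially the same approach as the paper's proof: restrict $N(v)$ to $C_0 \cup D_0 \cup Z$, observe that all edges from $v$ into $C_0 \cup D_0$ lie in $\cC_5(G)$ (so the non-$C_5$ edges at $v$ are at most $|Z| < 22\eps' n$), bound $\deg(v) < |D_0| + |Z| + n/25 < 0.396n$ under the contrary assumption, and then delete $v$ and clone the vertex $u' \in B_0$ from Claim~\ref{cl:c5clone} to contradict Claim~\ref{cl:c5duality}. Your extra discussion of how the $5$-cycles through $v$ are actually built is more detailed than the paper (which just says ``clearly''), but the argument is the same; note only that in your accounting the edges from $v$ to $C_0$ are already in $\cC_5(G)$, so the non-$C_5$ count is at most $|Z|$ rather than $\deg_{C_0}(v)+|Z|$ --- this does not affect the conclusion.
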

\begin{proof}
The vertex $v$ can be adjacent only to the vertices in $C_0 \cup D_0 \cup Z$,
and clearly every edge $\{v,w\}$ with $w \in C_0 \cup D_0$ occurs in $C_5$.
In particular, $v$ is incident to at most $22 \eps' n$ edges that do not occur in $C_5$.

As in Claim~\ref{cl:c5degC1B0}, if $\deg_{C_0}(v) < n/25$ then $\deg(v) < |D_0|+|Z|+n/25 < 0.396n$.
Therefore, removing the vertex $v$ and cloning the vertex $u'$ from Claim~\ref{cl:c5clone}
result in a graph contradicting Claim~\ref{cl:c5duality}.
\end{proof}

So the only possible edges that could be in $G$ but not following the pattern
of Construction~\ref{cstn:c5} are those between $B_1$ and $D_1$.  We rule them
out in the following claim.
\begin{claim}
$|E(B_1,D_1)| = 0$. 
\end{claim}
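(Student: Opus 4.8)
The plan is to argue by contradiction, mimicking the counting arguments already used in Claims~\ref{cl:c5noAvB}, \ref{cl:c5noBvC} and~\ref{cl:c5degB1A0}: a single edge between $B_1$ and $D_1$ would force quadratically many copies of $C_5$ each running through a distinct edge of $E(A_0,B_0)$, contradicting Corollary~\ref{cor:c5a0b0edge}.

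So I would start by supposing $\{u,v\}\in E(G)$ with $u\in B_1$ and $v\in D_1$. First, invoke Claim~\ref{cl:c5degD1C0} to fix one neighbour $c\in C_0$ of $v$. Since $c\in C_0$ we have $\deg_{B_0}(c)\ge(1-\eps')|B_0|$, and since every vertex of $A_0$ has at least $(1-\eps')|B_0|$ neighbours in $B_0$, any $a\in A_0$ and the fixed $c$ share at least $(1-2\eps')|B_0|$ common neighbours in $B_0$. Next, from Claim~\ref{cl:c5degB1A0}, $u$ has at least $(1-800\eps')|A|$ neighbours in $A$; since $|A\setminus A_0|=|A_1|$ and $|A_0\setminus A|\le|Y|$ are both $O(\eps' n)$, this gives $\deg_{A_0}(u)\ge|A_0|-O(\eps' n)$. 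Then for every $a\in N_{A_0}(u)$ and every $b\in N_{B_0}(a)\cap N_{B_0}(c)$ the edges $\{u,a\},\{a,b\},\{b,c\},\{c,v\},\{v,u\}$ form a copy of $C_5$ in $G$ (the five vertices are pairwise distinct, an easy check using that $B_1,D_1\subseteq X$, which is disjoint from $A_0\cup B_0\cup C_0$, because $Y\subseteq C_1$). Hence each such $\{a,b\}$ lies in $E(A_0,B_0)\cap\cC_5(G)$.

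Finally I would count: letting $a$ range over $N_{A_0}(u)$ and, for each $a$, letting $b$ range over $N_{B_0}(a)\cap N_{B_0}(c)$, we obtain at least $\bigl(|A_0|-O(\eps' n)\bigr)\cdot(1-2\eps')|B_0|$ pairwise distinct edges in $E(A_0,B_0)\cap\cC_5(G)$. Since $|A_0|=(1/2-\sqrt2/4\pm\eps')n$ and $|B_0|=(1/4\pm\eps')n$ while $\eps'<10^{-4}$, this quantity is comfortably larger than $5\eps' n^2$, contradicting Corollary~\ref{cor:c5a0b0edge}. Therefore no edge between $B_1$ and $D_1$ can exist.

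The argument is routine given the structural claims already in place; the only point requiring care — and the main (minor) obstacle — is the bookkeeping of the $O(\eps' n)$ error terms: one must check that $\deg_{A_0}(u)$ is within $O(\eps' n)$ of $|A_0|$ (so the common neighbourhoods used above are not merely nonempty but of size $\Theta(n)$), and that the resulting quadratic count genuinely exceeds the threshold $5\eps' n^2$ of Corollary~\ref{cor:c5a0b0edge}. Both hold once $\eps'$ is small enough, and the bound $\eps'<10^{-4}$ fixed earlier already suffices.
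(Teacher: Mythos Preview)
Your proof is correct and follows essentially the same approach as the paper's: build a $C_5$ of the shape $u\,a\,b\,c\,v$ and conclude that $\Omega(n^2)$ edges of $E(A_0,B_0)$ lie in $\cC_5(G)$, contradicting Corollary~\ref{cor:c5a0b0edge}. The only cosmetic difference is that the paper lets $c$ vary with $b$ (using $\deg_{C_0}(v)\ge n/25$ and $\deg_{C_0}(x)>|C_0|-n/25$ for each $x\in B_0$ to find a common neighbour), whereas you fix a single $c\in N_{C_0}(v)$ upfront and intersect $N_{B_0}(a)\cap N_{B_0}(c)$; both routes give the same $\approx|A_0||B_0|>0.03n^2$ lower bound.
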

\begin{proof}
Suppose for contradiction there is an edge $\{u,v\}$ with $u \in B_1$ and $v
\in D_1$.  Since any vertex $x \in B_0$ has $\deg_{C_0}(x) > |C_0| - n/25$,
the vertices $v$ and $x$ have a common neighbor and hence
\[ \left|E(A_0,B_0) \cap \cC_5(G)\right| \ge \deg_{A_0}(u) \cdot (1-\eps')|B_0| > 0.03n^2,\]
which indeed contradicts Corollary~\ref{cor:c5a0b0edge}.
\end{proof}

Let $D := D_0 \cup D_1$. Putting everything together, we conclude that the edges in $G$ are as in
Construction~\ref{cstn:c5}.
\begin{cor}
$V(G) = A \cupdot B \cupdot C \cupdot D$ and 
$E(G) \subseteq E(A,B) \cupdot E(B,C) \cupdot E(C,D) \cupdot \binom{D}{2}$.
\end{cor}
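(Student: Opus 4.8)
The plan is to read off the corollary from the structural claims established above; no fresh argument is needed, and what is left is purely bookkeeping --- checking that the four sets partition $V(G)$ and that every forbidden type of edge has already been eliminated.

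First I would record the partition. The stability step of Section~\ref{sec:stability} gives $V(G) = A_0' \cupdot B_0 \cupdot C_0 \cupdot D_0 \cupdot Z$ with $Z = X \cupdot Y$ and $A_0' = A_0 \setminus Y$. Since $C_1 \subseteq Z'$ and $A_1 := Z' \setminus C_1$, we have $Z' = A_1 \cupdot C_1$; as $B_1 \cap Z' = \emptyset$ and $D_1 := Z \setminus (A_1 \cup B_1 \cup C_1)$, it follows that $Z = A_1 \cupdot B_1 \cupdot C_1 \cupdot D_1$. Hence $A = A_0' \cupdot A_1$, $B = B_0 \cupdot B_1$, $C = C_0 \cupdot C_1$, $D = D_0 \cupdot D_1$ partition $V(G)$, which is the first half of the corollary.

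For the edge inclusion I would go through the forbidden pairs of parts one at a time. That $A$ is independent combines independence of $A_0'$ (a subset of the independent set $A_0$), independence of $A_1$, and the absence of $A_0'$--$A_1$ edges, the latter because $A_1 \subseteq Z'$ and no $Z'$-vertex has a neighbour in $A_0 \cup C_0$ by Claims~\ref{cl:c5noAvB} and~\ref{cl:c5noBvC}; the same template handles $B$ (via $|E(B_1,B_0)| = 0$ together with independence of $B_0$ and of $B_1$) and $C$ (via the fact that every $C_1$-vertex has a neighbour in $B_0$, hence none in $C_0$ by Claim~\ref{cl:c5noBvC}, plus independence of $C_0$ and of $C_1$). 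For the three cross pairs: $E(A,C)=\emptyset$ from $|E(A_0,C_0)| = 0$, from $Z'$ avoiding $C_0$, and from the definition of $B_1$ forcing no edge between $C_1$ and $A$; $E(A,D)=\emptyset$ from the way $Y$ was defined (so $A_0'$ avoids $D_0$), from the fact that a $Z'$-vertex with a $D_0$-neighbour lies in $C_1$ (so $A_1$ avoids $D_0$), and from $D_1$ having no neighbour in $A$; $E(B,D)=\emptyset$ from $|E(B_0,D_0)| = 0$, $|E(B_1,D_0)| = 0$, the claim that $|E(B_1,D_1)| = 0$, and $D_1$ having no neighbour in $B_0$. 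Everything that survives --- $E(A,B)$, $E(B,C)$, $E(C,D)$ and $\binom{D}{2}$ --- is precisely the edge set permitted in Construction~\ref{cstn:c5}, which is the second half.

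I do not expect any real obstacle: the substance sits in the preceding claims, whose proofs rest on the edit-distance stability of Section~\ref{sec:stability} together with the extremality-and-cloning device of Claims~\ref{cl:c5duality} and~\ref{cl:c5mindeg} (removing a low-degree vertex and cloning a vertex incident to many non-pentagon edges cannot decrease the edge count, by Claim~\ref{cl:c5duality}). The one point to be careful about is that the split of $Z$ into $A_1,B_1,C_1,D_1$ is both disjoint and exhaustive --- in particular that $B_1$, defined through adjacency into $A$ rather than into $B_0$, is disjoint from $Z'$ --- and that each of the six forbidden part-pairs is matched to a cited statement rather than silently inherited from the approximate picture.
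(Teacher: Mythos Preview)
Your proposal is correct and matches the paper's approach: the paper states this corollary without proof, prefacing it only with ``Putting everything together, we conclude that the edges in $G$ are as in Construction~\ref{cstn:c5}'', so the intended argument is exactly the bookkeeping you supply --- collecting the partition $Z = A_1 \cupdot B_1 \cupdot C_1 \cupdot D_1$ and running through the forbidden part-pairs using the preceding claims. Your explicit check of each of the six forbidden pairs (and in particular your flagging of the one genuinely delicate point, that $B_1 \cap Z' = \emptyset$) is a faithful and slightly more thorough version of what the paper leaves to the reader.
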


In particular, the set of edges $E(G) = \cC_5(G) \cupdot E(A,B)$.
Since $G$ is minimizing $|\cC_5(H)|$ among all graphs in $H \in \cE'_n$, we immediately
conclude the following.
\begin{claim}
$|E(A,B)| = |A||B| = \widetilde{F}(n)$.
\end{claim}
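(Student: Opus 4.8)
The plan is to derive the two equalities in turn, starting from the identity $E(G) = \cC_5(G)\cupdot E(A,B)$ established just above.

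\textbf{Step 1: $|E(A,B)| = \widetilde{F}(n)$.} Since this union is disjoint, $|E(A,B)| = |E(G)| - |\cC_5(G)|$. As $G \in \cG_n$, we have $|\cC_5(G)| = F(n) = \lfloor n^2/4\rfloor + 1 - \widetilde{F}(n)$, and as $G \in \cE'_n$ we have $|E(G)| \ge \lfloor n^2/4\rfloor + 1$; together these give $|E(A,B)| \ge \widetilde{F}(n)$. For the reverse inequality I would apply Claim~\ref{cl:c5duality} to $G$ itself, obtaining $|E(G)\setminus\cC_5(G)| \le \widetilde{F}(n)$, i.e.\ $|E(A,B)| \le \widetilde{F}(n)$. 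Hence $|E(A,B)| = \widetilde{F}(n)$, and consequently $|E(G)| = F(n)+\widetilde{F}(n) = \lfloor n^2/4\rfloor + 1$, so in fact $G \in \cE_n$.

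\textbf{Step 2: $|E(A,B)| = |A|\cdot|B|$.} Here I would first isolate a structural observation: whenever $\widehat G$ is a graph on four classes $A \cupdot B \cupdot C \cupdot D$ all of whose edges lie between consecutive classes of the path $A$--$B$--$C$--$D$ or inside $D$ (so $A$, $B$, $C$ are independent and there are no $A$--$C$, $A$--$D$ or $B$--$D$ edges), then no copy of $C_5$ in $\widehat G$ can use an edge of $E(A,B)$. Indeed, tracing a hypothetical $5$-cycle $a\,b\,x\,y\,z$ through an edge $\{a,b\}$ with $a \in A$ and $b \in B$: the second cycle-neighbour $z$ of $a$ lies in $B$ (the only class adjacent to $A$), the second cycle-neighbour $x$ of $b$ lies in $A\cup C$, and then $y$ would have to be adjacent both to $z\in B$ — forcing $y \in A\cup C$ — and to $x\in A\cup C$ — forcing $y\in B\cup D$ — which is impossible. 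Now suppose for contradiction that $\{a,b\} \notin E(G)$ for some $a \in A$ and $b \in B$. Since $|\cC_5(G)| = F(n) > 0$ for $n \ge n_0$, I may pick an edge $e \in \cC_5(G)$ and set $G' := (G\setminus e)\cup\{a,b\}$. All edges of $G'$ still lie within the pattern above (we only deleted an edge and added an $A$--$B$ edge), so by the observation every $C_5$ of $G'$ avoids $\{a,b\}$ and is therefore a $C_5$ of $G\setminus e$; hence $\cC_5(G') \subseteq \cC_5(G)\setminus\{e\}$ and $|\cC_5(G')| \le F(n)-1$. On the other hand $|E(G')| = |E(G)| = \lfloor n^2/4\rfloor + 1$ by Step~1, so $G' \in \cE'_n$, which forces $|\cC_5(G')| \ge F(n)$ — a contradiction. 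Therefore every pair in $A\times B$ is an edge of $G$, i.e.\ $|E(A,B)| = |A|\cdot|B|$.

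Combining the two steps gives $|A|\cdot|B| = |E(A,B)| = \widetilde{F}(n)$, as claimed. I expect the only genuinely non-bookkeeping point to be the structural observation that a $C_5$ can never run through an $A$--$B$ edge of a graph supported on this path-with-a-loop pattern, together with the (routine) check that both $G$ and the modified graph $G'$ are supported on it; everything else is just unwinding the definitions of $F(n)$, $\widetilde{F}(n)$, $\cE_n$, $\cE'_n$ and $\cG_n$.
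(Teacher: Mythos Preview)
Your argument is correct and follows the same line as the paper's (which is compressed to a single sentence). Step~1 is exactly right, and your structural observation in Step~2 --- that no $C_5$ in a graph supported on the $A$--$B$--$C$--$D$ pattern can traverse an $A$--$B$ edge --- is the key point that makes the ``immediately'' in the paper work.

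One small simplification in Step~2: you do not need to swap edges. Since you have already established in Step~1 that $|E(G)| = \lfloor n^2/4\rfloor + 1$ and $|E(A,B)| = \widetilde{F}(n)$, you can simply \emph{add} the missing pair $\{a,b\}$ to obtain $G' \in \cE'_n$. Your structural observation gives $\cC_5(G') = \cC_5(G)$, hence $|E(G')\setminus\cC_5(G')| = \widetilde{F}(n)+1$, which already contradicts Claim~\ref{cl:c5duality}. The edge-swap version is equally valid, just slightly longer.
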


Therefore, the quadruple $(|A|,|B|,|C|,|D|)$ is $n$-extremal which finishes the proof of the theorem.
\end{proof}


\section{Exact result for longer odd cycles}
\label{sec:c7+exact}

As in the previous section, for an integer $k\ge 3$ and a graph $G$ we define
$\cC_{2k+1}(G)$ to be the set of all edges of $G$ that occur in a copy of
$C_{2k+1}$ in $G$. In other words,
\[\cC_{2k+1}(G) := \bigcup\limits_{H \subseteq G, H \cong C_{2k+1}} E(H) .\]

Recall $\cE_n$ and $\cE'_n$ are the sets of all $n$-vertex graphs with exactly $\lfloor n^2/4\rfloor +1$ edges
and at least $\lfloor n^2/4\rfloor +1$ edges, respectively.
For any $k \ge 3$, let 
\[F_{2k+1}(n) := \min\limits_{G \in \cE_n}|\cC_{2k+1}(G)|,\]
and $\widetilde{F}_{2k+1}(n) := \lfloor n^2/4\rfloor +1 - F_{2k+1}(n)$.
Finally we define $\cG^{2k+1}_n \subseteq \cE'_n$ to be the set of all $G \in \cE'_n$ with $|\cC_{2k+1}(G)| = F_{2k+1}(n)$.
As we will show, for any $k\ge \ell\ge3$, there exists a sufficiently large $n_0:=n_0(k)$
such that $\cG^{2k+1}_n = \cG^{2\ell+1}_n$ for all $n \ge n_0$.

\begin{theorem}
\label{thm:c7+exact}
For any integer $k \ge 3$ there exists an integer $n_0$ such that the following holds for any $n \ge n_0$.
If $G \in \cG^{2k+1}_n$, then $V(G)$ can be partitioned 
into four sets $A$, $B$, $C$ and $D$ such that
\begin{itemize}
\item $|A|=\lfloor \frac{n-2}{6}\rfloor$, $|B|=\lfloor\frac{n+1}{6}\rfloor$, $|C|=1$ and $|D|=\lfloor \frac{2n+1}{3} \rfloor$.
\item $A$ and $B$ are independent sets of $G$,
\item $\{u,v\} \in E(G)$ for any $u \in A\cup C$ and $v \in B$,
\item $\{u,v\} \notin E(G)$ for any $u \in A$  and $v \in C \cup D$, and
\item $\{u,v\} \notin E(G)$ for any $u \in B$  and $v \in D$.
\end{itemize}
In particular, $F_{2k+1}(n) = \begin{cases}
2n^2/9+1   & \textrm{for } n \equiv 0 \mod 6,\\
2n^2/9+(n+13)/18   & \textrm{for } n \equiv 1 \mod 6,\\
2n^2/9-(n-22)/18   & \textrm{for } n \equiv 2 \mod 6,\\
2n^2/9+1   & \textrm{for } n \equiv 3 \mod 6,\\
2n^2/9+(n+22)/18   & \textrm{for } n \equiv 4 \mod 6,\\
2n^2/9-(n-13)/18   & \textrm{for } n \equiv 5 \mod 6.
\end{cases}$
\end{theorem}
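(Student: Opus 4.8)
The plan is to follow the route of the proof of Theorem~\ref{thm:c5exact}, now tailored to Construction~\ref{cstn:cliquebip} and to cycles of length~$2k+1$. Fix $k\ge 3$, take $G\in\cG^{2k+1}_n$ with $n$ large, and colour an edge of~$G$ red if it lies on some copy of $C_{2k+1}$ and blue otherwise. By Theorems~\ref{thm:c7+} and~\ref{thm:c7+uniq}, for any prescribed $\eps'>0$ the graph~$G$ can be made equal to Construction~\ref{cstn:cliquebip} by modifying at most $\eps' n^2$ pairs, and that construction is $O(n)$-close to a disjoint union of a clique on $(2/3+o(1))n$ vertices with a complete balanced bipartite graph on the remaining $(1/3+o(1))n$ vertices. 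Extracting an exceptional set as in Section~\ref{sec:c5exact} then yields a partition $V(G)=D_0\cupdot A_0\cupdot B_0\cupdot X$ with $|D_0|=(2/3+o(1))n$, $|A_0|=|B_0|=(1/6+o(1))n$ and $|X|=o(n)$, such that $G[D_0]$ misses only $o(n^2)$ pairs, all but $o(n^2)$ pairs between $A_0$ and $B_0$ are edges, and only $o(n^2)$ edges of $G$ are incident to $X$ or lie between $D_0$ and $A_0\cup B_0$; a cloning argument in the spirit of Claim~\ref{cl:c5mindeg} also gives $\delta(G)\ge(1/4-o(1))n$.

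The second step is to show that, up to $o(n^2)$ edges, the red edges of~$G$ are exactly the edges inside~$D_0$. One first records the duality fact (cf.\ Claim~\ref{cl:c5duality}): no graph in $\cE'_n$ has more than $\widetilde F_{2k+1}(n)$ edges avoiding every copy of $C_{2k+1}$, since otherwise deleting the surplus red edges contradicts minimality of $F_{2k+1}$. Next, because $|D_0|\ge 2k+1$ and $G[D_0]$ is almost complete, a greedy cycle construction shows that every edge inside $D_0$, and every edge from a vertex to $D_0$ across which that vertex keeps many $D_0$-neighbours, lies on a copy of $C_{2k+1}$. Hence $\cC_{2k+1}(G)\supseteq E(G[D_0])$, so $|E(G[D_0])|\le F_{2k+1}(n)=(2/9+o(1))n^2$; together with $|D_0|=(2/3+o(1))n$ this pins $|D_0|$ down to $(2/3)n+o(n)$ more tightly, forces $A_0$ and $B_0$ to be balanced up to $o(n)$ with almost all pairs between them being edges, and leaves only $o(n^2)$ red edges with an endpoint outside $D_0$; in particular at most $o(n^2)$ edges of $E(A_0,B_0)$ are red.

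The core of the argument is then a chain of cleanup claims, each of the form ``if $G$ contains a configuration absent from Construction~\ref{cstn:cliquebip}, then $\Omega(n^2)$ edges of $E(A_0,B_0)$ lie on a copy of $C_{2k+1}$, contradicting the previous step''. In this way one shows successively that $A_0$ and $B_0$ are independent (one edge inside $A_0$ closes, by alternating between $B_0$ and $A_0$, into a $(2k+1)$-cycle whose $2k-1$ participating edges of $E(A_0,B_0)$ range over $\Omega(n^2)$ essentially independent choices); that there is a unique vertex $c$ adjacent to many vertices of~$D_0$ and to at least one vertex of $A_0\cup B_0$ (two such vertices would be joined inside $D_0$ and hence lie together on a $(2k+1)$-cycle through $\Omega(n^2)$ edges of $E(A_0,B_0)$, while the existence of one follows from $\delta(G)\ge(1/4-o(1))n$, since a disjoint clique-plus-bipartite graph on $n$ vertices has fewer than $\lfloor n^2/4\rfloor+1$ edges); and, setting $C:=\{c\}$ and absorbing the vertices of $X$ together with the few remaining badly attached vertices into sets $A$, $B$, $C$, $D$ according to their neighbourhoods — using $\delta(G)$ and cloning exactly as in Claims~\ref{cl:c5mindeg}--\ref{cl:c5degD1C0} — that $A$ and $B$ are independent and that $\{u,v\}\notin E(G)$ whenever $u\in A$ and $v\in C\cup D$ or $u\in B$ and $v\in D$. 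Since the red edges then consist of exactly the edges within $C\cup D$ and $|C|=1$, this is the claimed adjacency pattern up to the sizes and the multiplicities of the two blocks.

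It remains to optimise the sizes. For this~$G$ one has $\cC_{2k+1}(G)=E(G[C\cup D])$, while the bipartite block between $A\cup C$ and $B$ carries the remaining $\lfloor n^2/4\rfloor+1-|E(G[C\cup D])|$ edges; since $G$ minimises $|\cC_{2k+1}(G)|$ over $\cE_n$, that block must be complete (otherwise, trading a missing block edge for a clique edge produces a graph in $\cE'_n$ with strictly fewer edges on a $C_{2k+1}$), so $(|A|+1)|B|=\widetilde F_{2k+1}(n)$ and $|A|+|B|+|D|=n-1$, with $|E(G[C\cup D])|=\lfloor n^2/4\rfloor+1-(|A|+1)|B|$ minimised. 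As $(|A|+1)|B|\le\lceil(n-|D|)/2\rceil\lfloor(n-|D|)/2\rfloor$, and this quantity together with $\binom{|D|+1}{2}$ must reach $\lfloor n^2/4\rfloor+1$, the value $|D|$ is forced to be the least integer $d$ with $\binom{d+1}{2}+\lceil(n-d)/2\rceil\lfloor(n-d)/2\rfloor\ge\lfloor n^2/4\rfloor+1$, and a computation split along $n\bmod 6$ gives $d=\lfloor(2n+1)/3\rfloor$, hence $|A|=\lfloor(n-2)/6\rfloor$, $|B|=\lfloor(n+1)/6\rfloor$ and $F_{2k+1}(n)=\lfloor n^2/4\rfloor+1-\lfloor(n+4)/6\rfloor\lfloor(n+1)/6\rfloor$, which in the six residue classes unfolds to the stated closed form. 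Finally, since the whole derivation uses only $|D|+1\ge 2k+1$, and in a graph of this shape every clique edge lies on a $C_{2\ell+1}$ for each $3\le\ell\le k$ while the bipartite block has no odd cycle at all, the extremal graphs coincide and $\cG^{2k+1}_n=\cG^{2\ell+1}_n$ for all large~$n$. The main obstacle is the cleanup: one must verify that \emph{every} deviation from Construction~\ref{cstn:cliquebip} — notably the behaviour of $X$ and of the bridging vertex — provokes $\Omega(n^2)$ red edges in $E(A_0,B_0)$, which forces one to build $(2k+1)$-cycles through many local configurations while checking at each stage that the sets involved stay of size $\gg k$, and the cloning steps that pin down the exact neighbourhoods rely essentially on the minimum-degree bound.
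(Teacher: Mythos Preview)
Your outline follows the paper's broad strategy—stability gives an approximate partition, then a duality/cloning framework plus ``forbidden configuration forces too many red edges in $E(A_0,B_0)$'' arguments clean it up—but two points do not go through as written.

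First, the minimum-degree bound you quote is off: the vertex one clones sits in the bipartite block and is incident to about $|B|\approx n/6$ edges avoiding $C_{2k+1}$, so cloning only yields $\delta(G)\ge (1/6-o(1))n$, not $(1/4-o(1))n$. This is minor but feeds into later estimates.

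Second, and more substantively, your uniqueness argument for the bridging vertex has a real gap. You assert that two candidates $c_1,c_2$, each with many $D_0$-neighbours and at least one neighbour in $A_0\cup B_0$, would put $\Omega(n^2)$ edges of $E(A_0,B_0)$ on a $C_{2k+1}$. But if both $c_i$ have their bipartite neighbours only in $A_0$, the short odd cycle one builds for $k=3$ is $c_1 d_1 d_2 c_2 a_2 b a_1$ with $a_i\in N_{A_0}(c_i)$ \emph{fixed}; the only $E(A_0,B_0)$-edges it covers are $\{a_1,b\},\{a_2,b\}$ as $b$ ranges over common neighbours, which is $O(n)$ edges, not $\Omega(n^2)$, when each $c_i$ has $O(1)$ neighbours in $A_0$. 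Nor can you assume ``many $D_0$-neighbours'' for a cut vertex a~priori—the final theorem does not prescribe $E(C,D)$. The paper sidesteps both issues by defining $C$ as a \emph{minimum} vertex cut separating off a dense bipartite component: minimality gives a Hall-type bound $|N_D(X)|>|X|$ for $X\subseteq C$ (so every $c\in C$ has at least two $D$-neighbours and $E(C,D)\subseteq\cC_{2k+1}(G)$), and the proof that $|C_A|\le1$ is a genuine three-case analysis (a $4$-path with endpoints in $C_A$; two $C_A$-vertices with few $D$-neighbours; the residual $|C_A|=2$ case), each dispatched by a tailored vertex-replacement rather than the blanket $\Omega(n^2)$-red-edges argument you propose. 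Your existence argument for $c$ is also incomplete: connectedness only gives a path through $X$ between the clique and bipartite blocks, not a single vertex adjacent to both.
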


\begin{proof}
Let $V:=V(G)$. 
For any $\eps' >0$ there is a choice of $\eps < \eps'$ and a large enough
constant $n_0 \in \Nat$ such that if $n \ge n_0$, then  
the stability result proven in Theorem~\ref{thm:c7+uniq} and the fact $G \in
\cG^{2k+1}_n$ imply that there exists a set of at most $\eps' \cdot n$ vertices
$C'$ such that the
subgraph $G[V\setminus C']$ is disconnected, and it contains a connected
component $D'$ with at least $(2/3 - \eps')\cdot n$ vertices and minimum degree at least $(2/3-\eps') \cdot n$.
This follows because, after removing at most $(\eps' / 2)\cdot n$ exceptionally behaving vertices $C'_1$,
any pair of two vertex-disjoint edges $e_1=x_1y_1$ and $e_2=x_2y_2$, where $x_1$ and $x_2$
are from the nearly clique part and $y_1$ and $y_2$ from the nearly complete bipartite part,
yields that at least $n/7$ edges incident to $y_i$, where $i\in\{1,2\}$, occur in some $C_{2k+1}$.
However, the nearly clique part contains at least $(4/9 - \eps'/15)\cdot n^2$ edges of $G$
and each such an edge occurs in some $C_{2k+1}$. Therefore, there must be less than $(\eps'/2) \cdot n$ vertices $C'_2$
such that the nearly clique part in $G$ forms a connected component of $G[V \setminus (C'_1 \cup C'_2)]$.
We conclude that there is a partition of $V$ into four parts $A'$, $B'$, $C'$ and $D'$ that satisfies
\begin{itemize}
\item $|A'|=(1/6 \pm \eps') \cdot n$, $|B'|=(1/6 \pm \eps') \cdot n$, $|C'| < \eps' \cdot n$, $|D'| = (2/3 \pm \eps') \cdot n$,
\item $\forall v\in A': \deg_{B'}(v) \ge (1/6-\eps') \cdot n$,
\item $\forall v\in B': \deg_{A'}(v) \ge (1/6-\eps') \cdot n$,
\item $\forall v\in D': \deg(v) \ge (2/3 - \eps') \cdot n$, and
\item $E(A' \cup B',D') = 0$,
\end{itemize}
Note that these properties yield that the induced subgraph $G[D']$ has edge-density at least $1-\eps'$, and $|E(A',B')| \ge |A'||B'| - 4\eps' \cdot n^2$. 

We start our exposition with a direct analogue of Claim~\ref{cl:c5duality}.
\begin{claim}
\label{cl:c7+duality}
There is no $n$-vertex graph $G' \in \cE'_n$ with $|E(G') \setminus \cC_{2k+1}(G')| > \widetilde{F}_{2k+1}(n)$.
\end{claim}
\begin{proof}
As otherwise removing from $G'$ arbitrarily chosen $|E(G')| - \lfloor n^2/4 \rfloor - 1$ edges in $\cC_{2k+1}(G')$
yields an $n$-vertex graph with less than $F_{2k+1}(n)$ edges that occur in $C_{2k+1}$, a contradiction.
\end{proof}

We continue by showing that both $A'$ and $B'$ are in fact independent sets in $G$.
\begin{claim}\label{cl:c7+ABneighbors}
No $v \in V$ is adjacent to $u \in A'$ and $w \in B'$.
\end{claim}
\begin{proof}
As otherwise, we will actually show that almost every edge of $G$ occurs in some $C_{2k+1}$.

Firstly, recall that all the edges of $G[D']$ occur in $C_{2k+1}$.
If there would be a vertex $v$ adjacent to $u \in A'$ and $w \in B'$, then we
can find a copy of $C_{2k+1}$ containing any given edge $\{u',w'\}$
with $u' \in A' \setminus \{u\}$ and $w' \in B' \setminus \{w\}$ in the
following way:  let $u_0 \in A'$ be an arbitrary common neighbor of $w$ and
$w'$, and let $P$ be a $(2k-3)$-vertex path between $u$ and $u'$ disjoint from
$w$, $w'$ and $u_0$.  Note that such a path exists because every vertex in $A$
has at least $|B| - 2\eps' n$ neighbors in $B$, and symmetrically every vertex
in $B$ has at least $|A| - 2\eps' n$ neighbors in $A$.                                  
Therefore, $vwu_0w'P$ is a copy of $C_{2k+1}$ in $G$ containing the edge $\{u',w'\}$.
It follows that $|E(G) \setminus \cC_{2k+1}(G)| \le \eps' n^2$,
which clearly contradicts the fact that $G \in \cG^{2k+1}_n$.
\end{proof}

\begin{cor}
$A'$ and $B'$ are independent sets in $G$.
\end{cor}

Let $C \subseteq V$ be a minimum-size set so that $G-C$ is disconnected and one
of its connected components is a bipartite graph $(A,B)$ with minimum degree at
least $n/7$.  Clearly, this is well defined because $C'$ 
has the bipartite graph $(A',B')$ as one of the components.
Moreover, among all such cuts $C$ of the minimum size, we choose such a $C$
that $|A|+|B|$ is as large as possible.

Let $D : = V \setminus (A \cup B \cup C)$. Because we already have a partition $(A',B',C',D')$ with $(A',B')$ being a bipartite graph and $D'$ of edge density at least $1-\eps'$, 
one can easily see that the partition $A \cupdot B \cupdot C \cupdot D$ of $V$ 
behaves very similarly to the original partition $A' \cupdot B' \cupdot C' \cupdot D'$. 
In particular,
\begin{itemize}
\item $|A|=(1/6 \pm 2\eps')\cdot n$,
\item $|B|=(1/6 \pm 2\eps')\cdot n$,
\item $|C| < \eps' \cdot n$,
\item $|D \setminus D'| < \eps' \cdot n$, and
\item $E(D) \subseteq \cC_{2k+1}(G)$.
\end{itemize}
Following the proof of Claim \ref{cl:c7+ABneighbors} we get also that there is no vertex $v \in V$ adjacent to $u \in A$ and $w \in B$.

Now let use an argument analogous to the one used in Section~\ref{sec:c5exact}
to show that $G$ must have a large minimum degree.
\begin{claim}
\label{cl:c7+clone}
There is a vertex $v \in A$ that is incident to at least $n/6 - 9\eps' \cdot n$ edges not in $\cC_{2k+1}(G)$.
\end{claim}
\begin{proof}
Suppose not, then the number of edges not in $\cC_{2k+1}(G)$ is smaller than 
\[|A| \cdot \left(\frac n6 - 9\eps' \cdot n\right) +
  |C| \cdot n \le \left(\frac n6 + 2\eps' \cdot n\right) \cdot \left(\frac n6 - 9\eps' \cdot n\right)
  + \eps' \cdot n^2 < n^2/36 - \eps'/6 \cdot n^2.
\]
Therefore, there are more than $2n^2/9 + \eps' n^2/6$ edges in $\cC_{2k+1}(G)$,
contradicting the extremality of $G$ since Construction~\ref{cstn:cliquebip}
has at most $2n^2/9 + (n+22)/18$ edges that occur in $C_{2k+1}$.
\end{proof}

\begin{cor}
\label{cor:c7+mindeg}
For any $v \in V$, $\deg(v) > n/6 - 9\eps' \cdot n$.
\end{cor}

\begin{proof}
If there exist a vertex $w\in V$ of a smaller degree, 
then by removing $w$ and adding a clone of the vertex $v$ from the above claim, 
we improve the graph contradicting Claim~\ref{cl:c7+duality}.
\end{proof}


\begin{claim}
Every $u,w\in D$ have a common neighbor in $D$.
\end{claim}
\begin{proof}
First observe that all pairs of vertices $u \in D'$ and $w \in D$ have a common neighbor in $D'$.
Suppose for a contradiction there exist two vertices $u,w \in D\setminus D'$ with no common neighbor in $D$.
Then consider the graph $G'$ obtained from $G$ by removing both $u$ and $w$, adding a new vertex $u'$ connected to the
whole set $D' \cap D$, and adding a new vertex $w'$ which will be a clone of the vertex $v$ from Claim~\ref{cl:c7+clone}.

We removed at most $|D| + 2\eps' n$ edges from $G$, and added 
$\deg(u') + \deg(w') \ge  |D| + n/6 - 10\eps' n$ new edges.
Moreover, all the removed edges were in $\cC_{2k+1}(G)$, so $G'$ contradicts Claim~\ref{cl:c7+duality}.
\end{proof}

Now let us concentrate on the vertex-cut $C$.
Firstly, we observe that $C$ must be non-empty.
\begin{claim}
$G$ is a connected graph. In particular $|C| \ge 1$.
\end{claim}
\begin{proof}
If $G$ is disconnected, take any two connected components of $G$ and add
one edge between them. Clearly, the added edge does not occur in any cycle
contradicting $G \in \cG^{2k+1}_n$.
\end{proof}

In the following series of claims, we will show that $|C| \le 1$. In order to
do so, we split the vertices of $C$ based on their adjacencies to $A$ and $B$
(recall no vertex can be adjacent to both $u \in A$ and $w \in B$).
Let $C_A := \{v \in C \cond \deg_A(v) > 0 \}$ and $C_B := C \setminus C_A = \{v \in C \cond \deg_B(v) > 0\}$.

\begin{claim}\label{cl:c7+Cbipartite}
$|E(C_A)| = |E(C_B)|=0$.
\end{claim}
\begin{proof}
Suppose the claim is false. Without loss of generality, there is an edge $\{v_1,v_2\} \in E(C_A)$.
Consider any two vertices $u_1 \in N_A(v_1)$ and $u_2 \in N_A(v_2)$, any vertex $w_1 \in N_B(u_1)$,
any vertex $u_3 \in N_A(w_1)\setminus \{u_1,u_2\}$, and a $(2k-3)$-vertex path $P$ between the vertices $u_2$ and $u_3$
with the internal vertices disjoint from $u_1$, $v_1$, $v_2$ and $w_1$. It follows that $v_1u_1w_1Pv_2$ yields a copy of $C_{2k+1}$ in $G$.
Therefore,
\[|E(A,B) \cap \cC_{2k+1}(G)| \ge \deg_B(u_1) \cdot \left(\deg_A(w_1)-2\right) > n^2/50, \]
and hence $|\cC_{2k+1}(G)| > 2n^2/9 + (n+22)/18$; a contradiction. 
\end{proof}

Next, we study the edges between the sets $C$ and $D$.
\begin{claim}
For any set $X \subseteq C$, $|N_D(X)| > |X|$.
In particular, every vertex $v \in C$ have at least two neighbors in $D$.
\end{claim}
\begin{proof}
Suppose for contradiction that there exists $X \subseteq C$ with $|N_D(X)| \le |X|$,
and let $Y:=N_D(X)$.
By Corollary~\ref{cor:c7+mindeg}, $\deg_{A \cup B}(v) > n/6 - 9\eps' n > n/7$
for any $v \in X$.
Therefore, $(C \cup Y) \setminus X$ is a vertex-cut of size at most
$|C|$ and $G[A \cup B \cup X]$ is a bipartite graph (from Claim \ref{cl:c7+Cbipartite}) with a minimum degree at least $n/7$
contradicting the choice of $C$.
\end{proof}

Since every $v \in C$ has at least two neighbors in $D$, we conclude that every edge between
$C$ and $D$ occurs in some $(2k+1)$-cycle, i.e., $E(C,D) \subseteq \cC_{2k+1}(G)$.




\begin{claim}
\label{cl:c7+noC_A-D-C_Bedge}
$|N_D(u_a) \cap N_D(u_b)| = 0$ for any $u_a \in C_A$ and $u_b \in C_B$.
\end{claim}
\begin{proof}
Suppose there exists $w \in N_D(u_a) \cap N_D(u_b)$.
Let $v_a \in N_A(u_a)$ and $v_b \in N_B(u_b)$ be chosen arbitrarily,
and consider the bipartite subgraph $(A',B')$ with $A' := N_A(v_b)$ and $B' := N_B(v_a)$.
It follows that $|E(A,B) \setminus E(A',B')| < 4\eps' n^2$. On the other hand,
any edge $\{x,y\} \in E(A',B')$ occurs in $C_{2k+1}$ for all $k\ge3$, a contradiction.
\end{proof}

\begin{claim}
$|C_A| \le 1$ and $|C_B| \le 1$.
\end{claim}
\begin{proof}
By symmetry, it is enough to prove that $|C_A| \le 1$. Suppose for contradiction that $|C_A|\ge 2$.

We first consider the case when there are two vertices $u_1, u_2 \in C_A$ and
an edge $\{x_1,x_2\} \in E(D)$ with $x_1 \in N_D(u_1)$ and $x_2 \in N_D(u_2)$.
In other words, there is a $4$-vertex path with both of its endpoints in $C_A$.
Let $W := N_A\left(\{u_1,u_2\}\right)$. Note that $|W|\ge2$ as otherwise $(C
\cup W) \setminus \{u_1,u_2\}$ contradicts the minimality of $C$.
Since any two vertices $w_1,w_2 \in A$ have more than $2n/7 - |B| > 4|B|/7$
common neighbors in $B$, we conclude that $|E(W,B)\setminus \cC_{2k+1}(G)| < 3|B|/7
< n/13$. Also, $\deg(w) \le |B| + |C| < n/5$ for any $w \in A$. It follows that
the graph obtained from $G$ by removing the vertex-set $W$, adding $|W|-1$ new
vertices fully connected to $D$, and adding a clone of a~vertex~$v$ from
Claim~\ref{cl:c7+clone} yields a graph $G'$ with more than $n^2/4$ edges
and $|E(G') \setminus \cC_{2k+1}(G')| > |E(G) \setminus \cC_{2k+1}(G)|$, a contradiction
with Claim~\ref{cl:c7+duality}. 

For the rest of the proof, we may assume there is no $4$-vertex path with the
endpoints in $C_A$. In particular, at most one vertex from $C_A$ can have
$\Omega(\eps n)$ neighbors in $D$.
Let us now focus on the edges between $C_A$ and $A$ that
are not in $\cC_{2k+1}(G)$. Clearly, there are at most $|A|$ of them since any two
edges $e_1,e_2 \in E(C_A,A)$ with $e_1 \cap e_2 \in A$ occur in $C_{2k+1}$.  Now
suppose there exist two vertices $u_1,u_2 \in C_A$ that both have less than
$n/24$ neighbors in $D$.  By Corollary~\ref{cor:c7+mindeg}, it follows that
$|N_A(u_1) \cap N_A(u_2)| > |A|/3$. On the other hand, \hbox{$\deg(u_1) + \deg(u_2) < n/2$}.  
Therefore, replacing the vertices $u_1$ and $u_2$ with one new vertex adjacent
to every vertex in $D$ and a clone of the vertex $v$ from
Claim~\ref{cl:c7+clone} again yields a contradiction with
Claim~\ref{cl:c7+duality}.

We conclude that if $|C_A| \ge 2$, then $C_A = \left\{u_1, u_2\right\}$ and $\deg_D(u_1) \ge n/24$.
Note that $u_1$ or $u_2$ is incident to at most $|A|/2$ edges that do not occur
in $C_{2k+1}$.  Let $u \in C_A$ be this vertex and let $u' \in C_A$ be the
other vertex. Since $\deg_D(u_2) \ge 2$ and hence $\deg_D(u') \ge 2$, there
are at least $2 \cdot \left(\deg_D(u_1) - 2\right) > \deg_D(u)$ 
non-edges in $D$ between $N_D(u)$ and $N_D(u')$.
Therefore, removing the vertex $u$, adding all the edges $\{w,w'\}$ with
$w \in N_D(u)$ and $w' \in N_D(u')$, and adding a clone of the vertex $v$ from
Claim~\ref{cl:c7+clone} contradicts Claim~\ref{cl:c7+duality}, which finishes
the proof of the claim.
\end{proof}

It remains to show that we cannot have both $|C_A|=1$ and $|C_B|=1$.
\begin{claim}
$|C| = 1$.
\end{claim}
\begin{proof}
Suppose for contradiction there are vertices $u_a \in C_A$ and $u_b \in C_B$.
Firstly, recall that $N_D(u_a) \cap N_D(u_b) = \emptyset$ by Claim~\ref{cl:c7+noC_A-D-C_Bedge}.

Now let us prove that both $|N_A(u_a)|$ and $|N_B(u_b)|$ must have quite small sizes, say less~than~$n/24$.
Suppose, without loss of generality, that $|N_A(u_a)| \ge n/24$.
Our aim now is to show that any edge incident to $u_b$ is contained in some $C_{2k+1}$.
Consider any vertex $v \in N_B(u_b)$. Since $\deg_A(v) > |A| - 11\eps' n$, the vertices $v$
and $u_a$ have a common neighbor $w \in A$. Therefore, $u_awvu_bP$ gives a
copy of $C_{2k+1}$, where  $P$ is a $(2k-3)$-vertex path in $D$ between $x \in N_D(u_a)$ and $x' \in N_D(u_b) \setminus \{x\}$.
We conclude that every edge incident to $u_b$ is in $\cC_{2k+1}(G)$. Moreover, there is at least one edge in $E(A,B) \cap \cC_{2k+1}(G)$.
But then consider a graph $G'$ obtained from $G$ by removing at most $|B|$ edges between $u_b$ and $B$, and adding at least $|D| > |B|$
missing edges between $C$ and $D$. Since no edge from $E(A,B)$ is in $\cC_{2k+1}(G')$, the graph $G'$ contradicts Claim~\ref{cl:c7+duality}.

It remains to consider the case when both $|N_A(u_a)|$ and $|N_B(u_b)|$ have sizes less~than~$n/24$.
But then removing all the edges from, say, $u_a$ to $A$, and adding all the missing edges between $u_b$ and $B$ yield
a graph $G'$ that again contradicts Claim~\ref{cl:c7+duality}.
\end{proof}

This gives us a complete information on the structure of the extremal graphs. 

\begin{cor}
$E(G) \subseteq E(A,B) \cupdot E(B,C) \cupdot E(C,D) \cupdot \binom{D}{2}$. 
\end{cor}

It follows that all the edges that do not occur in $C_{2k+1}$ are incident to
vertices in $B$.
\begin{cor}
$E(A,B)=|A||B|$ and $|B|\ge|A|$.
Moreover, $F_{2k+1}(n) = \lfloor\frac{n^2}{4}\rfloor +1 - (|A|+1)|B|$.
\end{cor}

Finally, knowing the structure, it is straightforward to get the fact that $G \in \cG^{2k+1}_n$ yields
that $|A| = \lfloor(n-2)/6\rfloor$, $|B| = \lfloor(n+1)/6\rfloor$ and $|D| = \lfloor(2n+1)/3\rfloor$.
\end{proof}



\section{Concluding remarks}
\label{sec:remarks}

For an $n$-vertex graph $G$ with $\lfloor n^2/4\rfloor + 1$ edges, we
determined the asymptotic minimum number of the edges of $G$ that occur in some
copy of $C_5$ in $G$, and for any $k\ge3$, the exact minimum number of the
edges that occur in $C_{2k+1}$.
Our results show that the pentagon case has a very different behavior compared
to all the longer odd cycles. These results confirm a conjecture of
F\"uredi and Maleki, who proved the optimal asymptotic bounds under a stronger
assumption that $G$ has $(1/4 + \eps)n^2$ edges.

Our main tool was an application of techniques from finite forcibility in the
setting of flag algebras, combined with stability results on triangle-free
graphs.  This was crucial for dealing with $n$-vertex graphs that have only
$\lfloor n^2/4\rfloor + 1$ edges. 
We believe that our approach can be adapted to various other scenarios, and we
intend to investigate this direction further.

We were also able to guide flag algebras to give us additional structural
information for extremal configurations which yielded the corresponding
stability results. These stability results allowed us to fully describe the
structure of all the sufficiently large tight constructions. 

If $G$ contains $\alpha n^2$ edges for some $\alpha > 1/4$, then a standard averaging
argument yields that $G$ must contain much more edges that occur in $C_{2k+1}$, for $k$ being fixed,
than Theorems~\ref{thm:c5} and~\ref{thm:c7+} guarantee for $\lfloor n^2/4\rfloor + 1$ edges.
However, the averaging argument yields only a weak improvement. 
F\"uredi and Maleki~\cite{bib:FurMal} determined an asymptotically optimal
lower bound for this problem.  Note that the corresponding approximate result
for triangles was proven by F\"uredi and Maleki in~\cite{bib:FurMalTria}. 
%


F\"uredi and Maleki~\cite{bib:FurMal} also considered a more general question,
where instead of minimizing the number of edges that occur in odd cycles of a fixed length,
one minimizes the number of edges that occur in copies of $F$ for some fixed graph $F$.
If the graph $F$ has chromatic number $\chi = 3$, they obtained an asymptotically tight solution to this question.
However, for graphs $F$ with chromatic number $\chi \ge 4$, these questions are widely open.



\medskip

{\noindent \bf Acknowledgments.}

The authors thank Zoltan F\"uredi and Zeinab Maleki for discussing the results of~\cite{bib:FurMal}
and the relation to the results obtained in this paper, and to Shoham Letzter for her suggestions
regarding the results presented in Sections~\ref{sec:c5exact} and~\ref{sec:c7+exact}.
We also thank the anonymous referees for their valuable comments, and Jake
Cooper and Dan Kr\'al' for fruitful discussions at the beginning of this
project. All of these greatly improved the presentation of our results.


\begingroup
\bibliographystyle{abbrv}
\linespread{0.97}\selectfont
\bibliography{refs}
\endgroup
\newpage


\appendix

\section{Formal verification of correctness of Propositions~\ref{prop:c5flag} and~\ref{prop:c7+flag}}
\label{apx:verify}

In order to verify the correctness of the claimed identities, given 
a~\emph{proof-certificate} consisting of~matrices $\widehat{L}$, $M_\lambda$, $\widehat{B}$, $M_\beta$, $\widehat{R}$, $M_\rho$ and two numbers $a,b > 0$,
we perform the following 8 steps:
\begin{itemize}
\item[1)] Generate all the non-isomorphic flags in the sets $\cH_6$, $\cH_4^\lambda$, $\cH_4^\beta$, and $\cH_4^\rho$\,,

\item[2)] For every $F_1^\sigma,F_2^\sigma \in \cH_4^\sigma$, where $\sigma \in \{\lambda,\beta,\rho\}$,
express $\unlab{F^\sigma_1 \times F^\sigma_2}{\sigma}$\, as $\displaystyle\sum\limits_{H \in \cH_6} p_H^{F_1^\sigma,F_2^\sigma} \cdot H$\,,
 
\item[3)] Verify that the three matrices $\widehat{L}$, $\widehat{B}$, and $\widehat{R}$ are positive definite,

\item[4)] Express
$ \unlab{v_\lambda^\T M_\lambda^\T \cdot \widehat{L} \cdot M_\lambda v_\lambda}\lambda
+ \unlab{v_\beta^\T M_\beta^\T \cdot \widehat{B} \cdot M_\beta v_\beta}\beta
+ \unlab{v_\rho^\T M_\rho^\T \cdot \widehat{R} \cdot M_\rho v_\rho}\rho
$\,
as $\displaystyle\sum\limits_{H \in \cH_6} \zeta_H \cdot H$\,,

\item[5)] Express
$\left(\figR \,+\, \figB \,-\, \figN  \right) \times \figEMP$\, as $\displaystyle\sum\limits_{H \in \cH_6} \gamma_H \cdot H$\,
and
$\left(\figR \,+\, \figB \,-\, \figN  \right) \times \figCOCh$\, as $\displaystyle\sum\limits_{H \in \cH_6} \gamma'_H \cdot H$\,,

\item[6a)] In the case of Proposition~\ref{prop:c5flag}, express
$\figTRG \,\times\,\left(8\cdot\figR - \left({2+\sqrt{2}}\right) \cdot \figV \, \right)$\, as $\displaystyle\sum\limits_{H \in \cH_6} \kappa_H \cdot H$\,,
\item[6b)] In the case of Proposition~\ref{prop:c7+flag}, express
$\figTRG \,\times\, \left( 9\cdot\figR - 4 \cdot \figV \, \right)$\, as $\displaystyle\sum\limits_{H \in \cH_6} \kappa_H \cdot H$\,,

\item[7)] For every $H \in \cH_6$, verify that
\begin{equation}\label{eq:finalcoef}
\kappa_H  \ge 
\zeta_H + a \cdot \gamma_H + b \cdot \gamma'_H  \,,
\end{equation}

\item[8a)] In the case of~Proposition~\ref{prop:c5flag}, verify that the inequality (\ref{eq:finalcoef}) is strict for every $H \in  \cP_5 \cup \{C_4^X\}$, and 
\item[8b)] In the case of~Proposition~\ref{prop:c7+flag}, verify that the inequality (\ref{eq:finalcoef}) is strict for every $H \in \cP_4$.
\end{itemize}

In our verification scripts on the webpage \mbox{\linktocalculations}, we implement the first and the second step by a
simple exhaustive search over all the possibilities.  The positive-definiteness
of the given matrices is verified by finding their $LDL^\T$ decompositions and testing whether
all diagonal entries of $D$ are positive. 
Note that for the matrix decomposition, we use the corresponding function in SAGE which uses exact arithmetics.

Next, given a proof-certificate
$\left(\widehat{L},M_\lambda,\widehat{B},M_\beta,\widehat{R},M_\rho,a,b\right)$
and the values of $p^{F^\sigma_1,F^\sigma_2}_H$ computed in the second step, we
directly compute the values of $\zeta_H$, again using exact arithmetics implemented in SAGE.
Note that to do so, we only need to perform summation and multiplication in $\Rat\left[\sqrt 2\right]$.

To find the values $\zeta_H$, $\gamma_H$, $\gamma_H'$ and $\kappa_H$ from the steps 4-6,
we again go exhaustively through all the possibilities. Finally, the steps 7 and 8
are verified by a direct computation in $\Rat\left[\sqrt 2\right]$.

\end{document}